\newtheorem{theorem}{Theorem}[section]	
\newtheorem{cor}{Corollary}[section]	
\newtheorem{prop}{Proposition}[section]	
\newtheorem{assumption}{Assumption}[section]
\newtheorem{lem}{Lemma}[section]
\theoremstyle{definition}
\newtheorem{algo}{Algorithm}[section]
\newtheorem{remark}{Remark}[section]
\newcommand{\E}{\mathsf{E} }
\newcommand{\I}{\mathsf{I} }
\newcommand{\tr}{\mathsf{tr}}
\newcommand{\Var}{\mathsf{Var}}
\newcommand{\Cov}{\mathsf{Cov}}
\newcommand{\rank}{\mathsf{rank}}
\newcommand{\diag}{\mathsf{diag}}
\numberwithin{figure}{section}
 \numberwithin{equation}{section}
 \numberwithin{table}{section}
\begin{document}

\title{Inference for Heterogeneous Effects using Low-Rank Estimation of Factor Slopes}\thanks{We are
grateful to seminar participants at Iowa State, University of North Carolina - Chapel Hill, Duke, Connecticut, Syracuse, and IAAE 2019 for helpful discussions.}

\author{Victor Chernozhukov}
\address{Department of Economics, MIT,  Cambridge, MA 02139}
\email{vchern@mit.edu}

\author{Christian Hansen}
\address{Booth School of Business, University of Chicago, Chicago, IL 60637}
\email{Christian.Hansen@chicagobooth.edu}
 
\author{Yuan Liao}
\address{Department of Economics, Rutgers University, New Brunswick, NJ 08901}
\email{yuan.liao@rutgers.edu}

\author{Yinchu Zhu}
\address{Lundquist College of Business, University of Oregon, 1208 University St, Eugene, OR 97403 }
\email{yzhu6@uoregon.edu}

\date{First draft: December 25, 2018. This draft: \today}                                           

\begin{abstract}
We study a panel data model with general heterogeneous effects where slopes are allowed to vary across both individuals and over time. The key dimension reduction assumption we employ is that the heterogeneous slopes can be expressed as  having a factor structure so that the  high-dimensional slope matrix is low-rank and can thus be estimated using low-rank regularized regression. We provide a simple multi-step estimation procedure for the heterogeneous effects. The procedure makes use of sample-splitting and orthogonalization to accommodate inference following the use of penalized low-rank estimation. We formally verify that the resulting estimator is asymptotically normal allowing simple construction of inferential statements for  {the individual-time-specific effects and for cross-sectional averages of these effects}. We illustrate the proposed method in simulation experiments and by estimating the effect of the minimum wage on employment.
\end{abstract}

\maketitle

\textbf{Key words:}  nuclear norm penalization, singular value thresholding, sample splitting, interactive effects, post-selection inference
 
 
\onehalfspacing

 \newpage

\section{Introduction}

This paper studies estimation and inference within the following panel data model:
\begin{eqnarray}\label{baseline model}
y_{it}&=&x_{it}'\theta_{it}+ \alpha_i'g_t+u_{it} ,\quad i=1,..., N,\quad t=1,..., T,
\end{eqnarray}
where $y_{it}$ is the scalar outcome of interest, $x_{it}$ is a $d$-dimensional vector of observed covariates with heterogeneous slopes $\theta_{it}$, $\alpha_i$ and $g_t $ are unobserved fixed effects, and $u_{it}$ is an unobserved error term. The model permits general heterogeneity in the sense that fixed effects appear in the model interactively and the slope $\theta_{it}$ is allowed to vary across both $i$ and $t$. 
{The main goal of this paper is to provide an asymptotically valid procedure for performing statistical inference for averages of the $\theta_{it}$ taken across subgroups in the population at specific time periods. The subgroups may consist of single individuals, in which case inference is for a specific $\theta_{it}$; the entire cross-section; or any pre-specified subset of the cross-sectional units.}
 
The main dimension reduction assumption employed in this paper is that $\theta_{it}$ can be expressed as
$$\theta_{it}= \lambda_i'f_t.$$
That is, we assume the slopes $\theta_{it}$ can be represented by a factor structure where $\lambda_i$ is a matrix of loadings and $f_t$ is a vector of factors. We allow $f_t$ and $g_t$ to have overlapping components and allow $\lambda_i$ and $f_t$ to be constant across $i\leq N$ and $t\leq T$ respectively. Thus, the model accommodates homogeneous slopes as a special case.

It will be useful to represent the model in matrix form. 
Let $\Theta_r$ and $X_r$ be the $N \times T$ matrices with $r^{\text{th}}$ component $\theta_{it,r}$ and $x_{it,r}$ respectively. Let $M, Y,$ and $U$ be the $N\times T$ matrices of $\alpha_i'g_t$, $y_{it}$, and $u_{it}$. Finally, let $\odot$ denote the matrix element-wise product. Using this notation, we may express the model in matrix form as
$$
Y=\sum_{r=1}^d X_r\odot \Theta_r+ M+U.
$$

Under the maintained factor structure, the slope and fixed effect matrices, $\Theta_{r}$ and $M$, have rank at most equal to their associated numbers of factors. That is, these matrices are low-rank assuming the number of factors is small. 
This structure motivates estimating the model parameters via low-rank estimation:
\begin{align}
\begin{split}\label{e1.2}
\min_{\{\Theta_1,...,\Theta_d, M\}} & \|Y- \sum_{r=1}^d X_r\odot \Theta_r-M\|_F^2+P_0(\Theta_1,...,\Theta_d, M) \\
\text{where} & 
\quad P_0(\Theta_1,...,\Theta_d, M)=\sum_{r=1}^d\nu_r\|\Theta_r\|_n+\nu_0\|M\|_n
\end{split}
\end{align}
for some tuning parameters $\nu_0, \nu_1,...,\nu_d>0$ and $\|.\|_F$ and $\|.\|_n$ respectively denote the matrix Frobenius norm and the nuclear norm. In particular, let $\psi_1(\Theta)\geq...\geq\psi_{\min\{T, N\}}(\Theta)$ be the sorted singular values of  an $N\times T$ matrix $\Theta$, then 
$$
\|\Theta\|_n:=\sum_{i=1}^{\min\{T,N\}} \psi_i(\Theta).
$$
The nuclear norm thus provides a convex-relaxation of the rank of $\Theta$, and solution of \eqref{e1.2} can be achieved using efficient algorithms such as the singular value decomposition.  

The penalized low-rank estimators defined in \eqref{e1.2} will be consistent for matrices $\Theta_1$, ..., $\Theta_d$ and $M$ with suitable choice of the tuning parameters $\nu_0, ..., \nu_d$. However, the use of regularized regression also substantially complicates inference. For example, the estimators from \eqref{e1.2} may have large shrinkage bias in finite samples. 
This paper makes a useful contribution to the literature on penalized low-rank estimation by providing a formally valid approach to obtaining inferential statements about elements of the heterogeneous slope matrices $\Theta_1$, ..., $\Theta_d$ after application of singular value thresholding (SVT) type regularization. That is, we provide an approach to obtaining valid ``post-SVT inference.''

The chief difficulty to providing valid inferential statements about elements of $\theta_{it}$ is that estimates obtained by directly solving \eqref{e1.2} may suffer from substantial regularization bias. To overcome this bias, we use the solution of \eqref{e1.2} as initial estimates in an iterative least squares procedure. More specifically, let $\widetilde \Theta_1$, ..., $\widetilde \Theta_d$, and $\widetilde M$ denote the estimates of the $\Theta_r$ and $M$ obtained by solving \eqref{e1.2}.  We use them to estimate the rank of the $\Theta_r$ and $M$ and to obtain preliminary estimates of the $\lambda_{i}$ and $\alpha_i$, $\widetilde \lambda_i$ and $\widetilde\alpha_i$, by extracting the respective singular vectors from the estimated matrices. We then regress $y_{it}$ on the elements of $(x_{it}'\widetilde \lambda_i, \widetilde \alpha_i)$ to obtain $\widehat f_t$ and $\widehat g_t$ - ``de-biased'' estimates of $f_t$ and $g_t$. Finally, we obtain final estimates of $\lambda_i$ and $\alpha_i$ by regressing $y_{it}$ on $(\widehat g_t, \widehat f_tx_{it})$. The final estimator of $\theta_{it}$ is then $\widehat \theta_{it} = \widehat \lambda_i'\widehat f_t$. This procedure is analogous to the ``post-lasso'' procedure of \cite{belloni2013least} which proposes using OLS regression using only the covariates selected in a first stage lasso step to alleviate shrinkage bias in the high-dimensional linear model context. 

In practice and in our theoretical development, we make two modifications to the algorithm sketched in the preceding paragraph. First, we assume that the observed variables in $x_{it}$ also follow factor structures and make use of estimates of this factor structure to appropriately orthogonalize all variables in the model. This orthogonalization removes strong sources of dependence captured by a factor structure and importantly alleviates the impact of estimation of the interactive fixed effects, which are high-dimensional nuisance parameters from the perspective of learning $\theta_{it}$. Without this orthogonalization, the asymptotic normality of $\widehat\theta_{it}$ would require extremely strong restrictions on the observed covariates in $x_{it}$. The second modification is that we make use of sample splitting to avoid technical complications arising from potential correlation between estimation error in nuisance parameters and $x_{it}$. Section \ref{sec:2.2} explains the full algorithm and provides more discussion of the use of orthogonalization and sample-splitting.

 In summary, our method, building on the factor-structure of   heterogeneous slopes, is different from the usual low-rank estimation  in the literature. The factor-structure allows us to iteratively estimate the factors and loadings, producing unbiased estimators for heterogeneous effects that are ready for  inference. 
 
Nuclear norm penalization is now standard technique in the statistical literature for estimating low-rank/factor models; see, e.g., \cite{negahban2011estimation, recht2010guaranteed, sun2012calibrated, candes2010power, koltchinskii2011nuclear}. However, there is relatively little work surrounding element-wise inference for the low-rank matrices. The only work that we are aware of is \cite{cai2016geometric} which proposes a debiased estimator to make inference about the low-rank matrix after applying nuclear-norm penalization. We note that this debiased procedure does not apply in the model we are considering (even for the special case of our model with factor models for intercepts
and homogeneous models for slopes due to presence of additional control variables). The main issue is that, under sensible conditions, low-rank matrices resulting from a high-dimensional factor structure   have ``very spiked" eigenvalues, possessing leading eigenvalues that grow at rate $O(\sqrt{NT})$.  The presence of these fast-growing eigenvalues calls for  new analysis.

Nuclear-norm penalized estimation has also been applied in the recent econometric literature for panel data models. \cite{bai2017principal} studied the properties of nuclear-norm penalized estimation in the pure factor model setting. {\cite{athey2018matrix} consider the use of matrix completion methods to impute missing potential outcomes for estimating causal effects with a binary treatment variable. They show how estimating missing potential outcomes via nuclear-norm penalized methods relates to other imputation methods used in the causal inference literature and provide some results on rates of convergence for the estimated low-rank matrix.} \cite{moon2018nuclear} studied inference for common parameters in panel data models with homogeneous slopes and interactive fixed effects - i.e., \eqref{baseline model} where $\theta_{it}=\theta$ is imposed for all $(i,t)$. They apply nuclear-norm penalization to estimate the low-rank matrix of the interactive fixed effects.

The paper is also closely related to the literature on estimating interactive fixed effects model with homogeneous slopes: 
 \cite{bai09}, \cite{ MW11}, and \cite{ahn2013panel}.
   In addition, \cite{pesaran} 
studies time-invariant homogeneous models in which the slopes are allowed  to vary across $i$ - i.e., models with $\theta_{it} = \theta_i$ for all $t$. {\cite{su2015specification} consider testing against a nonlinear homogeneous effects model  - i.e. a model where $y_{it} = g(x_{it}) + \alpha_i'g_t + u_{it}$ for all $i$ and $t$.} Our approach builds upon this literature by considering a model with slopes that are allowed to be heterogeneous both across individual units $i$ and across time $t$ which allows capturing a richer spectrum of effects than can be accommodated in analogous homogeneous effects models.



The rest of the paper is organized as follows. Section 2 introduces the post-SVT algorithms that define our estimators. It also explains the rationale for using orthogonalization and sample splitting as part of our overall estimation and inference strategy. Section 3 provides asymptotic inferential theory. Section 4 presents simulation results, and Section 5 applies the proposed method to estimate minimum wage effects on employment. All proofs are presented in the appendix.

\section{The Model}

We consider the model
\begin{align}
\begin{split}\label{baseline model 2}
y_{it} &= x_{it}\theta_{it}+ \alpha_i'g_t+u_{it} ,\quad i=1,..., N,\quad t=1,..., T, \ \text{with} \\ 
\theta_{it} &= \lambda_i'f_t.
\end{split}
\end{align}
We observe $(y_{it}, x_{it})$, and the goal is to make inference about $\theta_{it}$, which we assume follows a latent factor structure, {or averages of the $\theta_{it}$ taken across groups formed from cross-sectional units}. The fixed effects structure $\alpha_i'g_t$ in \eqref{baseline model 2} allows fixed effects to be additive, interactive, or both; see \cite{bai09}. We assume that $\dim(\lambda_i)=K_1$ and $\dim(\alpha_i)=K_2$ where both $K_1$ and $K_2$ are fixed. For ease of presentation, we focus on the case where $x_{it}$ is univariate, that is, $\dim(x_{it})=1$. The extension to the multivariate case is straightforward and is provided in Appendix \ref{app: multivariate}.

We assume that $\{\lambda_i, \alpha_i\}$ are deterministic sequences, while $\{f_t, g_t\}$ are random. We allow arbitrary dependence and similarity among $\{f_t: i\leq N, t\leq T\}$, and impose nearly no restrictions on the sequence for $\lambda_i$ and $f_t$. 
 For instance,     it  allows  homogeneous models  $\theta_{it}=\theta$ for all $(i,t)$ and a common parameter $\theta$ as   special cases by setting $\lambda_i=\lambda$ and $f_t=f$ for all $(i,t)$ and dimension $\dim(\lambda_i)=\dim(f_t)=1$.  This then reduces to the homogeneous interactive effect model, studied by \cite{bai09, MW11}.  


\subsection{Nuclear Norm Penalized Estimation}

Let $\Lambda, A$ respectively be $N\times K_1$ and $N\times K_2$ matrices of $\lambda_i$ and $\alpha_i$. Let $F, G$ respectively be $T\times K_1$ and $T \times K_2$ matrices of $f_t$ and $g_t$. Let $(Y, X, U)$ be $N\times T$ matrices of $(y_{it}, x_{it}, u_{it})$. Then \eqref{baseline model 2} may be expressed in matrix form as
\begin{eqnarray*}
Y&=&AG'+ X\odot (\Lambda F') +U
\end{eqnarray*}
where $\odot$ represents the element-wise product.   
Further let  $\Theta:=\Lambda F' $ and $M:=AG'$. Note that both $\Theta$ and $M$  are $N\times T$ matrices whose ranks are respectively $K_1$ and $K_2$. We consider asymptotics where $N,T\to\infty$ but $K_1, K_2$ are fixed constants. Thus, $\Theta$ and $M$ are low-rank matrices. Motivated by this structure, a simple baseline estimator of $(M, \Theta)$ results from the following penalized nuclear-norm optimization problem:
\begin{align}
\begin{split}\label{e1}
 (\widetilde\Theta,\widetilde M)&= \arg\min_{\Theta, M} F(\Theta, M),\\
 F(\Theta, M)&:=\|Y-M- X\odot \Theta\|_F^2+\nu_2\|M\|_n+\nu_1\|\Theta\|_n 
\end{split}
\end{align}
 for some tuning parameters $\nu_2, \nu_1>0$.

\subsection{Computation using Singular Value Thresholding}
 
 For a fixed matrix $Y$, let $UDV'=Y$ be its singular value decomposition. Define the singular value thresholding operator $$S_{\lambda}(Y)= UD_{\lambda}V',$$ where $D_{\lambda}$ is defined by replacing the  diagonal entry $D_{ii}$ of $D$ by  $\max\{D_{ii}-\lambda, 0\}$.  That is, $S_{\lambda}(Y)$ applies ``soft-thresholding" on the singular values of $Y$.

The solution of (\ref{e1}) can be obtained by iteratively using \textit{singular value thresholding estimations}. 
To discuss computation in more detail, we start with two simpler problems.
 
\subsubsection{Pure factor model}
 
We start by discussing estimating a pure factor model:
$$
Y= M+ U
$$
where $M= \Lambda F'$ is low-rank. $M$ can thus be estimated using nuclear-norm optimization:
\begin{equation}\label{soft-2.1}
\widehat M= \arg\min_{ M} \|Y- M\|_F^2+\nu\|M\|_n
\end{equation}
for some tuning parameter $\nu>0$. The solution $\widehat M$ is {well-known to have the closed-form expression} (e.g., \citep{ma2011fixed})
$$
\widehat M= S_{\nu/2}(Y),
$$
which simply applies the singular value thresholding operator to $Y$ with tuning $\nu/2$.

To see the connection with the usual principal components (PC)-estimator (e.g., \cite{SW02}) for $M$, let the singular value decomposition for $Y$ be $Y= UDV'$; and let $\bar D_{\widehat K}$ be the diagonal matrix produced by keeping the $\widehat K$ largest singular values of $D$ and setting the remaining singular values to zero. The PC-estimator for $M$ with $\widehat K$ factors is given by
$$
\widehat M_{PC} = U\bar D_{\widehat K} V'.
$$
Note that  $\bar D_{\widehat K}$ is equivalent to replacing the  diagonal entry $D_{ii}$ of $D$ by  $ D_{ii}1\{D_{ii}>\nu/2\}$ for any  $\nu/2\in[ D_{\widehat K+1, \widehat K+1}, D_{\widehat K, \widehat K} ]$. That is, $\bar D_{\widehat K}$ is produced by applying ``hard-thresholding" on the singular values of $Y$. As such, the nuclear-norm estimator (\ref{soft-2.1}) and the usual PC-estimator are closely related: the former applies soft-thresholding on the singular values of $Y$, while the latter applies hard-thresholding.  

\subsubsection{Low-rank estimation with a regressor}\label{subsubsec: regressor}

Next, we consider estimating a  slightly more complex model:
$$
Y= X\odot\Theta +U,
$$
where $\Theta$ is a low-rank matrix. Again, we can estimate $\Theta$ using nuclear-norm optimization: 
\begin{equation}\label{low-s2.3}
\widehat\Theta:=\arg\min_{\Theta} \|Y- X\odot \Theta\|_F^2+\nu\|\Theta\|_n.
\end{equation}
While the solution to \eqref{low-s2.3} does not have a closed-form expression for a general $X$ matrix, it satisfies the following KKT condition \citep{ma2011fixed}: For any $\tau>0$,  
$$
\widehat \Theta=S_{\tau\nu/2}(\widehat\Theta- \tau X\odot(X\odot \widehat \Theta -Y)).
$$ 
This fact suggests a simple iterative scheme to solve (\ref{low-s2.3}): Let $ \Theta_{k}$ denote the solution at step $k$ in the iteration. We update this solution at step $k+1$ to
\begin{equation}\label{itera-2.4}
\Theta_{k+1}=S_{\tau\nu/2}(\Theta_k- \tau X\odot(X\odot   \Theta_k -Y)).
\end{equation}
  
The iterative scheme (\ref{itera-2.4})  is in fact a  gradient descent procedure (e.g., \cite{beck2009fast}). Fix the solution at the $k^{\textnormal{th}}$ iteration $\Theta_{k}$ and some ``step size"  $\tau>0$. Define
\begin{eqnarray*}
p(\Theta,\Theta_k,\tau)&:=&\tau^{-1}\|\Theta_k-\tau A_k-\Theta\|_F^2+ C_k,\cr
A_k&:=&X\odot (X\odot\Theta_k-Y).
\end{eqnarray*}
where $C_k=L(\Theta_k)-\tau\|  A_k \|_F^2$  does not depend on $\Theta$. Then, $L(\Theta):=\|Y-X\odot\Theta\|_F^2$ is \textit{universally majorized} by $p(\Theta,\Theta_k,\tau)$ for any $\tau\in(0,1/\max_{it}x_{it}^2)$:  
\begin{eqnarray*}
L(\Theta)&\leq& p(\Theta,\Theta_k,\tau),\quad \forall \Theta\cr
L(\Theta_k)&=& p(\Theta_k,\Theta_k,\tau).
\end{eqnarray*}
Therefore, the gradient descent method at step $k+1$ solves the optimization problem
$$
\Theta_{k+1}=\arg\min_{\Theta} p(\Theta,\Theta_k,\tau)+\nu\|\Theta\|_n,
$$
which produces a solution of the form given in (\ref{itera-2.4}).
 
\subsubsection{The joint algorithm and its convergence}

Building from the previous discussion, we are now ready to solve the joint problem (\ref{e1}):
$$
\min_{M,\Theta}\|Y-M- X\odot \Theta\|_F^2+\nu_2\|M\|_n+\nu_1\|\Theta\|_n.
$$
We have that, given $\Theta$, solving for $M$  leads to the closed form solution $S_{\nu_2/2}( Y-X\odot \Theta)$.  We also have that solving for $\Theta$ given $M$ is the same as solving
$$
 \min_{\Theta} \|Z_M- X\odot \Theta\|_F^2+\nu_1\|\Theta\|_n,
$$
where $Z_M=Y-M$, which can be solved following the discussion in Section \ref{subsubsec: regressor}. 
As such, we can employ the following algorithm to iteratively solve for $\widetilde M$ and $\widetilde \Theta$ as the global solution to (\ref{e1}).

\begin{algo}\label{algo 1} Compute the nuclear-norm penalized regression as follows: 
 
\textit{Step 1:} Fix the ``step size" $\tau\in(0,1/\max_{it}x_{it}^2)$. Initialize $\Theta_0, M_0$ and set $k=0.$
 
\textit{Step 2:} Let 
\begin{eqnarray*}
\Theta_{k+1}&=& S_{\tau\nu_1/2}(\Theta_k-\tau X\odot(X\odot\Theta_k -Y+M_k)),
\cr
M_{k+1}&=& S_{\nu_2/2} (Y-X\odot\Theta_{k+1}).
\end{eqnarray*}
 Set $k$ to $k+1.$

\textit{Step 3:} Repeat step 2 until convergence.

\end{algo}
 
The following proposition verifies that the evaluated objective function $F(\Theta_{k+1}, M_{k+1})$ is monotonically decreasing and converges to the global minimum at the rate $O(k^{-1})$. 
 
 \begin{prop}\label{p2.1} Let $(\widetilde \Theta,\widetilde M)$ be a global minimum  for $F(\Theta, M)$. 
Then for any $\tau\in(0,1/\max_{it}x_{it}^2)$, and any initial $\Theta_0, M_0$, we have: 
$$ F(\Theta_{k+1}, M_{k+1})\leq  F(\Theta_{k+1}, M_{k})\leq  F(\Theta_{k}, M_{k}),$$
 for each $k\geq 0$. In addition,  for all $k\geq 1$, 
 \begin{equation}\label{e2.1}
 F(\Theta_{k+1}, M_{k+1}) -F(\widetilde \Theta, \widetilde M)\leq \frac{1}{k\tau} \|\Theta_1-\widetilde \Theta\|_F^2.
 \end{equation}
 \end{prop}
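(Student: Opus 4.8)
The plan is to read the iteration as a proximal gradient (ISTA) scheme on the $\Theta$-block with the $M$-block eliminated \emph{exactly} at every step, and then to import the standard $O(1/k)$ analysis of \cite{beck2009fast}, handling the cross terms produced by the two-block structure. For the monotonicity chain I would use two observations. First, $M_{k+1}=S_{\nu_2/2}(Y-X\odot\Theta_{k+1})$ is the exact minimizer of $M\mapsto F(\Theta_{k+1},M)$, which gives $F(\Theta_{k+1},M_{k+1})\le F(\Theta_{k+1},M_k)$ for free. Second, to get $F(\Theta_{k+1},M_k)\le F(\Theta_k,M_k)$ I would invoke the universal majorization already recorded in the text, applied with $Y$ replaced by $Z_{M_k}=Y-M_k$: the smooth part $L_{M_k}(\Theta):=\|Y-M_k-X\odot\Theta\|_F^2$ is majorized by $p(\Theta,\Theta_k,\tau)$ for $\tau\in(0,1/\max_{it}x_{it}^2)$ with equality at $\Theta_k$, and $\Theta_{k+1}$ minimizes $p(\cdot,\Theta_k,\tau)+\nu_1\|\cdot\|_n$; chaining the majorization inequality, this minimization, and the equality at $\Theta_k$ yields the claim.

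For the rate \eqref{e2.1} the key object is the one-step descent inequality for the prox map (Lemma~2.3 of \cite{beck2009fast}) applied to the $\Theta$-step. Writing the quadratic upper bound with coefficient $\tau^{-1}$ (equivalently a gradient Lipschitz constant $2/\tau\ge 2\max_{it}x_{it}^2$), convexity of $L_{M_k}$ together with the subgradient optimality of $\Theta_{k+1}$ for the penalized quadratic gives, for every $\Theta$,
\[ F(\Theta,M_k)-F(\Theta_{k+1},M_k)\ \ge\ \tfrac{1}{\tau}\|\Theta_{k+1}-\Theta_k\|_F^2+\tfrac{2}{\tau}\langle\Theta_k-\Theta,\,\Theta_{k+1}-\Theta_k\rangle. \]
Taking $\Theta=\widetilde\Theta$ and simplifying the cross term with the polarization identity collapses the right-hand side to $\tau^{-1}\big(\|\Theta_{k+1}-\widetilde\Theta\|_F^2-\|\Theta_k-\widetilde\Theta\|_F^2\big)$.

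The two-block structure is then resolved by two one-sided inequalities that point in exactly the right directions: $F(\Theta_{k+1},M_k)\ge F(\Theta_{k+1},M_{k+1})$ (exactness of the $M$-update, valid for $k\ge1$ since then $M_k=S_{\nu_2/2}(Y-X\odot\Theta_k)$) and $F(\widetilde\Theta,M_k)\ge F(\widetilde\Theta,\widetilde M)$ (optimality of $\widetilde M$ for fixed $\widetilde\Theta$). Substituting both into the displayed inequality yields, for $k\ge1$, $F(\Theta_{k+1},M_{k+1})-F(\widetilde\Theta,\widetilde M)\le \tau^{-1}\big(\|\Theta_k-\widetilde\Theta\|_F^2-\|\Theta_{k+1}-\widetilde\Theta\|_F^2\big)$. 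Summing this telescoping bound from the first to the $k$th step cancels all intermediate $\|\cdot\|_F^2$ terms and leaves $\tau^{-1}\|\Theta_1-\widetilde\Theta\|_F^2$; since the gap $F(\Theta_{j+1},M_{j+1})-F(\widetilde\Theta,\widetilde M)$ is non-increasing in $j$ by the monotonicity just proved, the $k$th gap is at most the average of the $k$ gaps, giving $\frac{1}{k\tau}\|\Theta_1-\widetilde\Theta\|_F^2$, which is \eqref{e2.1}.

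I expect the main obstacle to be precisely the bookkeeping of the $M$-block: the gradient step for $\Theta_{k+1}$ is taken at the stale value $M_k$ rather than at $\widetilde M$, so a naive application of the single-block ISTA bound leaves a residual $F(\widetilde\Theta,M_k)-F(\widetilde\Theta,\widetilde M)\ge0$ that does not obviously telescope. Recognizing that this residual is dominated away by the two one-sided inequalities above—so that the $M$-iterates never enter the final estimate and the bound depends only on $\Theta_1$—is the one genuinely non-mechanical step; the remainder is the textbook ISTA computation.
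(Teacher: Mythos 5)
Your monotonicity argument is correct and is essentially the paper's: exactness of the $M$-update gives $F(\Theta_{k+1},M_{k+1})\le F(\Theta_{k+1},M_k)$, and the majorization of $\|Y-M_k-X\odot\Theta\|_F^2$ by $p(\cdot,\Theta_k,\tau)$ together with the fact that $\Theta_{k+1}$ minimizes the penalized majorizer gives the second inequality.

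The rate argument, however, has a genuine gap at the step where you claim the two one-sided inequalities resolve the two-block structure. Applying the single-block descent lemma to $\Theta\mapsto F(\Theta,M_k)$ at $\Theta=\widetilde\Theta$ and then using $F(\Theta_{k+1},M_k)\ge F(\Theta_{k+1},M_{k+1})$ yields
\[
F(\Theta_{k+1},M_{k+1})-F(\widetilde\Theta,M_k)\ \le\ \tau^{-1}\bigl(\|\Theta_k-\widetilde\Theta\|_F^2-\|\Theta_{k+1}-\widetilde\Theta\|_F^2\bigr),
\]
but your second inequality $F(\widetilde\Theta,M_k)\ge F(\widetilde\Theta,\widetilde M)$ points in exactly the wrong direction: it says that replacing $F(\widetilde\Theta,M_k)$ by $F(\widetilde\Theta,\widetilde M)$ makes the left-hand side \emph{larger}, so the bound does not transfer to $F(\Theta_{k+1},M_{k+1})-F(\widetilde\Theta,\widetilde M)$. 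Equivalently, the residual $F(\widetilde\Theta,M_k)-F(\widetilde\Theta,\widetilde M)\ge 0$ that you flag enters the upper bound with a plus sign and is not dominated away; controlling $\sum_{m}\bigl[F(\widetilde\Theta,M_m)-F(\widetilde\Theta,\widetilde M)\bigr]$ would essentially require already knowing that $M_m$ converges to $\widetilde M$, which is what is being proved.

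The paper's Lemma \ref{la.2} avoids the residual altogether by deriving the one-step inequality for the \emph{joint} objective rather than for $F(\cdot,M_k)$: it uses joint convexity of $\|Y-X\odot\Theta-M\|_F^2$ in $(\Theta,M)$, subgradient inequalities for both nuclear-norm terms, and, crucially, the KKT condition $0=\nu_2\partial\|M_k\|_n-2(Y-X\odot\Theta_k-M_k)$, which holds for $k\ge1$ because $M_k$ exactly minimizes $F(\Theta_k,\cdot)$. This stationarity makes the entire $M$-directional cross term vanish identically, so the bound $F(\Theta,M)-F(\Theta_{k+1},M_{k+1})\ge \tau^{-1}\bigl(\|\Theta_{k+1}-\Theta\|_F^2-\|\Theta_k-\Theta\|_F^2\bigr)$ holds for an arbitrary pair $(\Theta,M)$, in particular $(\widetilde\Theta,\widetilde M)$, with no leftover $M$-term; this is also where the restriction $k\ge1$ genuinely enters. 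Your subsequent telescoping and averaging steps coincide with the paper's and would be fine once the per-step inequality is established this way.
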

Proposition \ref{p2.1} shows that Algorithm \ref{algo 1} converges to the global optimum from an arbitrary initial value. The upper bound in (\ref{e2.1})  depends on the initial values through the accuracy of the first iteration $\Theta_1-\widetilde \Theta$. Not surprisingly, the upper bound  does not depend on $M_1- \widetilde M$ because, given $\Theta_1$, minimizing with respect to $M$ has a one-step closed form solution whose accuracy is completely determined by $\Theta_1$.  We also see that the largest possible value for $\tau$ to ensure the convergence is $1/\max_{it}x_{it}^2$. In practice, we set $\tau=(1-\epsilon)/\max_{it}x_{it}^2$ for  $\epsilon=0.01$.

\subsection{Post-SVT Inference}\label{sec:2.2}

While estimates of $\Theta$ resulting from solving the nuclear-norm penalized regression \eqref{e1} will be consistent with good rates of convergence under reasonable conditions, using these estimates will generally be inappropriate for inference because they may suffer from substantial shrinkage bias. To address this issue, we consider a ``debiased'' estimator that relies on using additional ordinary least squares steps to obtain updated estimates of $f_t$ and $\lambda_i$, starting from  model structure and initial values learned from a first-step solution of \eqref{e1}, that are less subject to shrinkage bias. 


At a high-level our estimation and inference strategy proceeds in three stages, with \textit{sample splitting} over time, as to be detailed later: 

\vspace{.1in}

\textit{Stage 1:} Obtain initial estimates of $(M, \Theta)$ from \eqref{e1}. Obtain estimates of the rank of $M$ and $\Theta$.

\textit{Stage 2:}   Extract estimated loadings, $(\widetilde\alpha_i, \widetilde \lambda_i)$, from the singular vectors of the subsample estimates of $(M, \Theta)$ corresponding to the rank estimated in Stage 1.
 
\textit{Stage 3:} Appropriately orthogonalize $y_{it}$ and $x_{it}$ based on a model for sources of strong dependence for $x_{it}$. Iteratively estimate $f_t$ and $\lambda_i$ using least squares and the orthogonalized data: 
  
(i) Obtain estimator $\widehat f_t$ for $f_t$ (and estimator $\widehat g_t$ for $g_t$) from  running OLS on the ``orthogonalized data" and  $(\widetilde \lambda_i,\widetilde\alpha)$.
  
(ii) Obtain estimator $\widehat \lambda_i$ for $\lambda_i$ (and estimator $\widehat \alpha_i$ for $\alpha_i$) from using  OLS  on the  ``orthogonalized data" and $(\widehat f_t , \widehat g_t)$.

(iii) Obtain the estimator $\widehat\theta_{it}:=\widehat \lambda_i'\widehat f_t$.

\vspace{.1in}

%
 %
  %
  %
%
  
As Stage 1 employs a singular value thresholding  method, we call our procedure ``post-SVT inference". We show in Section \ref{sec: asymptotics} that $\widehat\theta_{it}$ is ${C_{NT}}$-consistent, where $C_{NT}=\sqrt{\min\{N, T\}}$, and is asymptotically normally distributed, centered around $\theta_{it}$, for each fixed $(i, t)$. We note that, due to the rotation discrepancy, $\lambda_i$ and $f_t$ are not separately identified; they can only be estimated up to a rotation transformation. However, the effect coefficient, $\theta_{it}$, is not subject to this rotation discrepancy and is well-identified and cleanly estimable.
    
Stage 3, which involves unregularized OLS estimation, is the essential stage to alleviating shrinkage bias and obtaining well-centered estimators for $\theta_{it}$.  This stage involves two important ingredients of our estimation algorithm: (1) orthgonalization and (2) sample splitting, as we now explain. 

Consider step (i) in Stage 3 in the simplified case where we have $\dim(f_t) = \dim(g_t) = 1$. 
If we did not use ``orthogonalized data" but use the raw data $(y_{it}, x_{it})$, the estimator $\check f_t$ for $f_t$ would have  the following expansion:
\begin{align}
\begin{split}\label{eq: uncentered OLS}
\sqrt{N}(\check f_t-H_1^{-1}f_t)&= \widehat{Q}\frac{1}{\sqrt{N}}\sum_{i=1}^N\lambda_i x_{it}u_{it}+R_t   \\
&\qquad 
+\widehat{Q}\frac{1}{\sqrt{N}}\sum_{i=1}^N\lambda_i x_{it}(\widetilde\alpha_i-H_2\alpha_i)g_t\\
&\qquad+\widehat{Q}\frac{1}{\sqrt{N}}\sum_{i=1}^N\lambda_i x_{it}^2(\widetilde \lambda_i-H_1\lambda_i)f_t 
\end{split}
\end{align} 
for some $\widehat Q$, 
where $H_1, H_2$ are rotation matrices, and $R_t$ is a higher-order remainder term. 
 The usual asymptotic behavior will be driven by the term $ \widehat{Q}\frac{1}{\sqrt{N}}\sum_{i=1}^N\lambda_i x_{it}u_{it}$, and we wished to be able to show that all other terms are asymptotically negligible, especially the last two terms in (\ref{eq: uncentered OLS}).  
For this heuristic illustration, we focus on the term 
$$
 \frac{1}{\sqrt{N}}\sum_{i=1}^N\lambda_i (\widetilde\alpha_i-H_2\alpha_i) x_{it}:=  \Delta.
$$
It is however, challenging to argue that $\Delta$ is negligible for two reasons.  First,	it can be shown that the nuclear-norm penalized estimator has a rate of convergence
$$
\frac{1}{N}\sum_{i=1}^N\|\widetilde\alpha_i-H_2\alpha_i\|^2=O_P(C_{NT}^{-2}).
$$
By itself, this rate is insufficient to ensure $\Delta=o_P(1)$ in the case $N\geq T$. Second, $x_{it}$ will generally not be mean zero and will be expected to exhibit substantial correlation in  both the time series and cross-section in many applications. 
 
We resolve these difficulties through orthogonalization and sample-splitting.  We will also use a new argument to deal with the last term involving $\widetilde\lambda_i-\lambda_i$ in (\ref{eq: uncentered OLS}), which does not appear previously in the literature, to our best knowledge. 

\textbf{Orthogonalization} 

 To investigate the role of the dependence structure in $x_{it}$, write 
$$
x_{it}=\mu_{it}+e_{it},
$$
where $\mu_{it}$ is the mean process of $x_{it}$ which is assumed to capture both time series dependence and strong sources of cross-sectional correlation. Specifically, we will maintain the assumption that $e_{it}$ is a zero-mean process that is serially independent and cross-sectionally weakly dependent throughout our formal analysis. 
Using this decomposition of $x_{it}$, we can produce an ``orthogonalized'' version of \eqref{baseline model 2}:  
\begin{equation}\label{e3.1}
\dot y_{it}= \alpha_i'g_t + e_{it}\lambda_i'f_t+u_{it},\quad \text{ where } \dot y_{it}= y_{it}- \mu_{it}\theta_{it}.
\end{equation}

Based on \eqref{e3.1}, we can estimate $f_t$ by regressing the estimated $\dot y_{it}$ onto estimated $(\alpha_i, e_{it}\lambda_i)$. This leads to the expansion of estimated $f_t$ as
\begin{align}
\begin{split}\label{eq: centered OLS}
\sqrt{N}(\widehat f_t-H_1^{-1}f_t)&= \widehat{Q}_e\frac{1}{\sqrt{N}}\sum_{i=1}^N\lambda_i e_{it}u_{it}  +o_P(1) \\
&\qquad 
+\widehat{Q}_e\frac{1}{\sqrt{N}}\sum_{i=1}^N\lambda_i e_{it}(\widetilde\alpha_i-H_2\alpha_i)g_t \\
&\qquad+\widehat{Q}_e\frac{1}{\sqrt{N}}\sum_{i=1}^N\lambda_i (\E e_{it}^2)(\widetilde \lambda_i-H_1\lambda_i)f_t
\end{split}
\end{align} 
for some $\widehat Q_e.$
 In this case, the  effect of estimating  $\alpha_i$ becomes 
$$
\Delta_e:= \ \frac{1}{\sqrt{N}}\sum_{i=1}^N\lambda_i(\widetilde\alpha_i-H_2\alpha_i)e_{it}
$$
where $e_{it}$ is a cross-sectionally weakly dependent zero-mean process that is independent across $t$. If $e_{it}$ is independent of $\lambda_i(\widetilde\alpha_i-H_2\alpha_i)$ and sufficient moments exist, it is then immediate that $\Delta_e = o_P(1)$. We note that this orthogonalization  results in partialing-out the strongly dependent components in $x_{it}$ and is tightly tied to the so-called \textit{Neyman's orthogonality} that has been shown to be important in obtaining valid inference after high-dimensional estimation in a variety of contexts; see, e.g., \cite{dml}. 

\textbf{Sample-splitting} 

To formally argue that   $\Delta_e=o_P(1)$, we use sample-splitting.  For the fixed $t$, let  $I \subset \{1,...,T\}\backslash t$ be a set of time indexes, and let $$D_I=\{(y_{is}, x_{is}): i\leq N, s\in I\}.$$ Rather than using the full sample to obtain initial estimates of $\widetilde \lambda_i$ and $\widetilde \alpha_i$, we run the nuclear-norm optimization (\ref{e1}) using only data $D_I$. Because $e_{it}$ is  serially  independent and $t\notin I$, we will have that $\widetilde\lambda_i$ and $\widetilde\alpha_i$ are independent of $e_{it}$ which allows us to easily verify that $\Delta_e$ and similar terms vanish asymptotically. For example, in the case of cross-sectional independence, we have $\E(\Delta_e| D_I)=0$ and 
$$
\Var(\Delta_e|D_I)=\frac{1}{N}\sum_{i=1}^N\Var(e_{it}) (\widetilde\alpha_i-H_2\alpha_i) ^2\lambda_i^2.
$$ 
Therefore $\Delta_e=o_P(1)$ so that the effect of estimating the nuisance parameters on the final estimator $\widehat\theta_{it}$ is negligible.

In practice, operationalizing the orthogonalization in \eqref{e3.1} requires that a sufficiently high-quality estimate of $e_{it}$ can be obtained which will require restricting the process $\mu_{it}$. In this paper, we assume 
$$
\mu_{it}=l_i'w_t.
$$
for some $l_i$ and $w_t$. Hence, $x_{it}$ is allowed to follow a factor structure, where $(l_i, w_t)$ respectively represent loadings and factors, and may thus be strongly intertemporally and cross-sectionally correlated. We note that this structure allows for $w_t$ to overlap with $(f_t, g_t)$. We take the dimension $K_x:=\dim(w_t)$ to be fixed in our analysis. Other structures of $\mu_{it}$ could also be imposed and analyzed with all results going through as long as $\mu_{it}$ is sufficiently well-estimable. We discuss another example for factor models of missing data later, and  leave other extensions to future work.

\textbf{The effect of $\widetilde\lambda_i-H_1\lambda_i$} 

   We now explain  the effect of $\widetilde\lambda_i-H_1\lambda_i$ in (\ref{eq: centered OLS}).   We shall use a new argument to address it. 

When we estimate $f_t$  using the orthogonalized data (i.e., estimated $(\dot y_{it}, e_{it})$,   the effect of estimating $\lambda_i$ becomes 
$$\widetilde \Delta_e:=\widehat{Q}_e\frac{1}{\sqrt{N}}\sum_{i=1}^N\lambda_i (\E e_{it}^2)(\widetilde \lambda_i-H_1\lambda_i).$$
This term, however, is not $o_P(1)$ since $\E e_{it}^2\neq 0$. As such, we conclude that the effect of estimating $\widetilde\lambda_i$ from the nuclear-norm regularized estimation is not asymptotically negligible.
Returning to (\ref{eq: centered OLS}) and noting that $\Delta_e=o_P(1)$, we have  
\begin{align*} 
\sqrt{N}(\widehat f_t-H_1^{-1}f_t)= \widehat{Q}_e\frac{1}{\sqrt{N}}\sum_{i=1}^N\lambda_i e_{it}u_{it}   
+\widetilde\Delta_ef_t +o_P(1).
\end{align*}
Thus, $\widetilde\Delta_e$ results in a non-vanishing asymptotic bias in the estimator $\widehat f_t$. Importantly, this bias manifests as an additional time-invariant rotation of the factors $f_t$. We can thus define $H_f:=H_1^{-1} + \widetilde\Delta_eN^{-1/2}$ and establish that 
$$
\sqrt{N}( \widehat f_t- H_ff_t)=\widehat Q_e \frac{1}{\sqrt{N}}\sum_{i=1}^N\lambda_ie_{it}u_{it}+o_P(1).
$$
Therefore, the effect of first-step estimation error in $\lambda_i$, $\widetilde\lambda_i-H_1'\lambda_i$, is ``absorbed" by the adjusted rotation matrix.  

Here we are saved by the fact that, for estimating  $\theta_{it}=\lambda_i'f_t$, we only require that $f_t$ be well estimated up to a rotation matrix; that is, we only need that the estimated $f_t$ has the same span as the actual latent $f_t$. This phenomenon is new relative to the existing post-regularization inference literature and arises due to the factor structure, though it is analogous to the usual rotational indeterminacy issues in factor models. Once we obtain that $\widehat f_t$ appropriately recovers the span of $f_t$, the final least squares iteration where we use $\widehat f_t$ as an observed input produces an estimator $\widehat\lambda_i$ that suitably recovers the appropriately rotated $\lambda_i$ to correspond to the rotation in $\widehat f_t$. Recovering these two compatibly rotated versions of $f_t$ and $\lambda_i$ is then sufficient for inferential theory for $\widehat\theta_{it}$, which depends only on their product.

\subsection{Formal Estimation Algorithm.}

We now state the full estimation algorithm for $\theta_{it}$ for some fixed $(i,t)$, including explicit steps for orthogonalization and sample splitting. We state the algorithm in the leading case where 
$$
x_{it} = l_i'w_t+e_{it},\quad \E e_{it}=0.
$$
We recall that we use this model for $x_{it}$ to partial out the common component $\mu_{it} = l_i'w_t$ by subtracting the estimated $\mu_{it}$ from $x_{it}$ to obtain $e_{it} = x_{it} - \mu_{it}$ and working with the model 
\begin{equation}\label{e3.5}
\dot y_{it} =\alpha_i'g_t+ e_{it}\theta_{it} + u_{it},\quad \text{where }\dot y_{it}=y_{it}-\mu_{it}\theta_{it}.
\end{equation}


\begin{algo}\label{al2.2} Estimate $\theta_{it}$ as follows. 

\textit{Step 1. Estimate the number of factors.} 
Run nuclear-norm penalized regression:
\begin{equation}\label{e2.5}
(\widetilde M, \widetilde\Theta):=\arg\min_{M,\Theta}\|Y-M- X\odot \Theta\|_F^2+\nu_2\|M\|_n+\nu_1\|\Theta\|_n.
\end{equation}
Estimate $K_1, K_2$ by
   $$
  \widehat K_1=\sum_{i} 1\{\psi_i(\widetilde \Theta)\geq (\nu_2  \|\widetilde \Theta\|)^{1/2}\},\quad  \widehat K_2=\sum_{i} 1\{\psi_i(\widetilde M)\geq  (\nu_1  \|\widetilde M\|)^{1/2}\}
  $$
 where $\psi_i(.)$ denotes the $i$th largest singular-value.

\medskip

\textit{Step 2. Estimate the structure $x_{it}=\mu_{it}+e_{it}$.} In the factor model, use the PC estimator to obtain  $(\widehat{\mu}_{it},\widehat e_{it})$ for all $i=1,..., N, \ \ t=1,..,T.$ 
 
\medskip

\textit{Step 3: Sample splitting.} 
Randomly split the sample into $\{1,..., T\}/\{t\}=I\cup I^c$, so that $|I|_0= [(T-1)/2]$. Denote respectively by $Y_I, X_I$ as the  $N\times |I|_0$ matrices of $(y_{is}, x_{is})$ for  observations $s\in I$. Estimate the low-rank matrices $\Theta$ and $M$ as in (\ref{e2.5}) with $(Y, X)$ replaced with $(Y_I, X_I)$ to obtain $(\widetilde M_I, \widetilde\Theta_I)$.

Let $\widetilde\Lambda_I=(\widetilde\lambda_1,...,\widetilde\lambda_N)'$ be the $N\times \widehat K_1$ matrix whose columns are defined as $\sqrt{N}$ times  the first $\widehat K_1$ eigenvectors of $\widetilde \Theta_I\widetilde \Theta_I'$.  Let $\widetilde A_I=(\widetilde\alpha_1,...,\widetilde\alpha_N)'$ be the $N\times \widehat K_2$ matrix whose columns are defined as $\sqrt{N}$ times the first $\widehat K_2$ eigenvectors of $\widetilde  M_I\widetilde  M_I'$. 

\medskip

\textit{Step 4. Estimate components for ``orthogonalization.''} Using $\widetilde A_I$ and $\widetilde \Lambda_I$, obtain   
$$
(  \widetilde f_s,\widetilde g_s):= \arg\min_{f_s, g_s}\sum_{i=1}^N(y_{is}- \widetilde\alpha_i'g_s- x_{is} \widetilde \lambda_i'f_s)^2,\quad s\in I^c\cup\{t\}.
$$
Update estimates of loadings as
$$
(\dot\lambda_i, \dot\alpha_i)= \arg \min_{\lambda_i, \alpha_i}\sum_{s\in I^c\cup\{t\}} (y_{is} - \alpha_i'  \widetilde g_s- x_{is} \lambda_i'\widetilde f_s)^2,\quad i=1,..., N.
$$
    
\medskip

\textit{Step 5. Estimate $(f_t,\lambda_i)$ for use in inference about $\theta_{it}$.} Motivated by (\ref{e3.5}), define $\widehat y_{is}= y_{is}- \widehat{\mu}_{is}\dot\lambda_i'\widetilde f_s$ and $\widehat e_{is}=x_{is}- \widehat{\mu}_{is}$.
For all $s\in I^c\cup\{t\}$, let 
$$
(\widehat f_{I,s},\widehat g_{I,s}):= \arg\min_{f_s, g_s}\sum_{i=1}^N(\widehat y_{is}- \widetilde\alpha_i'g_s- \widehat e_{is} \widetilde \lambda_i'f_s)^2.
$$
Fix $i\leq N$, let
$$
(\widehat\lambda_{I,i},\widehat\alpha_{I,i})=\arg \min_{\lambda_i, \alpha_i}\sum_{s\in I^c\cup\{t\}} (\widehat y_{is}-   \alpha_i'\widehat g_{I,s}- \widehat e_{is}  \lambda_i'\widehat f_{I,s})^2.
$$
  
\medskip

\textit{Step 6. Exchange $I$ and $I^c$.} Repeat steps  3-5 with $I$ and $I^c$ exchanged to obtain $(\widehat{\lambda}_{I^c,i}, \widehat{f}_{I^c,s}: s\in I\cup \{t\}, i\leq N)$. 

\medskip

\textit{Step 7. Estimate $\theta_{it}$. } Obtain the estimator of $\theta_{it}$:
$$
\widehat\theta_{it}:= \frac{1}{2}[\widehat\lambda_{I,i}'\widehat f_{I,t}+ \widehat{\lambda}_{I^c,i}'\widehat f_{I^c,t}]. 
$$

\end{algo}

\begin{remark}\label{re2.1}
We split the sample to $\{1,..., T\}=I\cup I^c\cup\{t\}$ which ensures that $e_{it}$ is independent of the data in both $I$ and $I^c$ for the given $t$. In steps 3-5, we obtain the quantities we need to estimate $\theta_{is}$ for $s\in \{t\}\cup I^c$, and we switch the roles of $I$ and $I^c$ in step 6 to allow us to estimate $\theta_{is}$ for $s\in \{t\}\cup I$. As such, we can estimate $\theta_{it}$ twice at $s=t$. The final estimator is taken as the average of the two available estimators of $\theta_{it}$: $\widehat\lambda_{I,i}'\widehat f_{I,t}$ and $\widehat{\lambda}_{I^c,i}'\widehat f_{I^c,t}$. This averaging restores the asymptotic efficiency that would otherwise be lost due to the sample splitting. 
\end{remark}

\begin{remark}\label{re2.2} 
Step 4 is needed to obtain a sufficiently high-quality estimate for $\dot y_{it}=y_{it}-\mu_{it}\lambda_i'f_t$ to allow application of the ``partialed out equation'' (\ref{e3.1}). 
$\widetilde\lambda_i$ and $\widetilde\alpha_i$ obtained in step 1 are not sufficiently well-behaved to produce the desired result which motivates the need for the estimators $(\dot\lambda_i, \dot\alpha_i)$. The estimators obtained in step 4 are still unsuitable for inference due to the use of the raw $x_{it}$ which will generally not be zero-mean or serially independent. Accounting for these issues gives rise to the need for step 5.
\end{remark}

\subsection{Choosing the tuning parameters}\label{s:choo}

In practice, we also need to choose the tuning parameters, $\nu_1$ and $\nu_2$, used in solving \eqref{e1}. We adopt a simple plug-in approach that formally guarantees asymptotic score domination and is sufficient to provide good rates of convergence of the estimators $\widetilde M$ and $\widetilde \Theta$ and guarantee consistency of their estimated ranks.

The ``scores" of the nuclear-norm penalized regression are given by $2\|U\|$ and $2\|X\odot U\|$, where $\|.\|$ denotes the matrix operator norm and $U$ and $X \odot U$ are $N \times T$ matrices with typical elements $U_{it} = u_{it}$ and  $X\odot U_{it} = x_{it}u_{it}$. 
We then choose tuning parameters $(\nu_2, \nu_1)$ so that
$$
2\|U\|< (1-c)\nu_2,\quad 2\|X\odot U\|< (1-c)\nu_1
$$
for some $c>0$ with high probability.   

To quantify the operator norm of $U$ and $X \odot U$, we assume that the columns of $U$ and $X\odot U$, respectively $\{u_t\} $ and $\{x_t\odot u_t\}$,  are sub-Gaussian vectors. In the absence of serial correlation, the eigenvalue-concentration inequality for independent sub-Gaussian random vectors (Theorem 5.39 of \cite{vershynin2010introduction}) implies
$$
\|(X\odot U)(X\odot U)' -  \E (X\odot U)(X\odot U)'\|=O_P(\sqrt{  NT} +N),
$$
which provides a sharp bound for $\|X\odot U\|$; and a similar upper bound holds for $\|UU'-\E UU'\|$. Hence, $\nu_2$ and $\nu_1$ can be chosen to satisfy $\nu_2\asymp \nu_1\asymp \max\{\sqrt{N}, \sqrt{T}\}$.

In the presence of serial correlation, we assume the following representation:
$$
X\odot U= \Omega_{NT}\Sigma_T^{1/2}
$$
where $\Omega_{NT}$ is an $N\times T$   matrix with independent, zero-mean,  sub-Gaussian columns. Then, by the  eigenvalue-concentration inequality for sub-Gaussian random vectors, we continue to have that  
$$
\|\Omega_{NT}\Omega_{NT}' -  \E\Omega_{NT}\Omega_{NT}'\|=O_P(\sqrt{  NT} +N).
$$ 
In addition, we maintain that $\Sigma_T$ is a $T\times T$, possibly non-diagonal, deterministic matrix whose eigenvalues are bounded from below and above by  constants. Allowing $\Sigma_T$ to be non-diagonal captures serial-correlation in $\{x_t\odot u_t\}.$ Putting all of these conditions together will also imply $\|X\odot U\|\leq O_P(\max\{\sqrt{N},\sqrt{T}\})$. Therefore, the tuning parameters   can be chosen to satisfy 
$$\nu_2\asymp   \nu_1\asymp \max\{\sqrt{N},\sqrt{T}\}$$
quite generally.  

In the Gaussian case, we can  compute appropriate tuning parameters via simulation. 
Suppose $u_{it}$ is independent across both $(i,t)$ and $u_{it}\sim \mathcal N(0,\sigma_{ui}^2)$. Let $Z$ be an $N\times T$ matrix whose elements $z_{it}$ are generated as $\mathcal N(0,\sigma_{ui}^2)$ independent across  $(i,t)$. Then $\|X\odot U\|=^d\|X\odot Z\| $ and $\|U\|=^d \|Z\|$
where $=^d$ means ``is identically distributed to''. Let $\bar Q(W ; m)$ denote the $m^{\textnormal{th}}$ quantile of a random variable $W$. For  $\delta_{NT}=o(1)$, we can take  
$$
\nu_2=2(1+c_1)\bar Q(\|Z\| ; 1-\delta_{NT}),\quad \nu_1=2(1+c_1)\bar Q(\|X\odot Z\| ; 1-\delta_{NT})
$$
which respectively denote $2(1+c_1)$ multiplied by the $1-\delta_{NT}$ quantile of $\|Z\|$ and $\|X\odot Z\|$. We will then have that 
$$
2\|U\|< (1-\frac{c_1}{1+c_1})\nu_2,\quad2 \|X\odot U\|< (1-\frac{c_1}{1+c_1})\nu_1
$$
holds with probability $1-\delta_{NT}$. In practice, we compute the quantiles by simulation replacing $\sigma_{ui}^2$ with an initial consistent estimator. In our simulation and empirical examples, we set $c_1=0.1$ and $\delta_{NT}=0.05$.

\section{ Asymptotic Results}\label{sec: asymptotics}

\subsection{Estimating Low Rank Matrices}
   
We first introduce a key  assumption  about the nuclear-norm SVT procedure. We require some ``invertibility" condition for the operator:
$$
(\Delta_1, \Delta_2):\to \Delta_1+\Delta_2\odot X
$$
when $   (\Delta_1, \Delta_2)$ is restricted to a ``cone,'' consisting  of  low-rank matrices, which we call the  \textit{restricted low-rank set}. These invertibility conditions were previously introduced and studied by \cite{negahban2011estimation}. To describe this cone,  we first introduce some notation. Define
$$
\Theta_I^0=(\lambda_i'f_t: i\leq N, t\in I),\quad  M_I^0=(\alpha_i'g_t: i\leq N, t\in I).
$$ 
Define  $U_1 D_1 V_1'=\Theta_I^0$ and  $U_2 D_2 V_2'=M_I^0$  as the singular value decomposition of $\Theta_I^0$ and $M_I^0$,  where the superscript $0$ represents the ``true" parameter values. Further decompose, for $j=1,2,$
$$
U_j= (U_{j, r}, U_{j,c}),\quad V_j= (V_{j, r}, V_{j,c})
$$
Here $(U_{j,r}, V_{j,r})$ are the singular vectors corresponding to nonzero singular values, while  $(U_{j,c}, V_{j,c})$ are singular vectors corresponding to the zero singular values. In addition, for any $N\times T/2$ matrix $\Delta$, let 
$$
\mathcal P_{j} (\Delta )= U_{j,c} U_{j, c}' \Delta V_{j,c} V_{j,c}',\quad \mathcal M_{j}(\Delta)=\Delta -\mathcal P_{j}(\Delta). 
$$
Here $U_{j,c} U_{j, c}' $ and $V_{j,c} V_{j, c}' $ respectively are the projection matrices onto the columns of $U_{j,c}$ and $V_{j,c}$.  Therefore, $\mathcal M_1(\cdot)$ and $\mathcal M_2(\cdot)$ can be considered as the projection matrices onto the ``low-rank spaces" of  $\Theta_I^0$  and  $M_I^0$  respectively, and $\mathcal P_1(\cdot)$ and $\mathcal P_2(\cdot)$ are projections onto their orthogonal spaces.

\begin{assumption}[Restricted strong convexity]\label{a3.1}  Define the \textit{restricted low-rank set} as, for some $c>0,$
$$
\mathcal C(c)=\{(\Delta_1,\Delta_2):   
\|\mathcal P_1(\Delta_1)\|_n+ \|\mathcal P_2(\Delta_2)\|_n
\leq c\| \mathcal M_1(\Delta_1)\|_n+c\| \mathcal M_2(\Delta_2)\|_n\}.
$$	
For any $c>0$,  there is a constant $\kappa_c>0$ so that If $(\Delta_1, \Delta_2)\in\mathcal C(c) $ then 
$$
\|\Delta_1+\Delta_2\odot X\|_F^2\geq \kappa_c \|\Delta_1\|_F^2+\kappa_c\|\Delta_2\|_F^2.
$$
The same condition holds when $(M_I^0, \Theta_I^0)$ are replaced with  $(M_{I^c}^0, \Theta_{I^c}^0)$, or the full-sample $(M^0, \Theta^0)$. 
\end{assumption}

Our next assumption allows non-stationary and arbitrarily serially dependent $(f_t, g_t)$. These conditions not only hold for serially weakly dependent sequences but also allow for perfectly dependent sequences: $(f_t, g_t)=(f, g)$ for some time-invariant $(f,g)$ by setting $\dim(f_t)=\dim(g_t)=1$.
          
\begin{assumption}\label{a3.2} As $T\to\infty$, the sub-samples $(I,I^c)$ satisfy: 
\begin{eqnarray*}&&\frac{1}{|I|_0}\sum_{t\in I} f_tf_t' =\frac{1}{T}\sum_{t=1}^T f_tf_t' +o_P(1)=
      \frac{1}{|I^c|_0}\sum_{t\in I^c} f_tf_t' ,\cr
      &&\frac{1}{|I|_0}\sum_{t\in I} g_tg_t' =\frac{1}{T}\sum_{t=1}^T g_tg_t' +o_P(1)=
      \frac{1}{|I^c|_0}\sum_{t\in I^c} g_tg_t'.
\end{eqnarray*}
In addition, there is a $c>0$ such that all the eigenvalues of  $\frac{1}{T}\sum_{t=1}^T f_tf_t'  $ and $\frac{1}{T}\sum_{t=1}^T g_tg_t' $ are bounded from below by $c$ almost surely.
\end{assumption}

The next assumption requires that the factors be strong. In addition, we require distinct eigenvalues in order to identify their corresponding eigenvectors, and therefore, $(\lambda_i,\alpha_i)$. {These conditions are standard in the factor modeling literature. We do wish to note though that, in the present context, these conditions also serve a role analogous to ``$\beta$-min'' conditions in the sparse high-dimensional linear model framework in that they provide strong separation between relevant and irrelevant factors and effectively allow oracle selection of the rank of the low-rank matrices. It may be interesting, but is beyond the scope of the present work, to consider estimation and inference in the presence of weak factors.} 

\begin{assumption}[Valid factor structures with strong factors]\label{a3.2}   
There are constants  $c_1>...>c_{K_1}>0$, and $c_1'>...>c_{K_2}'>0$, so that, up to a term $o_P(1)$,

(i) $c_j'$ equals the $j$ th largest eigenvalue of $(\frac{1}{T}\sum_tg_tg_t')^{1/2} 	\frac{1}{N}\sum_{i=1}^N\alpha_i\alpha_i'(\frac{1}{T}\sum_tg_tg_t')^{1/2} $ for all $j=1,..., K_1$, and 
	
	(ii) $c_j$ equals the $j$ th largest eigenvalue of $(\frac{1}{T}\sum_tf_tf_t')^{1/2} 	\frac{1}{N}\sum_{i=1}^N\lambda_i\lambda_i'(\frac{1}{T}\sum_tf_tf_t')^{1/2} $ for all $j=1,..., K_2$.

\end{assumption}

Recall that $\widetilde M_S$ and $\widetilde \Theta_S$ respectively are the estimated low-rank matrices obtained by the nuclear-norm penalized estimations on sample $S\in\{I, I^c,\{1,...,T\}\}$. Given the above assumptions, we have consistency in the Frobenius norm for the estimated low-rank matrices.
 
\begin{prop}\label{p3.1} Suppose $2\|X\odot U\|<(1-c)\nu_1$, $2\|U\|<(1-c)\nu_2$ and $\nu_2\asymp \nu_1$. Then  under Assumption \ref{a3.1},
for $S\in \{I, I^c,  \{1,...,T\}\}$  
(i)
$$
\frac{1}{NT}\|\widetilde M_S-M_S\|_F^2=O_P(\frac{\nu_2^2+\nu_1^2}{NT})=\frac{1}{NT}\|\widetilde \Theta_S-\Theta_S\|_F^2.
$$
(ii) Additionally with Assumption \ref{a3.2},   there are  square matrices $ H_{S1}, H_{S2}$, so that 
$$
\frac{1}{N} \|\widetilde A_S- AH_{S1}\|_F^2=O_P(\frac{\nu_2^2+\nu_1^2}{NT}),\quad \frac{1}{N} \|\widetilde\Lambda_S-\Lambda H_{S2}\|_F^2=O_P(\frac{\nu_2^2+\nu_1^2}{NT}).
$$
(iii) Furthermore, 
$$
P(\widehat K_1=K_1,\quad \widehat K_2=K_2)\to 1.
$$
\end{prop}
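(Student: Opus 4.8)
The plan is to prove the three claims in sequence, since part (i) supplies the Frobenius-norm rate that feeds the perturbation arguments behind (ii) and (iii). For part (i) I would run the standard restricted-strong-convexity argument of \cite{negahban2011estimation}. Writing $\Delta_1=\widetilde M_S-M_S$ and $\Delta_2=\widetilde\Theta_S-\Theta_S$ and substituting $Y=M_S+X\odot\Theta_S+U$ into the optimality inequality $F(\widetilde\Theta_S,\widetilde M_S)\le F(\Theta_S,M_S)$, the quadratic term expands to $\|\Delta_1+X\odot\Delta_2\|_F^2$ and the cross term is $2\langle U,\Delta_1\rangle+2\langle X\odot U,\Delta_2\rangle$, using $\langle U,X\odot\Delta_2\rangle=\langle X\odot U,\Delta_2\rangle$. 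Bounding these by operator--nuclear duality, $|\langle U,\Delta_1\rangle|\le\|U\|\,\|\Delta_1\|_n$ and $|\langle X\odot U,\Delta_2\rangle|\le\|X\odot U\|\,\|\Delta_2\|_n$, and invoking the score-domination hypotheses $2\|U\|<(1-c)\nu_2$ and $2\|X\odot U\|<(1-c)\nu_1$ together with decomposability of the nuclear norm around the low-rank $(\Theta_S,M_S)$, I would first conclude that the error pair lies in the restricted low-rank set $\mathcal C(c)$ for a suitable $c$, and then obtain a basic inequality whose right side is $\max\{\nu_1,\nu_2\}$ times the sum of the nuclear norms of the rank-$2K_j$ projections of the errors. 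Since those projections have nuclear norm at most $\sqrt{K_1+K_2}$ times their Frobenius norm, and the latter is dominated by $\|\Delta_1\|_F+\|\Delta_2\|_F$, applying the restricted strong convexity of Assumption \ref{a3.1} to the left side yields $\|\Delta_1\|_F^2+\|\Delta_2\|_F^2=O_P(\nu_1^2+\nu_2^2)$, which after dividing by $NT$ is exactly the claimed rate.

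For part (ii) I would pass from the matrix rate to the loadings by a Davis--Kahan ($\sin\Theta$) perturbation argument on the Gram matrix. Writing $\widetilde\Theta_S\widetilde\Theta_S'-\Theta_S\Theta_S'=\Delta_2\Theta_S'+\Theta_S\Delta_2'+\Delta_2\Delta_2'$ and bounding in operator norm by $\|\Theta_S\|\,\|\Delta_2\|_F+\|\Delta_2\|_F^2$, part (i) and $\|\Theta_S\|\asymp\sqrt{N|S|_0}$ give a perturbation of order $O_P(\sqrt{N|S|_0}\,\max\{\sqrt N,\sqrt T\})$. The strong-factor and distinct-eigenvalue conditions of Assumption \ref{a3.2} ensure that the top $K_1$ eigenvalues of $\Theta_S\Theta_S'$ are separated from each other and from zero at the scale $N|S|_0$, so the eigengap dominates the perturbation; Davis--Kahan then bounds the top-$K_1$ eigenvector block of $\widetilde\Theta_S\widetilde\Theta_S'$ against that of $\Theta_S\Theta_S'$ up to an orthogonal rotation $H_{S2}$, and after accounting for the $\sqrt N$ normalization of the columns this is precisely $\frac1N\|\widetilde\Lambda_S-\Lambda H_{S2}\|_F^2=O_P((\nu_1^2+\nu_2^2)/(NT))$. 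The argument for $\widetilde A_S$ via $\widetilde M_S\widetilde M_S'$ is identical and produces $H_{S1}$.

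For part (iii) I would combine Weyl's inequality with the calibrated threshold. Weyl gives $|\psi_i(\widetilde\Theta_S)-\psi_i(\Theta_S)|\le\|\Delta_2\|\le\|\Delta_2\|_F=O_P(\max\{\sqrt N,\sqrt T\})$, while the strong factors force $\psi_i(\Theta_S)\asymp\sqrt{N|S|_0}$ for $i\le K_1$ and $\psi_i(\Theta_S)=0$ for $i>K_1$. The threshold $(\nu_2\|\widetilde\Theta_S\|)^{1/2}\asymp(\max\{\sqrt N,\sqrt T\})^{1/2}(N|S|_0)^{1/4}$ then sits strictly between the surviving singular values, of order $\sqrt{N|S|_0}$, and the perturbed zeros, of order $\max\{\sqrt N,\sqrt T\}$: the ratios to the threshold are $(\min\{N,T\})^{1/4}\to\infty$ and $(\min\{N,T\})^{-1/4}\to0$ respectively. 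Hence $\widehat K_1=K_1$ with probability approaching one, and symmetrically $\widehat K_2=K_2$.

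The main obstacle I anticipate is part (ii). Because part (i) delivers only a Frobenius (hence at best a crude operator-norm) bound on $\Delta_2$, the Gram-matrix perturbation is itself of the large order $\sqrt{N|S|_0}\,\max\{\sqrt N,\sqrt T\}$; the argument survives only because the ``very spiked'' eigenvalues emphasized in the introduction create an eigengap of the even larger order $N|S|_0$. Making this separation quantitative under the distinct-eigenvalue condition, and then tracking the rotation matrices $H_{S1},H_{S2}$ consistently across the three subsamples $S\in\{I,I^c,\{1,\dots,T\}\}$ so that they can later be aligned in Algorithm \ref{al2.2}, is the delicate step; parts (i) and (iii) are comparatively mechanical once the rate and the strong-factor eigenstructure are in hand.
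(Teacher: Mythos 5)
Your proposal is correct and follows essentially the same route as the paper's proof: the Negahban--Wainwright decomposability/restricted-strong-convexity argument for (i), a Weyl-plus-Davis--Kahan perturbation of the Gram matrices $\widetilde\Theta_S\widetilde\Theta_S'$ and $\widetilde M_S\widetilde M_S'$ for (ii) (where, as you anticipate, the paper makes the eigengap quantitative via the distinct strong-factor eigenvalues of Assumption \ref{a3.2} and checks that the rotation, built from $\frac{1}{N}\Lambda'\Lambda$ and $\frac{1}{T}\sum_t f_tf_t'$ only, is split-invariant), and Weyl's inequality against the calibrated threshold for (iii). The only slight imprecision is calling $H_{S2}$ orthogonal: the paper's rotation is $H_1=S_\Lambda^{-1/2}R$ with $R$ orthogonal, so $H_{S2}$ itself is merely invertible, which changes nothing in the argument.
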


\begin{remark}
The above results provide convergence of the nuclear-norm regularized estimators and the associated singular vectors based upon the entire sample $\{1,...,T\}$ or upon a subsample $S=I$ or $I^c.$ Convergence using the entire sample is sufficient for consistently selecting the rank, as shown in result (iii), while convergence using the  subsamples serves for post-SVT inference after sample-splitting. 
\end{remark}

\subsection{Asymptotic Inferential Theory}
   
Our main inferential theory is for a \textit{group average effect}. Fix a cross-sectional subgroup 
$$
\mathcal G\subseteq\{1,2,...,N\}.
$$
We are interested in inference of the group average effect at a fixed $t\leq T$:
$$
\bar\theta_{\mathcal G,t} := \frac{1}{|\mathcal G|_0}\sum_{i\in\mathcal G} \theta_{it},
$$
where $|\mathcal G|_0 $ denotes the group size, that is, the number of elements in $\mathcal G$.
The group size can be either fixed or grow with $N$.  This structure admits two important special cases as extremes: (i) $\mathcal G=\{i\}$ for any fixed individual $i$, which allows   inference for any fixed individual; and (ii) $\mathcal G=\{1,2,...,N\}$, which allows inference for the cross-sectional average effect $\bar\theta_t:=\frac{1}{N}\sum_{i=1}^N\theta_{it}$.

We will show that the group average effect estimator has rate of convergence 
$$
\frac{1}{|\mathcal G|_0} \sum_{i \in \mathcal G} \widehat\theta_{it} - \bar\theta_{\mathcal G,t} = O_P\left(\frac{1}{\sqrt{T|\mathcal G|_0}} +  \frac{1}{\sqrt{N}}\right).
$$
It is useful to compare this result to the rate that would be obtained in estimating a ``group-homogeneous effect" by imposing homogeneous effects at the group level. Suppose we are interested in the effect on a fixed group $\mathcal G$. One could estimate a simpler model which assumes homogeneity within the group:
\begin{equation}\label{eq3.2}
y_{it}= \theta_{\mathcal G,t} x_{it} +\alpha_i'g_t+ u_{it},\quad i\in\mathcal G.
\end{equation}
The ``group-homogeneous effect estimator" $\widetilde\theta_{G,t}$  would satisfy
$$
\widetilde\theta_{G,t}- \theta_{\mathcal G,t}=O_P\left(\frac{1}{\sqrt{|\mathcal G|_0}}\right).
$$
In effect, the homogeneous effect estimator uses only cross-sectional information within $\mathcal G$. This estimator is thus inconsistent if $\mathcal G$ is finite. In contrast, by leveraging the factor structure for the coefficients, we estimate a heterogeneous average effect $\bar\theta_{\mathcal G,t}=\frac{1}{|\mathcal G|_0}\sum_{i\in\mathcal G}\theta_{it}$ making use of all cross-sectional information.  The estimator $ \frac{1}{|\mathcal G|_0}\sum_{i\in\mathcal G}\widehat\theta_{it}$ has a faster rate of convergence than the more traditional homogeneous effects estimator when  $|\mathcal G|_0$ is small and is consistent even if the group size is finite.    We also note that the homogeneous effects estimator will generally be inconsistent for group average effects when effects are actually heterogeneous. Note that we estimate the average effect for a given \textit{known} group. See, e.g., \cite{bonhomme2015grouped} and \cite{su2016identifying} for  estimating group effects with estimated group memberships.

To obtain the asymptotic distribution, we make a number of additional assumptions. Throughout, let $(F, G, W, E, U)$ be the  $T\times K$ matrices of   $(f_t, g_t, w_t, e_{it}, u_{it})$, where $K$ differs for different quantities. 
   
\begin{assumption}[Dependence]\label{a4.1}    (i) $\{e_{it} , u_{it} \}$ are independent across $t$; $\{e_{it}\}$ are also  conditionally independent across $t$, given $\{F, G, W,U \}$;   $\{u_{it}\}$ are also conditionally independent across $t$, given $\{F,G,W, E\}$;

(ii) $\E(e_{it}|u_{it}, w_{t}, g_t, f_t)= 0,\quad 	\E(u_{it}|e_{it}, w_{t}, g_t, f_t)=0.$

(iii) The $N\times T$ matrix $X\odot U$
has the following decomposition:
$$
X\odot U= \Omega_{NT}\Sigma_T^{1/2}
$$
where 
\begin{enumerate}
\item  $\Omega_{NT}:=(\omega_1,...,\omega_T)$ is an $N\times T$   matrix whose columns $\{\omega_t\}_{t\leq T}$ are independent sub-gaussian random vectors with $\E\omega_t=0$; more specifically, there is $C>0$ such that 
$$
\max_{t\leq T} \sup_{\|x\|=1} \E\exp(s \omega_t'x)\leq \exp(s^2C),\quad \forall s\in\mathbb R.
$$
\item   
$\Sigma_T$ is a $T\times T$ deterministic matrix whose eigenvalues are bounded from both below and above by  constants.   
\end{enumerate}

(iv) There is weak conditional cross-sectional dependence. Specifically, let $\mathcal W=(F, G, W)  $. Let $\omega_{it}=u_{it}e_{it}$, and let $c_i$ be a bounded nonrandom sequence.   Almost surely,   \begin{eqnarray*}
&&\max_{t\leq T}\frac{1}{N^3}\sum_{i,j,k,l\leq N}|\Cov(e_{it} e_{jt}, e_{kt} e_{lt} |\mathcal W, U )|<C ,\quad\max_{t\leq T}\|\E(u_tu_t'|\mathcal W, E)\|<C\cr
&& \max_{t\leq T}\E(|\frac{1}{\sqrt{N}}\sum_{i=1}^Nc_i\omega_{it}|^4|\mathcal W)<C,\quad \max_{t\leq T}\E(| \frac{1}{\sqrt{N}}\sum_{i=1}^Nc_ie_{it}|^4|\mathcal W, U)<C\cr
&& \max_{t\leq T}\E(|\frac{1}{\sqrt{N}}\sum_{i=1}^Nc_iu_{it}|^4|\mathcal W, E)<C,\quad  \max_{t\leq T}\max_{i\leq N} \frac{1}{N} \sum_{k,j\leq N}  |\E(e_{kt}e_{it}e_{jt}|\mathcal W, U)|<C\cr 
&&\max_{t\leq T}\max_{i\leq N}\sum_{j=1}^N|\Cov(e_{it}^m, e_{jt}^r|   \mathcal W, U  )|<C,\quad m,r\in\{1,2\}\cr
&&\max_{t\leq T}\max_{i\leq N} \frac{1}{N}\sum_{k,j\leq N}|\Cov(  \omega_{it}\omega_{jt},  \omega_{it}\omega_{kt}|\mathcal W)|<C \cr
&&\max_{t\leq T}\max_{i\leq N}\sum_{j=1}^N|\Cov(\omega_{it},\omega_{jt})|\mathcal W)|<C.
  \end{eqnarray*}

\end{assumption}

The sub-Gaussian condition allows us to apply the   eigenvalue-concentration inequality for independent  random vectors  (Theorem 5.39 of \cite{vershynin2010introduction}) to bound the scores $\|X\odot U\|$ and $\|U\|$. 
Under Assumption \ref{a4.1},  we can take  $\nu_2,\nu_1=O_P(\sqrt{N+T})$. Therefore, we will have
$$
\frac{1}{N} \|\widetilde A_S- AH_{S1}\|_F^2=O_P\left( \frac{1}{N}+\frac{1}{T}\right)= \frac{1}{N} \|\widetilde\Lambda_S-\Lambda H_{S2}\|_F^2 .
$$

Condition (iii) allows $x_{it}$  to be serially weakly dependent with serial correlation captured by $\Sigma_T$. The imposed conditions allow us to apply the eigenvalue-concentration inequality for independent sub-Gaussian  random vectors on $\Omega_{NT}$. In addition, we allow arbitrary dependence among rows of $\Omega_{NT}$, and thus strong cross-sectional dependence in $x_{it}$ is allowed. Allowing strong cross-sectional dependence is desirable given the factor structure. 

Let $$
 V_{\lambda2}= \Var\left(\frac{1}{\sqrt{N}}\sum_{i=1}^N\lambda_i e_{it}u_{it}\bigg{|}F\right). 
$$
\begin{assumption}
[Cross-sectional CLT] As $N\to\infty$,
$$
V_{\lambda2}^{-1/2} \frac{1}{\sqrt{N}}\sum_{i=1}^N\lambda_i e_{it}u_{it}\to^d\mathcal N(0,I).
$$
\end{assumption}

Before stating our final assumption, it is useful to define a number of objects. First, define
  \begin{eqnarray*}
b_{NT,1}&=&\max_{t\leq T}\|  \frac{1}{NT}\sum_{i=1}^N\sum_{s=1}^Tw_s(e_{is}e_{it}-\E e_{is}e_{it})\|\cr
b_{NT,2}&=&(\max_{t\leq T}\frac{1}{T}\sum_{s=1}^T ( \frac{1}{N}\sum_{i=1}^Ne_{is}e_{it}-\E e_{is}e_{it})^2   )^{1/2}\cr
b_{NT,3}&=&\max_{t\leq T}\| \frac{1}{N}\sum_{i=1}^Nl_ie_{it}\|   \cr
b_{NT,4}&=&\max_{i\leq N}\| \frac{1}{T}\sum_{s=1}^T e_{is}w_s\|  \cr
b_{NT,5}&=&\max_{i\leq N}\|    \frac{1}{NT}\sum_{j=1}^N\sum_{s=1}^T l_j(e_{js}  e_{is}-\E e_{js}  e_{is}   )  \|  \cr
  \end{eqnarray*}

In addition, we introduce Hessian matrices that are involved when iteratively estimating $\lambda_i$ and $f_t$. 
    \begin{eqnarray*}
D_{ft}&=&\frac{1}{N}\Lambda '(\diag(X_t) M_{\alpha} \diag(X_t)\Lambda\cr
\bar D_{ft}&=&\frac{1}{N}\Lambda '\E((\diag(e_t) M_{\alpha} \diag(e_t) )\Lambda
+\frac{1}{N}\Lambda '(\diag(Lw_t) M_{\alpha} \diag(Lw_t) \Lambda
\cr
 D_{\lambda i}&=&\frac{1}{T}  F'  (\diag(\underline{X}_i) M_{ g} \diag(\underline{X}_i)  ) F\cr
  \bar D_{\lambda i}&=&\frac{1}{T}  F' \E (\diag(E_i) M_{ g} \diag(E_i)  ) F
  +\frac{1}{T}  F'  (\diag(Wl_i) M_{ g} \diag(Wl_i)  ) F.
\end{eqnarray*}
The above matrices  involve the following notation. Let $X_t, e_t$ denote the $N\times 1$ vector of $x_{it}$ and $e_t$; $\underline{X_i}$ and $E_i$ denote the $T\times 1$ vectors of $x_{it}$ and $e_{it}$.
Let $L$ denote the $N\times \dim(w_t)$ matrix of $l_i$; $W$ denote the $T\times \dim(w_t)$ matrix of $w_t$.  Let $M_g=I-G(G'G)^{-1}G'$ be a $T\times T$ projection matrix, and let $M_\alpha$ be an the $N\times N$ matrix defined similarly. Finally, let $\diag(e_t)$ denote the  diagonal matrix whose entries are elements of $e_t$; all other $\diag(.)$ matrices are defined similarly. 

     \begin{assumption}[Moment bounds]\label{a3.8}
   (i) $\max_i(\|\lambda_i\|+\|\alpha_i\|+ \|l_i\|)<C$.

  (ii)  
  $\max_{t\leq T}\|\frac{1}{N}\sum_ie_{it}\alpha_i\lambda_i'\|_F=o_P(1)$ and  $ \delta_{NT}\max_{it}|e_{it}|=o_P(1)$,    where
  $$   \delta_{NT}:=   ( C_{NT}^{-1}+b_{NT,4}  +b_{NT,5})\max_{t\leq T}\|w_t\|+ b_{NT,1}+b_{NT,3}+C_{NT}^{-1}b_{NT,2}+ C_{NT}^{-1/2} .  $$

   (iii) Let $\psi_j(H)$ denote the $j^{th}$ largest singular value of matrix $H$.
   Suppose 
   there is $c>0$, so that almost surely,   for all $t\leq T$ and $i\leq N$, $\min_{j\leq K_2} \psi_j(D_{\lambda i})>c$, $\min_{j\leq K_2} \psi_j(D_{ft})>c$, $\min_{j\leq K_2} \psi_j(\bar D_{\lambda i})>c$ and $\min_{j\leq K_2} \psi_j(\bar D_{ft})>c$.
      In addition, 
      $$c< \min_j\psi_j(\frac{1}{N}\sum_il_il_i')\leq \max_j\psi_j(\frac{1}{N}\sum_il_il_i')<C$$
          $$c< \min_j\psi_j(\frac{1}{T}\sum_tw_tw_t')\leq \max_j\psi_j(\frac{1}{T}\sum_tw_tw_t')<C.$$
      (iv) $\max_{it}\E( e_{it}^8|U,F)<C$, and $\E \|w_t\|^4+ \E\|g_t\|^4+\E \|f_t\|^4 <C$, and

        $ \E \|g_t\|^4\|f_t\|^4+\E u_{it}^4\|f_t\|^4+\E e_{jt}^4 \|f_t\|^8+\E e_{jt}^4\|f_t\|^4\|g_t\|^4+ 
  \E \|w_t\|^4\|g_t\|^4<C$.

   \end{assumption}

We now present the main inferential result.  Recall that 
   $$
 \bar\theta_{\mathcal G,t} := \frac{1}{|\mathcal G|_0}\sum_{i\in\mathcal G} \theta_{it},
 $$
 with $\bar\theta_t=  \bar\theta_{\mathcal G,t} $ when $\mathcal G=\{1,...,N\}$, where $\mathcal G$ is a known cross-sectional subset of particular interest. 
 
   \begin{theorem}\label{t3.2} Suppose  Assumptions \ref{a3.1}-  \ref{a3.8} hold. 
   Fix any $t\leq T$ and $\mathcal G\subseteq\{1,...,N\}$. Suppose 
 $N,T\to\infty$, and either (i) $|\mathcal G|_0=o(N)$, or (ii) $N=o(T^2)$ holds.  In addition, suppose $f_t' V_ff_t $  and $\bar\lambda_{\mathcal G}' V_\lambda \bar\lambda_{\mathcal G}$ are both bounded away from zero.
  Then 
   $$
\Sigma_{\mathcal G}^{-1/2}\left( \frac{1}{|\mathcal G|_0}\sum_{i\in\mathcal G}\widehat\theta_{it}-\bar\theta_{\mathcal G,t}\right)\to^d\mathcal N(0,1)
   $$
   where, 
   $$
 \Sigma_{\mathcal G}:=\frac{1}{T|\mathcal G|_0}f_t' V_ff_t+ \frac{1}{N}\bar\lambda_{\mathcal G}' V_\lambda \bar\lambda_{\mathcal G},
   $$
 with
 \begin{eqnarray*}
 V_f&=&\frac{1}{T}\sum_{s=1}^T \Var\left(\frac{1}{\sqrt{|\mathcal G|_0}}\sum_{i\in\mathcal G}\Omega_i f_se_{is}u_{is}\bigg{|}F\right),\quad \Omega_i=(\frac{1}{T}\sum_{s=1}^Tf_sf_s' \E e_{is}^2)^{-1}\cr
 V_{\lambda}&=& V_{\lambda1}^{-1}V_{\lambda2}V_{\lambda1}^{-1},\quad V_{\lambda1}= \frac{1}{N}\sum_{i=1}^N\lambda_i\lambda_i'\E e_{it}^2, \cr 
 \bar\lambda_{\mathcal G}&=&  \frac{1}{|\mathcal G|_0}\sum_{i\in\mathcal G}\lambda_i.
 \end{eqnarray*}
 
   \end{theorem}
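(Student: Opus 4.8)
The plan is to reduce $\frac{1}{|\mathcal G|_0}\sum_{i\in\mathcal G}\widehat\theta_{it}$ to a sum of two asymptotically independent, asymptotically normal pieces and then to account for the sample splitting. I would first fix one split and analyze the half-sample quantities $(\widehat f_{I,t},\widehat\lambda_{I,i})$ of Step 5, deferring the averaging with the $I^c$ split to the end. The core of the argument is to upgrade the heuristic in \eqref{eq: centered OLS} into two genuine asymptotic linear representations,
\begin{align*}
\widehat f_{I,t}-H_ff_t&=\tfrac1N\widehat Q_e\textstyle\sum_{i=1}^N\lambda_ie_{it}u_{it}+r_{f,t},\\
\widehat\lambda_{I,i}-H_\lambda\lambda_i&=\Omega_i\tfrac{1}{|I^c\cup\{t\}|}\textstyle\sum_{s\in I^c\cup\{t\}}f_se_{is}u_{is}+r_{\lambda,i},
\end{align*}
with $\Omega_i=(\frac1T\sum_sf_sf_s'\E e_{is}^2)^{-1}$ as in the definition of $V_f$ and, crucially, with \emph{compatible} rotations $H_\lambda'H_f=I$. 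Proposition \ref{p3.1} supplies the first-step rate $\frac1N\|\widetilde\Lambda_S-\Lambda H_{S2}\|_F^2=O_P(N^{-1}+T^{-1})$ and consistent rank selection that these representations build on; the orthogonalized equation \eqref{e3.5} and Assumption \ref{a4.1} supply the conditional mean-zero, serially independent structure of $e_{it}$ that drives the leading terms.

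Granting the representations, I would expand the product and average over $\mathcal G$:
\begin{equation*}
\tfrac{1}{|\mathcal G|_0}\sum_{i\in\mathcal G}\bigl(\widehat\lambda_{I,i}'\widehat f_{I,t}-\theta_{it}\bigr)=(H_\lambda\bar\lambda_{\mathcal G})'(\widehat f_{I,t}-H_ff_t)+\tfrac{1}{|\mathcal G|_0}\sum_{i\in\mathcal G}(H_ff_t)'(\widehat\lambda_{I,i}-H_\lambda\lambda_i)+\mathrm{rem},
\end{equation*}
where $H_\lambda'H_f=I$ makes the leading product reproduce $\theta_{it}=\lambda_i'f_t$ exactly. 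The first term collects the error in $\widehat f_{I,t}$; since $\widehat f_{I,t}$ uses the full cross-section at $t$, substituting the first representation and using $\widehat Q_e'H_\lambda\to V_{\lambda1}^{-1}$ together with the cross-sectional CLT gives a variance converging to $\frac1N\bar\lambda_{\mathcal G}'V_\lambda\bar\lambda_{\mathcal G}$. The second term is an average over $i\in\mathcal G$ of conditionally mean-zero, weakly cross-sectionally dependent, serially independent summands of order $|I^c\cup\{t\}|^{-1/2}$; a CLT in $(i,s)$ yields variance $\frac{1}{(T/2)|\mathcal G|_0}f_t'V_ff_t$ for this single split. A direct covariance computation shows the two terms share only the $(i\in\mathcal G,\,s=t)$ contributions, so their covariance is $O((NT)^{-1})$, negligible relative to $\sqrt{\Var(\mathrm I)\,\Var(\mathrm{II})}=O((NT|\mathcal G|_0)^{-1/2})$ because $|\mathcal G|_0/(NT)\le T^{-1}\to0$; hence the two pieces are asymptotically orthogonal.

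The main obstacle is establishing the two linear representations with negligible remainders and, inside them, the rotation compatibility $H_\lambda'H_f=I$. The delicate point is the ``new argument'' discussed after \eqref{eq: centered OLS}: the first-step error $\widetilde\lambda_i-H_1\lambda_i$ does not vanish but enters $\widehat f_{I,t}$ as a common, time-invariant rotation, which I must fold into $H_f$ while checking that the subsequent loading step produces $H_\lambda=H_f^{-\prime}$, so the rotations cancel in the product. Controlling $r_{f,t}$ and $r_{\lambda,i}$ is where Assumption \ref{a3.8} and the quantities $b_{NT,1},\dots,b_{NT,5}$ enter: the orthogonalization in \eqref{e3.5} replaces $x_{it}$ by the zero-mean, serially independent $e_{it}$ and thereby kills the non-negligible term $\Delta$ of Section \ref{sec:2.2}, while sample splitting makes $(\widetilde\lambda_i,\widetilde\alpha_i)$ independent of $e_{it}$ at the held-out $t$, so the analogues of $\Delta_e$ vanish conditionally on $D_I$. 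The genuinely quantitative part is verifying that the bias and second-order pieces of $r_{f,t},r_{\lambda,i}$ are $o_P(\Sigma_{\mathcal G}^{1/2})$; this is exactly where one of the regime conditions $|\mathcal G|_0=o(N)$ or $N=o(T^2)$ is invoked, to ensure that whichever of the two variance scales $(T|\mathcal G|_0)^{-1}$ and $N^{-1}$ is the larger also dominates the residual higher-order error (e.g.\ a common $O_P(T^{-1})$ piece in $\widehat f_{I,t}$ is beaten by the $N^{-1/2}$ scale precisely when $N=o(T^2)$).

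Finally, I would repeat the analysis verbatim for the $I^c$ split and combine via $\widehat\theta_{it}=\frac12[\widehat\lambda_{I,i}'\widehat f_{I,t}+\widehat\lambda_{I^c,i}'\widehat f_{I^c,t}]$. The two variance components behave differently under averaging: the $f_t$-error terms from the two splits coincide asymptotically, as both use the same period-$t$ cross-sectional score $\frac1N\sum_i\lambda_ie_{it}u_{it}$, so averaging leaves the $\frac1N\bar\lambda_{\mathcal G}'V_\lambda\bar\lambda_{\mathcal G}$ contribution unchanged; the $\lambda_i$-error terms use the disjoint time blocks $I^c\cup\{t\}$ and $I\cup\{t\}$, are therefore asymptotically independent, and averaging halves their single-split variance, restoring the full-sample value $\frac{1}{T|\mathcal G|_0}f_t'V_ff_t$ anticipated in Remark \ref{re2.1}. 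Assumption \ref{a3.2} guarantees that the half-sample second moments of $(f_t,g_t)$ agree with the full-sample ones, so these limits are well defined. Adding the two orthogonal Gaussian pieces gives $\Sigma_{\mathcal G}=\frac{1}{T|\mathcal G|_0}f_t'V_ff_t+\frac1N\bar\lambda_{\mathcal G}'V_\lambda\bar\lambda_{\mathcal G}$, and Studentizing by $\Sigma_{\mathcal G}^{-1/2}$ — legitimate since both $f_t'V_ff_t$ and $\bar\lambda_{\mathcal G}'V_\lambda\bar\lambda_{\mathcal G}$ are bounded away from zero — delivers the stated standard normal limit.
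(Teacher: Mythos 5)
Your proposal is correct and follows essentially the same route as the paper: the paper's Propositions \ref{pc.1} and \ref{pc.2} are exactly your two asymptotic linear representations with the compatible rotations ($\widehat f_{I,t}$ centered at $H_f f_t$ and $\widehat\lambda_{I,i}$ at $H_f^{'-1}\lambda_i$, so the product recovers $\lambda_i'f_t$), and the paper likewise verifies $\Cov(\xi_{NT},\zeta_{NT}\mid F)=o_P(1)$, uses the regime conditions to kill the $O_P(C_{NT}^{-2})$ remainder relative to $\Sigma_{\mathcal G}^{1/2}$, and restores full efficiency by observing that $Q_I+Q_{I^c}$ recombines into the full-sample time average while the period-$t$ cross-sectional score $J$ is common to both splits. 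No substantive gap.
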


Theorem \ref{t3.2}  immediately leads to two special cases.
\begin{cor}\label{cor3.1}
Suppose   Assumptions \ref{a3.1}-  \ref{a3.8} hold.  Fix $t\leq T$.

(i) Individual effect: Fix  any $i\leq N$, then 
  $$
\Sigma_{i}^{-1/2}\left( \widehat\theta_{it}- \theta_{it}\right)\to^d\mathcal N(0,1)
   $$
   where 
   $$
 \Sigma_{i}:=\frac{1}{T }f_t' V_{f,i}f_t+ \frac{1}{N} \lambda_{i}' V_\lambda \lambda_{i}
   $$
 with $V_{f,i}=\frac{1}{T}\sum_{s=1}^T \Var\left( \Omega_i f_se_{is}u_{is}\bigg{|}F\right)$.

 (ii) Cross-sectional average effect:    Suppose  $N=o(T^2)$ and $\liminf_{N}\sigma_\lambda^2>0$, then
 $$
 \sqrt{N}\sigma_{\lambda}^{-1}\left( \frac{1}{N}\sum_{i=1}^N\widehat\theta_{it}-\bar \theta_{t}\right)\to^d\mathcal N(0,1)
   $$
   where $\bar \theta_{t}=\frac{1}{N}\sum_{i=1}^N\theta_{it}$,  $\bar\lambda=\frac{1}{N}\sum_{i=1}^N\lambda_i
   $ and $\sigma_\lambda^2:=\bar\lambda ' V_\lambda \bar\lambda.$

\end{cor}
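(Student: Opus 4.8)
The plan is to obtain both parts as specializations of Theorem~\ref{t3.2}, choosing the group $\mathcal G$ appropriately and, in the second part, re-standardizing the asymptotic variance.

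For part (i) I would take $\mathcal G=\{i\}$, so that $|\mathcal G|_0=1$. Since $N\to\infty$, the requirement $|\mathcal G|_0=o(N)$ in case~(i) of Theorem~\ref{t3.2} holds trivially and the theorem applies. Substituting $|\mathcal G|_0=1$ gives $\bar\theta_{\mathcal G,t}=\theta_{it}$ and $\bar\lambda_{\mathcal G}=\lambda_i$; the normalizing factor $1/\sqrt{|\mathcal G|_0}=1$ collapses the sum defining $V_f$ to its single term, so $V_f$ reduces to $V_{f,i}=\frac1T\sum_{s=1}^T\Var(\Omega_i f_s e_{is}u_{is}\mid F)$. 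Hence $\Sigma_{\mathcal G}$ becomes $\Sigma_i=\frac1T f_t'V_{f,i}f_t+\frac1N\lambda_i'V_\lambda\lambda_i$, and the nondegeneracy hypotheses of the theorem specialize to $f_t'V_{f,i}f_t$ and $\lambda_i'V_\lambda\lambda_i$ being bounded away from zero, which delivers the stated conclusion.

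For part (ii) I would take $\mathcal G=\{1,\dots,N\}$, so $|\mathcal G|_0=N$ and case~(i) no longer applies; here the hypothesis $N=o(T^2)$ is exactly case~(ii) of Theorem~\ref{t3.2}. Specializing yields $\bar\theta_{\mathcal G,t}=\bar\theta_t$, $\bar\lambda_{\mathcal G}=\bar\lambda$, and $\bar\lambda_{\mathcal G}'V_\lambda\bar\lambda_{\mathcal G}=\sigma_\lambda^2$, which is bounded away from zero by $\liminf_N\sigma_\lambda^2>0$; this verifies the $\lambda$-component nondegeneracy. The theorem then gives $\Sigma_{\mathcal G}^{-1/2}(\frac1N\sum_i\widehat\theta_{it}-\bar\theta_t)\to^d\mathcal N(0,1)$ with $\Sigma_{\mathcal G}=\frac1{TN}f_t'V_ff_t+\frac1N\sigma_\lambda^2$.

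The only nonmechanical step, and the one I expect to require care, is replacing the standardization $\Sigma_{\mathcal G}^{-1/2}$ by the simpler $\sqrt N\,\sigma_\lambda^{-1}$ asserted in the statement. The idea is that the $f$-driven term is of smaller order than the $\lambda$-driven term when the group is the entire cross-section: under Assumptions~\ref{a4.1} and~\ref{a3.8} one has $V_f=\frac1T\sum_s\Var(\frac1{\sqrt N}\sum_i\Omega_i f_s e_{is}u_{is}\mid F)=O_P(1)$ and $\|f_t\|$ has bounded moments, so $\frac1{TN}f_t'V_ff_t=O_P(1/(TN))$ while $\frac1N\sigma_\lambda^2\asymp 1/N$. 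Consequently $N\Sigma_{\mathcal G}/\sigma_\lambda^2=1+(f_t'V_ff_t)/(T\sigma_\lambda^2)=1+o_P(1)$, since $T\to\infty$ and $\sigma_\lambda^2$ is bounded away from zero. Writing $\sqrt N\,\sigma_\lambda^{-1}(\cdot)=(N\Sigma_{\mathcal G}/\sigma_\lambda^2)^{1/2}\,\Sigma_{\mathcal G}^{-1/2}(\cdot)$ and invoking Slutsky's theorem then yields the claimed $\mathcal N(0,1)$ limit. The point to watch is precisely that $V_f$ remains $O_P(1)$ as the group expands to the full cross-section, so that the $1/(TN)$ contribution is genuinely negligible against $1/N$; the remaining content is pure specialization of Theorem~\ref{t3.2}.
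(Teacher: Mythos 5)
Your proposal is correct and follows essentially the same route as the paper: the paper's own proof of Corollary~\ref{cor3.1} is a direct specialization of Theorem~\ref{t3.2} to $\mathcal G=\{i\}$ and $\mathcal G=\{1,\dots,N\}$, with the cross-sectional-average case relying on the fact that the $\frac{1}{TN}f_t'V_ff_t$ term is $o_P(1/N)$ and hence absorbed into $\sigma_\lambda^2/N$ by Slutsky, exactly as you argue. Your write-up is in fact more explicit than the paper's (which states the two conclusions with almost no intermediate detail), but there is no substantive difference in approach.
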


We now discuss estimating the asymptotic variance.  While all quantities can be estimated using their sample counterparts, complications arise due to the rotation discrepancy coupled with sample splitting. We note that the fact that factors and loadings are estimated up to rotation  matrices does not usually result in extra complications because the asymptotic variance of $\widehat\theta_{it}$ is rotation-invariant. In the current context, however, the estimated factors in the different subsamples may correspond to different rotation matrices. 
 
To preserve the rotation invariance property of the asymptotic variance, we therefore estimate  quantities that go into the asymptotic variance separately   within the subsamples, and produce the final asymptotic variance estimator by averaging the results across subsamples. Specifically, assuming $u_{jt}$ to be cross-sectionally independent given $\{e_{jt}\}$ for simplicity, we respectively estimate the above quantities on $S$, for $S\in \{I,I^c\}$ denoting one of the two subsamples, by    
 \begin{eqnarray*}
 \widehat v_{\lambda} &=&\frac{1}{2N}( \widehat \lambda_{I,\mathcal G}' \widehat V_{\lambda,1}^{I-1} \widehat V_{\lambda,2}^I  \widehat V_{\lambda,1}^{I-1} \widehat \lambda_{I,\mathcal G} +   \widehat \lambda_{I^c,\mathcal G}' \widehat V_{\lambda,1}^{I^c-1} \widehat V_{\lambda,2}^{I^c}  \widehat V_{\lambda,1}^{I^c-1} \widehat \lambda_{I^c,\mathcal G} )  \cr
  \widehat v_{f} &=&\frac{1}{2T|\mathcal G|_0}( \widehat f_{I,t}' \widehat V_{I, f}  \widehat f_{I,t} +   \widehat f_{I^c,t}' \widehat V_{I^c, f}   \widehat f_{I^c,t} ) \cr
   \widehat V_{S, f} &=& \frac{1}{|\mathcal G|_0|S|_0}\sum_{s\notin S}\sum_{i\in\mathcal G} \widehat\Omega_{S,i}\widehat f_{S,s}\widehat f_{S,s}' \widehat\Omega_{S,i}  \widehat e_{is}^2\widehat u_{is}^2    \cr
 \widehat V_{\lambda,1}^S&=&\frac{1}{N}\sum_j\widehat  \lambda_j\widehat \lambda_j'\widehat e_{jt}^2 \cr
 \widehat V_{\lambda,2}^S&=&\frac{1}{N}\sum_j\widehat \lambda_j\widehat \lambda_j'\widehat e_{jt}^2 {\widehat u_{jt}^2} \cr
    \widehat \lambda_{S,\mathcal G} &=&  \frac{1}{|\mathcal G|_0}\sum_{i\in\mathcal G}\widehat \lambda_{S,i},\quad \widehat\Omega_{S,i}= (\frac{1}{|S|_0} \sum_{s\in S}\widehat f_{S,s}\widehat f_{S,s}')^{-1} (\frac{1}{T}\sum_{s=1}^T\widehat e_{is}^2)^{-1}.
 \end{eqnarray*}
The estimated  asymptotic variance of $\widehat\theta_{it}$ is then $  \widehat v_{\lambda} + \widehat v_{f} .$
       
\begin{cor}\label{t3.3} In addition to the assumptions  of Theorem \ref{t3.2}, assume $u_{it}$
to be cross-sectionally independent given $\{e_{it}\}$.   
  Then for any fixed $t\leq T$ and $\mathcal G\subseteq\{1,...,N\}$, 
   $$
 (\widehat v_{\lambda} + \widehat v_{f} )^{-1/2}\left( \frac{1}{|\mathcal G|_0}\sum_{i\in\mathcal G}\widehat\theta_{it}-\bar\theta_{\mathcal G,t}\right)\to^d\mathcal N(0,1).
   $$
         
\end{cor}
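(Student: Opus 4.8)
The plan is to show that the proposed variance estimator $\widehat v_{\lambda}+\widehat v_{f}$ is consistent for $\Sigma_{\mathcal G}$ in the appropriate sense, i.e. $(\widehat v_{\lambda}+\widehat v_{f})/\Sigma_{\mathcal G}\to^P 1$, and then combine this with Theorem~\ref{t3.2} via Slutsky's theorem. The subtlety flagged in the text is that the two subsamples $I$ and $I^c$ yield factor/loading estimates that are consistent only up to \emph{distinct} rotation matrices, say $H_I$ and $H_{I^c}$. My strategy would be to verify that each of $\widehat v_f$ and $\widehat v_\lambda$ is, separately on each subsample, consistent for a rotation-invariant target, so that the particular rotation attached to that subsample washes out of the quadratic forms. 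Concretely, for the $S$-subsample piece of $\widehat v_f$, I would track how the rotation $H_S$ enters: $\widehat f_{S,t}\approx H_{S}^{-\prime}f_t$ (in the sense established in Section~\ref{sec:2.2}, absorbing the $\widetilde\lambda$-bias), $\widehat\Omega_{S,i}\approx H_{S}'\Omega_i H_{S}$, and $\widehat f_{S,s}\widehat f_{S,s}'\approx H_{S}^{-\prime}f_sf_s'H_{S}^{-1}$, so that the sandwich $\widehat f_{S,t}'\widehat\Omega_{S,i}\widehat f_{S,s}\widehat f_{S,s}'\widehat\Omega_{S,i}\widehat f_{S,t}$ telescopes to $f_t'\Omega_i f_s f_s'\Omega_i f_t$ with all $H_S$ and $H_S^{-1}$ cancelling in pairs. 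An identical cancellation governs $\widehat v_\lambda$ through the $V_{\lambda1},V_{\lambda2}$ sandwich.

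After establishing rotation invariance, I would establish that each subsample estimator converges to the correct population object. First I would argue that the plug-in residuals $\widehat e_{is}$ and $\widehat u_{is}$ are sufficiently accurate: $\widehat e_{is}=x_{is}-\widehat\mu_{is}$ with $\widehat\mu_{is}$ the PC estimate of $l_i'w_s$, which by Proposition~\ref{p3.1}-type rates converges at $O_P(C_{NT}^{-1})$ uniformly in the averaged sense, and $\widehat u_{is}$ formed from the final fitted values inherits the $C_{NT}$-consistency of $\widehat\theta_{is}$. I would then show that replacing $(e,u,f,\lambda,\Omega)$ by their estimates in the sample averages defining $\widehat V_{S,f}$, $\widehat V_{\lambda,1}^S$, $\widehat V_{\lambda,2}^S$, and $\widehat\lambda_{S,\mathcal G}$ changes each by $o_P(1)$ relative to its population counterpart; the uniform moment bounds in Assumption~\ref{a3.8}(iv) together with the fourth-moment conditions in Assumption~\ref{a4.1}(iv) supply the needed uniform integrability and law-of-large-numbers control for these quadratic and quartic averages. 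The cross-sectional independence of $u_{jt}$ given $\{e_{jt}\}$ assumed here is exactly what reduces $V_{\lambda2}$ to the simple diagonal form $\frac{1}{N}\sum_i\lambda_i\lambda_i'\E e_{it}^2 u_{it}^2$ that $\widehat V_{\lambda,2}^S$ targets, so I would invoke that assumption to match the estimand.

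With both pieces in hand I would conclude that $\widehat v_\lambda=\frac{1}{N}\bar\lambda_{\mathcal G}'V_\lambda\bar\lambda_{\mathcal G}(1+o_P(1))$ and $\widehat v_f=\frac{1}{T|\mathcal G|_0}f_t'V_f f_t(1+o_P(1))$, so $\widehat v_\lambda+\widehat v_f=\Sigma_{\mathcal G}(1+o_P(1))$, using the maintained lower bounds that $f_t'V_f f_t$ and $\bar\lambda_{\mathcal G}'V_\lambda\bar\lambda_{\mathcal G}$ are bounded away from zero so that the ratio is well behaved. The main obstacle I anticipate is not the consistency of the sample moments per se but the bookkeeping for the two competing rotations under sample splitting: I must show the $\frac12(\cdot_I+\cdot_{I^c})$ averaging is legitimate, i.e. that each subsample estimator is independently consistent for the \emph{same} rotation-invariant scalar despite $H_I\neq H_{I^c}$, and that the averaging does not introduce a cross term depending on the (unidentified) product $H_I^{-1}H_{I^c}$. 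Verifying that the quadratic-form structure forces every rotation to appear paired with its inverse within a single subsample block, so that no unmatched rotation survives, is the delicate step; once that is secured, the remainder reduces to standard uniform law-of-large-numbers arguments under the stated moment conditions, and a final application of Slutsky's theorem to Theorem~\ref{t3.2} completes the proof.
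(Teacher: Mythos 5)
Your proposal is correct and follows essentially the same route as the paper: the paper likewise shows each subsample's quadratic form is rotation-invariant (every $H_f$ appears paired with its inverse inside a single block, so no cross term in $H_I^{-1}H_{I^c}$ can arise), establishes consistency of the plug-in sample moments in two steps (first replacing $(\widehat e,\widehat u,\widehat\lambda,\widehat f)$ by their true rotated counterparts using uniform $\max_{i,t}$ bounds, then a conditional-variance LLN to pass to the population targets), uses the cross-sectional independence of $u_{it}$ given $\{e_{it}\}$ to match $\widehat V_{\lambda,2}^S$ to its estimand, and concludes by averaging the two subsample scalars and applying Slutsky with Theorem \ref{t3.2}. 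No substantive gap.
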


\section{Application to Inference for Low-rank Matrix Completion}
 
Our inference method is fairly general and is applicable to many low-rank based inference problems. Here, we briefly discuss an application to large-scale factor models where observations are missing at random.  Existing methods for inference in this setting often use the EM algorithm to estimate $\theta_{it}$, as discussed in \cite{SW02} and theoretically justified in \cite{su2019factor}. In the following, we outline how our proposed approach can be used to perform estimation and inference in this setting. 
 
Consider a factor model
$$
y^*_{it}= \theta_{it}+ v_{it},\quad \theta_{it}=\lambda_i'f_t
$$
where $\lambda_i, f_t$ are respectively the factor loadings and factors and we assume that the $y^*_{it}$ are missing at random.  Let $x_{it}=1\{y_{it}^* \text{ is not missing}\}$ indicate that $y^*_{it}$ is observed. We then observe $y_{it}:=y^*_{it}x_{it}$ and $x_{it}$. Write $u_{it}:= v_{it}x_{it}$. Then multiplying $x_{it} $ on both sides of the model we have
 \begin{equation}\label{eq4.1}
 y_{it} = x_{it}\theta_{it} + u_{it},
 \end{equation}
which can be written in matrix form as $Y= X\odot\Theta+U$ using the notation from previous sections.

Recovering $\Theta$ can thus be viewed as a \textit{noisy low-rank matrix completion} problem. This problem has been studied, for example, in \cite{koltchinskii2011nuclear} where $\Theta$ is estimated via
\begin{equation}\label{e4.2}
\widetilde\Theta=\arg\min_\Theta \|Y-X\odot\Theta\|_F^2+\nu_1\|\Theta\|_{n}.
\end{equation}
This estimator is consistent with good rates of convergence, but it is unsuitable for inference in general as $\widetilde\Theta$ tends to have large shrinkage bias. 

To address the shrinkage bias from directly solving \eqref{e4.2}, we can apply our proposed algorithm. While one could apply our full algorithm from the previous sections, we note that there is no need to orthogonalize $(x_{it}, y_{it})$ in this case as the model does not have an interactive fixed effects structure. We can thus state a simplified algorithm as follows:

\begin{algo}\label{al4.1} Estimation of $\theta_{it}$ in factor models with data missing-at-random:

\textit{Step 1. Estimate the number of factors.} 
Run  penalized regression (\ref{e4.2}). Estimate $K =\dim(f_t)$ by
   $
  \widehat K=\sum_{i} 1\{\psi_i(\widetilde \Theta)\geq (\nu_1  \|\widetilde \Theta\|)^{1/2}\},
  $
 where $\psi_i(.)$ denotes the $i^{\text{th}}$ largest singular-value.

\medskip

\textit{Step 2: Sample splitting.} 
Randomly split the sample into $\{1,..., T\}/\{t\}=I\cup I^c$, so that $|I|_0= [(T-1)/2]$. Estimate the low-rank matrices $\Theta$  as in (\ref{e4.2}) with $(Y, X)$ replaced with $(Y_I, X_I)$ to obtain $ \widetilde\Theta_I$.
Let $\widetilde\Lambda_I=(\widetilde\lambda_1,...,\widetilde\lambda_N)'$ be the $N\times \widehat K$ matrix whose columns are defined as $\sqrt{N}$ times  the first $\widehat K$ eigenvectors of $\widetilde \Theta_I\widetilde \Theta_I'$.  

\medskip

\textit{Step 3. Estimate $(f_t,\lambda_i)$ for use in inference about $\theta_{it}$.} Using  $\widetilde \Lambda_I$,
$$
\widehat f_{I,s}:= \arg\min_{f_s}\sum_{i=1}^N(y_{is}- x_{is} \widetilde \lambda_i'f_s)^2,\quad s\in I^c\cup\{t\}.
$$
Update estimates of loadings as
$$
\widehat\lambda_{I,i}= \arg \min_{\lambda_i}\sum_{s\in I^c\cup\{t\}} (y_{is} - x_{is} \lambda_i'\widehat f_{I,s})^2,\quad i=1,..., N.
$$
    
\medskip
 
\textit{Step 4. Exchange $I$ and $I^c$.} Repeat steps  2-3 with $I$ and $I^c$ exchanged to obtain $(\widehat{\lambda}_{I^c,i}, \widehat{f}_{I^c,s}: s\in I\cup \{t\}, i\leq N)$. 

\medskip

\textit{Step 5. Estimate $\theta_{it}$. } Obtain the estimator of $\theta_{it}$:
$$
\widehat\theta_{it}:= \frac{1}{2}[\widehat\lambda_{I,i}'\widehat f_{I,t}+ \widehat{\lambda}_{I^c,i}'\widehat f_{I^c,t}]. 
$$

\end{algo}

It is worth noting that the restricted strong convexity condition (Assumption \ref{a3.1}) has a clear, intuitive interpretation in this case. Specifically, the condition is that there exists a $\kappa_c>0$ such that
$$
\|\Delta\odot X\|_F^2\geq \kappa_c \|\Delta\|_F^2 \quad (\text{equivalently } \sum_{it}(\Delta_{it}x_{it})^2\geq \kappa_c\sum_{it}\Delta_{it}^2)
$$
for any $\Delta$ inside the ``low-rank cone.'' Intuitively, this condition means that $y_{it}^*$ should be observed sufficiently often, ruling out cases where the fraction of observations is vanishingly small. 

Maintaining the analogs of the assumptions of Corollary \ref{cor3.1} and the additional condition that $\mu_{i}:=P(x_{it}=1)$ is time-invariant, 
it can be shown that 
$$
\frac{\widehat\theta_{it}-\theta_{it}}{\left(\frac{1}{T}f_t'V_{f,i}f_t+\frac{1}{N}\lambda_i'V_\lambda \lambda_i\right)^{1/2}}\to^d\mathcal N(0,1) 
$$
where $(V_{f,i}, V_\lambda)$ are as given in Corollary \ref{cor3.1} with $e_{it}$ replaced by  $x_{it}$ in all relevant quantities to define the covariance.
Note that our method allows the observation probabilities $\mu_i$ to be individual specific. In the case  that $\mu_i$ does not  vary over $i$, 
 the asymptotic variance is the same as that of  \cite{su2019factor} obtained from the EM algorithm. 


\section{Monte Carlo Simulations}

In this section, we provide some simulation evidence about the finite sample performance of our proposed procedure in two settings. The first setting is a static model with heterogeneous effects where all of our assumptions are satisfied. The second case is a dynamic model which is not formally covered by our theoretical analysis. 

\subsection{Static Model}
For our first set of simulations, we generate outcomes as 
$$
y_{it}= \alpha_i'g_t+x_{it,1}\theta_{it} +  x_{it,2}\beta_{it} + u_{it}
$$
where $\theta_{it}=\lambda_{i,1}'f_{t,1}$ and $\beta_{it}=\lambda_{i,2}'f_{t,2}$. Generate the observed   $x_{it}$ according to
$$
x_{it,r}=  l_{i,r}'w_{t,r}+\mu_x+e_{it,r},\quad r=1,2,
$$
where $(e_{it,r}, u_{it})$ are generated independently from the standard normal distribution across $(i,t,r)$. We set $\mu_x=2$ so $x_{it,r}$ follows a factor model with an intercept, where $(l_{i,r}, w_{t,r}, e_{it,r})$ can be estimated via the PC estimator on the de-meaned sample covariance matrix $(s_{ij,r})_{N\times N}$:   $$s_{ij,r}=\frac{1}{T}\sum_{t=}^T(x_{it,r}-\bar x_{i,r})(x_{jt,r}-\bar x_{j,r}),\quad \bar x_{i,r}=\frac{1}{T}\sum_tx_{it,r}.$$
We  set $\dim(g_t) = \dim(f_{t,1}) = \dim(f_{t,2}) = \dim(w_{t,1}) = \dim(w_{t,2}) = 1$ and generate all factors and factor loadings as independent draws from $\mathcal N(2,1)$. The loadings $(\alpha_i, \lambda_{i,1},\lambda_{i,2}, l_{i,r})$ are 
treated fixed  in the simulation replications while the factors $(g_t, f_{t,1}, f_{t,2}, w_{t,r})$ are random and repeatedly sampled in replications.
 We report results for $N = T = 100$, $N = T = 200$, and $N = 500$ and $T = 100$. Results are based on 1000 simulation replications.

We compare  three  estimation methods: 

(I) (``Partial-out") the proposed estimator that uses orthogonalization (partialing out $\mu_{it}$ from $x_{it}$) and sample-splitting.

(II) (``No Partial-out") the estimator that uses  sample splitting but  not partial-out $\mu_{it}$ from $x_{it}$. That is, we run Steps 1-4 of the algorithm using data for $s\in I$, and obtain 
\begin{eqnarray*}
(  \widetilde f_{I,s},\widetilde g_s)&=& \arg\min_{f_s, g_s}\sum_{i=1}^N(y_{is}- \widetilde\alpha_i'g_s- x_{is} \widetilde \lambda_i'f_s)^2,\quad s\in I^c\cup\{t\},\cr
(\dot\lambda_{I,i}, \dot\alpha_i)&=& \arg \min_{\lambda_i, \alpha_i}\sum_{s\in I^c\cup\{t\}} (y_{is} - \alpha_i'  \widetilde g_s- x_{is} \lambda_i'\widetilde f_s)^2,\quad i=1,..., N.
\end{eqnarray*}
We then exchange $I$ and $I^c$ to obtain
$(\widetilde f_{I^c, t}, \dot\lambda_{I,i})$. The estimator is then defined  as $\frac{1}{2}(\dot\lambda_{I,i}'\widetilde f_{I,t}+\dot\lambda_{I^c, i}'\widetilde f_{I^c, t})$.

(III) (``Regularized") the estimator using the nuclear-norm regularizion only. That is, $\theta_{it}$ is directly  estimated as the $(i,t)$ th element of 
$\widetilde\Theta$ in (\ref{e1}) with no further debiasing.

We report results for standardized estimates defined as
$$
\frac{\widehat\theta_{it}-\theta_{it}}{se(\widehat\theta_{it})}.
$$
For estimator (I) (``Partial-out''), we use the feasible estimator of the theoretical standard error
$$
se(\widehat\theta_{it})=( \widehat v_{\lambda} +  \widehat v_{f} )^{1/2}.
$$
given in 
Corollary \ref{t3.3}. If the asymptotic theory is adequate, the standardized estimates should be approximately $\mathcal N(0, 1)$ distributed. We standardize estimators (II) and (III) (``No Partial-out'' and ``Regularized'') using the sample standard deviation obtained from the Monte Carlo repetitions as $se(\widehat\theta_{it})$ as the theoretical variances of these estimators is unknown. This standardization is of course infeasible in practice but is done for comparison purposes. 


We report results only for  $\theta_{it}$ with $t=i=1$;  results for other values of $(t,i)$ and   for  $\beta_{it}$  are similar.   In Table \ref{tab1}, we report the fraction of simulation draws where the true value for $\theta_{it}$ was contained in the 95\% confidence interval:
$$
[\widehat\theta_{it}-1.96 se(\widehat\theta_{it}), \widehat\theta_{it}-1.96 se(\widehat\theta_{it})].
$$
Again, $se(\widehat\theta_{it})$  is the feasible  standard error   in Corollary \ref{t3.3} for estimator (I), and is the infeasible standard error obtained from Monte Carlo repetitions for estimators (II) and (III).

Figure \ref{sec: fig1} plots the histogram  of the standardized estimates, superimposed with the standard normal density. The histogram is scaled to be a density function. The top panels of  Figure \ref{sec: fig1} are for the proposed estimation method; the middle panels are for the estimation without partialing-out the mean structure of $x_{it}$, and the bottom panels are for the nuclear-norm regularized estimators directly.   It appears that asymptotic theory provides a good approximation to the finite sample distributions for the proposed post-SVT method. In contrast, both the estimated $\theta_{it}$ without partialing-out and the regularization-only method noticeably deviate from the standard normal distribution.

\begin{table}[htp]
\begin{center}
\label{tab1}
\caption{Coverage Probability in Static Simulation Design}
\begin{tabular}{cc|ccc}
\hline
\hline
$N$& $T$ &   Partial-out & No partial-out & Regularized\\
\hline
100 & 100 & 0.943 & 0.794&0.778\\
200 & 200 & 0.956 & 0.835&0.911\\
500 & 100 & 0.969 & 0.804&0.883\\
\hline
\multicolumn{5}{p{10.5cm}}{\footnotesize Note: This table reports the   coverage probability of 95\% confidence intervals in the static simulation design. Column ``Partial-out'' provides results from our proposed procedure; column ``No partial-out'' reports results from a procedure that   directly uses the observed covariates without partialing-out its mean structure. ``Regularized" directly uses the nuclear-norm regularized estimator, without further debiasing. The confidence interval for the ``Partial-out" estimator is constructed using feasible standard errors, while confidence intervals for the other two estimators use infeasible standard errors obtained from the Monte Carlo repetitions. Results are based on 1000 simulation replications.}
\end{tabular}
\end{center}
\end{table}%

  \begin{figure}[htbp!]
\begin{center}
 \includegraphics[width=6cm]{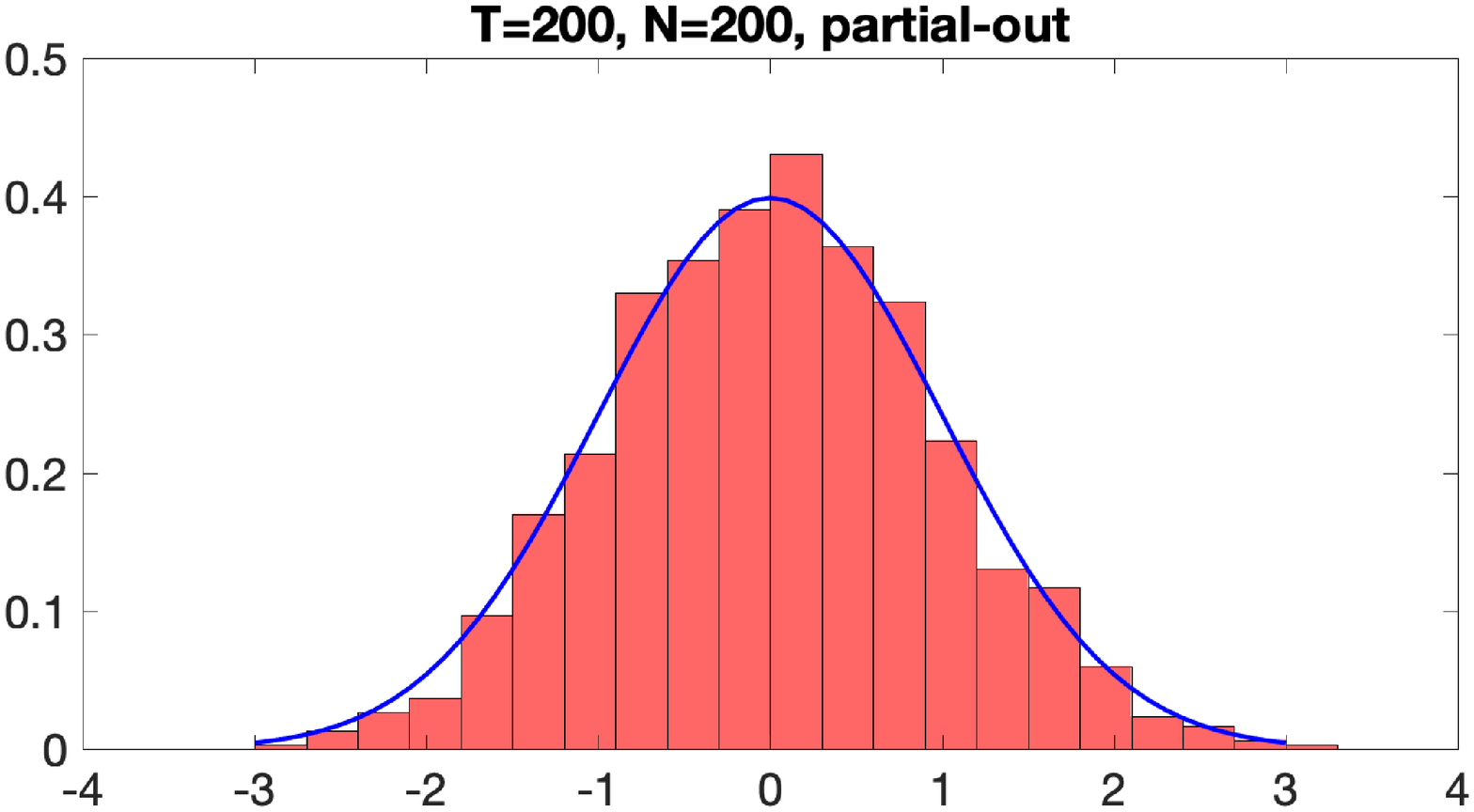}
  \includegraphics[width=6cm]{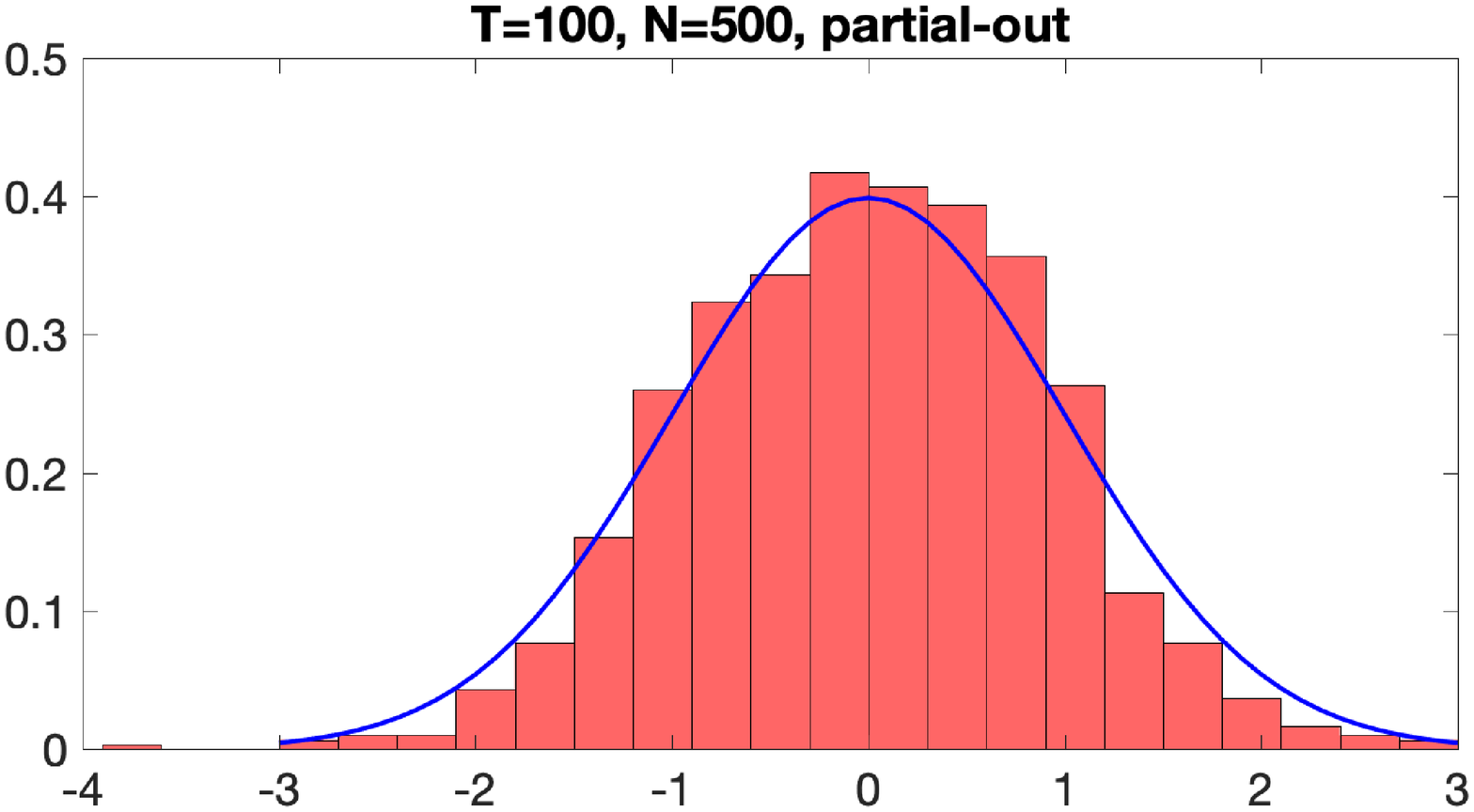}
\includegraphics[width=6cm]{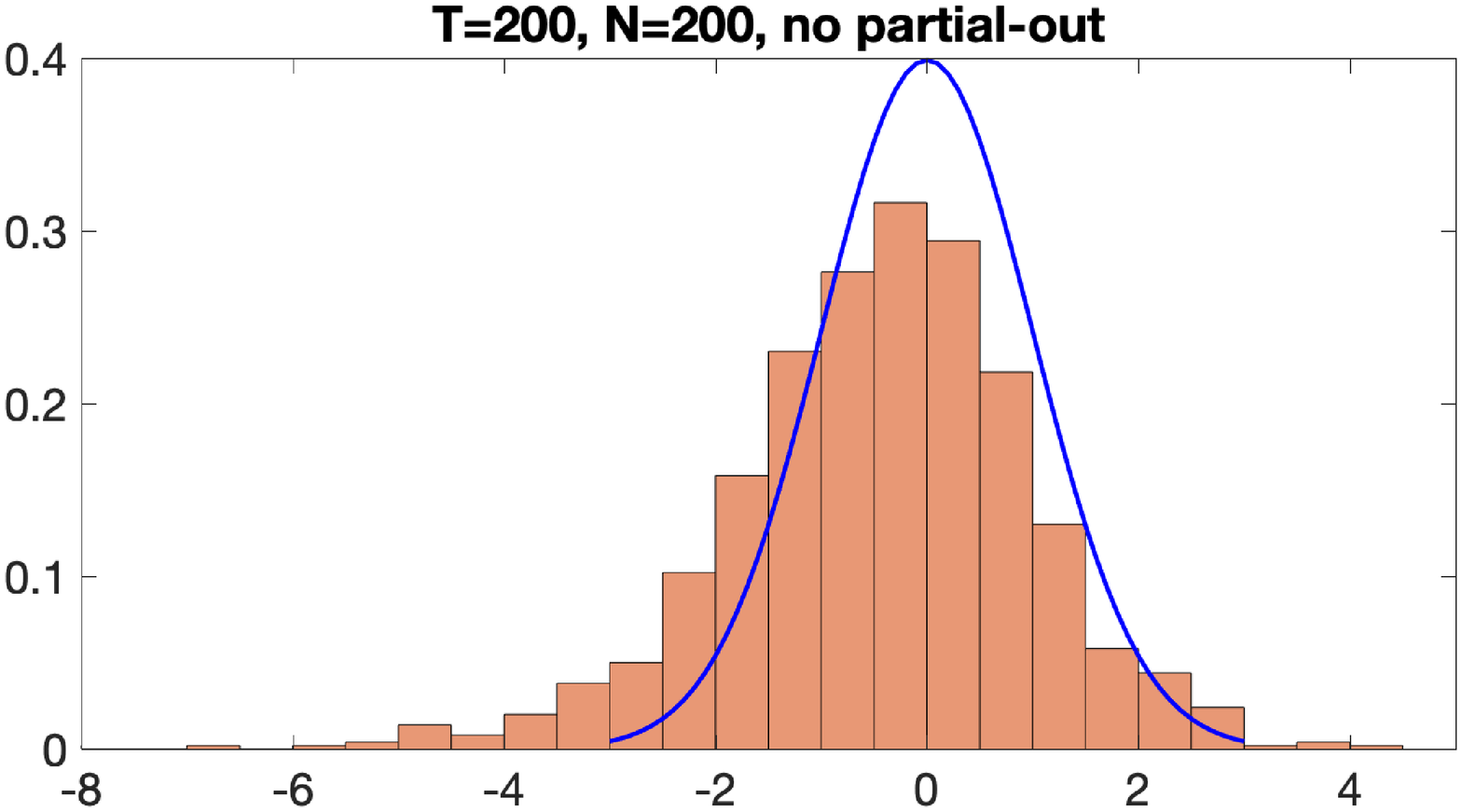}
\includegraphics[width=6cm]{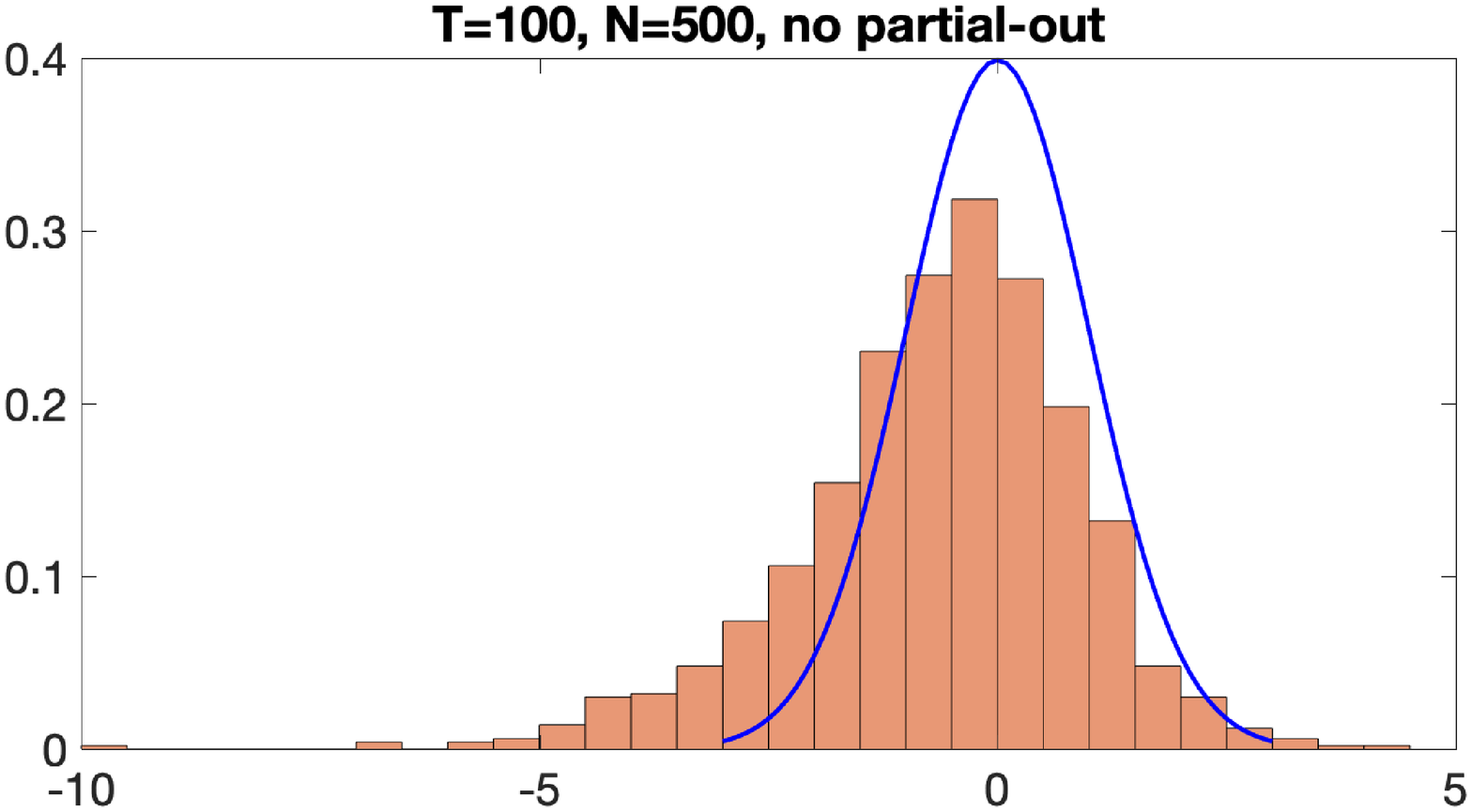}
 \includegraphics[width=6cm]{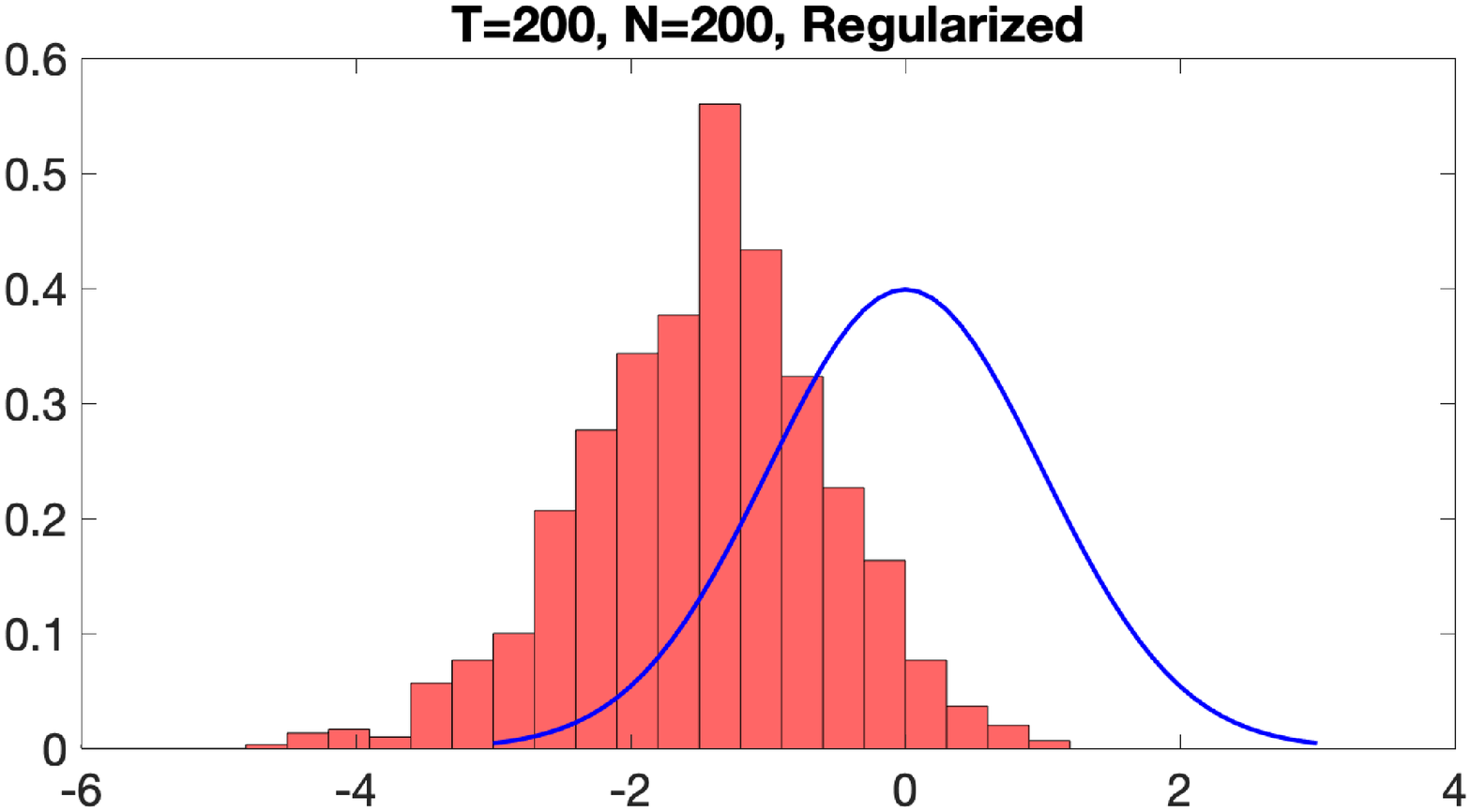}
 \includegraphics[width=6cm]{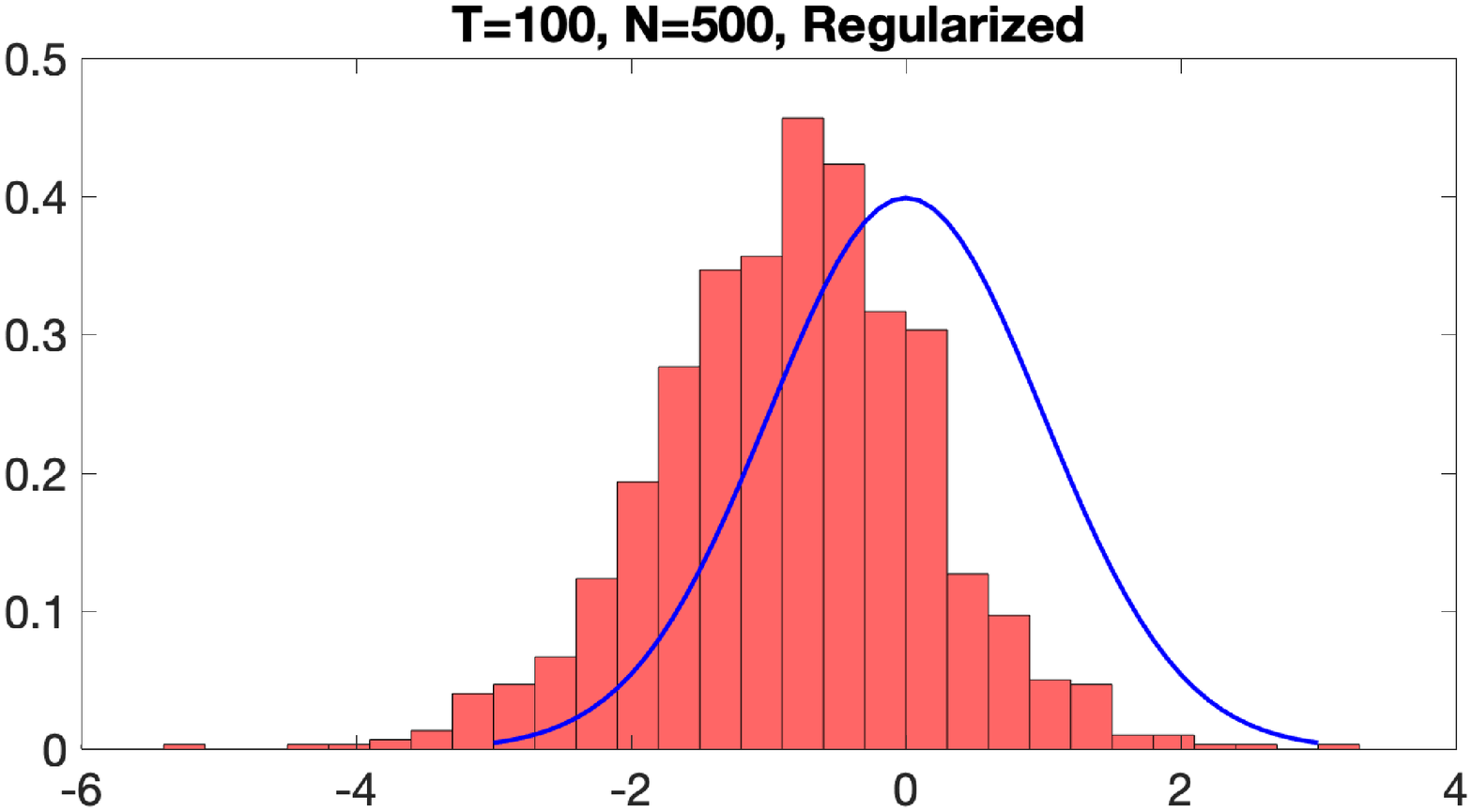}
\caption{\footnotesize  Histograms of standardized estimates in static models   ($(\widehat\theta_{11}-\theta_{11})/se(\widehat\theta_{11})$). The ``partial-out" plots are based on the full method including partialing-out the mean structure of $x_{it}$ and use a feasible estimate of the theoretical standard error. The ``no partial-out" plots apply our proposed procedure ignoring the step that partials out the mean structure of $x_{it}$ and rely on infeasible standard errors. The ``Regularized" plots are based on directly using the nuclear-norm regularized estimator and rely on infeasible standard errors. The standard normal density function is superimposed on each histogram. 
}
\end{center}
\label{sec: fig1}
\end{figure}

%
%
%
%
%

\subsection{Dynamic Model}

We now consider an example with a lagged dependent variable to allow for dynamics.  Specifically, we consider the model
\begin{align}\label{eq: dynamic}
y_{it}= \alpha_i'g_t+x_{it}\lambda_{i,1}'f_{t,1} 
+y_{i,t-1}\lambda_{i,2}'f_{t,2}+ u_{it},
\end{align}
where $x_{it}=l_i'w_t+ e_{it}$. Our asymptotic theory does not allow the lagged variable    $y_{i,t-1}$ because the presence of this variable is incompatible with an exact representation within a static factor model structure. 
Nevertheless, as this dynamic model is interesting, we investigate the finite sample performance of our method in this case with the understanding that it is not covered by our formal theory.

We calibrate the parameters of the data generating process for this simulation by estimating \eqref{eq: dynamic} using the data from \cite{acemoglu2008income}. Here $y_{it}$ is the democracy score of country $i$ in period $t$, and $x_{it}$ is the GDP per capita over the same period. From the data, we estimate that all low-rank matrices have two factors with the exception of $\alpha_i'g_t$, which is estimated to have one factor. 
We then generate all factors and loadings from normal distributions with means and covariances matching with the sample means and covariances of the estimated factors and loadings obtained from the actual democracy-income data. We generate initial conditions $y_{i1}$  independently from $0.3\mathcal N(0,1) + 0.497$, whose parameters are calibrated from the real data at time $t=1$. $u_{it}$ is generated independently from a $\mathcal N(0,\sigma^2)$ with $\sigma= 0.1287$ calibrated from the real data. Finally, $y_{it}$ is generated iteratively according to the model using the draws of initial conditions. 

Let $z_{it}:=y_{i, t-1}$. Figure  \ref{fig2} plots  the first twenty eigenvalues of the $N\times N$ sample covariance of  $ z_{it}$ when $N=T=100$, averaged over 100 repetitions. The plot reveals one very spiked eigenvalue. Therefore, we estimate a one-factor model for $z_{it}$ in our estimation procedure.

\begin{figure}[htbp]
\label{fig2}
\begin{center}
\includegraphics[width=6cm]{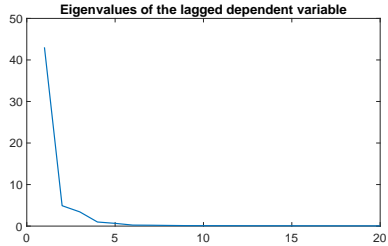}
\caption{\footnotesize Sorted eigenvalues of the sample covariance of the lagged dependent variable, averaged over 100 repetitions. }
 
\end{center}

\end{figure}

In this design, we only report results from applying the full multi-step procedure including sample-splitting and partialing-out. We report results for standardized estimates defined as
$$
\frac{\widehat\theta_{r,it}-\theta_{r,it}}{( \widehat v_{r,\lambda} +  \widehat v_{r,f} )^{1/2}}.\
$$
for $t=i=1$ and $r \in \{1,2\}$ for $\theta_{r,it} = \lambda_{i,r}'f_{t,r}$ and $\widehat v_{r,\lambda} +  \widehat v_{r,f}$ a corresponding estimate of the asymptotic variance. 
All results are based on 1000 simulation replications.

Table \ref{t4} reports the sample means and standard deviations of the t-statistics as well as coverage probabilities of feasible 95\% confidence intervals, and we report the histograms of the standardized estimates in Figure \ref{fig3}. Looking at the histograms, we can see that the bias in the estimated effects of $x_{it}$ is relatively small while there is a noticeable downward bias in the estimated effect of $y_{i,t-1}$. We also see from the numeric results that the standard deviation of the standardized effect of $x_{it}$ appears to be systematically too large, i.e. the estimated standard errors used for standardization are too small, which translates into some distortions in coverage probability. Coverage of the effect of lagged $y$ seems reasonably accurate despite the large estimated bias. Importantly, we see that performance improves on all dimensions as $N = T$ becomes larger. 
 
\begin{table}[htp]
\caption{Estimation and inference results in dynamic model simulation}
\begin{center}
\begin{tabular}{c|cc|cc|cc}
\hline
\hline
$N=T$ &  \multicolumn{2}{c|}{mean } &   \multicolumn{2}{c|}{standard deviation } &  \multicolumn{2}{c}{coverage probability } \\
  &  $x_{it}$ &   $y_{i,t-1} $   &  $x_{it}$ &$y_{i,t-1}$ &$x_{it}$ &$y_{i,t-1}$ \\
\hline
50 &  0.019& -0.345&1.342 & 1.140& 0.856&  0.919\\
100 & -0.096&  -0.267&  1.178&   1.015 &0.920&0.932\\
150 & -0.017 & -0.276 &1.117&0.942 &0.926& 0.944\\
\hline
\multicolumn{7}{p{12cm}}{\footnotesize Note: This table reports the simulation mean, standard deviation, and coverage probability of 95\% confidence intervals for the estimated effect of $x_{it}$ and $y_{i,t-1}$ for $i = t = 1$. Simulation mean and standard deviation are for estimates centered around the true parameter values and normalized in each replication by the estimated standard error. Thus, ideally, the entries in the mean panel would be 0 and the entries in the standard deviation panel would be 1. Results are based on 1000 simulation replications.}
\end{tabular}
\end{center}
\label{t4}
\end{table}%

\begin{figure}[htbp]
\label{fig3}
\begin{center}

\includegraphics[width=6cm]{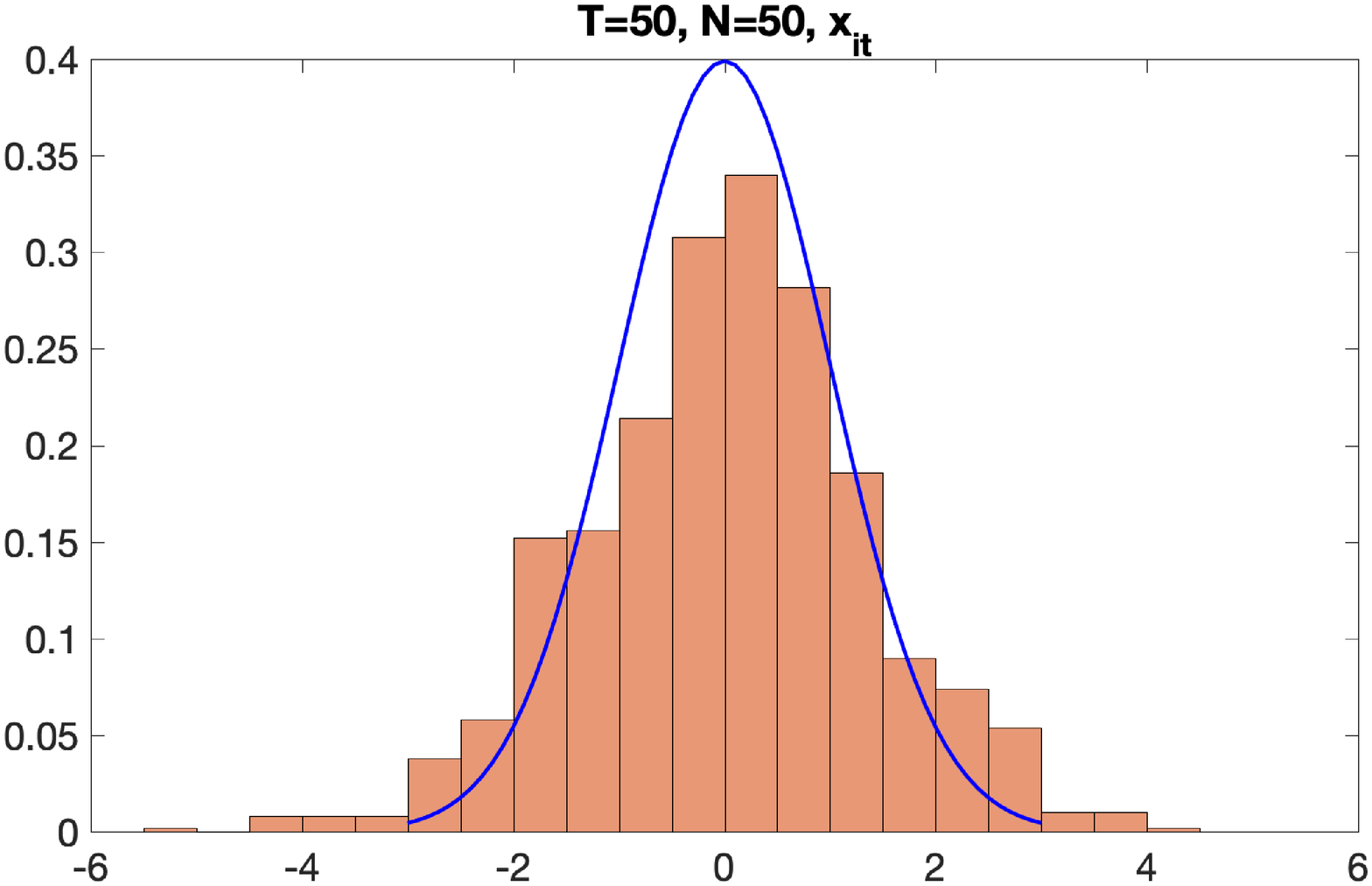}
\includegraphics[width=6cm]{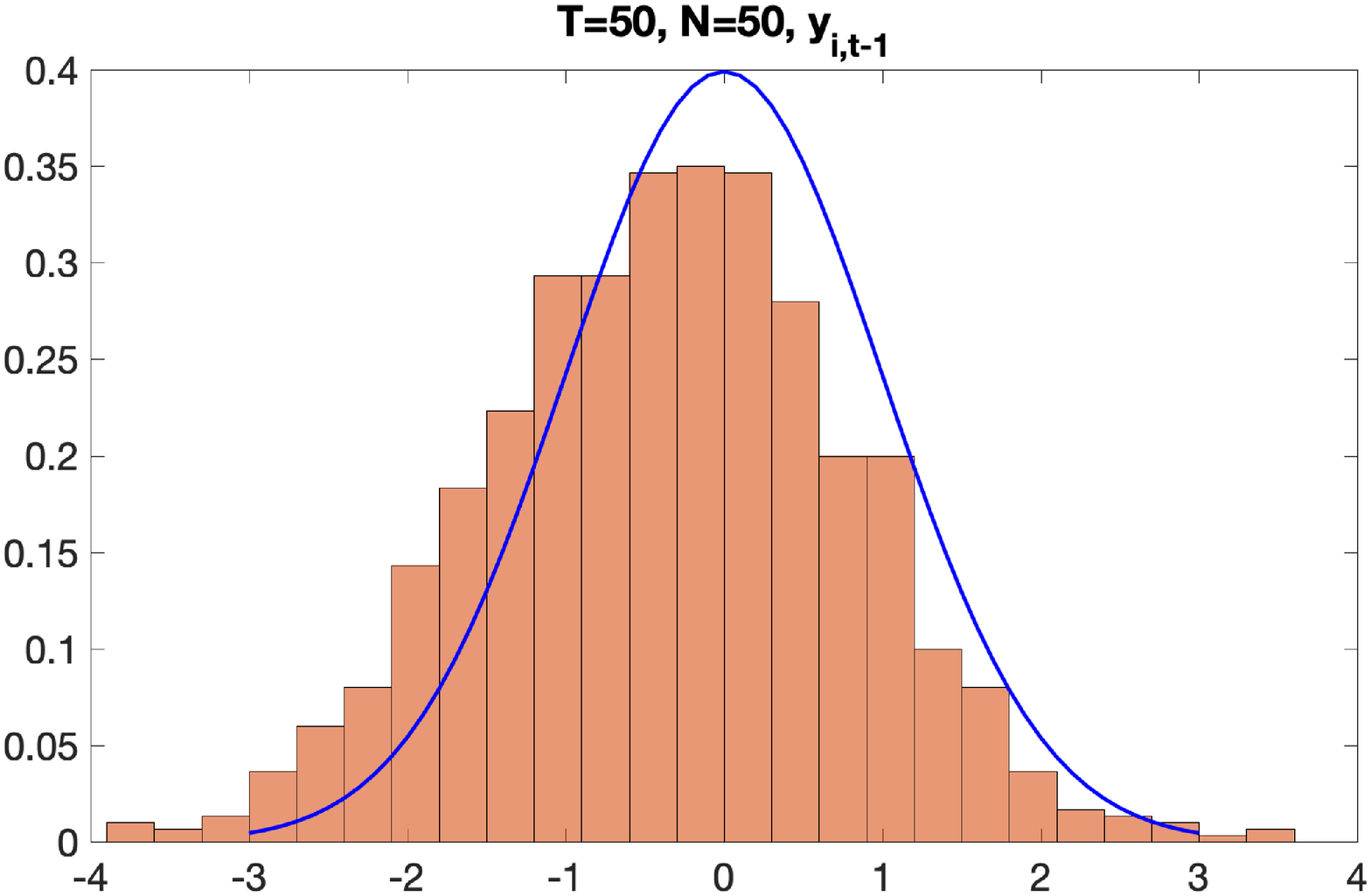}
\includegraphics[width=6cm]{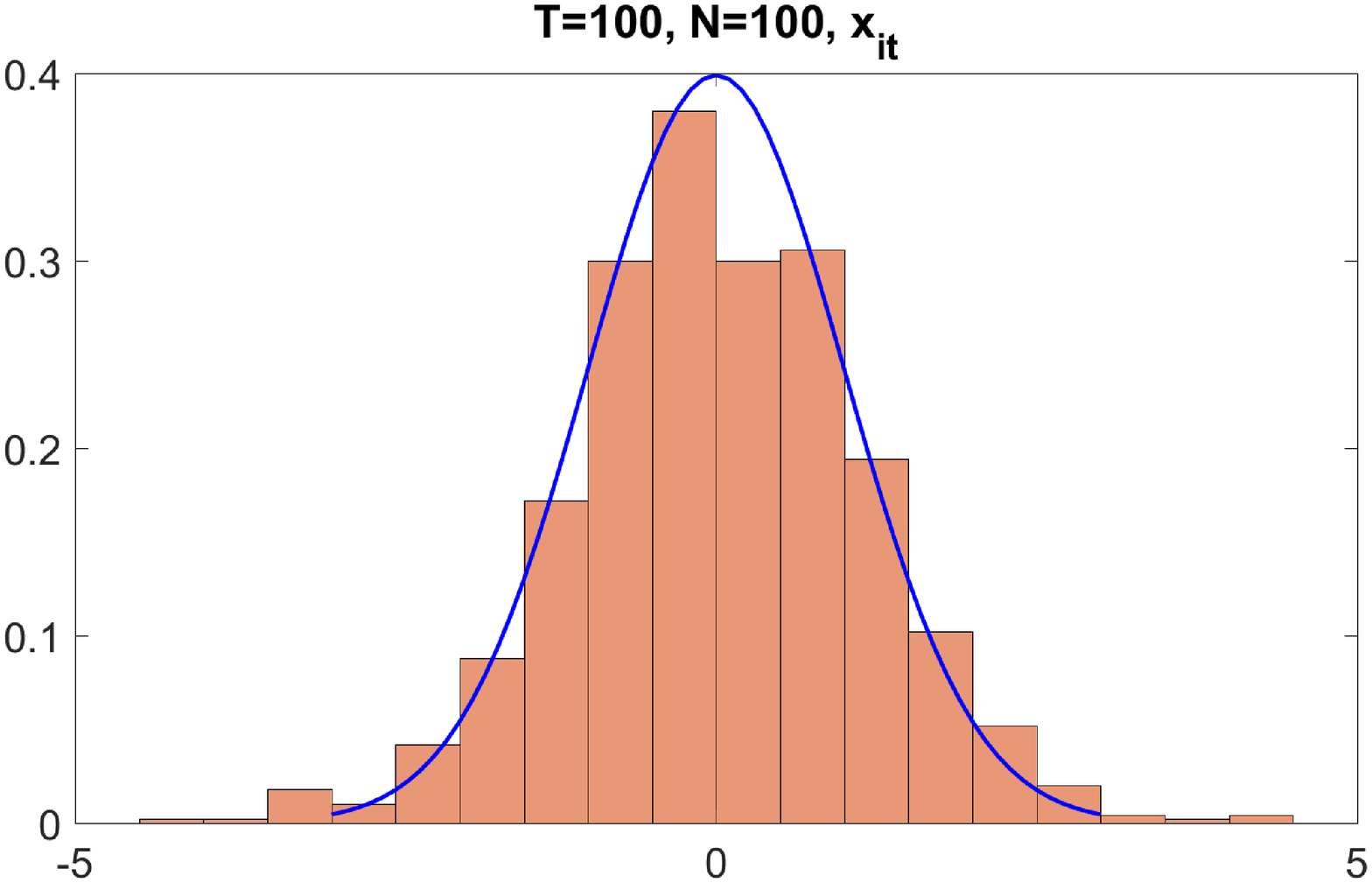}
\includegraphics[width=6cm]{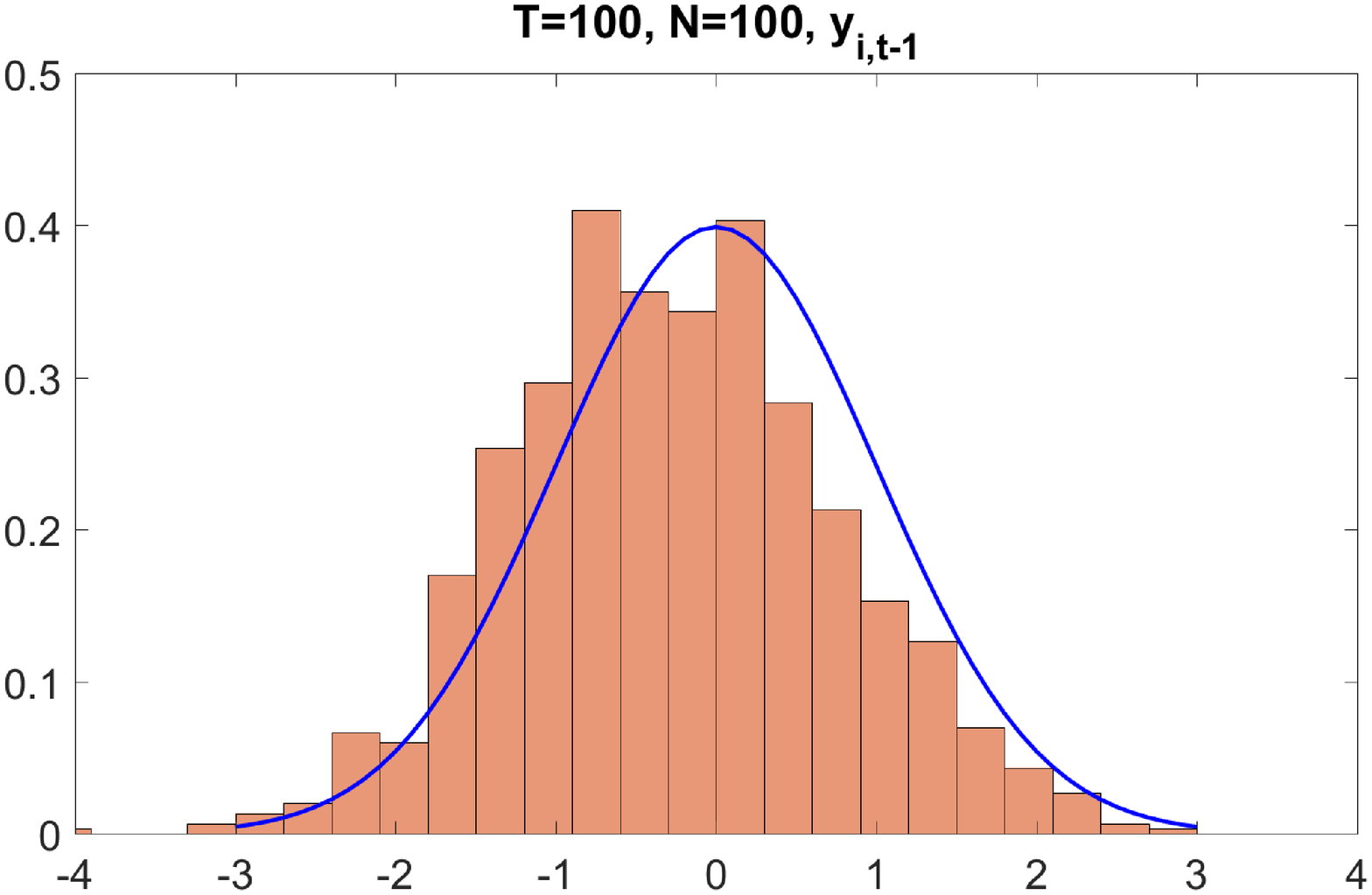}
\includegraphics[width=6cm]{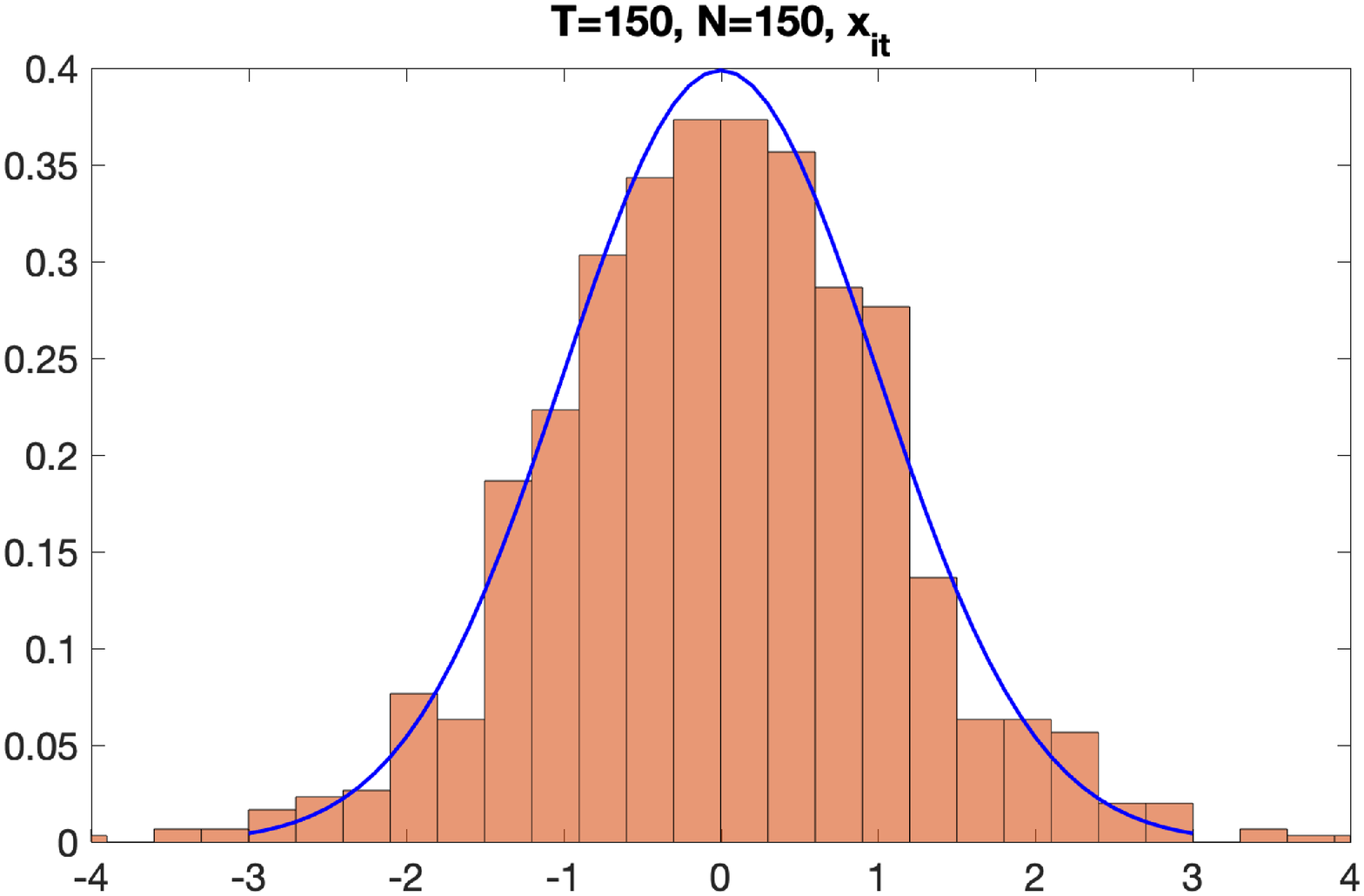}
\includegraphics[width=6cm]{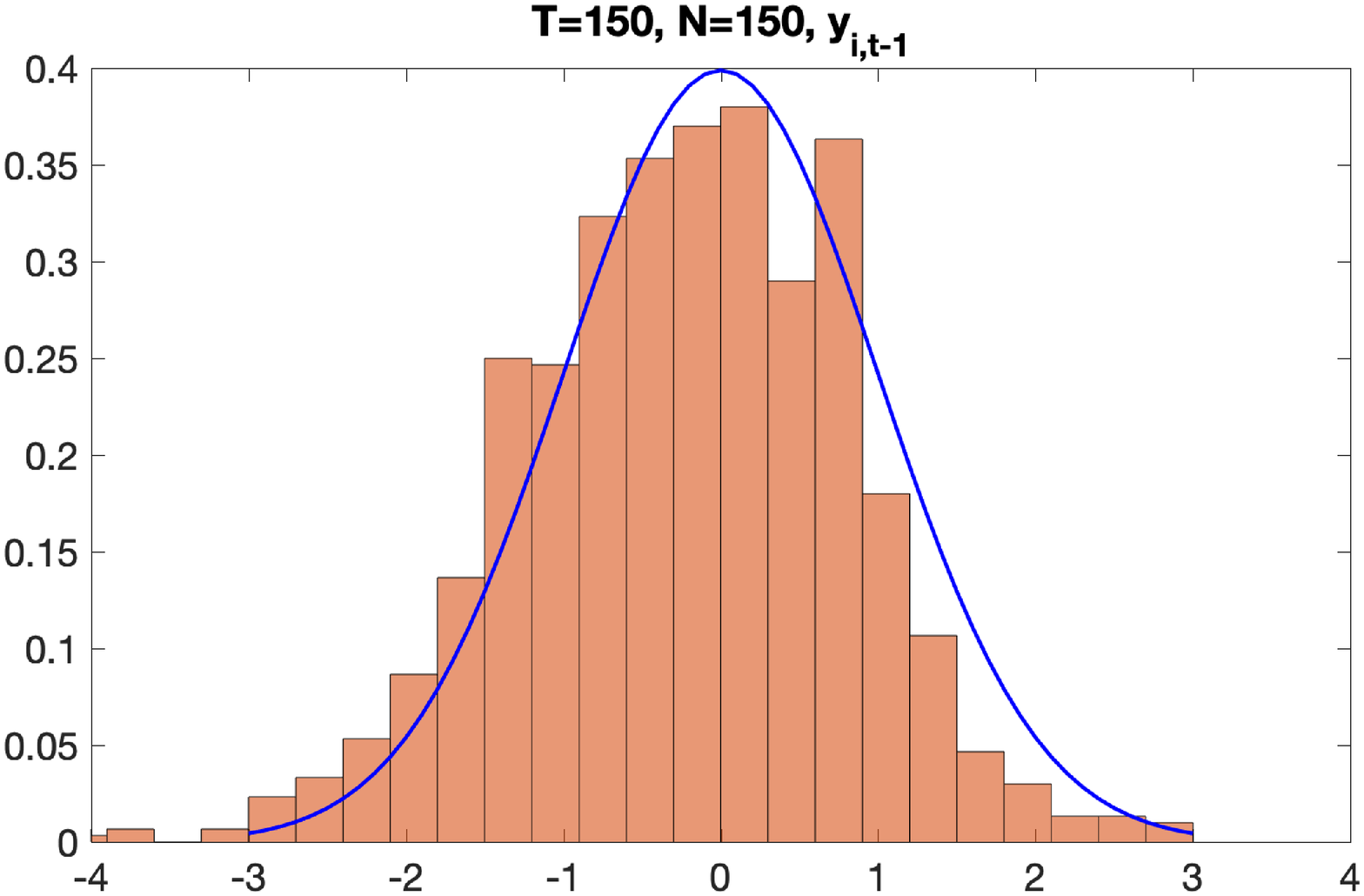}
\caption{\footnotesize Histograms of standardized estimates in  dynamic models   ($\widehat\theta_{11}-\theta_{11}$ divided by the estimated asymptotic standard deviation). The left three plots are the effect of  $x_{i,t}$; the right three plots are for the effect of $y_{i,t-1}$. The standard normal density function is superimposed on the histograms. }
  \end{center}

\end{figure}

\section{Empirical Illustration: Estimating the Effect of the Minimum Wage on Employment}\label{sec:demo}

 
\subsection{Background}
To illustrate the use of our approach, we consider estimation of the effect of the minimum wage on youth employment. Previous studies in the literature reach mixed conclusions. For instance, \cite{card1994minimum} find that the minimum wage has a positive effect on employment  using data of restaurants in New Jersey and Eastern Pennsylvania.  \cite{dube2010minimum} conclude the minimum wage has negative employment effects when county-level population is controlled for but also found no adverse employment effects from minimum wage increases when only variation among contiguous county-pairs is used for identifications. 
 \cite{phillips2018heterogeneous} consider slope heterogeneity at the county level by estimating a model of the form
\begin{equation}\label{e6.13}
y_{it}=x_{it,1}\theta_{i}+x_{it,2}\beta_{i}+ \alpha_i+g_t+u_{it}
\end{equation}
where $i$ denotes county, $t$ indexes quarter, $y_{it}$ is log employment, $x_{it,1}$ is the log minimum wage, and $x_{it,2}$ is log population, and $\alpha_i$ and $g_t$ are respectively additive county and time fixed effects.  They estimate county-level heterogeneous effects by imposing a grouping structure where $\theta_i$ and $\beta_i$ take on a small number of distinct values that are homogeneous within groups. The values of the group effects and group membership of each county is then learned as part of their estimation algorithm. They find evidence of the minimum wage having mixed effects across different groups. 
Also see \cite{neumark2008minimum} for an overview and  more recent works by \cite{cengiz2019effect} and  \cite{callaway2019difference}.

We contribute to this literature by estimating our model which allows for a rich intertemporal and cross-sectional heterogeneity structure in both the estimated effect of the minimum wage and in the fixed effects structure. It seems interesting to allow for intertemporal and cross-sectional heterogeneity of effects as there seems to be no \textit{ex ante} reason to believe that these effects should be constant. For example, these effects are presumably related to pressures from both the supply and demand side and other market conditions, all of which are plausibly time varying and related to local labor market conditions. Allowing for the interactive fixed effects structure may also be useful for capturing potentially complex unobserved heterogeneity that is correlated to the minimum wage. 

\subsection{Implementation}
We base our analysis on state-level data analogous to the county-level data in \cite{dube2010minimum} and \cite{phillips2018heterogeneous}.\footnote{We are grateful to Wuyi Wang for sharing the data with us. } The data are a balanced panel with $T = 66$ and $N = 51$ observations on the 50 states in the United States plus Washington D.C. running from the first quarter of 1990 to the second quarter of 2006.
Using these data, we estimate a model for log youth employment ($y_{it}$):
\begin{align*} 
y_{it} = x_{it,1}\theta_{it} + x_{it,2}\beta_{it} + x_{it,3}\gamma_{it} + \alpha_i'g_t + u_{it}
\end{align*}
with
\begin{align*}
\theta_{it}=\lambda_{i,1}'f_{t,1},\quad 	\beta_{it}=\lambda_{i,2}'f_{t,2},\quad \gamma_{it}=\lambda_{i,3}'f_{t,3},
\end{align*} 
where $x_{it,1 }$, $x_{it,2}$, and $x_{it,3}$ respectively denote the log minimum wage, log state population, and log total employment. 

For the orthogonalization step of our estimation procedure, we model each of the right-hand-side variables with a factor structure as 
$$
x_{it,k} = \mu_{it,k} + e_{it,k},\quad k=1,2,3.  
$$
where $\mu_{it,k} = \alpha_{ik} + l_{i,k}' w_{t,k}$. We treat the
$\alpha_{ik}$ as additive fixed effect and allow for the presence of additional factors by including $l_{i,k}'  w_{t,k}$. That is, we effectively enforce that there is at least one factor in every variable - a factor that is always one. The remaining factor structure can then pick up any other sources of strong correlation and is expected to pick up the general trend that all of the variables we consider tend to increase over the sample period.    
Using the eigenvalue-ratio method of \cite{ahn2013eigenvalue}, we estimate $\dim( w_{t,k})=3$ for each regressor $x_{it,k}$. 
We then partial out the estimated $\mu_{it,k}$ to estimate the heterogeneous effects $\theta_{it}$. To estimate  $   \mu_{it,k}$, we first subtract the cross-sectional mean from $x_{it,k}$ and then extract the first three principal components of the demeaned data matrix $x_{it,k} - \frac{1}{T} \sum_{t=1}^T x_{it,k}$.  

We choose the tuning parameters for the nuclear-norm regularized estimation using an iterative method based on Section \ref{s:choo}. Specifically, we first estimate $\Var(u_{it})$ using a homogeneous model 
$y_{it} = \sum_{k=1}^3 x_{it,k} \theta_k +\Var(u_{it})^{-1/2}v_{it}$ where $\Var(v_{it})=1$. We then set the tuning parameter for estimating the matrix of the  $\theta_{it}$ as
$$
\nu_1=2.2\bar Q(\|X_1\odot Z\|; 0.95),
$$
where $Z$ is an $N\times T$ matrix whose elements are generated from a $\mathcal N(0, \Var(u_{it}))$ and $X_1$ is the observed $N\times T$ matrix of $x_{it,1}$. The tuning parameters for the matrix of the $\beta_{it}$ and the matrix of the $\gamma_{it}$ are set similarly.  Let $(\widetilde\theta_{it},  \widetilde\beta_{it}, \widetilde\gamma_{it} ,\widetilde\alpha,\widetilde g_t)$ denote the nuclear-norm regularized estimators obtained with this initial set of tuning parameters. We re-estimate $\Var(u_{it})$ by  $\frac{1}{NT}\sum_{i,t} \widetilde u_{it}^2,$ where
$$
\widetilde u_{it}=y_{it}-x_{it,1}\widetilde \theta_{it}-x_{it,2}\widetilde\beta_{it}-x_{it,3}\widetilde\gamma_{it}- \widetilde\alpha_i'\widetilde g_t.
$$
We then update the values of the tuning parameters by re-simulating using the updated estimate of $\Var(u_{it})$. We iterate this procedure until it converges. We note that upon convergence, the estimated number of factor for each low-rank matrix in this example is two. I.e. we estimate $\dim(f_{t,1}) = \dim(f_{t,2}) = \dim(f_{t,3}) = \dim(g_t) = 2$.

\subsection{Results}

For each state, we estimate the minimum wage effect over all quarters. To summarize the results, we group the states into two equal-size groups according to their average minimum wage over the sample period: high minimum wage states (High) and low minimum wage states (Low). 

Figure \ref{fig5} plots the averaged effects $\bar\theta_{\mathcal G,t}$ for $\mathcal G \in\{\text{Low, High}\}$ across time. The most striking feature of the graph is that the estimated average minimum wage effect within the low minimum wage group is much more volatile than the estimated average effect within the high minimum wage states. This difference is especially pronounced in the early part of the sample period where we see extreme swings between estimated effects. In the latter part of the sample period, the estimated average effects in both the low and high minimum wage groups appear relatively stable and concentrated on relatively small values. We also see that estimated effects in the low-minimum wage group are associated with much larger standard errors than effects estimated in the high-minimum wage group. It is interesting, that broadly speaking, the estimated effects seem to follow the same pattern in both groups with estimated positive and negative effects in both groups tending to occur in the same time periods.

\begin{figure}[htbp]
	
	\begin{center}

		\includegraphics[width=10cm]{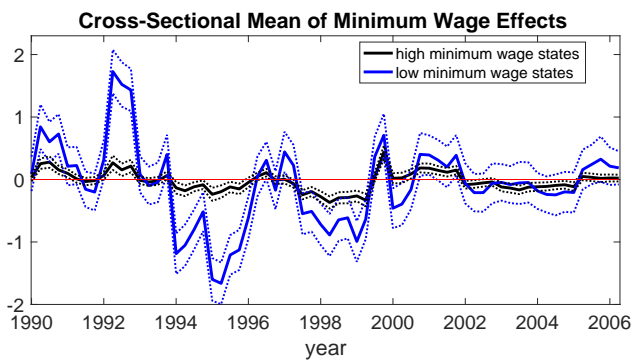}
		\caption{\footnotesize This figure plots the time series of cross-sectional averages of estimated state-level minimum wage effects with groups. The solid black line presents the estimated average effect within the high-minimum wage group (``high MW states''), and the solid blue curve presents the estimated average effect with the low-minimum wage group (``low MW states''). Dashed black and blue provide pointwise 95\% confidence intervals for the high- and low-minimum wage groups respectively.}
	\end{center}
	\label{fig5}
	
\end{figure}

To further aid interpretability, we present numeric summaries of results obtained by breaking the sample period into three time spans in Table \ref{t6.1}. Specifically, we consider the three periods defined as the first quarter of 1990 through the last quarter of 1993, the first quarter of 1994 through the last quarter of 1999, and the first quarter of 2000 through the second quarter of 2006.  Within each window for each of the high- and low-minimum group, we report several quantities. As a simple summary of the estimated effect sizes, we report the time series average of the time specific within group means in column $\widehat{\theta}_{\mathcal{G}}$  - i.e. we report $\frac{1}{|S|_0}\sum_{t \in S} \widehat{\theta}_{\mathcal{G},t}$ where $\widehat{\theta}_{\mathcal{G},t}=\frac{1}{|\mathcal G|_0}\sum_{i\in\mathcal G}\widehat\theta_{it}$
and   $S$ denotes one of the three time windows.
 To summarize statistical significance of the estimated average effects, we compute the fraction of time periods within each time window for which the estimate of the group-level average effect is negative and the corresponding 95\% pointwise confidence interval excludes zero and the fraction of observations for which the estimate of the group level average effect is positive and the corresponding 95\% pointwise confidence interval excludes zero broken out by group. Specifically, for each $\mathcal G \in\{\text{Low, High}\}$ and for $S$ denoting each of the time windows, we compute
\begin{eqnarray*}
	P_{\mathcal G}^+&=&\frac{1}{ |S|_0}\sum_{t\in S}1\{\widehat\theta_{\mathcal G,t} > 0\} 1\{|\widehat\theta_{\mathcal G,t}/s.e.(\widehat\theta_{\mathcal G,t})| > 1.96 \} \cr
	P_{\mathcal G}^-&=&\frac{1}{ |S|_0}\sum_{t\in S}1\{\widehat\theta_{\mathcal G,t} < 0\} 1\{|\widehat\theta_{\mathcal G,t}/s.e.(\widehat\theta_{\mathcal G,t})| > 1.96 \} 
\end{eqnarray*}   
where $s.e.(\widehat\theta_{\mathcal G,t})$ denotes the estimated standard error of $\widehat\theta_{\mathcal G,t}$.
Similarly, we summarize significance of the effects at the individual level by reporting the fraction of $(i,t)$ combinations within each window where the estimated $\theta_{it}$ is positive and significant and the fraction of $(i,t)$ combinations where the estimated $\theta_{it}$ is negative and significant:
 \begin{eqnarray*}
 	Q_{\mathcal G}^+&=&\frac{1}{|\mathcal G|_0}\sum_{i\in \mathcal G}\frac{1}{ |S|_0}\sum_{t\in S}1\{\widehat\theta_{it} > 0\} 1\{|\widehat\theta_{it}/s.e.(\widehat\theta_{it})| > 1.96 \} \cr
 	Q_{\mathcal G}^-&=&\frac{1}{|\mathcal G|_0}\sum_{i\in\mathcal G}\frac{1}{ |S|_0}\sum_{t\in S}1\{\widehat\theta_{it} < 0\} 1\{|\widehat\theta_{it}/s.e.(\widehat\theta_{it})| > 1.96 \} .
 \end{eqnarray*}

\begin{table}[htp]
	
	\begin{center}
		\caption{\footnotesize Summary of Estimated Minimum Wage Effects within Groups and Time Windows}
		\begin{tabular}{c|ccc|ccc|ccc}
			\hline
			\hline
			&  \multicolumn{3}{c|}{1990 through 1993 } &  \multicolumn{3}{c|}{1994 through 1999 }&\multicolumn{3}{|c}{2000 through 2006 } \\
				&&&&&&\\
			&   $P_{\mathcal G}^+$ & $P_{\mathcal G}^-$ & $\widehat{\theta}_{\mathcal{G}}$ &    $P_{\mathcal G}^+$ & $P_{\mathcal G}^-$  & $\widehat{\theta}_{\mathcal{G}}$ & $P_{\mathcal G}^+$ & $P_{\mathcal G}^-$  & $\widehat{\theta}_{\mathcal{G}}$  \\
			High min-wage  & 0.500& 0& 0.091&  0.083 &  0.708 & -0.123& 0.231&   0.423   &   -0.010\\
			Low min-wage  &  0.500& 0&0.479& 0.166&  0.708& -0.558   &0.115& 0.077&-0.002\\
			&&&&&&\\
	&   $Q_{\mathcal G}^+$ & $Q_{\mathcal G}^-$ &   &    $Q_{\mathcal G}^+$ & $Q_{\mathcal G}^-$  &  & $Q_{\mathcal G}^+$ & $Q_{\mathcal G}^-$  &    \\
High min-wage  & 0.118& 0.031& &  0.075&  0.202 &  & 0.075&   0.088&  \\
Low min-wage  &  0.327& 0.031& & 0.112&  0.453&    &0.123& 0.126& \\
			\hline
\multicolumn{10}{p{14.5cm}}{\footnotesize Note: This table reports summaries of group level average effects within time windows. $P_{\mathcal G}^+$ and $P_{\mathcal G}^-$ respectively denote the fraction of time periods within the specified window where the estimated group-level average effect is positive and statistically significant at the 5\% level and the fraction of time periods within the specified window where the estimated group-level average effect is negative and statistically significant at the 5\% level. $\widehat{\theta}_{\mathcal{G}}$ denotes the within time window average of the group-level average effects. $Q_{\mathcal G}^+$ and $Q_{\mathcal G}^-$ respectively denote the fraction of (state, time) combinations within the specified window where the estimated $\theta_{it}$ is positive and statistically significant at the 5\% level and where the estimated $\theta_{it}$ is negative and statistically significant at the 5\% level.}
		\end{tabular}
		
	\end{center}
	\label{t6.1}
\end{table}%

Looking at Table \ref{t6.1}, we see that the estimated average effect in both groups is positive and statistically significant in  50\% of the time periods and never negative and statistically significant. The overall averages are also positive over this window, with the average in the high-minimum wage group suggesting a relatively small positive effect of minimum wage increases on employment and the average in the low-minimum wage group suggesting a large positive effect of minimum wages on employment. These patterns essentially reverse in the second period where both groups have estimated average effects that are negative and statistically significant in the majority of time periods, though the estimated effects are positive and significant in some time periods. The overall averages are also negative and of nearly the same magnitude as the estimates in the first period, suggesting large disemployment effects associated to minimum wage increases in the low-minimum wage group and much more modest disemployment effects in the high-minimum wage group. 

Relative to the first two time blocks, the results in the last time block are much more stable and overall suggest modest effects of minimum wage changes on employment. The point estimates of average effects over this time period tend to be negative, while the average effect of the low minimum wage group has a noticeably larger magnitude than that of the high minimum wage group. 
 Over this window, the estimated effects in the low-minimum wage group tend to be small relative to standard errors, leading to a relatively small fraction of statistically significant estimated effects.  This pattern differs in the high-minimum wage group where the relatively more precise estimates lead to approximately sixty percent of the effects being statistically significant with a somewhat higher fraction being negative and significant than positive and significant. {Overall, these patterns are consistent with there being substantial heterogeneity in the coefficient on the minimum wage, with the minimum wage having both statistically significant large positive and large negative effects on youth employment depending on the time period.}

 {As a point of reference, we also obtained an estimate of the minimum wage effect using the usual homogeneous two-way fixed effects model:
$$
y_{it} = x_{it,1}\theta + x_{it,2}\beta + x_{it,3}\gamma + \alpha_i + g_t + u_{it}.
$$
Within our data, the estimate of $\theta$ is -0.1128 with estimated standard error of 0.0583, where the standard error is obtained using two-way clustering by state and time period. These results are negative and insignificant and thus the conclusions one would draw differ sharply from those one would draw using the richer heterogeneous effects model where we find substantial evidence for significant effects of the minimum wage on employment.} 

 {We wish to reiterate that the heterogeneous model we consider is substantially more flexible than the homogeneous two-way fixed effects in that it accommodates both a richer structure over unobserved heterogeneity through the interactive fixed effects and a richer structure over slopes which are allowed to vary over time and in the cross-section.  Within this richer structure, there are then two leading reasons for the apparent improvement in precision, in the sense of finding many statistically significant effects, relative to the homogeneous model. First, even if the homogeneous model were consistent for estimating the average effect of the minimum wage, it is of course completely possible that heterogeneity is such that the average effect is near zero while effects in different states and time periods are far from zero. The small and insignificant homogeneous effect could then be masking what are important heterogeneous effects at the state or time period level. Second, under the heterogeneous effect model, the homogeneous effects model is misspecified and the heterogeneity in slopes and the difference between the interactive and additive fixed effects structure will not be captured by the model. This misspecification will then manifest as unexplained variation and potentially large standard errors for the estimated homogeneous coefficients. Overall, we believe that the estimates obtained from the two models provide substantial evidence that the homogeneous effects model is indeed inadequate for modeling the impacts of the minimum wage.}

 {As a final caution, we note that interpreting our reported estimates as causal still requires the assumption that there are no sources of confounding leading to association between the minimum wage and the unobservable $u_{it}$. The potentially implausibly large estimated minimum wage effects and the large swings in these estimated effects may indeed be evidence for the presence of potential sources of endogeneity. Importantly, any such sources of confounding would also contaminate homogeneous effects models using the same set of variables that we use, as our estimated model nests these models.}

 {Finally, it should be noted that the flexible approach we take imposes insufficient structure for extrapolating estimated effects to learn objects such as the effect of the minimum wage next period on employment next period. Adding structure to allow answering such questions may be practically useful and would also potentially add further regularizing restrictions that could improve estimation performance. Exploring the impact of adding such additional structure may be interesting for future research.}

\bibliographystyle{ims}
\bibliography{liaoBib_newest}

\begin{thebibliography}{32}
\expandafter\ifx\csname natexlab\endcsname\relax\def\natexlab#1{#1}\fi
\expandafter\ifx\csname url\endcsname\relax
  \def\url#1{\texttt{#1}}\fi
\expandafter\ifx\csname urlprefix\endcsname\relax\def\urlprefix{URL }\fi

\bibitem[{Acemoglu et~al.(2008)Acemoglu, Johnson, Robinson and
  Yared}]{acemoglu2008income}
\textsc{Acemoglu, D.}, \textsc{Johnson, S.}, \textsc{Robinson, J.~A.} and
  \textsc{Yared, P.} (2008).
\newblock Income and democracy.
\newblock \textit{American Economic Review} \textbf{98} 808--42.

\bibitem[{Ahn and Horenstein(2013)}]{ahn2013eigenvalue}
\textsc{Ahn, S.~C.} and \textsc{Horenstein, A.~R.} (2013).
\newblock Eigenvalue ratio test for the number of factors.
\newblock \textit{Econometrica} \textbf{81} 1203--1227.

\bibitem[{Ahn et~al.(2013)Ahn, Lee and Schmidt}]{ahn2013panel}
\textsc{Ahn, S.~C.}, \textsc{Lee, Y.~H.} and \textsc{Schmidt, P.} (2013).
\newblock Panel data models with multiple time-varying individual effects.
\newblock \textit{Journal of Econometrics} \textbf{174} 1--14.

\bibitem[{Athey et~al.(2018)Athey, Bayati, Doudchenko, Imbens and
  Khosravi}]{athey2018matrix}
\textsc{Athey, S.}, \textsc{Bayati, M.}, \textsc{Doudchenko, N.},
  \textsc{Imbens, G.} and \textsc{Khosravi, K.} (2018).
\newblock Matrix completion methods for causal panel data models.
\newblock Tech. rep., National Bureau of Economic Research.

\bibitem[{Bai(2003)}]{bai03}
\textsc{Bai, J.} (2003).
\newblock Inferential theory for factor models of large dimensions.
\newblock \textit{Econometrica} \textbf{71} 135--171.

\bibitem[{Bai(2009)}]{bai09}
\textsc{Bai, J.} (2009).
\newblock Panel data models with interactive fixed effects.
\newblock \textit{Econometrica} \textbf{77} 1229--1279.

\bibitem[{Bai and Ng(2017)}]{bai2017principal}
\textsc{Bai, J.} and \textsc{Ng, S.} (2017).
\newblock Principal components and regularized estimation of factor models.
\newblock \textit{arXiv preprint arXiv:1708.08137} .

\bibitem[{Beck and Teboulle(2009)}]{beck2009fast}
\textsc{Beck, A.} and \textsc{Teboulle, M.} (2009).
\newblock A fast iterative shrinkage-thresholding algorithm for linear inverse
  problems.
\newblock \textit{SIAM journal on imaging sciences} \textbf{2} 183--202.

\bibitem[{Belloni and Chernozhukov(2013)}]{belloni2013least}
\textsc{Belloni, A.} and \textsc{Chernozhukov, V.} (2013).
\newblock Least squares after model selection in high-dimensional sparse
  models.
\newblock \textit{Bernoulli} \textbf{19} 521--547.

\bibitem[{Bonhomme and Manresa(2015)}]{bonhomme2015grouped}
\textsc{Bonhomme, S.} and \textsc{Manresa, E.} (2015).
\newblock Grouped patterns of heterogeneity in panel data.
\newblock \textit{Econometrica} \textbf{83} 1147--1184.

\bibitem[{Cai et~al.(2016)Cai, Liang, Rakhlin et~al.}]{cai2016geometric}
\textsc{Cai, T.~T.}, \textsc{Liang, T.}, \textsc{Rakhlin, A.} \textsc{et~al.}
  (2016).
\newblock Geometric inference for general high-dimensional linear inverse
  problems.
\newblock \textit{The Annals of Statistics} \textbf{44} 1536--1563.

\bibitem[{Callaway and Sant'Anna(2019)}]{callaway2019difference}
\textsc{Callaway, B.} and \textsc{Sant'Anna, P.~H.} (2019).
\newblock Difference-in-differences with multiple time periods.
\newblock Tech. rep., Working Paper.

\bibitem[{Cand{\`e}s and Tao(2010)}]{candes2010power}
\textsc{Cand{\`e}s, E.~J.} and \textsc{Tao, T.} (2010).
\newblock The power of convex relaxation: Near-optimal matrix completion.
\newblock \textit{IEEE Transactions on Information Theory} \textbf{56}
  2053--2080.

\bibitem[{Card and Krueger(1994)}]{card1994minimum}
\textsc{Card, D.} and \textsc{Krueger, A.~B.} (1994).
\newblock Minimum wages and employment: A case study of the fast-food industry
  in new jersey and pennsylvania.
\newblock \textit{American Economic Review} \textbf{84} 772--793.

\bibitem[{Cengiz et~al.(2019)Cengiz, Dube, Lindner and
  Zipperer}]{cengiz2019effect}
\textsc{Cengiz, D.}, \textsc{Dube, A.}, \textsc{Lindner, A.} and
  \textsc{Zipperer, B.} (2019).
\newblock The effect of minimum wages on low-wage jobs.
\newblock Tech. rep., National Bureau of Economic Research.

\bibitem[{Chernozhukov et~al.(2018)Chernozhukov, Chetverikov, Demirer, Duflo,
  Hansen, Newey and Robins}]{dml}
\textsc{Chernozhukov, V.}, \textsc{Chetverikov, D.}, \textsc{Demirer, M.},
  \textsc{Duflo, E.}, \textsc{Hansen, C.}, \textsc{Newey, W.} and
  \textsc{Robins, J.} (2018).
\newblock Double/debiased machine learning for treatment and structural
  parameters.
\newblock \textit{Econometrics Journal} \textbf{21} C1--C68.

\bibitem[{Dube et~al.(2010)Dube, Lester and Reich}]{dube2010minimum}
\textsc{Dube, A.}, \textsc{Lester, T.~W.} and \textsc{Reich, M.} (2010).
\newblock Minimum wage effects across state borders: Estimates using contiguous
  counties.
\newblock \textit{The Review of Economics and Statistics} \textbf{92} 945--964.

\bibitem[{Koltchinskii et~al.(2011)Koltchinskii, Lounici and
  Tsybakov}]{koltchinskii2011nuclear}
\textsc{Koltchinskii, V.}, \textsc{Lounici, K.} and \textsc{Tsybakov, A.~B.}
  (2011).
\newblock Nuclear-norm penalization and optimal rates for noisy low-rank matrix
  completion.
\newblock \textit{The Annals of Statistics} \textbf{39} 2302--2329.

\bibitem[{Ma et~al.(2011)Ma, Goldfarb and Chen}]{ma2011fixed}
\textsc{Ma, S.}, \textsc{Goldfarb, D.} and \textsc{Chen, L.} (2011).
\newblock Fixed point and bregman iterative methods for matrix rank
  minimization.
\newblock \textit{Mathematical Programming} \textbf{128} 321--353.

\bibitem[{Moon and Weidner(2018)}]{moon2018nuclear}
\textsc{Moon, H.~R.} and \textsc{Weidner, M.} (2018).
\newblock Nuclear norm regularized estimation of panel regression models.
\newblock \textit{arXiv preprint arXiv:1810.10987} .

\bibitem[{Moon and Weidner(2015)}]{MW11}
\textsc{Moon, R.} and \textsc{Weidner, M.} (2015).
\newblock Linear regression for panel with unknown number of factors as
  interactive fixed effects.
\newblock \textit{Econometrica} \textbf{83} 1543--1579.

\bibitem[{Negahban and Wainwright(2011)}]{negahban2011estimation}
\textsc{Negahban, S.} and \textsc{Wainwright, M.~J.} (2011).
\newblock Estimation of (near) low-rank matrices with noise and
  high-dimensional scaling.
\newblock \textit{The Annals of Statistics} \textbf{39} 1069--1097.

\bibitem[{Neumark and Wascher(2008)}]{neumark2008minimum}
\textsc{Neumark, D.} and \textsc{Wascher, W.~L.} (2008).
\newblock \textit{Minimum wages}.
\newblock MIT press.

\bibitem[{Pesaran(2006)}]{pesaran}
\textsc{Pesaran, H.} (2006).
\newblock Estimation and inference in large heterogeneous panels with a
  multifactor error structure.
\newblock \textit{Econometrica} \textbf{74} 967--1012.

\bibitem[{Recht et~al.(2010)Recht, Fazel and Parrilo}]{recht2010guaranteed}
\textsc{Recht, B.}, \textsc{Fazel, M.} and \textsc{Parrilo, P.~A.} (2010).
\newblock Guaranteed minimum-rank solutions of linear matrix equations via
  nuclear norm minimization.
\newblock \textit{SIAM review} \textbf{52} 471--501.

\bibitem[{Stock and Watson(2002)}]{SW02}
\textsc{Stock, J.} and \textsc{Watson, M.} (2002).
\newblock Forecasting using principal components from a large number of
  predictors.
\newblock \textit{Journal of the American Statistical Association} \textbf{97}
  1167--1179.

\bibitem[{Su et~al.(2015)Su, Jin and Zhang}]{su2015specification}
\textsc{Su, L.}, \textsc{Jin, S.} and \textsc{Zhang, Y.} (2015).
\newblock Specification test for panel data models with interactive fixed
  effects.
\newblock \textit{Journal of Econometrics} \textbf{186} 222--244.

\bibitem[{Su et~al.(2019)Su, Miao and Jin}]{su2019factor}
\textsc{Su, L.}, \textsc{Miao, K.} and \textsc{Jin, S.} (2019).
\newblock On factor models with random missing: Em estimation, inference, and
  cross validation.
\newblock Tech. rep., Working Paper.

\bibitem[{Su et~al.(2016)Su, Shi and Phillips}]{su2016identifying}
\textsc{Su, L.}, \textsc{Shi, Z.} and \textsc{Phillips, P.~C.} (2016).
\newblock Identifying latent structures in panel data.
\newblock \textit{Econometrica} \textbf{84} 2215--2264.

\bibitem[{Sun and Zhang(2012)}]{sun2012calibrated}
\textsc{Sun, T.} and \textsc{Zhang, C.-H.} (2012).
\newblock Calibrated elastic regularization in matrix completion.
\newblock In \textit{Advances in Neural Information Processing Systems}.

\bibitem[{Vershynin(2010)}]{vershynin2010introduction}
\textsc{Vershynin, R.} (2010).
\newblock Introduction to the non-asymptotic analysis of random matrices.
\newblock \textit{arXiv preprint arXiv:1011.3027} .

\bibitem[{Wang et~al.(2018)Wang, Su and Phillips}]{phillips2018heterogeneous}
\textsc{Wang, W.}, \textsc{Su, L.} and \textsc{Phillips, P. C.~B.} (2018).
\newblock The heterogeneous effects of the minimum wage on employment across
  states.
\newblock \textit{Economics Letters} .

\end{thebibliography}

\pagebreak

\appendix
   
\section{Estimation Algorithm for the Multivariate Case}\label{app: multivariate}
   
Consider  \begin{eqnarray*}
y_{it}&=&\sum_{r=1}^Rx_{it,r}\theta_{it,r}+ \alpha_i'g_t+u_{it} ,\quad i=1,..., N,\quad t=1,..., T. \cr 
\theta_{it,r}&=& \lambda_{i,r}'f_{t,r}.
\end{eqnarray*}
   where $(x_{it,1},...,x_{it,R})'$ is an $R$-dimensional vector of covariate. Each coefficient $\theta_{it,r}$ admits a factor structure with $\lambda_{i,r}$ and $f_{t,r}$ as the ``loadings" and ``factors"; the factors and loadings are $\theta_{it,r}$ specific, but are allowed to have overlap. Here $(R,\dim(\lambda_{i,1}),...,\dim(\lambda_{i,R}))$ are all assumed fixed.

Suppose $x_{it,r}= \mu_{it,r}+e_{it,r}$. For instance, $\mu_{it,r}=l_{i,r}'w_{t,r}$ also admits a factor structure.  Then after partialing out $\mu_{it,r}$,  the model can also written as:
$$
\dot y_{it}= \sum_{r=1}^Re_{it,r}\lambda_{i,r}'f_{t,r}+ \alpha_i'g_t+u_{it},\quad \dot y_{it}=y_{it}- \sum_{r=1}^R\mu_{it,r}\theta_{it,r}
$$
As such, it is  straightforward to extend   the estimation algorithm   to the multivariate case.  Let $X_r$ be the $N\times T$ matrix of $x_{it,r}$,  
Let $\Theta_r$ be the $N\times T$ matrix of $\theta_{it,r}$. We first estimate these low rank matrices using penalized nuclear-norm regression. We then  apply sample splitting, and employ steps 2-4 to iteratively estimate $(f_{t,r}, \lambda_{i,r})$.  The formal algorithm is stated as follows.

 \begin{algo}\label{ala.1} Estimate $\theta_{it,r}$ as follows. 

\textit{Step 1. Estimate the number of factors.} 
Run nuclear-norm penalized regression:
$$
(\widetilde M, \widetilde\Theta_r):=\arg\min_{M,\Theta_r}\|Y-M-\sum_{r=1}^R X_r\odot \Theta_r\|_F^2+\nu_0\|M\|_n+\sum_{r=1}^R\nu_r\|\Theta\|_n.
$$

Estimate $K_r=\dim(\lambda_{i,r}), K_0=\dim(\alpha_i)$ by
   $$
  \widehat K_r=\sum_{i} 1\{\psi_i(\widetilde \Theta_r)\geq (\nu_r  \|\widetilde \Theta_r\|)^{1/2}\},\quad  \widehat K_0=\sum_{i} 1\{\psi_i(\widetilde M)\geq  (\nu_0  \|\widetilde M\|)^{1/2}\}.
  $$

    \textit{Step 2. Estimate the structure $x_{it,r}=\mu_{it,r}+e_{it,r}$.} 
  
  In the many mean model, let $\widehat e_{it,r}=  x_{it,r}-\frac{1}{T}\sum_{t=1}^Tx_{it,r}$.  In the factor model, use the PC estimator to obtain  $(\widehat{l_{i,r}'w_{t,r}},\widehat e_{it,r})$ for all $i=1,..., N, t=1,.., T$ and $r=1,..., R.$

\textit{Step 3: Sample splitting.} 
Randomly split the sample into $\{1,..., T\}/\{t\}=I\cup I^c$, so that $|I|_0= [(T-1)/2]$.    Denote by    $Y_I$, $X_{I,r}$ as the  $N\times |I|_0$ matrices of $(y_{is}, x_{is,r})$ for  observations  at  $s\in I$.  
Estimate the low-rank matrices  $\Theta $ and $M $ as in step 1,  with $(Y, X_{r})$ replaced with  $(Y_I, X_{I,r})$, and obtain $(\widetilde M_I, \widetilde\Theta_{I,r})$.

Let $\widetilde\Lambda_{I,r}=(\widetilde\lambda_{1,r},...,\widetilde\lambda_{N,r})'$ be the $N\times \widehat K_r$ matrix, whose columns are defined as $\sqrt{N}$ times  the first $\widehat K_r$ eigenvectors of $\widetilde \Theta_{I,r}\widetilde \Theta_{I,r}'$.  Let $\widetilde A_{I}=(\widetilde\alpha_{1},...,\widetilde\alpha_{N})'$ be the $N\times \widehat K_0$ matrix, whose columns are defined as $\sqrt{N}$ times  the first $\widehat K_0$ eigenvectors of $\widetilde  M_I\widetilde  M_I'$.

    \textit{Step 4. Estimate the ``partial-out" components.}

Substitute in    $(\widetilde\alpha_i, \widetilde\lambda_{i,r})$,  and define   $$
(  \widetilde f_{s,r},\widetilde g_s):= \arg\min_{f_{s,r}, g_s}\sum_{i=1}^N(y_{is}- \widetilde\alpha_i'g_s- \sum_{r=1}^Rx_{is,r} \widetilde \lambda_{i,r}'f_{s,r})^2,\quad s\in I^c\cup\{t\}.
  $$
  and
       $$
(\dot\lambda_{i,r}, \dot\alpha_i)= \arg \min_{\lambda_{i,r}, \alpha_i}\sum_{s\in I^c\cup\{t\}} (y_{is} - \alpha_i'  \widetilde g_s- \sum_{r=1}^Rx_{is,r} \lambda_{i,r}'\widetilde f_{s,r})^2,\quad i=1,..., N.
  $$

    \textit{Step 5. Estimate $(f_{t,r},\lambda_{i,r})$ for inferences.} 
   
  Motivated by (\ref{e3.5}), 
for all $s\in I^c\cup\{t\}$,  let 
  $$
(\widehat f_{I,s,r},\widehat g_{I,s}):= \arg\min_{f_{s,r}, g_s}\sum_{i=1}^N(\widehat y_{is}- \widetilde\alpha_i'g_s- \sum_{r=1}^R\widehat e_{is,r} \widetilde \lambda_{i,r}'f_{s,r})^2.
  $$
 Fix $i\leq N$, 
  $$
(\widehat\lambda_{I,i,r},\widehat\alpha_{I,i})=\arg \min_{\lambda_{i,r}, \alpha_i}\sum_{s\in I^c\cup\{t\}} (\widehat y_{is}-   \alpha_i'\widehat g_{I,s}-\sum_{r=1}^R \widehat e_{is,r}  \lambda_{i,r}'\widehat f_{I,s,r})^2.
  $$
  where $\widehat y_{is}=y_{is}-\bar x_{i,r} \dot\lambda_i'\widetilde f_s$ and $\widehat e_{is,r}=x_{is,r}-\bar x_{i,r}$ in the many mean model, and $\widehat y_{is}= y_{is}- \widehat{l_{i,r}'w_{s,r}}\dot\lambda_{i,r}'\widetilde f_{s,r}$, $\widehat e_{is,r}=x_{is,r}- \widehat{l_{i,r}'w_{s,r}}$ in  the factor model.
  
      \textit{Step 6. Estimate   $\theta_{it,r}$. } 
  Repeat steps  3-5 with $I$ and $I^c$ exchanged, and obtain  $(\widehat{\lambda}_{I^c,i,r}, \widehat{f}_{I^c,s,r}: s\in I\cup \{t\}, i\leq N, r\leq R)$. 
Define
   $$
\widehat\theta_{it,r}:= \frac{1}{2}[\widehat\lambda_{I,i,r}'\widehat f_{I,t,r}+ \widehat{\lambda}_{I^c,i,r}'\widehat f_{I^c,t,r}]. $$

\end{algo}

The asymptotic variance can be estimated by $ \widehat v_{\lambda,r}+  \widehat v_{f,r} $, where 
\begin{eqnarray*}
 \widehat v_{\lambda,r} &=&\frac{1}{2N}( \widehat \lambda_{I,r,\mathcal G}' \widehat V_{\lambda,1}^{I-1} \widehat V_{\lambda,2}^I  \widehat V_{\lambda,1}^{I-1} \widehat \lambda_{I,r,\mathcal G} +   \widehat \lambda_{I^c,r,\mathcal G}' \widehat V_{\lambda,1}^{I^c-1} \widehat V_{\lambda,2}^{I^c}  \widehat V_{\lambda,1}^{I^c-1} \widehat \lambda_{I^c,r,\mathcal G} )  \cr
  \widehat v_{f} &=&\frac{1}{2T|\mathcal G|_0}( \widehat f_{I,t,r}' \widehat V_{I, f}  \widehat f_{I,t,r} +   \widehat f_{I^c,t,r}' \widehat V_{I^c, f}   \widehat f_{I^c,t,r} ) \cr
   \widehat V_{S, f} &=& \frac{1}{|\mathcal G|_0|S|_0}\sum_{s\notin S}\sum_{i\in\mathcal G} \widehat\Omega_{S,i,r}\widehat f_{S,s,r}\widehat f_{S,s,r}' \widehat\Omega_{S,i,r}  \widehat e_{is,r}^2\widehat u_{is}^2    \cr
 \widehat V_{\lambda,1}^S&=&\frac{1}{N}\sum_j\widehat  \lambda_{S,r,j}\widehat \lambda_{S,r, j}'\widehat e_{jt,r}^2 \cr
 \widehat V_{\lambda,2}^S&=&\frac{1}{N}\sum_j \widehat  \lambda_{S,r,j}\widehat \lambda_{S,r, j}'\widehat e_{jt,r}^2  {\widehat u_{jt}^2} \cr
    \widehat \lambda_{S,r,\mathcal G} &=&  \frac{1}{|\mathcal G|_0}\sum_{i\in\mathcal G}\widehat \lambda_{S,r,i},\quad \widehat\Omega_{S,i,r}= (\frac{1}{|S|_0} \sum_{s\in S}\widehat f_{S,s,r}\widehat f_{S,s,r}')^{-1} (\frac{1}{T}\sum_{s=1}^T\widehat e_{is,r}^2)^{-1}
 \end{eqnarray*}

It is also straightforward to extend the univariate asymptotic analysis to the multivariate case, and establish the asymptotic normality for $\widehat\theta_{it,r}$.   The proof techniques are the same, subjected to more complicated notation. Therefore our proofs below focus on the univariate case.

\section{Proof of Proposition \ref{p2.1}}

\begin{proof}

Recall that $\Theta_{k+1}= S_{\tau\nu_1/2}(\Theta_k-\tau A_k)$, where   $$
A_k=   X\odot(X\odot\Theta_k -Y+M_k).
$$
By Lemma \ref{la.2},  set $\Theta= \widetilde \Theta$ and $M=\widetilde M$,  and replace $k$ with subscript $m$, 
$$
F( \widehat \Theta, \widehat  M) - F(\Theta_{m+1}, M_{m+1})\geq \frac{1}{\tau } \left(\|\Theta_{m+1}-  \widehat  \Theta\|_F^2 -   \|\Theta_{m}- \widehat   \Theta\|_F^2 \right).
$$
Let $m=1,..., k$, and sum these inequalities up,    since $F(\Theta_{m+1}, M_{m+1})\geq F(\Theta_{k+1}, M_{k+1})$ by Lemma \ref{la.1}, 
\begin{eqnarray*}
&&k F( \widehat \Theta, \widehat  M) -  kF(\Theta_{k+1}, M_{k+1})\geq 
k F( \widehat \Theta, \widehat  M) - \sum_{m=1}^{k}F(\Theta_{m+1}, M_{m+1})\cr
&\geq&  \frac{1}{\tau } \left(\|\Theta_{k+1}-  \widehat  \Theta\|_F^2 -   \|\Theta_{1}- \widehat   \Theta\|_F^2 \right)\geq -\frac{1}{\tau } \|\Theta_{1}- \widehat   \Theta\|_F^2. 
\end{eqnarray*} 

\end{proof}

The above proof depends on  the following lemmas.
\begin{lem}\label{la.1} We have: (i)
 \begin{eqnarray*}
\Theta_{k+1}&=&\arg\min_{\Theta}  p(\Theta, \Theta_k,  M_k) +  \nu_1\|\Theta\|_n,  \cr
p(\Theta, \Theta_k,  M_k)&:=&   \tau^{-1}\left\|
\Theta_k-\Theta
\right\| ^2_F -2 \tr\left((\Theta_k-\Theta)'  A_k \right).
\end{eqnarray*}
(ii) For any  $\tau\in(0, 1/\max x_{it}^2)$, 
$$F(\Theta, M_k)\leq p(\Theta, \Theta_k, M_k)   +\nu_1\|\Theta\|_n+\nu_2\| M_k\|_n +  
 \|Y-M_k-X\odot\Theta_k\|_F^2  .
$$
(iii) 
$ F(\Theta_{k+1}, M_{k+1})\leq  F(\Theta_{k+1}, M_{k})\leq  F(\Theta_{k}, M_{k}).$

\end{lem}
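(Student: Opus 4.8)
The plan is to establish parts (i) and (ii) first and then assemble (iii) from them. Throughout, write $L(\Theta):=\|Y-M_k-X\odot\Theta\|_F^2$ for the smooth part of $F(\cdot,M_k)$ and let $\langle A,B\rangle:=\tr(A'B)$. Since $L$ is quadratic with $\nabla L(\Theta)=2X\odot(X\odot\Theta-Y+M_k)$, we have the identification $A_k=\tfrac12\nabla L(\Theta_k)$, which drives the whole argument.

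For part (i), I would complete the square. Using $-2\tr((\Theta_k-\Theta)'A_k)=2\langle\Theta-\Theta_k,A_k\rangle$, one checks that
\[
p(\Theta,\Theta_k,M_k)=\tau^{-1}\|\Theta-(\Theta_k-\tau A_k)\|_F^2-\tau\|A_k\|_F^2 .
\]
As the last term is free of $\Theta$, minimizing $p(\Theta,\Theta_k,M_k)+\nu_1\|\Theta\|_n$ is the same as minimizing $\tfrac12\|\Theta-(\Theta_k-\tau A_k)\|_F^2+\tfrac{\tau\nu_1}{2}\|\Theta\|_n$. I would then invoke the proximal-operator characterization of singular value soft-thresholding \citep{ma2011fixed}, namely $S_{\mu}(Z)=\arg\min_{\Theta}\tfrac12\|\Theta-Z\|_F^2+\mu\|\Theta\|_n$, applied with $Z=\Theta_k-\tau A_k$ and $\mu=\tau\nu_1/2$, to conclude that the minimizer is exactly $\Theta_{k+1}=S_{\tau\nu_1/2}(\Theta_k-\tau A_k)$.

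For part (ii), the key is the quadratic majorization of $L$ already foreshadowed in Section \ref{subsubsec: regressor}, here with $Z=Y-M_k$. Because $L(\Theta)=\sum_{it}(y_{it}-M_{k,it}-x_{it}\theta_{it})^2$ is separable and quadratic in each entry with curvature $2x_{it}^2$, its exact second-order expansion about $\Theta_k$ is
\[
L(\Theta)=L(\Theta_k)+\langle\nabla L(\Theta_k),\Theta-\Theta_k\rangle+\sum_{it}x_{it}^2(\theta_{it}-\theta_{k,it})^2 .
\]
Bounding the quadratic term by $(\max_{it}x_{it}^2)\|\Theta-\Theta_k\|_F^2\le\tau^{-1}\|\Theta-\Theta_k\|_F^2$, which is valid precisely because $\tau\in(0,1/\max_{it}x_{it}^2)$, and rewriting $\langle\nabla L(\Theta_k),\Theta-\Theta_k\rangle=-2\tr((\Theta_k-\Theta)'A_k)$, yields $L(\Theta)\le L(\Theta_k)+p(\Theta,\Theta_k,M_k)$. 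Adding $\nu_1\|\Theta\|_n+\nu_2\|M_k\|_n$ to both sides and noting $L(\Theta_k)=\|Y-M_k-X\odot\Theta_k\|_F^2$ gives the stated majorization.

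For part (iii), I would treat the two inequalities separately. The $M$-update is an exact minimization: given $\Theta_{k+1}$, the same prox identity shows $M_{k+1}=S_{\nu_2/2}(Y-X\odot\Theta_{k+1})$ is the global minimizer of $M\mapsto F(\Theta_{k+1},M)$, so $F(\Theta_{k+1},M_{k+1})\le F(\Theta_{k+1},M_k)$. For the second inequality, evaluate (ii) at $\Theta=\Theta_{k+1}$ and use optimality of $\Theta_{k+1}$ from (i), giving $p(\Theta_{k+1},\Theta_k,M_k)+\nu_1\|\Theta_{k+1}\|_n\le p(\Theta_k,\Theta_k,M_k)+\nu_1\|\Theta_k\|_n$; since $p(\Theta_k,\Theta_k,M_k)=0$, the right-hand side collapses to $F(\Theta_k,M_k)$, so $F(\Theta_{k+1},M_k)\le F(\Theta_k,M_k)$. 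The only non-elementary ingredient is the proximal characterization of singular value soft-thresholding used in (i) and in the $M$-step, which I take as known. The main (minor) obstacle is the majorization in (ii), where one must keep the Hadamard product $X\odot\Theta$ intact so that the curvature is exactly $\{2x_{it}^2\}$ and then recognize that the step-size restriction $\tau<1/\max_{it}x_{it}^2$ is exactly what makes $\tau^{-1}$ dominate it; the remainder is completing-the-square and trace bookkeeping.
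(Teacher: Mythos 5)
Your proof is correct and follows essentially the same route as the paper's: complete the square in $p$ and invoke the proximal (soft-thresholding) characterization of the nuclear-norm subproblem for (i), use the entrywise curvature bound $\sum_{it}x_{it}^2(\theta_{it}-\theta_{k,it})^2\le \tau^{-1}\|\Theta-\Theta_k\|_F^2$ for the majorization in (ii), and chain exact $M$-minimization with (i)--(ii) and $p(\Theta_k,\Theta_k,M_k)=0$ for (iii). No gaps.
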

\begin{proof}
(i)  We have
$   \|
\Theta_k-\tau A_k -\Theta\|_F^2
=\| \Theta_k -\Theta\|_F^2+\tau^2 \|A_k\|_F^2-2\tr[(\Theta_k-\Theta)' A_k]\tau.
$
So \begin{eqnarray*}
&&\arg\min_{\Theta} \|\Theta_k-\tau A_k -\Theta\|_F^2+\tau  \nu_1\|\Theta\|_n\cr
&=&\arg\min_{\Theta} \| \Theta_k -\Theta\|_F^2-2\tr[(\Theta_k-\Theta)' A_k]\tau+\tau  \nu_1\|\Theta\|_n\cr
&=&\arg\min_{\Theta}  \tau \cdot p(\Theta, \Theta_k, M_k)+\tau  \nu_1\|\Theta\|_n.
\end{eqnarray*}
On the other hand,  it is well known that $\Theta_{k+1}=S_{\tau\nu_1/2}(\Theta_k-\tau A_k)$ is the solution to the first problem in the above equalities \citep{ma2011fixed}.  This proves (i).



 (ii)
Note that   for $\Theta_k=(\theta_{k,it})$ and $\Theta=(\theta_{it})$,  and any $\tau ^{-1}>\max_{it}x_{it}^2$, we have
$$
 \|   X\odot(\Theta_k-\Theta)\|_F^2=\sum_{it} x_{it}^2(\theta_{k,it}-\theta_{it})^2
 <\tau^{-1} \|\Theta_k-\Theta\|_F^2.
$$
So 
\begin{eqnarray*}
 F(\Theta, M_k)&=&  \|Y-M_k- X\odot \Theta\|_F^2+\nu_2\|M_k\|_n+\nu_1\|\Theta\|_n
 \cr
&=&\|Y-M_k- X\odot \Theta_k\|_F^2 + \|   X\odot(\Theta_k-\Theta)\|_F^2
-2\tr[A_k' (\Theta_k-\Theta)]\cr
&&+\nu_2\|M_k\|_n+\nu_1\|\Theta\|_n\cr
&\leq &\|Y-M_k- X\odot \Theta_k\|_F^2 + \tau^{-1}\|   \Theta_k-\Theta\|_F^2
-2\tr[A_k' (\Theta_k-\Theta)]\cr
&&+\nu_2\|M_k\|_n+\nu_1\|\Theta\|_n\cr
&=&p(\Theta, \Theta_k,  M_k) + \|Y-M_k- X\odot \Theta_k\|_F^2 +\nu_2\|M_k\|_n+\nu_1\|\Theta\|_n.
\end{eqnarray*}

 (iii) By definition, $p(\Theta_{k}, \Theta_k, M_k) =0$. So
\begin{eqnarray*}
 F(\Theta_{k+1}, M_{k+1})&=&\|Y-X\odot \Theta_{k+1}- M_{k+1}\|_F^2 +\nu_1\|\Theta_{k+1}\|_n +\nu_2\|M_{k+1}\|_n\cr
 &\leq ^{\text{(a)}}&\|Y-X\odot \Theta_{k+1}- M_{k}\|_F^2 +\nu_1\|\Theta_{k+1}\|_n +\nu_2\|M_{k}\|_n\cr
 &=& F(\Theta_{k+1}, M_{k})\cr
 &\leq ^{\text{(b)}}&
 p(\Theta_{k+1}, \Theta_k, M_k)   +\nu_1\|\Theta_{k+1}\|_n+\nu_2\| M_k\|_n +  
 \|Y-M_k-X\odot\Theta_k\|_F^2\cr
 &\leq ^{\text{(c)}}& p(\Theta_{k}, \Theta_k, M_k)   +\nu_1\|\Theta_{k}\|_n+\nu_2\| M_k\|_n +  
 \|Y-M_k-X\odot\Theta_k\|_F^2\cr
 &=& F(\Theta_k, M_k).
 \end{eqnarray*}
(a) is  due to the definition of $M_{k+1}$; (b) is due to (ii); (c) is due to (i). 

\end{proof}

\begin{lem}\label{la.2} For any  $\tau\in(0, 1/\max x_{it}^2)$,  any $(\Theta, M)$ and any $k\geq 1$, 
$$
F( \Theta, M) - F(\Theta_{k+1}, M_{k+1})\geq \frac{1}{\tau } \left(\|\Theta_{k+1}- \Theta\|_F^2 -   \|\Theta_{k}- \Theta\|_F^2 \right).
$$
\end{lem}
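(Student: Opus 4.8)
The plan is to prove Lemma \ref{la.2} by recognizing Algorithm \ref{algo 1} as a proximal-gradient (ISTA) scheme \emph{in the single block $\Theta$}, after eliminating $M$ by exact minimization. The crucial preliminary observation is that, for every $k\geq 1$, the update $M_k=S_{\nu_2/2}(Y-X\odot\Theta_k)$ is the exact minimizer of $M\mapsto F(\Theta_k,M)$; hence, writing
\[
h(\Theta):=\min_M F(\Theta,M),
\]
we have $F(\Theta_{k+1},M_{k+1})=h(\Theta_{k+1})$, while $F(\Theta,M)\geq h(\Theta)$ for \emph{every} $(\Theta,M)$. It therefore suffices to prove the sharper statement $h(\Theta)-h(\Theta_{k+1})\geq \tfrac1\tau(\|\Theta_{k+1}-\Theta\|_F^2-\|\Theta_k-\Theta\|_F^2)$, after which Lemma \ref{la.2} follows from these two facts.

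First I would record that $h(\Theta)=g(\Theta)+\nu_1\|\Theta\|_n$, where $g(\Theta):=\min_M\{\|Y-M-X\odot\Theta\|_F^2+\nu_2\|M\|_n\}$ is convex and, being a Moreau envelope of $\nu_2\|\cdot\|_n$ composed with the affine map $\Theta\mapsto Y-X\odot\Theta$, is smooth with $\nabla g(\Theta_k)=2A_k$ by the envelope theorem (the $M$-derivative vanishes precisely because $M_k$ is the exact minimizer). Next I would assemble the majorizer $\mathrm{Maj}(\Theta):=p(\Theta,\Theta_k,M_k)+\nu_1\|\Theta\|_n+\nu_2\|M_k\|_n+\|Y-M_k-X\odot\Theta_k\|_F^2$ already used in Lemma \ref{la.1}. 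By Lemma \ref{la.1}(i), $\Theta_{k+1}=\arg\min_\Theta\mathrm{Maj}(\Theta)$; since $\mathrm{Maj}$ is $\tfrac2\tau$-strongly convex through its $\tau^{-1}\|\Theta_k-\Theta\|_F^2$ term, optimality gives
\[
\mathrm{Maj}(\Theta)\geq \mathrm{Maj}(\Theta_{k+1})+\tfrac1\tau\|\Theta-\Theta_{k+1}\|_F^2 .
\]

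The remaining two ingredients sandwich $\mathrm{Maj}$ between values of $h$. On the upper side, Lemma \ref{la.1}(ii)--(iii) give $h(\Theta_{k+1})=F(\Theta_{k+1},M_{k+1})\leq F(\Theta_{k+1},M_k)\leq \mathrm{Maj}(\Theta_{k+1})$. On the lower side, convexity of $g$, the identity $\nabla g(\Theta_k)=2A_k$, and $p(\Theta,\Theta_k,M_k)=\tau^{-1}\|\Theta_k-\Theta\|_F^2-2\,\tr(A_k'(\Theta_k-\Theta))$ combine, after cancellation, to yield $\mathrm{Maj}(\Theta)\leq h(\Theta)+\tfrac1\tau\|\Theta_k-\Theta\|_F^2$. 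Chaining the three inequalities,
\[
h(\Theta_{k+1})\leq \mathrm{Maj}(\Theta)-\tfrac1\tau\|\Theta-\Theta_{k+1}\|_F^2\leq h(\Theta)+\tfrac1\tau\|\Theta_k-\Theta\|_F^2-\tfrac1\tau\|\Theta-\Theta_{k+1}\|_F^2,
\]
which rearranges to the reduced inequality, and hence, via $F(\Theta,M)\geq h(\Theta)$ and $F(\Theta_{k+1},M_{k+1})=h(\Theta_{k+1})$, to Lemma \ref{la.2}.

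I expect the main obstacle to be handling the $M$-block correctly. A naive block-coordinate estimate that linearizes the loss in $\Theta$ at the \emph{stale} iterate $M_k$ produces only $F(\Theta,M_k)-F(\Theta_{k+1},M_{k+1})\geq\tfrac1\tau(\cdots)$; since the lemma must hold for an arbitrary---and, in the application to Proposition \ref{p2.1}, loss-minimizing---$M$ with $F(\Theta,M)$ possibly strictly below $F(\Theta,M_k)$, this weaker estimate points the wrong way and breaks the subsequent telescoping. The resolution is exactly the passage to the reduced objective $h$: exactness of the $M$-step makes $F(\Theta_{k+1},M_{k+1})=h(\Theta_{k+1})$ and $F(\Theta,M)\geq h(\Theta)$ hold simultaneously, which repairs the direction of the inequality. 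The only genuinely new computation is verifying $\nabla g(\Theta_k)=2A_k$ and the clean constant $\tfrac1\tau$, both of which hinge on the step-size restriction $\tau<1/\max_{it}x_{it}^2$.
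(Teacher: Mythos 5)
Your proof is correct, but it is organized differently from the paper's. The paper proves Lemma \ref{la.2} by summing four inequalities written directly for the joint objective $F(\Theta,M)$ — convexity of the loss in $(\Theta,M)$, convexity of the two nuclear-norm penalties at $\Theta_{k+1}$ and at $M_k$, and the majorization chain from Lemma \ref{la.1} — and then substituting the explicit KKT conditions for both $\Theta_{k+1}$ and $M_k$; the arbitrary $M$ disappears because the KKT identity $\nu_2\,\partial\|M_k\|_n = 2(Y-X\odot\Theta_k-M_k)$ makes the two $M$-directional terms cancel exactly. You instead eliminate $M$ up front via the marginal function $h(\Theta)=\min_M F(\Theta,M)$, identify $2A_k$ as the gradient (or, more economically, a subgradient) of the smooth part $g$ at $\Theta_k$ via the envelope theorem, and then run the standard three-point argument: strong convexity of the majorizer at its minimizer $\Theta_{k+1}$, the upper bound $h(\Theta_{k+1})\le \mathrm{Maj}(\Theta_{k+1})$, and the convexity lower bound $\mathrm{Maj}(\Theta)\le h(\Theta)+\tau^{-1}\|\Theta_k-\Theta\|_F^2$. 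Both arguments rest on the same two facts — exactness of the $M$-step for $k\ge 1$ and the step-size bound $\tau<1/\max_{it}x_{it}^2$ — and your diagnosis of why a naive block-coordinate estimate at the stale $M_k$ would point the wrong way is exactly the issue the paper's KKT cancellation is handling implicitly. What your route buys is conceptual clarity (the scheme is literally ISTA on $h=g+\nu_1\|\cdot\|_n$, which explains the $O(1/k)$ rate and why the bound in \eqref{e2.1} involves only $\Theta_1-\widetilde\Theta$ and not $M_1-\widetilde M$); what the paper's route buys is elementarity — it never needs differentiability of the partially minimized function, only subgradient inequalities. One small polish point: you do not actually need $g$ to be differentiable; the inequality $g(\Theta)\ge g(\Theta_k)+2\,\tr\bigl(A_k'(\Theta-\Theta_k)\bigr)$ follows from joint convexity of the inner objective together with $0\in\partial_M$ at $(\Theta_k,M_k)$, which sidesteps the envelope-theorem regularity discussion entirely.
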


\begin{proof} The proof is similar to that of Lemma 2.3 of \cite{beck2009fast}, with the extension that $M_{k+1}$ is updated after $\Theta_{k+1}$. The key difference  here  is that, while  an update to $M_{k+1}$   is added to the iteration, we show that the lower bound does not depend on $M_{k+1}$ or $ M_k$.  Therefore, the convergence property of the algorithm depends mainly on the step of updating $\Theta$.

Let $\partial \|A\|_n$ be an element that belongs to the subgradient of $\|A\|_n$. Note that $\partial \|A\|_n$ is convex in $A$. Also, $\|Y-X\odot\Theta- M\|_F^2$ is convex in $(\Theta, M)$, so   for any $\Theta, M$,  we have the  following three inequalities:
\begin{eqnarray*}
\|Y-X\odot\Theta- M\|_F^2&\geq&  \| Y- X\odot\Theta_k-M_k\|_F^2 \cr
&&-2\tr[(\Theta-\Theta_k)'    (X\odot(Y-X
\odot\Theta_k-M_k))]\cr
&&-2\tr[(M-M_k)'   (Y-X
\odot\Theta_k-M_k)]\cr
 \nu_1   \|\Theta\|_n   &\geq&   \nu_1 \|\Theta_{k+1}\|_n + \nu_1\tr[(\Theta-\Theta_{k+1})'  \partial \|\Theta_{k+1}\|_n ] \cr
  \nu_2    \| M \|_n  &\geq&    \nu_2 \| M_{k}\|_n +  \nu_2\tr[( M-M_{k})'  \partial \| M_{k}\|_n  ] .
\end{eqnarray*}
In addition, 
\begin{eqnarray*}
-F(\Theta_{k+1}, M_{k+1}) &\geq  &-F(\Theta_{k+1}, M_{k}) \cr
&\geq  & -p(\Theta_{k+1}, \Theta_k, M_k)   -\nu_1\|\Theta_{k+1}\|_n - \nu_2\| M_k\|_n -  
 \|Y-M_k-X\odot\Theta_k\|_F^2.
\end{eqnarray*}
where the  two inequalities are due to Lemma \ref{la.1}. Sum up  the above inequalities, 
\begin{eqnarray*}
&&F(\Theta, M)-F(\Theta_{k+1}, M_{k+1}) \geq (A)\cr
(A)&:=&  -2\tr[(\Theta-\Theta_k)'  (X\odot  (Y-X
\odot\Theta_k-M_k))]-2\tr[(M-M_k)'   (Y-X
\odot\Theta_k-M_k)]\cr
&&+  \nu_1\tr[(\Theta-\Theta_{k+1})'  \partial \|\Theta_{k+1}\|_n ] +  \nu_2\tr[( M-M_{k})'  \partial \| M_{k}\|_n  ] -p(\Theta_{k+1}, \Theta_k, M_k)   . 
\end{eqnarray*}
 We now simplify $(A)$.  Since $k\geq 1$, both $M_k$ and $\Theta_{k+1}$ should satisfy  the KKT condition. By Lemma  \ref{la.1}, they are:
\begin{eqnarray*}
0&=&\nu_1\partial \|\Theta_{k+1}\|_n -\tau^{-1}2(\Theta_k-\Theta_{k+1})+2A_k\cr
0&=&\nu_2\partial \|M_{k}\|_n - 2(Y-X\odot \Theta_{k}-M_k). 
\end{eqnarray*}
Plug in, we have
\begin{eqnarray*}
(A)&=& 
 \tau^{-1} 2 \tr[(\Theta-\Theta_{k+1})' (\Theta_k-\Theta_{k+1}) ]  - \tau^{-1}\left\|
\Theta_k-\Theta_{k+1}
\right\| ^2_F  \cr
&=&\frac{1}{\tau } \left(\|\Theta_{k+1}- \Theta\|_F^2 -   \|\Theta_{k}- \Theta\|_F^2 \right).
\end{eqnarray*} 

\end{proof}

\section{Proof of Proposition \ref{p3.1}}

\subsection{Level of the Score}

   \begin{lem}
    In the presence of serial correlations in $x_{it}$ (Assumption \ref{a4.1}), $\|X\odot U\|=O_P(\sqrt{N+T})=\|U\|$.  In addition, the chosen $\nu_2, \nu_1$ in Section \ref{s:choo}  are $ O_P(\sqrt{N+T})$.
    \end{lem}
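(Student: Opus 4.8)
The plan is to bound $\|X\odot U\|$ and $\|U\|$ separately by reducing each operator norm to a sample-covariance-type (Gram) matrix to which the sub-Gaussian eigenvalue-concentration inequality (Theorem 5.39 of \cite{vershynin2010introduction}) applies, and then to read off the tuning-parameter bound from the construction in Section \ref{s:choo}. For $\|X\odot U\|$ I would first invoke the decomposition $X\odot U=\Omega_{NT}\Sigma_T^{1/2}$ from Assumption \ref{a4.1}(iii) and pass to the Gram matrix, writing $\|X\odot U\|^2=\|(X\odot U)(X\odot U)'\|=\|\Omega_{NT}\Sigma_T\Omega_{NT}'\|$. Since $\Sigma_T$ is deterministic with eigenvalues bounded above by a constant $\bar\sigma=\psi_1(\Sigma_T)$, PSD monotonicity of the operator norm gives $\Omega_{NT}\Sigma_T\Omega_{NT}'\preceq \bar\sigma\,\Omega_{NT}\Omega_{NT}'$, hence $\|X\odot U\|^2\leq \bar\sigma\,\|\Omega_{NT}\Omega_{NT}'\|$. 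This isolates the random part in $\Omega_{NT}\Omega_{NT}'$, whose columns are independent, zero-mean, and sub-Gaussian with uniformly bounded sub-Gaussian norm.

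Next I would control $\|\Omega_{NT}\Omega_{NT}'\|$ by splitting into a centered part and its mean. The eigenvalue-concentration inequality for independent sub-Gaussian random vectors yields $\|\Omega_{NT}\Omega_{NT}'-\E\Omega_{NT}\Omega_{NT}'\|=O_P(\sqrt{NT}+N)$, exactly as recorded in Section \ref{s:choo}. For the mean term, $\E\Omega_{NT}\Omega_{NT}'=\sum_{t\leq T}\E\omega_t\omega_t'$, and the sub-Gaussian bound in Assumption \ref{a4.1}(iii) forces $\sup_{\|x\|=1}\E(\omega_t'x)^2\leq C$, i.e. $\|\E\omega_t\omega_t'\|\leq C$ uniformly in $t$; the triangle inequality then gives $\|\E\Omega_{NT}\Omega_{NT}'\|\leq CT$. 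Combining these and using $\sqrt{NT}+N=O(N+T)$, I obtain $\|\Omega_{NT}\Omega_{NT}'\|=O_P(N+T)$ and therefore $\|X\odot U\|=O_P(\sqrt{N+T})$. The bound for $\|U\|$ is the same but simpler: by Assumption \ref{a4.1}(i) the columns of $U$ are independent (and sub-Gaussian), so the concentration inequality applies directly to $UU'$, giving $\|UU'-\E UU'\|=O_P(\sqrt{NT}+N)$ with $\|\E UU'\|=O(T)$, hence $\|U\|=O_P(\sqrt{N+T})$.

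For the tuning parameters, the construction in Section \ref{s:choo} sets $\nu_1,\nu_2$ proportional to high quantiles of $\|X\odot Z\|$ and $\|Z\|$ for a Gaussian surrogate $Z$. Since $Z$ and $X\odot Z$ satisfy the same sub-Gaussian column structure, repeating the argument above shows these quantiles are themselves $O(\sqrt{N+T})$, so the chosen $\nu_1,\nu_2=O_P(\sqrt{N+T})$.

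I expect the main obstacle to be the careful passage from Theorem 5.39 of \cite{vershynin2010introduction}---which is stated for singular values of a matrix with independent isotropic rows---to the centered Gram-matrix bound $O_P(\sqrt{NT}+N)$ for the \emph{non-isotropic} but uniformly sub-Gaussian columns of $\Omega_{NT}$. The $\Sigma_T^{1/2}$ factor and the uniform control of $\|\E\omega_t\omega_t'\|$ are precisely what make the serially-correlated case go through, and these are the points requiring the most care.
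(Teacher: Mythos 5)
Your proposal is correct and follows essentially the same route as the paper: the decomposition $X\odot U=\Omega_{NT}\Sigma_T^{1/2}$, the sub-Gaussian eigenvalue-concentration bound $\|\Omega_{NT}\Omega_{NT}'-\E\Omega_{NT}\Omega_{NT}'\|=O_P(\sqrt{NT}+N)$, the $O(T)$ control of the mean term, and the observation that the same argument applies verbatim to $U$ and to the Gaussian surrogates $Z$, $X\odot Z$ defining $\nu_1,\nu_2$. The only divergence is in the mean term: you bound $\|\E\Omega_{NT}\Omega_{NT}'\|\leq\sum_{t\leq T}\|\E\omega_t\omega_t'\|\leq CT$ directly from the uniform sub-Gaussian moment condition on the columns $\omega_t$, whereas the paper computes the entries $(\E\Omega_{NT}\Omega_{NT}')_{ij}=\tr(\Sigma_T^{-1}\E w_jw_i')$ and uses the conditional cross-sectional and serial independence of $u_{it}$ to show this matrix is diagonal with $O(T)$ entries; your version is marginally more direct and relies on less of the dependence structure, but both yield the same $O(T)$ bound and the same conclusion.
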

    
 \begin{proof}

  The assumption that $\Omega_{NT}$ contains independent sub-Gaussian columns ensures that, by the  eigenvalue-concentration inequality for sub-Gaussian random vectors  (Theorem 5.39 of \cite{vershynin2010introduction}):
  $$
  \|\Omega_{NT}\Omega_{NT}' -  \E\Omega_{NT}\Omega_{NT}'\|=O_P(\sqrt{  NT} +N).
  $$ In addition, let $w_i$ be the $T\times 1$ vector of $\{x_{it}u_{it}: t\leq T\}$. We have, for each $(i,j,t,s)$,
  $$
  \E(w_iw_j')_{s,t}=\E x_{it}x_{js}u_{it}u_{js} = \begin{cases}
  \E x_{it}^2u_{it}^2, & i=j, t=s\cr
0, & \text{otherwise}
\end{cases}
  $$
  due to the conditional cross-sectional and serial independence  in $u_{it}$.
  Then 
  for the $(i,j)$'th entry of $\E\Omega_{NT}\Omega_{NT}'$,  
\begin{eqnarray*}
(\E\Omega_{NT}\Omega_{NT}' )_{i,j}&=&(\E(X\odot U)\Sigma_T^{-1}(X\odot U)' )_{i,j}\cr
&=&
\E w_i'\Sigma_T^{-1}w_j =tr(\Sigma_T^{-1} \E w_jw_i')
= \begin{cases}
\sum_{t=1}^T(\Sigma_T^{-1})_{tt} \E x_{it}^2u_{it}^2, & i=j\cr
0, & i\neq j.
\end{cases}
\end{eqnarray*} Hence $\|\E\Omega_{NT}\Omega_{NT}' \|\leq O(T)$. This implies $\| \Omega_{NT}\Omega_{NT}' \|\leq O(T+N)$.
Hence 
$\|X\odot U\|\leq \|\Omega_{NT}\|\|\Sigma_T^{1/2}\|\leq O_P(\max\{\sqrt{N},\sqrt{T}\})$.  The rate for $\|U\|$ follows from the same argument. 
 The second claim that $\nu_2, \nu_1$ satisfy the same rate constraint follows from the same argument, by replacing $U$ with $Z$, and Assumption  \ref{a4.1} is still satisfied  by $Z$ and $X\odot Z$.  
 
 \end{proof}
 
 \subsection{Useful Claims }

The proof of Proposition \ref{p3.1} uses some claims that are proved in the following lemma. 
Let us first recall the notations. Define $U_2D_2 V_2'=\Theta_I^0$ and $U_1 D_1 V_1'=M_I^0$
  as the singular value decompositions  of the true values $\Theta_I^0$ and $M_I^0$.    Further decompose, for $j=1,2,$
   $$
   U_j= (U_{j, r}, U_{j,c}),\quad V_j= (V_{j, r}, V_{j,c})
   $$
   Here $(U_{j,r}, V_{j,r})$
   corresponds to the nonzero singular values, while  $(U_{j,c}, V_{j,c})$
   corresponds to the zero singular values. In addition, for any $N\times T/2$ matrix $\Delta$, let 
   $$
   \mathcal P_{j} (\Delta )= U_{j,c} U_{j, c}' \Delta V_{j,c} V_{j,c}',\quad \mathcal M_{j}(\Delta)=\Delta -\mathcal P_{j}(\Delta). 
   $$
   Here $U_{j,c} U_{j, c}' $ and $V_{j,c} V_{j, c}' $ respectively are the projection matrices onto the columns of $U_{j,c}$ and $V_{j,c}$.  Therefore, $\mathcal M_1(\cdot)$ and $\mathcal M_2(\cdot)$ can be considered as the projection matrices onto the ``low-rank" spaces of $\Theta_I^0$ and $M_I^0$ respectively, and $\mathcal P_1(\cdot)$ and $\mathcal P_2(\cdot)$ ar projections onto  their orthogonal spaces.

\begin{lem}
[claims]  Same results below also apply  to $\mathcal P_2$   $\Theta_I^0$, and samples on $I^c$.  For any matrix $\Delta$, 

(i) $ \|\mathcal P_{1} (\Delta )+M_I^0\|_n=\|\mathcal P_{1} (\Delta )\|_n+\|M_I^0\|_n$.

(ii) $\| \Delta\|_F^2= \|\mathcal M_1(\Delta)\|_F^2+\|\mathcal P_1(\Delta)\|_F^2$

(iii)  $\rank(\mathcal M_1(\Delta))\leq 2K_1$, where $K_1=\rank(M_I^0)$.

(iv) $\|\Delta\|_F^2=\sum_j\sigma_j^2$ and $\|\Delta\|_n^2\leq\|\Delta\|_F^2\rank(\Delta)$, with $\sigma_j$ as the singular values of $\Delta$. 

(v) For any $\Delta_1,\Delta_2$,   $|\tr(\Delta_1\Delta_2)|\leq \|\Delta_1\|_n\|\Delta_2\|$,  Here $\|.\|$ denotes the operator norm.  
\end{lem}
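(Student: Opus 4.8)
The plan is to derive all five claims from the orthogonal block structure induced by the full singular value decomposition $M_I^0 = U_1 D_1 V_1'$, together with two standard inequalities. Throughout I would exploit the completeness relations $U_{1,r}U_{1,r}' + U_{1,c}U_{1,c}' = I$ and $V_{1,r}V_{1,r}' + V_{1,c}V_{1,c}' = I$ and the orthogonality $U_{1,r}'U_{1,c} = 0$, $V_{1,r}'V_{1,c} = 0$ coming from the full SVD. The identical arguments then apply verbatim with $(\mathcal P_1, \mathcal M_1, M_I^0, K_1)$ replaced by $(\mathcal P_2, \mathcal M_2, \Theta_I^0, K_2)$ and with $I$ replaced by $I^c$ or the full sample, which is what the lemma asserts.

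For (ii) and (iii) I would argue directly from this block decomposition. Writing $P = U_{1,c}U_{1,c}'$ and $Q = V_{1,c}V_{1,c}'$, the map $\Delta \mapsto \mathcal P_1(\Delta) = P\Delta Q$ is linear, idempotent (since $P^2 = P$, $Q^2 = Q$), and self-adjoint for the Frobenius inner product (since $P, Q$ are symmetric), hence an orthogonal projection; this gives $\tr(\mathcal P_1(\Delta)'\mathcal M_1(\Delta)) = 0$ and therefore the Pythagorean identity (ii). For (iii), I would insert the completeness relations on both sides of $\Delta$, cancel the ``corner'' term $\mathcal P_1(\Delta) = U_{1,c}U_{1,c}'\Delta V_{1,c}V_{1,c}'$, and regroup the remaining three terms as $\mathcal M_1(\Delta) = U_{1,r}(U_{1,r}'\Delta) + (U_{1,c}U_{1,c}'\Delta V_{1,r})V_{1,r}'$. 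The first summand has column space inside $\mathrm{span}(U_{1,r})$ and the second has row space inside $\mathrm{span}(V_{1,r})$, so each has rank at most $K_1 = \rank(M_I^0)$, and subadditivity of rank yields $\rank(\mathcal M_1(\Delta)) \le 2K_1$.

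For (iv) and (v) I would invoke standard facts. Claim (iv) is immediate: $\|\Delta\|_F^2 = \tr(\Delta'\Delta) = \sum_j \sigma_j^2$, and applying Cauchy--Schwarz to the $r := \rank(\Delta)$ nonzero singular values gives $\|\Delta\|_n^2 = (\sum_{j \le r}\sigma_j)^2 \le r \sum_{j \le r}\sigma_j^2 = \rank(\Delta)\,\|\Delta\|_F^2$. Claim (v) is the duality between the nuclear and operator norms: by von Neumann's trace inequality $|\tr(\Delta_1\Delta_2)| \le \sum_j \sigma_j(\Delta_1)\sigma_j(\Delta_2)$, and bounding every $\sigma_j(\Delta_2)$ by $\sigma_1(\Delta_2) = \|\Delta_2\|$ gives $|\tr(\Delta_1\Delta_2)| \le \|\Delta_2\|\sum_j \sigma_j(\Delta_1) = \|\Delta_1\|_n\,\|\Delta_2\|$.

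The main obstacle is claim (i), the decomposability $\|\mathcal P_1(\Delta) + M_I^0\|_n = \|\mathcal P_1(\Delta)\|_n + \|M_I^0\|_n$. The key observation is that $M_I^0 = U_{1,r}D_{1,r}V_{1,r}'$ has column and row spaces spanned by $U_{1,r}$ and $V_{1,r}$, whereas $\mathcal P_1(\Delta)$ has column and row spaces inside $\mathrm{span}(U_{1,c})$ and $\mathrm{span}(V_{1,c})$; since these supports are mutually orthogonal, I would build an SVD of the sum by concatenating the left and right singular systems of the two summands, which shows that the nonzero singular values of $M_I^0 + \mathcal P_1(\Delta)$ are exactly the disjoint union of those of the two matrices, giving additivity of the nuclear norm. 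The only care needed is to check that the concatenated factor matrices genuinely have orthonormal columns, which is precisely where $U_{1,r}'U_{1,c} = 0$ and $V_{1,r}'V_{1,c} = 0$ enter; this is the standard decomposability property of the nuclear norm (cf.\ \cite{negahban2011estimation}), and it is the one step I would write out in full.
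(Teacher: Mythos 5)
Your proposal is correct on all five claims, and it rests on the same structural fact as the paper's proof: the mutual orthogonality of the $(U_{1,r},V_{1,r})$ and $(U_{1,c},V_{1,c})$ subspaces coming from the full SVD of $M_I^0$. The differences are in execution rather than strategy. For (i) and (iii) the paper simply cites Lemma 2.3 of \cite{recht2010guaranteed} and Lemma 1 of \cite{negahban2011estimation}, whereas you supply direct proofs; your SVD-concatenation argument for (i) and your decomposition $\mathcal M_1(\Delta) = U_{1,r}(U_{1,r}'\Delta) + (U_{1,c}U_{1,c}'\Delta V_{1,r})V_{1,r}'$ for (iii) are exactly the content of those cited lemmas, so this buys self-containedness at the cost of a little length. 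For (ii) the paper conjugates by $(U_1,V_1)$ and reads off the Pythagorean identity from the disjoint block supports of $H_1$ and $H_2$; your abstract argument that $\Delta\mapsto P\Delta Q$ is a self-adjoint idempotent for the Frobenius inner product is equivalent and arguably cleaner. For (v) the paper avoids von Neumann's trace inequality entirely: it substitutes the SVD $\Delta_1 = UDV'$ to get $|\tr(\Delta_1\Delta_2)| = |\sum_i D_{ii}(V'\Delta_2 U)_{ii}| \le \max_i|(V'\Delta_2U)_{ii}|\sum_i D_{ii}$, which is more elementary than your route, though both are valid. No gaps.
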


\proof (i) Note that $M_I^0=U_{1,r}D_{1,r}V_{1,r}'$ where $D_{1,r}$ are the subdiagonal matrix of nonzero singular values.  The claim follows from Lemma 2.3 of \cite{recht2010guaranteed}. 

(ii)  Write 
$$
U_1'\Delta V_1=\begin{pmatrix}
A& B\\
C& U_{1,c}' \Delta V_{1,c}
\end{pmatrix}
=\begin{pmatrix}
A& B\\
C& 0
\end{pmatrix} +\begin{pmatrix}
0& 0\\
0& U_{1,c}' \Delta V_{1,c}
\end{pmatrix}:= H_2+H_1.
$$
Then $\mathcal P_1(\Delta)= U_1H_1V_1'$ and $\mathcal M_1(\Delta)= U_1H_2V_1'$. So
$$
\|\mathcal P_1(\Delta)\|_F^2=\tr( U_1H_1V_1'V_1H_1'U_1')= \tr(H_1H_1')=\|H_1\|_F^2.
$$
Similarly, $\|\mathcal M_1(\Delta)\|_F^2= \|H_2\|_F^2$. So 
$$
\|H_2\|_F^2+\|H_1\|_F^2=\|U_1'\Delta V_1\|_F^2=\|\Delta\|_F^2. 
$$

(iii)  This is  Lemma 1 of \cite{negahban2011estimation}. 

(iv) The first is a basic equality, and the second follows from the Cauchy-Schwarz inequality. 

(v)  Let $UDV'=\Delta_1$ be the singuar value decomposition of $\Delta_1$, then 
$$
|\tr(\Delta_1\Delta_2)| =|\sum_iD_{ ii} (V'\Delta_2 U)_{ii}|
\leq \max_i|(V'\Delta_2 U)_{ii}|\sum_iD_{ii} \leq \|\Delta_1\|_n\|\Delta_2\|.
$$
 
\subsection{Proof of Proposition \ref{p3.1}: convergence of $\widetilde \Theta_S, \widetilde M_S$}

In the proof below, we set $S= I$. That is, we consider estimation using only the data with $t\in I$. We set $T_0=|I|_0$. The proof carries over to $S= I^c$ or $S=I\cup I^c$. We suppress the subscript $S$ for notational simplicity.
 Let $\Delta_1=\widetilde  M-M$ and $\Delta_2= \widetilde \Theta-\Theta$. Then
$$
\|Y-\widetilde M-X\odot\widetilde \Theta\|_F^2= \|\Delta_1+X\odot\Delta_2\|_F^2+\|U\|_F^2- 2 \tr[U'(\Delta_1+X\odot\Delta_2)].
$$
Note that $\tr(U'(X\odot\Delta_2))=\tr(\Delta_2'(X\odot U))$. Thus by claim (v),
\begin{eqnarray*}
 |2 \tr[U'(\Delta_1+X\odot\Delta_2)]|&\leq& 2\|U\|\|\Delta_1\|_n +2\|X\odot U\|\|\Delta_2\|_n\cr
 &\leq& (1-c)\nu_2\|\Delta_1\|_n+ (1-c)\nu_1\|\Delta_2\|_n.
\end{eqnarray*}
Thus $\|Y-\widetilde M-X\odot\widetilde \Theta\|_F^2\leq \|Y-  M-X\odot\Theta\|_F^2$ (evalued at the true parameters) implies 
\begin{eqnarray*}
\|\Delta_1+X\odot\Delta_2\|_F^2  +\nu_2\|\widetilde M\|_n+\nu_1\|\widetilde\Theta\|_n&\leq& (1-c)\nu_2\|\Delta_1\|_n+ (1-c)\nu_1\|\Delta_2\|_n\cr
&&+ \nu_2\|M\|_n+ \nu_1\|\Theta\|_n.
\end{eqnarray*}
Now \begin{eqnarray*}
\|\widetilde M\|_n&=&\|\Delta_1+ M\|_n=\|M+\mathcal P_1(\Delta_1)+\mathcal M_1(\Delta_1)\|_n\cr
&\geq& \|M+\mathcal P_1(\Delta_1)\|_n-\|\mathcal M_1(\Delta_1)\|_n\cr
&=& \|M\|_n+\|\mathcal P_1(\Delta_1)\|_n-\|\mathcal M_1(\Delta_1)\|_n,
\end{eqnarray*}
where the last equality follows from claim (i). Similar lower bound applies to $\|\widetilde\Theta\|_n$. Therefore, 
\begin{eqnarray}\label{eb.7}
&&\|\Delta_1+X\odot\Delta_2\|_F^2  +c\nu_2  \|\mathcal P_1(\Delta_1)\|_n +c\nu_1  \|\mathcal P_2(\Delta_2)\|_n  \cr
&\leq&
(2-c)\nu_2\|\mathcal M_1(\Delta_1)\|_n
+(2-c)\nu_1\|\mathcal M_2(\Delta_2)\|_n.
\end{eqnarray}
In the case $U$ is Gaussian,  $\|U\|$ and $\|X\odot U\|\asymp \max\{\sqrt{N}, \sqrt{T}\}$, while in the more general case, set $\nu_2\asymp \nu_1\asymp\max\{\sqrt{N}, \sqrt{T}\}$. Thus the above inequality implies $(\Delta_1,\Delta_2)\in \mathcal C(a)$ for some $a>0$. Thus apply Assumption \ref{a3.1} and claims to (\ref{eb.7})   , for a generic $C>0$,
\begin{eqnarray*}
\|\Delta_1\|_F^2+\|\Delta_2\|_F^2
&\leq& C\nu_2\|\mathcal M_1(\Delta_1)\|_n
+C\nu_1\|\mathcal M_2(\Delta_2)\|_n\cr
&\leq^{\text{claim (iv)}}&  C\nu_2\|\mathcal M_1(\Delta_1)\|_F\sqrt{\rank(\mathcal{M}_2(\Delta_1))}
\cr
&&+C\nu_1\|\mathcal M_2(\Delta_2)\|_F\sqrt{\rank(\mathcal{M}_2(\Delta_2))}\cr
&\leq^{\text{claim (iii)}}& C\nu_2\|\mathcal M_1(\Delta_1)\|_F\sqrt{ 2K_1}
+C\nu_1\|\mathcal M_2(\Delta_2)\|_F\sqrt{2K_2}\cr
&\leq^{\text{claim (ii)}}& C\nu_2\|\Delta_1\|_F +C\nu_1\|\Delta_2\|_F\cr
&\leq& C\max\{\nu_2, \nu_1\} \sqrt{\|\Delta_1\|_F^2+\|\Delta_2\|_F^2}.
\end{eqnarray*}
Thus 
$
\|\Delta_1\|_F^2+\|\Delta_2\|_F^2\leq C(\nu_2^2+\nu_1^2).
$

 \subsection{Proof of Proposition \ref{p3.1}: convergence of $\widetilde\Lambda_S, \widetilde A_S$}

 We proceed the proof in  the following steps.

 \textbf{step 1: bound the eigenvalues}
 
Replace  $\nu_2^2+\nu_1^2$ with $O_P(N+T)$,  then 
$$
 \|\widetilde\Theta_S-\Theta_S\|_F^2 =O_P(N+T).
$$
Let $S_f=\frac{1}{T_0} \sum_{t\in I}f_tf_t' $, $\Sigma_f=  \frac{1}{T} \sum_{t=1}^T f_tf_t' $ and $S_{\Lambda} = \frac{1}{N}\Lambda'\Lambda$. 
Let $\psi_{I,1}^2\geq ...\geq \psi_{I, K_1}^2$ be the  $K_1$ nonzero eigenvalues of $\frac{1}{NT_0}\Theta_I\Theta_I'
=\frac{1}{N}\Lambda S_f\Lambda'
$.  Let $\widetilde  \psi^2_1\geq ...\geq \widetilde  \psi^2_{K_2}$ be the  first  $K_2$ nonzero singular values of $\frac{1}{NT_0}\widetilde \Theta_I\widetilde \Theta_I'$.  
Also, let $\psi_j^2 $ be the $j$ th largest eigenvalue of $\frac{1}{N}\Lambda \Sigma_f\Lambda'$. 
  Note that  $\psi_1^2...\psi_{K_1}^2$ are the same as the eigenvalues of $\Sigma_f^{1/2} S_\Lambda   \Sigma_f^{1/2}$.  Hence by Assumption \ref{a3.2}, there are constants $c_1,...,c_{K_1}>0$, so that 
$$
\psi_j^2=c_j,\quad j=1,..., K_1.
$$
Then by Weyl's  theorem, for $j=1,..., \min\{T_0,N\}$, with the assumption that $\|S_f-\Sigma_f\|=O_P(\frac{1}{\sqrt{T}})$,
$
| \psi_{I,j}^2-\psi_j^2|\leq   \frac{1}{N} \|\Lambda (S_f- \Sigma_f)\Lambda' \|
\leq O(1) \|S_f-\Sigma_f\| =O_P(\frac{1}{\sqrt{T}})
$. This also implies $\|\Theta_I\|=\psi_{I,1}\sqrt{NT_0}=\sqrt{(c_1+o_P(1))T_0N}$.

Still by Weyl's  theorem, for $j=1,..., \min\{T_0,N\}$,
\begin{eqnarray*}
|\widetilde\psi_j^2-\psi_{I,j}^2|&\leq& \frac{1}{NT_0} \|\widetilde\Theta_I\widetilde\Theta_I'-\Theta_I \Theta_I' \|
\cr
&\leq& \frac{2}{NT_0}\| \Theta_I\| \|\widetilde\Theta_I-\Theta_I\| + \frac{1}{NT_0} \|\widetilde\Theta_I-\Theta_I\|^2 
=O_P(\frac{1}{\sqrt{N}}+\frac{1}{\sqrt{T}}).\cr
\text{implying}\cr
 |\widetilde\psi_j^2-\psi_{j}^2|&=&O_P(\frac{1}{\sqrt{N}}+\frac{1}{\sqrt{T}}).
\end{eqnarray*}

Then for all $j\leq K_1$,  with probability approaching one, 
\begin{eqnarray}\label{ea.8}
| \psi_{j-1}^2    - \widetilde \psi_{j}^2|
&\geq&  | \psi_{j-1}^2    - \psi_{j}^2|
- | \psi_{j}^2-\widetilde \psi_{j}^2| \geq (c_{j-1}-c_j)  /2\cr
| \widetilde  \psi_{j}^2    -   \psi_{j+1}^2| 
& \geq& 
|  \psi_{j}^2    -   \psi_{j+1}^2| - | \widetilde  \psi_{j}^2    -   \psi_{j}^2|\geq (c_j-c_{j+1}) /2
\end{eqnarray}
 with $\psi_{K_1+1}^2=c_{K_1+1}=0$ because $\Theta_I\Theta_I'$ has at most $K_1$ nonzero eigenvalues.

 \textbf{step 2: characterize the  eigenvectors}
 
Next, we show that there is a $K_1\times K_1$ matrix $H_1$, so that the columns of $\frac{1}{\sqrt{N}}\Lambda H_1$ are the first  $K_1$ eigenvectors  of $\Lambda \Sigma_f\Lambda'$. 
Let  $L= S_\Lambda^{1/2} \Sigma_f S_\Lambda^{1/2}$. Let $R$ be  a $K_1\times K_1$ matrix whose columns are the eigenvectors of $L$. Then $D= R'LR$ is a diagonal  matrix of the eigenvalues of $L$ that are distinct nonzeros according to Assumption  \ref{a3.2}.  Let $H_1 = S_\Lambda^{-1/2} R$.     Then 
\begin{eqnarray*}
\frac{1}{N} \Lambda \Sigma_f\Lambda'\Lambda H_1 &=&     \Lambda S_\Lambda^{-1/2}   S_\Lambda^{1/2}  \Sigma_f S_\Lambda ^{1/2}S_\Lambda ^{1/2}H_1
= \Lambda S_\Lambda^{-1/2}  RR' L R\cr
&=&\Lambda H_1 D.
\end{eqnarray*}
Now $\frac{1}{N}(\Lambda H_1)' \Lambda H_1= H_1'S_\Lambda H_1= R'R=I$.
So the columns of $\Lambda H_1/\sqrt{N}$ are the eigenvectors of 
$\Lambda \Sigma_f\Lambda'$, corresponding to the eigenvalues in $D$.

  Importantly, the rotation matrix $H_1$, by definition, depends only on $S_\Lambda,  \Sigma_f$, which is time-invariant, and does not depend on the splitted sample.

 \textbf{step 3: prove the convergence}
 
 We first assume  $\widehat K_1=K_1$. The proof of the consistency is given in step 4 below. Once this is true, then the following argument can be carried out conditional on the event $\widehat K_1=K_1$. 
 Apply   Davis-Kahan sin-theta inequality, and by (\ref{ea.8}), 
\begin{eqnarray*}
&&\| \frac{1}{\sqrt{N}}\widetilde\Lambda_I- \frac{1}{\sqrt{N}}\Lambda H_1\|_F\leq\frac{\frac{1}{N} \|\Lambda \Sigma_f\Lambda'-\frac{1}{T_0}\widetilde\Theta_I\widetilde\Theta_I'\|}{\min_{j\leq K_2}\min\{  
| \psi_{j-1}^2    - \widetilde \psi_{j}^2|,   | \widetilde  \psi_{j}^2    -   \psi_{j+1}^2| \}}\cr
&\leq& O_P(1)\frac{1}{N} \|\Lambda \Sigma_f\Lambda'-\frac{1}{T_0}\widetilde\Theta_I\widetilde\Theta_I'\|\cr
&\leq& O_P(1)\frac{1}{N} \|\Lambda (\Sigma_f-S_f)\Lambda'\|
 +\frac{1}{NT_0} \|  \Theta_I\Theta_I'-\widetilde\Theta_I\widetilde\Theta_I'\| =O_P(\frac{1}{\sqrt{T}}+\frac{1}{\sqrt{N}}). 
\end{eqnarray*}

 \textbf{step 4: prove $P(\widehat K_1=K_1)=1$.}
 
 Note that $\psi_j(\widetilde\Theta)=\widetilde \psi_j\sqrt{NT}$. 
 By step 1,  for all $j\leq K_1$,  $\widetilde\psi_j^2\geq c_j- o_P(1)\geq c_j/2$ with probability approaching one. Also,
$\widetilde \psi_{K_1+1}^2\leq O_P(T^{-1/2}+N^{-1/2})$, implying that
$$
\min_{j\leq K_1}\psi_j(\widetilde\Theta)\geq c_{K_1}\sqrt{NT}/2,\quad \max_{j>K_1}\psi_j(\widetilde\Theta)\leq O_P(T^{-1/4}+N^{-1/4})\sqrt{NT}.
$$
In addition, $\nu_2^{1/2-\epsilon}\|\widetilde\Theta\|^{1/2+\epsilon}\asymp   (\sqrt{N+T})^{1/2-\epsilon}(\sqrt{NT})^{1/2+\epsilon}   $ for some small $\epsilon\in(0,1)$, 
$$
\min_{j\leq K_1}\psi_j(\widetilde\Theta)\geq \nu_2^{1/2-\epsilon}\|\widetilde\Theta\|^{1/2+\epsilon},\quad \max_{j>K_1}\psi_j(\widetilde\Theta)\leq o_P(1)\nu_2^{1/2-\epsilon}\|\widetilde\Theta\|^{1/2+\epsilon}.
$$
This proves the consistency of $\widehat K_1$.

 Finally, the proof of the convergence for  $\widetilde A_I$ and the consistency of $\widehat K_2$ follows from the same argument.  Q.E.D.

       \section{Proof of Theorem   \ref{t3.2}}

        In the factor model
 $$
 x_{it}= l_i'w_t+e_{it},
 $$
we write  $
\widehat e_{it}=x_{it}-\widehat{l_i'w_t}$, 
  $\widehat\mu_{it}= \widehat{l_i'w_t} $ and $\mu_{it}=l_i'w_t $.
   The proof in this section works for both models.

       Let
 $$
      C_{NT}= \min\{\sqrt{N}, \sqrt{T}\}.
      $$
      
    First  recall that   $   (  \widetilde f_s, \dot\lambda_i)$   are computed as the preliminary estimators in step 3.  The main technical requirement of these estimators  is that their estimation effects are negligible, specifically,  there is a rotation matrix $H_1$ that is independent of the sample splitting,  for each fixed $t\notin I$,   
    \begin{eqnarray*}
\|\frac{1}{N}\sum_{j}  (\dot\lambda_j-H_1' \lambda_j)e_{jt}\|^2&=&O_P( C_{NT}^{-4}) , \cr
  \frac{1}{|\mathcal G|_0}\sum_{i\in\mathcal G} \|\frac{1}{T}\sum_{s\notin I}  f_s  (\widetilde f_s-H_1^{-1}f_s )'\mu_{is}e_{is}\|^2  &=& O_P(C_{NT}^{-4}).
    \end{eqnarray*}
   These are given in Lemmas  \ref{ld.5} and \ref{lc.2}  for the factor model.
    
      \subsection{Behavior of $\widehat f_t$  }

Recall that  for each $t\notin I$,   $$
(\widehat f_{I,t},\widehat g_{I,t}):= \arg\min_{f_t, g_t}\sum_{i=1}^N(\widehat y_{it}- \widetilde\alpha_i'g_t- \widehat e_{it} \widetilde \lambda_i'f_t)^2.
  $$
 
  For notational simplicity, we simply write $\widehat f_t=\widehat f_{I,t}$ and $\widehat g_t=\widehat g_{I,t}$, but keep in mind that $\widetilde\alpha$ and $\widetilde\lambda$ are estimated through the low rank estimations on data $I$.  Note that $\widetilde \lambda_i$  consistently estimates $\lambda_i$ up to a rotation matrix $H_1'$, so $\widehat f_t$ is consistent for $H_1^{-1}f_t$. However, as we shall explain below, it is  difficult to establish the asymptotic normality for $\widehat f_t$ centered at $H_1^{-1}f_t$. Instead, we obtain a new centering quantity, and obtain an expansion for 
  $$
\sqrt{N}(\widehat f_t-  H_ff_t)
  $$
  with a new rotation matrix $H_f$ that is also independent of $t$. For the purpose of inference for $\theta_{it}$, this is sufficient.

Let $\widehat w_{it}=(\widetilde \lambda_i' \widehat e_{it}, \widetilde \alpha_i')'$, and $\widehat B_{t}= \frac{1}{N}\sum_i\widehat w_{it}\widehat w_{it}'$. Define $w_{it}=(\lambda_i'e_{it},\alpha_i')'$, and 
$$\widehat Q_t=  \frac{1}{N}\sum_i\widehat w_{it}( \mu_{it}\lambda_i'f_t - \widehat{\mu}_{it}\dot\lambda_i'\widetilde f_t +u_{it}).$$
 
 We have
  \begin{eqnarray}\label{ed.14}
\begin{pmatrix}
\widehat f_t\\
\widehat g_t
\end{pmatrix}&=&\widehat B_t^{-1} \frac{1}{N}\sum_i\widehat w_{it}(y_{it}- \widehat{\mu}_{it} \dot\lambda_i'\widetilde f_t)\cr
&=&\begin{pmatrix}
 H_1^{-1}f_t\\
 H_2^{-1}g_t
\end{pmatrix}+\widehat B_t^{-1}\widehat S_t\begin{pmatrix}
 H_1^{-1}f_t\\
 H_2^{-1}g_t
\end{pmatrix}+\widehat B_t^{-1}\widehat Q_t
  \end{eqnarray}
  where 
  $$
  \widehat S_t= \frac{1}{N}\sum_i
\begin{pmatrix}
 \widetilde \lambda_i  \widehat e_{it}( \lambda_i'H_1e_{it}-\widetilde\lambda_i'\widehat e_{it})& 
  \widetilde \lambda_i  \widehat e_{it} ( \alpha_i'H_2-\widetilde\alpha_i')\\
  \widetilde\alpha_i( \lambda_i'H_1e_{it}-\widetilde\lambda_i'\widehat e_{it}) & 
    \widetilde\alpha_i( \alpha_i'H_2-\widetilde\alpha_i')
\end{pmatrix}.
  $$

 Note that the ``upper block" of $ \widehat B_t^{-1} \widehat S_t$  is not first-order negligible. Essentially this is due to the fact that the moment condition 
 $$
\frac{\partial }{\partial\lambda_i} \E   \lambda_ie_{it}(\dot y_{it}- \alpha_i'g_t-e_{it}\lambda_i'f_t)\neq 0,
 $$
 so is not ``Neyman orthogonal" with respect to $\lambda_i$. 
 On the other hand,  we can get around such    difficulty.  
In Lemma \ref{ld.6} below, we show that   $\widehat B_t$ and $\widehat S_t$ both converge in probability  to  block diagonal matrices that are independent of $t$. So $g_t$ and $f_t$ are ``orthogonal", and 
 $$
 \widehat B_t^{-1} \widehat S_t= \begin{pmatrix}
 \bar H_3&0\\
 0& \bar H_4
 \end{pmatrix}+ o_P(N^{-1/2}).
 $$
  Define $H_f:= H_1^{-1}+\bar H_3H_1^{-1}.$ Then (\ref{ed.14}) implies that 
$$
\widehat f_t =H_ff_t+ \text{upper block of } \widehat B_t^{-1}\widehat Q_t.
$$
 Therefore $\widehat f_t$ converges to $f_t$ up to a  new  rotation matrix $H_f$, which equals $H_1^{-1}$ up to an $o_P(1)$ term $\bar H_3H_1^{-1}$.  While  the effect of $\bar H_3$  is not negligible, it is  ``absorbed" into the new rotation matrix.  As such, we are able to establish the asymptotic normality for $
\sqrt{N}( \widehat f_t -H_ff_t).
 $
 
 \begin{prop}\label{pc.1} For each fixed $t\notin I$, for both the (i) many mean model and  (ii) factor model, 
 we have 
 $$
   \widehat f_t-H_ff_t= H_f (\frac{1}{N}\sum_i  \lambda_i\lambda_i' \E e_{it}^2)^{-1}   \frac{1}{N}\sum_i \lambda_ie_{it}u_{it}+O_P(C_{NT}^{-2}).
 $$
 \end{prop}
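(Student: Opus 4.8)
The plan is to start from the expansion \eqref{ed.14} and reduce the claim to an analysis of the upper ($f$-)block of $\widehat B_t^{-1}\widehat Q_t$. Extracting the first block of \eqref{ed.14} gives
$$
\widehat f_t = H_1^{-1}f_t + [\widehat B_t^{-1}\widehat S_t]_{11}H_1^{-1}f_t + [\widehat B_t^{-1}\widehat S_t]_{12}H_2^{-1}g_t + [\widehat B_t^{-1}\widehat Q_t]_{1},
$$
where the subscripts denote the $(f,g)$ block structure. By Lemma \ref{ld.6}, $\widehat B_t^{-1}\widehat S_t$ is block diagonal up to a remainder of order $O_P(C_{NT}^{-2})$, with diagonal blocks $\bar H_3,\bar H_4$ that are independent of $t$; crucially, the off-diagonal block is $O_P(C_{NT}^{-2})$ (it couples with the mean-zero $e_{it}$ and is killed by the sample-split independence), whereas $\bar H_3 = O_P(C_{NT}^{-1})$ is genuinely non-negligible because it couples with the non-mean-zero $\mathsf E e_{it}^2$. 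Writing $H_f := (I+\bar H_3)H_1^{-1} = H_1^{-1}+\bar H_3 H_1^{-1}$ and using $f_t,g_t = O_P(1)$, the two $\widehat S_t$-terms collapse into $(H_f - H_1^{-1})f_t$ plus an $O_P(C_{NT}^{-2})$ error, so that
$$
\widehat f_t - H_f f_t = [\widehat B_t^{-1}\widehat Q_t]_1 + O_P(C_{NT}^{-2}).
$$
It then remains to identify the upper block of $\widehat B_t^{-1}\widehat Q_t$.

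For $\widehat B_t$, the key device is that the sample split places $t\notin I$, so $(\widetilde\lambda_i,\widetilde\alpha_i)$ are independent of $(e_{it},u_{it})$. Combining this independence with the loading-consistency $\frac{1}{N}\sum_i\|\widetilde\lambda_i - H_1'\lambda_i\|^2 = O_P(C_{NT}^{-2})$ from Proposition \ref{p3.1}, the factor-estimation rate controlling $\widehat\mu_{it}-\mu_{it}$, and the cross-sectional law of large numbers implied by Assumption \ref{a4.1}(iv), I would show that the upper-left block of $\widehat B_t$ satisfies $\frac{1}{N}\sum_i\widetilde\lambda_i\widetilde\lambda_i'\widehat e_{it}^2 = H_1'(\frac{1}{N}\sum_i\lambda_i\lambda_i'\mathsf E e_{it}^2)H_1 + o_P(1)$, while the off-diagonal block $\frac{1}{N}\sum_i\widetilde\lambda_i\widehat e_{it}\widetilde\alpha_i'$ is $o_P(1)$ because $e_{it}$ is mean zero and cross-sectionally weakly dependent. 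This confirms the block-diagonal limit of $\widehat B_t$, so that $[\widehat B_t^{-1}\widehat Q_t]_1 = [H_1'(\frac{1}{N}\sum_i\lambda_i\lambda_i'\mathsf E e_{it}^2)H_1]^{-1}[\widehat Q_t]_1 + O_P(C_{NT}^{-2})$.

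The main work, and the step I expect to be the principal obstacle, is the upper block of $\widehat Q_t$. Here I would decompose the residual as
$$
\mu_{it}\lambda_i'f_t - \widehat\mu_{it}\dot\lambda_i'\widetilde f_t + u_{it} = u_{it} + \mu_{it}(\lambda_i'f_t - \dot\lambda_i'\widetilde f_t) + (\mu_{it}-\widehat\mu_{it})\dot\lambda_i'\widetilde f_t,
$$
isolating the score $u_{it}$ from the two partialing-out errors. The score part gives $\frac{1}{N}\sum_i\widetilde\lambda_i\widehat e_{it}u_{it}$, and a further split into $\frac{1}{N}\sum_i(\widetilde\lambda_i - H_1'\lambda_i)e_{it}u_{it}$ plus a $\widehat e_{it}-e_{it}$ term shows, again via sample-split independence and a conditional-variance bound, that this equals $H_1'\frac{1}{N}\sum_i\lambda_ie_{it}u_{it} + O_P(C_{NT}^{-2})$ (the cross term has conditional variance of order $N^{-1}C_{NT}^{-2}$). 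The two partialing-out error contributions are where the rate lemmas enter: Lemma \ref{ld.5} controls the weighted sum involving $\dot\lambda_j - H_1'\lambda_j$ (which, because $\dot\lambda_i$ is computed on $I^c\cup\{t\}$, is not independent of $e_{it}$ and must be bounded directly), and Lemma \ref{lc.2} feeds in through the preliminary estimator $\widetilde f_s$ that determines $\dot\lambda_i$; the single-$t$ error $f_t - H_1\widetilde f_t$ is handled by pulling it out of the $i$-sum and using that $\frac{1}{N}\sum_i\widetilde\lambda_i e_{it}\mu_{it}\lambda_i'$ is conditionally mean zero given $(w_t,D_I)$ by Assumption \ref{a4.1}(ii). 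Together with the moment bounds of Assumption \ref{a3.8}, each error term is shown to be $O_P(C_{NT}^{-2})$. The difficulty is that three distinct estimation errors ($\widehat\mu_{it}$, $\dot\lambda_i$, and $\widetilde f_s$) are entangled and must be disentangled one at a time using the sample-splitting independence and the weak-dependence conditions.

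Finally, combining the pieces, the upper block of $\widehat B_t^{-1}\widehat Q_t$ equals
$$
\Big[H_1'\big(\tfrac{1}{N}\sum_i\lambda_i\lambda_i'\mathsf E e_{it}^2\big)H_1\Big]^{-1}H_1'\,\tfrac{1}{N}\sum_i\lambda_ie_{it}u_{it} + O_P(C_{NT}^{-2}) = H_1^{-1}\big(\tfrac{1}{N}\sum_i\lambda_i\lambda_i'\mathsf E e_{it}^2\big)^{-1}\tfrac{1}{N}\sum_i\lambda_ie_{it}u_{it}+O_P(C_{NT}^{-2}),
$$
where the cancellation $(H_1')^{-1}H_1' = I$ is used. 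Replacing the leading factor $H_1^{-1}$ by $H_f = H_1^{-1}+\bar H_3 H_1^{-1}$ costs $\bar H_3 H_1^{-1}(\cdots)^{-1}\frac{1}{N}\sum_i\lambda_ie_{it}u_{it} = O_P(C_{NT}^{-1})\cdot O_P(N^{-1/2}) = O_P(C_{NT}^{-2})$, since $N^{-1/2}\le C_{NT}^{-1}$. This yields the stated expansion, and the identical argument applies to the many-mean model with $\widehat\mu_{it}$ replaced by the cross-time mean $\bar x_i$.
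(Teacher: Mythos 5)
Your proposal follows essentially the same route as the paper's proof: extract the upper block of the expansion \eqref{ed.14}, use the block-diagonal limits of $\widehat B_t$ and $\widehat B_t^{-1}\widehat S_t$ (Lemma \ref{ld.6}) to absorb the non-negligible $\bar H_3$ into the new rotation $H_f=(\I+\bar H_3)H_1^{-1}$, kill the remaining terms via sample-splitting independence, Lemma \ref{ld.5} for the $\dot\lambda_i-H_1'\lambda_i$ effect, and the single-$t$ rate for $\widetilde f_t-H_1^{-1}f_t$, and finally swap $H_1^{-1}$ for $H_f$ at cost $O_P(C_{NT}^{-1})\cdot O_P(N^{-1/2})=O_P(C_{NT}^{-2})$. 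The paper organizes the remainder into the explicit terms $A_{1t},\dots,A_{6t}$ handled by Lemma \ref{ld.9}, but the decomposition and the key ideas are the same as yours.
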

 
\begin{proof} Define 
$$
B=\frac{1}{N}\sum_i\begin{pmatrix}
H_1'\lambda_i\lambda_i'H_1\E e_{it}^2 &0 \\
0& H_2'\alpha_i\alpha_i'H_2
\end{pmatrix},\quad S=\frac{1}{N}\sum_i
\begin{pmatrix}
 H_1'\lambda_i ( \lambda_i'H_1-  \widetilde\lambda_i'  )\E e_{it}^2&0\\
 0&    \widetilde\alpha_i( \alpha_i'H_2-\widetilde\alpha_i')
\end{pmatrix}.
$$
 Both
$B, S$ are independent of $t$ due to the stationarity of $e_{it}^2$. But $S$ depends on the sample splitting through $(\widetilde\lambda_i, \widetilde\alpha_i)$.

From (\ref{ed.14}),  
   \begin{eqnarray}\label{ed.24}
\begin{pmatrix}
\widehat f_t\\
\widehat g_t
\end{pmatrix}
&=& (B^{-1} S+\I)\begin{pmatrix}
 H_1^{-1}f_t\\
 H_2^{-1}g_t
\end{pmatrix}+ B^{-1}    \frac{1}{N}\sum_i 
\begin{pmatrix}
 H_1'\lambda_i e_{it}   \\
 H_2'\alpha_i
\end{pmatrix}
 u_{it} 
\cr
  &&+\sum_{d=1}^6A_{dt}, \text{ where} \cr
A_{1t}&=&   (\widehat B_t^{-1}\widehat S_t-  B^{-1} S )\begin{pmatrix}
 H_1^{-1}f_t\\
 H_2^{-1}g_t
\end{pmatrix}\cr
A_{2t}&=&
(\widehat B_t^{-1}- B^{-1})  \frac{1}{N}\sum_i 
\begin{pmatrix}
 H_1'\lambda_i e_{it}   \\
 H_2'\alpha_i
\end{pmatrix}
 u_{it} 
  \cr
A_{3t}&=& \widehat B_t^{-1}   \frac{1}{N}\sum_i 
\begin{pmatrix}
\widetilde\lambda_i\widehat e_{it}  - H_1'\lambda_i e_{it}   \\
\widetilde\alpha_i -H_2'\alpha_i
\end{pmatrix}
 u_{it} \cr
 A_{4t}&=& \widehat B_t^{-1}    \frac{1}{N}\sum_i 
\begin{pmatrix}
\widetilde\lambda_i\widehat e_{it}  - H_1'\lambda_i e_{it}   \\
\widetilde\alpha_i -H_2'\alpha_i
\end{pmatrix}( \mu_{it}\lambda_i'f_t - \widehat \mu_{it} \dot\lambda_i'\widetilde f_t  ) \cr
A_{5t}&=& (\widehat B_t^{-1} -B^{-1}) \frac{1}{N}\sum_i 
\begin{pmatrix}
 H_1'\lambda_i e_{it}   \\
 H_2'\alpha_i
\end{pmatrix}( \mu_{it}\lambda_i'f_t - \widehat \mu_{it} \dot\lambda_i'\widetilde f_t  ) \cr
A_{6t}&=& B ^{-1}  \frac{1}{N}\sum_i 
\begin{pmatrix}
 H_1'\lambda_i e_{it}   \\
 H_2'\alpha_i
\end{pmatrix}( \mu_{it}\lambda_i'f_t - \widehat \mu_{it} \dot\lambda_i'\widetilde f_t  ) .
  \end{eqnarray}
  Note that $B^{-1}S$ is a block-diagonal matrix, with the upper block being
  $$
\bar H_3:=H_1^{-1}(\frac{1}{N}\sum_i  \lambda_i\lambda_i' \E e_{it}^2)^{-1}  \frac{1}{N}\sum_i  \lambda_i ( \lambda_i'H_1-  \widetilde\lambda_i'  )\E e_{it}^2. 
  $$
  Define 
  $$
  H_f:=(\bar H_3+\I)H_1^{-1}.
  $$
Fixed $t\notin I$, in the factor model, in Lemma \ref{ld.9} we show that 
 $ \sum_{d=2}^5A_{dt}=O_P(C_{NT}^{-2})$, the ``upper block" of $A_{6t}$, $\frac{1}{N}\sum_i 
 \lambda_i e_{it} ( l_i'w_t\lambda_i'f_t - \widehat{l_i'w_t}\dot\lambda_i'\widetilde f_t  )=O_P(C_{NT}^{-2})$ and the upper block of $A_{1t}$ is $O_P(C_{NT}^{-2})$.
 Therefore,  
 $$
 \widehat f_t=  H_ff_t+ H_1^{-1} (\frac{1}{N}\sum_i  \lambda_i\lambda_i' \E e_{it}^2)^{-1}   \frac{1}{N}\sum_i \lambda_ie_{it}u_{it}+O_P(C_{NT}^{-2}).
$$
  Given that $\bar H_3=O_P(C_{NT}^{-1})$ we have $H_1^{-1}=H_f+O_P(C_{NT}^{-1})$. 
  By
   $ \frac{1}{N}\sum_i \lambda_ie_{it}u_{it}=O_P(N^{-1/2})$, 
  \begin{equation}
 \widehat f_t=  H_ff_t+ H_f (\frac{1}{N}\sum_i  \lambda_i\lambda_i' \E e_{it}^2)^{-1}   \frac{1}{N}\sum_i \lambda_ie_{it}u_{it}+O_P(C_{NT}^{-2}).
  \end{equation}
  
\end{proof}

 \subsection{Behavior of $\widehat \lambda_i$}
Recall that  fix $i\leq N$, 
  $$
(\widehat\lambda_{I,i},\widehat\alpha_{I,i})=\arg \min_{\lambda_i, \alpha_i}\sum_{s\notin I} (\widehat y_{is}-   \alpha_i'\widehat g_{I,s}- \widehat e_{is} \lambda_i'\widehat f_{I,s})^2.
  $$
  For notational simplicity, we simply write $\widehat \lambda_i=\widehat \lambda_{I,i}$ and $\widehat \alpha_i=\widehat \alpha_{I,i}$, but keep in mind that $\widetilde\alpha$ and $\widetilde\lambda$ are estimated through the low rank estimations on data $I$. Write
  $$
  T_0=|I^c|_0.
  $$
    \begin{eqnarray}\label{ec.33}
&&(    B^{-1}S+\I)
  \begin{pmatrix}
H_1^{-1}&0\\
0& H_2^{-1}
  \end{pmatrix}
:=\begin{pmatrix}
H_f&0\\
0& H_g 
  \end{pmatrix}\cr
&&  \widehat D_i= \frac{1}{T_0}\sum_{s\notin I}  \begin{pmatrix}
\widehat f_s\widehat f_s'\widehat e_{is}^2&\widehat f_s\widehat g_s'\widehat e_{is}\\
\widehat g_s\widehat f_s'\widehat e_{is}& \widehat g_s\widehat g_s'
  \end{pmatrix},\quad 
D_i=   \frac{1}{T_0}\sum_{s\notin I}  \begin{pmatrix}
H_ff_s  f_s'H_f' \E e_{is}^2&  0\\
0& H_gg_s g_s'H_g'
  \end{pmatrix}\cr
  \end{eqnarray}

  \begin{prop}\label{pc.2}  Fixed a group $\mathcal G\subset\{1,..., N\}$. Write
  	$
  	\Omega_{i}:= (\frac{1}{T}\sum_{s=1 }^T f_sf_s'\E e_{is}^2)^{-1}.
  	$
$$\frac{1}{|\mathcal G|_0}\sum_{i\in\mathcal G}  (\widehat\lambda_{I,i}-H_f^{'-1}\lambda_{i})=  H_f^{'-1} \frac{1}{|\mathcal G|_0T_0}\sum_{i\in\mathcal G} \sum_{s\notin I} \Omega_{i} f_se_{is}u_{is}+
  O_P(C_{NT}^{-2})+o_P(\frac{1}{\sqrt{T|\mathcal G|_0}})$$  
  \end{prop}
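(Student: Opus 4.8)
The plan is to mirror the analysis of $\widehat f_t$ in Proposition \ref{pc.1}, but now working in the cross-sectional direction and keeping track of the extra factor $1/\sqrt{|\mathcal G|_0}$ afforded by averaging over the group. Writing the step-5 OLS estimator from its normal equations,
\begin{equation*}
\begin{pmatrix}\widehat\lambda_{I,i}\\ \widehat\alpha_{I,i}\end{pmatrix}
= \widehat D_i^{-1}\,\frac{1}{T_0}\sum_{s\notin I}\begin{pmatrix}\widehat e_{is}\widehat f_s\\ \widehat g_s\end{pmatrix}\widehat y_{is},
\end{equation*}
with $\widehat D_i$ as in (\ref{ec.33}), I would first substitute the orthogonalized representation $\widehat y_{is}=\alpha_i'g_s+e_{is}\lambda_i'f_s+u_{is}+R_{is}$, where $R_{is}:=\mu_{is}\lambda_i'f_s-\widehat\mu_{is}\dot\lambda_i'\widetilde f_s$ is the partialing-out remainder. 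Subtracting the centering value $H_f^{'-1}\lambda_i$ (the correct center, since $\widehat f_s$ estimates $H_f f_s$ so that $(H_f^{'-1}\lambda_i)'\widehat f_s\approx\lambda_i'f_s$), I would expand the residual into a leading score term in $u_{is}$ plus error pieces driven by $\widehat f_s-H_f f_s$, $\widehat g_s-H_g g_s$, $\widehat e_{is}-e_{is}$, and $R_{is}$.

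For the Hessian, I would show $\widehat D_i\to_P D_i$ with $D_i$ block-diagonal, exactly as in Lemma \ref{ld.6}: the off-diagonal blocks vanish because $\widehat g_s$ and $\widehat e_{is}\widehat f_s$ are asymptotically orthogonal, and the upper-left block converges to $H_f(\frac{1}{T_0}\sum_{s\notin I}f_sf_s'\E e_{is}^2)H_f'$, which by the splitting condition in Assumption \ref{a3.2} equals $H_f\Omega_i^{-1}H_f'+o_P(1)$. The leading score term is then
\begin{equation*}
D_{i,11}^{-1}\frac{1}{T_0}\sum_{s\notin I}\widehat e_{is}\widehat f_s\,u_{is}
= H_f^{'-1}\Omega_i\,\frac{1}{T_0}\sum_{s\notin I}f_se_{is}u_{is}+(\text{negligible}),
\end{equation*}
after replacing $\widehat e_{is}$ by $e_{is}$ and $\widehat f_s$ by $H_ff_s$; summing over $i\in\mathcal G$ produces the asserted main term $H_f^{'-1}\frac{1}{|\mathcal G|_0T_0}\sum_{i\in\mathcal G}\sum_{s\notin I}\Omega_if_se_{is}u_{is}$, which has the sharp order $O_P(1/\sqrt{T|\mathcal G|_0})$ by the sample-splitting independence of $u_{is}$ from the $I$-sample and cross-sectional near-independence.

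\textbf{The main obstacle} is to show that every remaining term is either $O_P(C_{NT}^{-2})$ uniformly in $i$ or $o_P(1/\sqrt{T|\mathcal G|_0})$ after group averaging, which is a more demanding rate than the $o_P(1/\sqrt{T})$ needed for a single $i$. The delicate pieces are the cross terms in which $\widehat f_s-H_ff_s$ multiplies $e_{is}u_{is}$ and is summed over both $s\notin I$ and $i\in\mathcal G$: by Proposition \ref{pc.1}, $\widehat f_s-H_ff_s$ is itself a time-$s$ cross-sectional average, equal to $H_f(\frac1N\sum_j\lambda_j\lambda_j'\E e_{js}^2)^{-1}\frac1N\sum_j\lambda_je_{js}u_{js}+O_P(C_{NT}^{-2})$, so these terms become double averages over $(i,j)$ and $s$ whose control requires the conditional cross-sectional weak-dependence and moment bounds of Assumptions \ref{a4.1} and \ref{a3.8}, together with the fact that $(e_{is},u_{is})$ for $s\notin I$ is independent of the $I$-sample loadings $\widetilde\lambda$. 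The remainder term $\frac1{T_0}\sum_{s\notin I}\widehat e_{is}\widehat f_sR_{is}$ is where orthogonalization is essential: because $\widehat e_{is}$ is (approximately) mean-zero and weakly dependent, the large common component $\mu_{is}$ inside $R_{is}$ is tamed, and the group-averaged bound is precisely the one recorded at the start of this section via Lemmas \ref{ld.5} and \ref{lc.2}, namely $\frac1{|\mathcal G|_0}\sum_{i\in\mathcal G}\|\frac1T\sum_{s\notin I}f_s(\widetilde f_s-H_1^{-1}f_s)'\mu_{is}e_{is}\|^2=O_P(C_{NT}^{-4})$. Collecting the leading score term together with these negligible remainders yields the stated expansion.
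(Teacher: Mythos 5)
Your proposal is correct and follows essentially the same route as the paper: the same normal-equation expansion around $H_f^{'-1}\lambda_i$, the same decomposition of the error into Hessian-estimation, $\widehat e_{is}-e_{is}$, $\widehat f_s-H_ff_s$, and partialing-out remainders, the same use of the block-diagonal limit of $\widehat D_i$ and of Lemmas \ref{ld.5} and \ref{lc.2} for the orthogonalized remainder, and the same replacement of the subsample Hessian $\frac{1}{T_0}\sum_{s\notin I}f_sf_s'$ by its full-sample counterpart via Assumption \ref{a3.2} to produce the $o_P(1/\sqrt{T|\mathcal G|_0})$ term. The only difference is one of detail: the paper carries out the group-averaged bounds on the $\widehat f_s-H_ff_s$ cross terms through the dedicated Lemmas \ref{ld.14}--\ref{ld.16}, which is exactly the program you outline.
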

  
\begin{proof} By definition and (\ref{ed.24}),    \begin{eqnarray}\label{ec.32}
  \begin{pmatrix}
  \widehat\lambda_i\\
  \widehat\alpha_i
  \end{pmatrix}
  &=& \widehat D_i^{-1}\frac{1}{T_0}\sum_{s\notin I}
   \begin{pmatrix}
\widehat f_s \widehat e_{is} \\
\widehat g_s 
  \end{pmatrix}(y_{is}-\widehat \mu_{is} \dot\lambda_i'\widetilde f_s)\cr
  &=&    \begin{pmatrix}
H_f^{'-1}\lambda_i\\
H_g^{'-1}\alpha_i
  \end{pmatrix}+  D_i^{-1}\frac{1}{T_0}\sum_{s\notin I}
   \begin{pmatrix}
H_ff_se_{is}  \\
H_gg_s
  \end{pmatrix}u_{is} 
  + \sum_{d=1}^6R_{di},\quad \text{ where},
  \cr
  \cr
  R_{1i}&=&  (\widehat D_i^{-1}-D_i^{-1})\frac{1}{T_0}\sum_{s\notin I}
   \begin{pmatrix}
H_ff_se_{is}  \\
H_gg_s
  \end{pmatrix}u_{is} \cr
    R_{2i}&=& 
  \widehat D_i^{-1}\frac{1}{T_0}\sum_{s\notin I}
   \begin{pmatrix}
 H_ff_s(\widehat e_{is} -e_{is})u_{is} \\
0
  \end{pmatrix}\cr
  R_{3i}&=& (\widehat D_i^{-1}-D_i^{-1})\frac{1}{T_0}\sum_{s\notin I}
   \begin{pmatrix}
\widehat f_s \widehat e_{is} \\
\widehat g_s 
  \end{pmatrix}(\mu_{is}\lambda_i'f_s- \widehat \mu_{is}\dot\lambda_i'\widetilde f_s  )\cr
R_{4i}&=&  D_i^{-1}\frac{1}{T_0}\sum_{s\notin I}
\begin{pmatrix}
	\widehat f_s \widehat e_{is} \\
	\widehat g_s 
\end{pmatrix}(\mu_{is}\lambda_i'f_s- \widehat \mu_{is}\dot\lambda_i'\widetilde f_s  )\cr
  R_{5i}&=&\widehat D_i^{-1}\frac{1}{T_0}\sum_{s\notin I}
\begin{pmatrix}
\widehat e_{is}\I&0  \\
0&\I
  \end{pmatrix}     \begin{pmatrix}
 \widehat f_s  -H_ff_s  \\
\widehat g_s -H_gg_s
  \end{pmatrix}    u_{is} \cr
  R_{6i}&=&\widehat D_i^{-1}\frac{1}{T_0}\sum_{s\notin I}
   \begin{pmatrix}
\widehat f_s \widehat e_{is} \\
\widehat g_s 
  \end{pmatrix}( \lambda_i'H_f^{-1}, \alpha_i'H_g^{-1})\begin{pmatrix}
\widehat e_{is}\I&0  \\
0&\I
  \end{pmatrix}     \begin{pmatrix}
 \widehat f_s  -H_ff_s  \\
\widehat g_s -H_gg_s
  \end{pmatrix}   
  \end{eqnarray}
  
   Let  $r_{di}$ be the upper blocks of $R_{di}$.     
 Lemma \ref{lc.8add} shows
$\frac{1}{|\mathcal G|_0}\sum_{i\in\mathcal G}\|\widehat D_i^{-1}-D_i^{-1}\|^2=O_P(C_{NT}^{-2})$. So  
 $\frac{1}{|\mathcal G|_0}\sum_{i\in\mathcal G}\|R_{1i}\|=O_P(C_{NT}^{-2})$.
 Lemma  \ref{ld.2a} shows \\
 $\frac{1}{|\mathcal G|_0}\sum_{i\in\mathcal G}\| \frac{1}{T}\sum_{s\notin I} f_s(\widehat e_{is}-e_{is})u_{is}  \|^2=O_P(C_{NT}^{-4})$, so  $\frac{1}{|\mathcal G|_0}\sum_{i\in\mathcal G}\|R_{2i}\|=O_P(C_{NT}^{-2})$.
 By Lemma \ref{ld.13},
 	$$
   	\frac{1}{|\mathcal G|_0}\sum_{i\in\mathcal G}\|\frac{1}{T}\sum_{s\notin I}\widehat f_s\widehat e_{is} ( \mu_{it}\lambda_i'f_s-   \widehat\mu_{it}\dot\lambda_i'\widetilde f_s  )\|^2 =O_P(C_{NT}^{-4})$$
	$$ 	\frac{1}{|\mathcal G|_0}\sum_{i\in\mathcal G}\|\frac{1}{T}\sum_{s\notin I}\widehat g_s (\mu_{it}   \lambda_i'f_s- \widehat{\mu}_{it}\dot\lambda_i'\widetilde f_s  )\|^2 =O_P(C_{NT}^{-2}).
   	$$
	So $\frac{1}{|\mathcal G|_0}\sum_{i\in\mathcal G}\|R_{3i}\|=O_P(C_{NT}^{-2})= \frac{1}{|\mathcal G|_0}\sum_{i\in\mathcal G}\|r_{4i}\|$.  Next,  by Lemma \ref{ld.14},   $\frac{1}{|\mathcal G|_0}\sum_{i\in\mathcal G}\|r_{5i}\|=O_P(C_{NT}^{-2}).$ By Lemma \ref{ld.15}, $\frac{1}{|\mathcal G|_0}\sum_{i\in\mathcal G}\|r_{6i}\|=O_P(C_{NT}^{-2}).$  Therefore,
	$$
\sum_{d=1}^6	\frac{1}{|\mathcal G|_0}\sum_{i\in\mathcal G}\|r_{di}\|=O_P(C_{NT}^{-2}).
	$$

Now write  $S_{f, I}=\frac{1}{T_0}\sum_{s\notin I} f_sf_s' $ and $S_{f}=\frac{1}{T}\sum_{s=1}^T f_sf_s' $, 
\begin{eqnarray*}
 \Omega_{I,i}&:=& (\frac{1}{T_0}\sum_{s\notin I} f_sf_s'\E e_{is}^2)^{-1} =S_{f,I}^{-1} (\E e_{is}^2)^{-1} \cr
 \Omega_{i}&:=& (\frac{1}{T_0}\sum_{s=1}^T f_sf_s'\E e_{is}^2)^{-1}=S_{f}^{-1} (\E e_{is}^2)^{-1}.
\end{eqnarray*}
Then by the assumption  $ S_{f,I}=S_f+o_P(1)$.
  	It remains to bound, for $\omega_{is}= e_{is}u_{is}$,
\begin{eqnarray*}
 &&\|\frac{1}{|\mathcal G|_0}\sum_{i\in\mathcal G} [ \Omega_{I,i}- \Omega_i]\frac{1}{T_0} \sum_{s\notin I} f_s\omega_{is}\|^2
 = \|\frac{1}{|\mathcal G|_0}\sum_{i\in\mathcal G}   \Omega_{I,i}( S_{f,I}-S_f ) \Omega_i\frac{1}{T_0} \sum_{s\notin I} f_s\omega_{is} \E e_{is}^2\|^2\cr
 &\leq&\|S_{f,I}-S_f \|^2 \|S_f^{-1}\|\|S_{f,I}^{-1}\| \|\frac{1}{|\mathcal G|_0}\sum_{i\in\mathcal G}   \frac{1}{T_0} \sum_{s\notin I} f_s\omega_{is} (\E e_{is}^2)^{-1}\|^2_F \leq o_P(T^{-1}|\mathcal G|_0^{-1}). 
\end{eqnarray*}
	
	This finishes the proof. 
      	
\end{proof}
 
  \subsection{Proof of normality of $\widehat\theta_{it}$}
Fix $t\notin I$.   Suppose $T$ is odd and $|I|_0=|I^c|_0=(T-1)/2$.  Let 
  \begin{eqnarray*}
  Q_I&:=& \frac{1}{|\mathcal G|_0|I|_0}\sum_{i\in\mathcal G} \sum_{s\notin I} \Omega_{i} f_se_{is}u_{is}=O_P( C_{NT}^{-1} |\mathcal G|_0^{-1/2}),\cr
   Q_{I^c}&:=& \frac{1}{|\mathcal G|_0|I|_0}\sum_{i\in\mathcal G} \sum_{s\notin I^c} \Omega_{i} f_se_{is}u_{is}=O_P( C_{NT}^{-1} |\mathcal G|_0^{-1/2}),\cr
    Q&:=& \frac{1}{|\mathcal G|_0T}\sum_{i\in\mathcal G} \sum_{s=1}^T \Omega_{i} f_se_{is}u_{is},\cr
  J&:=& (\frac{1}{N}\sum_i  \lambda_i\lambda_i' \E e_{it}^2)^{-1}   \frac{1}{N}\sum_i \lambda_ie_{it}u_{it}=O_P(C_{NT}^{-1})\cr
  \bar\lambda_{\mathcal G}&:=& \frac{1}{|\mathcal G|_0}\sum_{i\in\mathcal G} \lambda_i.
  \end{eqnarray*}
    Also $Q_I=O_P(\frac{1}{\sqrt{T|\mathcal G|_0}})=Q_{I^c}$  follows from $\max_i\sum_j|\Cov(u_{is}e_{is},  u_{js}e_{js}|F)|<C$, and 
    $\max_i\|\Omega_i\|
    \leq \lambda^{-1}_{\min}( \frac{1}{T}\sum_{s } f_sf_s') (\min_i\E e_{is}^2)^{-1}<C.
    $
 By Propositions \ref{pc.1} \ref{pc.2},  $\widehat f_t-H_ff_t= H_f J+O_P(C_{NT}^{-2})$ and  $
  \frac{1}{|\mathcal G|_0}\sum_{i\in\mathcal G}  (\widehat\lambda_{I,i}-H_f^{'-1}\lambda_i)= H_f^{'-1} Q_I+
  O_P(C_{NT}^{-2}). $
Therefore, 
 $$
    \frac{1}{|\mathcal G|_0}\sum_{i\in\mathcal G} (\widehat\lambda_{I,i}'\widehat f_{I,t}-\lambda_i'f_t)=f_t' Q_I   +  \bar\lambda_{\mathcal G}' J+O_P(C_{NT}^{-2}).
 $$
Exchanging $I$ with $I^c$, we have, for $t\notin I^c$,  
 \begin{eqnarray*}
  \frac{1}{|\mathcal G|_0}\sum_{i\in\mathcal G} ( \widehat\lambda_{I^c,i}'\widehat f_{I^c,t}-\lambda_i'f_t)&=&f_t' Q_{I^c}  +  \bar\lambda_{\mathcal G}' J+O_P(C_{NT}^{-2}).
        \end{eqnarray*}
      Note that the fixed $t\notin I\cup I^c$, so take the average:
   \begin{eqnarray}
 \frac{1}{|\mathcal G|_0}\sum_{i\in\mathcal G}( \widehat\theta_{it}-\theta_{it})&=&f_t' \frac{ Q_I+Q_{I^c}}{2}
  +  \bar\lambda_{\mathcal G}' J+O_P(C_{NT}^{-2})\cr
  &=& f_t' \frac{1}{|\mathcal G|_0T} \sum_{i\in\mathcal G} \bigg{[} \sum_{s \in I^c\cup\{t\}} \Omega_{i} f_se_{is}u_{is}
  + \sum_{s\in I\cup\{t\}} \Omega_{ i} f_se_{is}u_{is}\bigg{]} +  \bar\lambda_{\mathcal G}' J\cr
  &&+O_P(C_{NT}^{-2})+o_P(\frac{1}{\sqrt{T|\mathcal G|_0}})\cr
  &=&f_t' Q+\bar\lambda_{\mathcal G}' J+O_P(C_{NT}^{-2})+o_P(\frac{1}{\sqrt{T|\mathcal G|_0}})\cr
  &=& \frac{1}{\sqrt{T|\mathcal G|_0}}\zeta_{NT} +\frac{1}{\sqrt{N}}       \xi_{NT}  
  +O_P(C_{NT}^{-2})+o_P(\frac{1}{\sqrt{T|\mathcal G|_0}}) .
  \end{eqnarray}
      where 
\begin{eqnarray}
      \zeta_{NT}&=&f_t' \frac{1}{\sqrt{|\mathcal G|_0T}}\sum_{i\in\mathcal G} \sum_{s=1}^T \Omega_{i} f_se_{is}u_{is},\cr
       \xi_{NT}&=&\bar\lambda_{\mathcal G}'V_{\lambda1}^{-1}   \frac{1}{\sqrt{N}}\sum_i \lambda_ie_{it}u_{it}\cr
       V_{\lambda1}&=&\frac{1}{{N}}\sum_i  \lambda_i\lambda_i' \E e_{it}^2,\quad V_{\lambda2}=\Var(  \frac{1}{\sqrt{N}}\sum_i \lambda_ie_{it}u_{it}|F)\cr
     V_{\lambda}&=& V_{\lambda1}^{-1}   V_{\lambda2}   V_{\lambda1}^{-1},\quad 
     V_f=\frac{1}{T}\sum_{s=1}^T \Var(\frac{1}{\sqrt{|\mathcal G|_0}}\sum_{i\in\mathcal G} \Omega_{i} f_se_{is}u_{is}|F),\cr
 \Sigma_{NT}&=&\frac{1}{T|\mathcal G|_0}f_t'V_ff_t+\frac{1}{N}\bar\lambda_{\mathcal G}'V_\lambda  \bar\lambda_{\mathcal G} .
\end{eqnarray}
Next,  
$
 \Cov(\xi_{NT}, \zeta_{NT}|F)=\frac{1}{\sqrt{|\mathcal G|_0TN}}
\sum_{i\in\mathcal G}  \sum_j \bar\lambda_{\mathcal G}'V_{\lambda1}^{-1} \lambda_j   f_t' \Omega_{i} f_t\E (\omega_{it}  \omega_{jt} |F)
 =o_P(1).
$
We can then  use the same argument as  the proof of Theorem 3 in \cite{bai03} to claim that 
$$
\frac{ \frac{1}{\sqrt{T|\mathcal G|_0}}\zeta_{NT} +\frac{1}{\sqrt{N}}       \xi_{NT}  
}{\Sigma_{NT}^{1/2}}\to^d \mathcal N(0,1).
$$ 
Also, when either $|\mathcal G|_0=o(N)$ or $N=o(T^2)$ holds,  
 $$
   \frac{O_P(C_{NT}^{-2})+ o_P(\frac{1}{\sqrt{T|\mathcal G|_0}}) }{\Sigma_{NT}^{1/2}} =o_P(1).
 $$ 
Therefore,  $\Sigma_{NT}^{-1/2} \frac{1}{|\mathcal G|_0}\sum_{i\in\mathcal G}( \widehat\theta_{it}-\theta_{it})\to^d\mathcal N(0,1).$

   \subsection{Two special cases }
  
  We now consider two special cases. 
  
  \textit{Case I.} fix an  $i\leq N$ and set $\mathcal G=\{i\}$. In this case we are interested in inference about individual $\theta_{it}$. We have 
  $$
  V_{f,i}=  \Omega_i \E(\frac{1}{T} \sum_{s=1}^T f_s f_s' e_{is}^2u_{is}^2|F) \Omega_i,\quad  \Sigma_{NT,i}:= \frac{1}{T }f_t'V_{f,i}f_t+\frac{1}{N} \lambda_{i}'V_\lambda   \lambda_{i}.
  $$
  So $\Sigma_{NT,i}^{-1/2} ( \widehat\theta_{it}-\theta_{it})\to^d\mathcal N(0,1).$
 
    \textit{Case II.}  $\mathcal G=\{1,..., N\}$.  Then $|\mathcal G|_0=N.$ Set
    $
    \bar\lambda=\frac{1}{N}\sum_i\lambda_i.
    $ We have
    $$
\sqrt{N} ( \bar\lambda 'V_\lambda  \bar\lambda)^{-1/2} \frac{1}{N}\sum_{i=1}^N( \widehat\theta_{it}-\theta_{it})\to^d\mathcal N(0,1).
    $$
    
       \subsection{Covariance estimations} Suppose $C_{NT}^{-1}\max_{is}e_{is}^2u_{is}^2=o_P(1)$. 
    We now consider estimating $\Sigma_{NT}$ in the general case, with the assumption that $e_{it}u_{it}$ are cross sectionally independent given $F$. Then
    $$
    V_{\lambda2}= \frac{1}{N}\sum_i \lambda_i\lambda_i'\E( e_{it}^2u_{it}^2|F) ,\quad
  V_f=\frac{1}{T|\mathcal G|_0}\sum_{s=1}^T \sum_{i\in\mathcal G}  \Omega_{i}f_sf_s'\Omega_{i}\E( e_{is}^2u_{is}^2|F)
 $$
 The main goal is to consistently estimate the above two quantities.  For notational simplicity, we assume $\dim(f_t)= \dim(\alpha_i)=1$.
 
\subsubsection{Consistency for $ V_{\lambda }$}
 
 We have $\widehat V_{\lambda2, I}:=  \frac{1}{N}\sum_i \widehat \lambda_{I, i}\widehat \lambda_{I, i}' \widehat e_{it}^2\widehat u_{it, I}^2$, where $\widehat u_{it, I}=y_{it}- x_{it}\widehat\lambda_{I,i}'\widehat f_{I,t}-\widehat\alpha_{I,i}'\widehat g_{I,t}$.
 
 We remove the subscript ``$I$" for notational simplicity. 
 
 Step 1. Show $\|\widehat V_{\lambda2, I}- \widetilde V_{\lambda2}\|=o_P(1)$
 where $ \widetilde V_{\lambda2}=H_f^{'-1} \frac{1}{N}\sum_i   \lambda_{ i} \lambda_{i}'  e_{it}^2 u_{it}^2 H_f^{-1}$.
 
By Lemma \ref{ld.15}  $ \max_i[\|\widehat\lambda_i-H_f^{'-1}\lambda_i\|+ \|\widehat\alpha_i-H_g^{'-1}\alpha_i\|]=o_P(1)$.   By Lemma \ref{ld.17add}, 
 \begin{eqnarray*}
\frac{1}{N}\sum_i|\widehat u_{it} -u_{it}|^2(1+e_{it}^2) &\leq& \max_{it}|\widehat u_{it}-u_{it}|^2O_P(1)=o_P(1). 
  \end{eqnarray*}
  So we have
 \begin{eqnarray*}
\| \widehat V_{\lambda2 }- \widetilde V_{\lambda2}\|&\leq &
 \frac{1}{N}\sum_i (\widehat \lambda_{  i} -\lambda_{i}) ^2\widehat e_{it}^2\widehat u_{it }^2
 + \frac{4}{N}\sum_i |(\widehat \lambda_{  i} -\lambda_{i})  \lambda_{i} |  (\widehat u_{it }-u_{it})^2\max_i\widehat e_{it}^2\cr
 &&+ \frac{4}{N}\sum_i (\widehat \lambda_{  i} -\lambda_{i})  \lambda_{i}\widehat e_{it}^2 u_{it }^2
 + \frac{2}{N}\sum_i \lambda_{i}^2(\widehat e_{it}^2-e_{it}^2)(\widehat u_{it }-u_{it})^2
 \cr
 &&+\frac{2}{N}\sum_i \lambda_{i}^2(\widehat e_{it}^2-e_{it}^2)  u_{it }^2+ \frac{2}{N}\sum_i \lambda_{i}^2 e_{it}^2(\widehat u_{it }- u_{it})u_{it}
 + \frac{1}{N}\sum_i \lambda_{i}^2 e_{it}^2(\widehat u_{it }- u_{it})^2\cr
 &\leq &
 o_P(1).
 \end{eqnarray*}
 
  Step 2. Show $\| \widetilde V_{\lambda2}-H_f^{'-1}V_{\lambda2}H_f^{-1}\|=o_P(1)$. 
  
  It suffices to show 
   $\Var( \frac{1}{N}\sum_i \lambda_i^2e_{it}^2u_{it}^2|F)\to 0$ almost surely.   Almost surely in $F$,
  $$
  \Var( \frac{1}{N}\sum_i \lambda_i^2e_{it}^2u_{it}^2|F) 
\leq    \frac{\max_i\|\lambda_i\|^2}{N}\max_{i\leq N}\E(e_{it}^4u_{it}^4|F) \to0
  $$
 given that $\max_{i\leq N}\E(e_{it}^4u_{it}^4|F) <C$ almost surely.
 
 The consistency for $ V_{\lambda1}$ follows from the same argument. 
 
\subsubsection{Consistency for $ V_{f}$} Write $\widehat\Omega_{I,i}= (\frac{1}{T_0}\sum_{s\notin I}\widehat f_{I,s}\widehat f_{I,s}')^{-1}\widehat\sigma_i^{-2}$. \\
$\widehat V_{f, I}=\frac{1}{T_0|\mathcal G|_0}\sum_{s\notin I} \sum_{i\in\mathcal G}  \widehat \Omega_{I, i}\widehat f_{I,s} \widehat f_{I,s}'\widehat\Omega_{I,i} \widehat  e_{is}^2\widehat  u_{is, I}^2 $. We aim to show the consistency of $\widehat V_{f, I}.$
   We remove the subscript ``$I$" for notational simplicity. Then simply write 
\begin{eqnarray*}
\widehat V_{f}&=&\widehat S_f^{-1}\frac{1}{T|\mathcal G|_0}\sum_{s} \sum_{i\in\mathcal G}  \widehat f_{s} \widehat f_{s}' \widehat  e_{is}^2\widehat  u_{is}^2 \widehat\sigma_i^{-4} \widehat S_f^{-1}\cr
\widetilde V_{f}&=&H_f^{'-1}  S_f^{-1}\frac{1}{T|\mathcal G|_0}\sum_{s} \sum_{i\in\mathcal G}    f_{s} f_{s}' e_{is}^2 u_{is}^2  \sigma_i^{-4} S_f^{-1}H_f^{-1}  
\end{eqnarray*} where 
 $\sigma_i^2=\E e_{it}^2$. 
 
  Step 1. Show $\|\widehat V_{f}- \widetilde V_{f}\|=o_P(1)$. First note that 
  $$
  \widehat S_f- H_fS_fH_f'=\frac{1}{T}\sum_t\widehat f_t\widehat f_t' -H_ff_tf_t'H_f'=o_P(1).
  $$
We show $\|\frac{1}{T|\mathcal G|_0}\sum_{s} \sum_{i\in\mathcal G}  \widehat f_{s} \widehat f_{s}' \widehat  e_{is}^2\widehat  u_{is}^2 \widehat\sigma_i^{-4} -H_f\frac{1}{T|\mathcal G|_0}\sum_{s} \sum_{i\in\mathcal G}    f_{s} f_{s}' e_{is}^2 u_{is}^2  \sigma_i^{-4} H_f'\|^2=o_P(1) $. Ignoring $H_f$, it is bounded by 
\begin{eqnarray*}
	&&O_P(C_{NT}^{-2})  \max_s\|\frac{1}{|\mathcal G|_0} \sum_{i\in\mathcal G}   \widehat  e_{is}^2\widehat  u_{is}^2 \widehat\sigma_i^{-4} \|^2
	+O_P(C_{NT}^{-2})  \frac{1}{T|\mathcal G|_0}\sum_{s} \sum_{i\in\mathcal G}  \|  f_{s}   f_{s}' \widehat  u_{is}^2 \widehat\sigma_i^{-4}\|^2\cr
	&&+ \frac{1}{T|\mathcal G|_0}\sum_{s} \sum_{i\in\mathcal G}  (\widehat  u_{is} -u_{is} )^2   \frac{1}{T|\mathcal G|_0}\sum_{s} \sum_{i\in\mathcal G}  \|  f_{s}  f_{s}'   e_{is}^2(\widehat  u_{is} +u_{is} ) \widehat\sigma_i^{-4}\|^2 + O_P(1) \max_i|\widehat\sigma_i^4-\sigma_i^4|^2
\end{eqnarray*}
By Lemma \ref{ld.15add}, 
$\max_{is}|\widehat u_{is}-u_{is}|=o_P(1)$, $\max_i|\widehat \sigma_i^2-\sigma_i^2|=o_P(1)$, $C_{NT}^{-1}\max_{is}e_{is}^2u_{is}^2=o_P(1)$. So the above is $o_P(1)$. 

Step 2. Show $\|\widetilde V_f-H_f^{'-1}V_f H_f^{-1}\|=o_P(1)$.

It suffices to prove 
$
\Var(\frac{1}{T|\mathcal G|_0}\sum_{s} \sum_{i\in\mathcal G}    f_{s}  ^2 e_{is}^2 u_{is}^2  \sigma_i^{-4} |F)\to 0
$ almost surely.  It is in fact bounded by 
  $$
 \frac{1}{T^2|\mathcal G|_0^2}\sum_{s} \sum_{i\in\mathcal G}  f_{s}  ^4 \E(   e_{is}^4 u_{is}^4    |F)\max_i\sigma_i^{-8}\to 0
  $$
  given that $\frac{1}{T}\sum_t\|f_t\|^4<C$ almost surely.

\section{Technical lemmas  }

 \subsection{The effect of $\widehat e_{it}-e_{it}$ in the factor model}
 
Here we present the intermediate results when $x_{it}$ admits a factor structure:
  $$
  x_{it}=l_i'w_t+e_{it}.
  $$

   Let $\widehat w_t$ be the PC estimator of $w_t$.   Then $\widehat l_i'=\frac{1}{T}\sum_s x_{is}\widehat w_s'$ and $ \widehat e_{it}-e_{it}=  l_i'H_x(\widehat w_t-H_x^{-1}w_t)+(\widehat l_i'-l_i'H_x) \widehat w_t.$
   
      \begin{lem}\label{ld.1a} (i)
   $\max_{it}|\widehat e_{it}-e_{it}|=O_P(\phi_{NT})$, where
$$   \phi_{NT}:=   (C_{NT}^{-2}(\max_i\frac{1}{T}\sum_se_{is}^2)^{1/2}+b_{NT,4}  +b_{NT,5})(1+ \max_{t\leq T}\|w_t\|)+ (b_{NT,1}+C_{NT}^{-1}b_{NT,2}+b_{NT,3}).$$
So $\max_{it} |\widehat e_{it}-e_{it}|\max_{it}|e_{it}|=O_P(1)$.

(ii) All terms below are $O_P(C_{NT}^{-2})$, for a fixed $t$:
 $\frac{1}{N}\sum_i (\widehat e_{it}-e_{it}) ^4   $,
$\frac{1}{N}\sum_i (\widehat e_{it}-e_{it}) ^2    $, and 
 $\frac{1}{N}\sum_i (\widehat e_{it}-e_{it}) ^2e_{it}^2 
 $ ,  $\frac{1}{N}\sum_i    \lambda_i  \lambda_i'  (\widehat e_{it}-e_{it})e_{it} $, $\frac{1}{N}\sum_i \lambda_i(\widehat e_{it}  -e_{it})u_{it}$. 
 
 (iii)   $\frac{1}{N}\sum_i\lambda_i \alpha_i' (\widehat e_{it} -e_{it})=O_P(C_{NT}^{-1})$ for a fixed $t$.
 
    \end{lem}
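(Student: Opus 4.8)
The starting point is the stated identity
$$
\widehat e_{it}-e_{it}= l_i'H_x\,\delta_{w,t}+\delta_{l,i}'\,\widehat w_t,\qquad \delta_{w,t}:=\widehat w_t-H_x^{-1}w_t,\quad \delta_{l,i}:=\widehat l_i-H_x'l_i,
$$
which separates the error into a factor-estimation piece and a loading-estimation piece. The plan is first to record the standard principal-components expansions (as in \cite{bai03}) for $\delta_{w,t}$ and $\delta_{l,i}$, whose leading stochastic terms are $\frac1N\sum_j l_j e_{jt}$ and $\frac1T\sum_s e_{is}w_s$ respectively, and whose uniform-in-$(i,t)$ remainders are governed by $b_{NT,1},\dots,b_{NT,5}$. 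From these I read off $\max_t\|\delta_{w,t}\|$ and $\max_i\|\delta_{l,i}\|$ in terms of the $b_{NT,j}$ and, for a fixed $t$, the sharper rates $\|\delta_{w,t}\|=O_P(C_{NT}^{-1})$ and $\frac1N\sum_i\|\delta_{l,i}\|^2=O_P(C_{NT}^{-2})$.

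For (i) I would substitute both expansions into the identity, take $\max_{it}$, and use $\max_i\|l_i\|<C$ (Assumption \ref{a3.8}(i)) together with $\|\widehat w_t\|\le\|H_x^{-1}\|\,\|w_t\|+\|\delta_{w,t}\|=O_P(1+\|w_t\|)$; collecting the $b_{NT,j}$ bounds reproduces $\phi_{NT}$, the factor $(1+\max_t\|w_t\|)$ arising precisely from $\|\widehat w_t\|$. The product statement then follows by comparing $\phi_{NT}$ with $\delta_{NT}$: since $\frac1T\sum_s e_{is}^2=O_P(1)$ one has $\phi_{NT}\le C\delta_{NT}$, so Assumption \ref{a3.8}(ii) gives $\phi_{NT}\max_{it}|e_{it}|=o_P(1)$, which is in particular $O_P(1)$. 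For (ii) I would treat each average by the same split. The factor piece always factors as $\delta_{w,t}$ times a cross-sectional average; when that average carries a mean-zero noise (the $\lambda_i\lambda_i'(\cdot)e_{it}$ and $\lambda_i(\cdot)u_{it}$ displays) it is $O_P(N^{-1/2})$ by the weak-dependence variance bounds of Assumption \ref{a4.1}, whence $N^{-1/2}\|\delta_{w,t}\|=O_P(N^{-1/2}C_{NT}^{-1})\le O_P(C_{NT}^{-2})$, while for the pure $(\widehat e_{it}-e_{it})^2$ and $(\widehat e_{it}-e_{it})^4$ averages I use $\frac1N\sum_i\|l_i\|^2e_{it}^2=O_P(1)$ and $\|\delta_{w,t}\|^2=O_P(C_{NT}^{-2})$ directly. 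The loading piece is handled by replacing $\delta_{l,i}$ by its leading expansion $\approx\frac1T\sum_s e_{is}w_s$ (up to $H_x$) and separating the self-term $s=t$ from the cross-terms $s\ne t$.

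The main obstacle is exactly the two averages $\frac1N\sum_i(\widehat e_{it}-e_{it})^2e_{it}^2$ and $\frac1N\sum_i\lambda_i\lambda_i'(\widehat e_{it}-e_{it})e_{it}$, where the loading error (depending on all $\{e_{is}\}_s$) is multiplied against the time-$t$ noise $e_{it}$: a crude Cauchy--Schwarz bound only yields $O_P(C_{NT}^{-1})$. To sharpen this to $O_P(C_{NT}^{-2})$ I would exploit the serial independence of $e_{it}$ across $t$ (Assumption \ref{a4.1}(i)), so that the $s\ne t$ cross-terms in $\frac1T\sum_s e_{is}w_s\cdot e_{it}$ are conditionally mean zero and contribute $O_P(C_{NT}^{-2})$ in variance, while the single self-term $s=t$ carries the factor $1/T\le C_{NT}^{-2}$ and is controlled by the eighth-moment bound $\max_{it}\E(e_{it}^8\mid U,F)<C$ of Assumption \ref{a3.8}(iv); the remainder from replacing $\delta_{l,i}$ by its leading term is absorbed into the $b_{NT,j}$ and shown negligible. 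Finally, for (iii) the same split applies, but now no mean-zero noise multiplies the factor error, so the factor piece reduces to $\big(\frac1N\sum_i\lambda_i\alpha_i'l_i'\big)H_x\delta_{w,t}$ with the bounded, generically nonzero average $\frac1N\sum_i\lambda_i\alpha_i'l_i'=O_P(1)$, hence is $O_P(C_{NT}^{-1})$, whereas the loading piece is $O_P(N^{-1/2})$ up to $b_{NT,j}$-remainders; the factor piece dominates and cannot be improved, consistent with the weaker rate claimed.
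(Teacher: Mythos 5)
Your proposal follows essentially the same route as the paper's proof: the identical decomposition $\widehat e_{it}-e_{it}=l_i'H_x(\widehat w_t-H_x^{-1}w_t)+(\widehat l_i-H_x'l_i)'\widehat w_t$, the same PC expansions with leading terms $\eta_t=\frac1N\sum_j l_je_{jt}$ and $\frac1T\sum_s e_{is}w_s$ and remainders controlled by $b_{NT,1},\dots,b_{NT,5}$, the same variance computation exploiting serial independence of $e_{it}$ to sharpen the cross-product terms from $O_P(C_{NT}^{-1})$ to $O_P(C_{NT}^{-2})$, and the same identification of $\eta_t$ as the dominating $O_P(C_{NT}^{-1})$ term in (iii). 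The argument is correct and matches the paper's.
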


\begin{proof}
   Below we first simplify the expansion of $\widehat e_{it}-e_{it}$.
Let $K_3=\dim(l_i)$.
Let
 $\mathcal Q$ be a diagonal matrix consisting of the reciprocal of the  first $K_3$ eigenvalues of $XX'/(NT)$. Let 
\begin{eqnarray*}
 \zeta_{st}&=& \frac{1}{N}\sum_i(e_{is}e_{it}-\E e_{is}e_{it}),\quad \eta_{t}= \frac{1}{N}\sum_il_ie_{it}  ,\cr
 \sigma^2&=& \frac{1}{N}\sum_{i}\E  e_{it}^2.
  \end{eqnarray*}
For the PC estimator,  there is a rotation matrix  $\bar H_x$,  by (A.1) of \cite{bai03}, (which can be simplified due to the serial independence in $e_{it}$)
\begin{eqnarray*}
\widehat w_t-\bar H_xw_t&=&\mathcal Q\frac{ \sigma^2}{T}(\widehat w_t-\bar H_xw_t)
+\mathcal Q[\frac{ \sigma^2}{T} \bar H_x+ \frac{1}{TN}\sum_{is}\widehat w_s l_i'e_{is}]w_t
\cr
&&+\mathcal Q \frac{1}{T}\sum_{s}\widehat w_s \zeta_{st}+ \mathcal Q\frac{1}{T}\sum_s\widehat w_sw_s'\eta_t   .
  \end{eqnarray*}
Move $\mathcal Q\frac{ \sigma^2}{T}(\widehat w_t-\bar H_xw_t)$ to the left hand side (LHS); then LHS  becomes \\$(\I-\mathcal Q\frac{\sigma^2}{T})  (\widehat w_t-\bar H_xw_t)$. Note that $\|\mathcal Q\|=O_P(1)$ so $\mathcal Q_1:=(\I-\mathcal Q\frac{\sigma^2}{T})^{-1}$ exists whose eigenvalues all converge to one. Then multiply $\mathcal Q_1$ on both sides, we reach 
\begin{eqnarray*}
\widehat w_t-\bar H_xw_t&=& 
\mathcal Q_1   \mathcal Q[\frac{ \sigma^2}{T} \bar H_x+ \frac{1}{TN}\sum_{is}\widehat w_s l_i'e_{is}]w_t
\cr
&&+\mathcal Q_1 \mathcal Q \frac{1}{T}\sum_{s}\widehat w_s \zeta_{st}+\mathcal Q_1  \mathcal Q\frac{1}{T}\sum_s\widehat w_sw_s'\eta_t   .
  \end{eqnarray*}
  Next, move $\mathcal Q_1   \mathcal Q[\frac{ \sigma^2}{T} \bar H_x+ \frac{1}{TN}\sum_{is}\widehat w_s l_i'e_{is}]w_t$ to LHS, combined with $-\bar H_xw_t$, then LHS becomes $\widehat w_t- H_x^{-1}w_t$, where
   $H_x^{-1}=   (\I+ \mathcal Q_1\mathcal Q\frac{ \sigma^2}{T}) \bar H_x
  +\mathcal Q_1\mathcal Q \frac{1}{TN}\sum_{is}\widehat w_s l_i'e_{is}
  $, and $\mathcal Q_1\mathcal Q \frac{1}{TN}\sum_{is}\widehat w_s l_i'e_{is}=o_P(1)$. So the eigenvalues of $H_x^{-1}$ converge to those of $\bar H_x$, which are well known to be bounded away from both zero and infinity \citep{bai03}.  Finally, let $R_1=\mathcal Q_1 \mathcal Q $ and $R_2= \mathcal Q_1\mathcal Q \frac{1}{T}\sum_{s}\widehat w_s w_s'$, we reach
 \begin{eqnarray}\label{ed.43}
\widehat w_t- H_x^{-1}w_t&=&R_1  \frac{1}{T}\sum_{s}\widehat w_s \zeta_{st}+  R_2\eta_t .
  \end{eqnarray}
  with $\|R_1\|+\|R_2\|=O_P(1)$.

  Also, $\widehat l_i'=\frac{1}{T}\sum_s x_{is}\widehat w_s'$,  
 for $\mathcal Q_3=  -H_x(R_1  \frac{1}{T^2}   \sum_{m,s\leq T}\zeta_{ms}  \widehat w_m  \widehat w_s'  +R_2 \frac{1}{T}\sum_s  \eta_s  \widehat w_s' )=O_P(C_{NT}^{-2})$,
          \begin{eqnarray}
   \widehat e_{it}-e_{it}& =&  l_i'H_x(\widehat w_t-H_x^{-1}w_t)+(\widehat l_i'-l_i'H_x) \widehat w_t\cr
   &=& 
 l_i'H_x(\widehat w_t-H_x^{-1}w_t) +   \frac{1}{T}\sum_s e_{is}(\widehat w_s'-w_s'H_x^{-1'})   \widehat w_t\cr
                  &&+    \frac{1}{T}\sum_s e_{is}w_s'H_x^{-1'}  \widehat w_t +   l_i'H_x   \frac{1}{T}\sum_s(H_x^{-1}w_s-\widehat w_s)\widehat w_s'   \widehat w_t\cr
   &=&   \frac{1}{T}\sum_s e_{is}w_s'H_x^{-1'}  \widehat w_t +  \frac{1}{T^2}    \sum_{s,m\leq T}e_{is} \zeta_{ms} \widehat w_m 'R_1 '  \widehat w_t 
 +  \frac{1}{T}\sum_s e_{is}  \eta_s'R_2'    \widehat w_t   \cr
      &&+  l_i'  \mathcal Q_3 \widehat w_t+      l_i'H_x  R_1  \frac{1}{T}\sum_{s}\widehat w_s \zeta_{st}   +      l_i'H_x    R_2\eta_t  .
  \end{eqnarray}

  (i)  We first show that $\max_{t\leq T}\|\widehat w_t- H_x^{-1}w_t\|=O_P(1)$.
Define
  \begin{eqnarray*}
b_{NT,1}&=&\max_{t\leq T}\|  \frac{1}{NT}\sum_{is}w_s(e_{is}e_{it}-\E e_{is}e_{it})\|\cr
b_{NT,2}&=&(\max_{t\leq T}\frac{1}{T}\sum_s ( \frac{1}{N}\sum_ie_{is}e_{it}-\E e_{is}e_{it})^2   )^{1/2}\cr
b_{NT,3}&=&\max_{t\leq T}\| \frac{1}{N}\sum_il_ie_{it}\|   \cr
b_{NT,4}&=&\max_i\| \frac{1}{T}\sum_s e_{is}w_s\|  \cr
b_{NT,5}&=&\max_i\|    \frac{1}{NT}\sum_{js} l_j (e_{js}  e_{is} -\E e_{js}  e_{is}  ) \|   
  \end{eqnarray*}
Then  \begin{eqnarray*}
\max_{t\leq T}\|\frac{1}{T}\sum_{s}\widehat w_s \zeta_{st}\|&\leq&
O_P(1)\max_{t\leq T}\|\frac{1}{T}\sum_{s} w_s \zeta_{st}\|+ O_P(C_{NT}^{-1})(\max_{t\leq T}\frac{1}{T}\sum_s\zeta_{st}^2)^{1/2}\cr
&=& O_P(b_{NT,1}+C_{NT}^{-1}b_{NT,2}).
  \end{eqnarray*}
  Then by assumption, 
  $\max_{t\leq T}\|\widehat w_t- H_x^{-1}w_t\|=O_P(b_{NT,1}+C_{NT}^{-1}b_{NT,2}+b_{NT,3})=O_P(1)$.  As such $\max_{t\leq T}\|\widehat w_t\|=O_P(1)+ \max_{t\leq T}\|w_t\|$.
  
  In addition, 
   \begin{eqnarray*}
\max_i\| \frac{1}{T^2}    \sum_{m\leq T}\sum_{s\leq T}e_{is} \zeta_{ms} \widehat w_m '\|&\leq&
O_P(C_{NT}^{-1}) (\max_i \frac{1}{T} \sum_{s\leq T}e_{is} ^2 )^{1/2}(  \frac{1}{T^2} \sum_{s,t\leq T}\zeta_{st} ^2)^{1/2}
\cr
&&+O_P(1) (\max_i\frac{1}{T}\sum_se_{is}^2)^{1/2} (\frac{1}{T}\sum_s\| \frac{1}{TN}\sum_{jt}(e_{jt}e_{js}-\E e_{jt}e_{js})   w_t\|^2   )^{1/2}\cr
&=&O_P(C_{NT}^{-2})(\max_i\frac{1}{T}\sum_se_{is}^2)^{1/2}.
  \end{eqnarray*}
So 
 $\max_{it}|   \widehat e_{it}-e_{it}|=O_P(\phi_{NT})$, where 
$$
\phi_{NT}:=   (C_{NT}^{-2}(\max_i\frac{1}{T}\sum_se_{is}^2)^{1/2}+b_{NT,4}  +b_{NT,5})(1+ \max_{t\leq T}\|w_t\|)+ (b_{NT,1}+C_{NT}^{-1}b_{NT,2}+b_{NT,3}).
$$

(ii)   
  Let $a\in\{1, 2,4\}$, and $b\in\{0,1,2\}$, and a bounded constant sequence $c_i$,  consider, up to a $O_P(1)$ multiplier that is independent of $(t,i)$,
        \begin{eqnarray}
\frac{1}{N}\sum_ic_i e_{it}^b (    \widehat e_{it}-e_{it}   )   ^a
&=&  \frac{1}{N}\sum_ic_i e_{it}^b (   \frac{1}{T}\sum_s e_{is}w_s)^a     \widehat w_t     ^a\cr
&&+\frac{1}{N}\sum_ic_i e_{it}^b (     \frac{1}{T^2}    \sum_{s,m\leq T}e_{is} \zeta_{ms} \widehat w_m )^a \widehat w_t     ^a   \cr
&&+\frac{1}{N}\sum_ic_i e_{it}^b (    \frac{1}{T}\sum_s e_{is}  \eta_s')^a  \widehat w_t    ^a   +\frac{1}{N}\sum_ic_i e_{it}^b      l_i^a \mathcal Q_3^a  \widehat w_t     ^a   \cr
&&+\frac{1}{N}\sum_ic_i e_{it}^b     l_i^a ( \frac{1}{T}\sum_{s}\widehat w_s \zeta_{st}   )   ^a   +\frac{1}{N}\sum_ic_i e_{it}^b     l_i^a   \eta_t    ^a   .
  \end{eqnarray}
Note that 
        \begin{eqnarray*}
\frac{1}{T}\sum_{s}\widehat w_s \zeta_{st}  &\leq&O_P(1)\frac{1}{TN}\sum_{is} w_s  (e_{is}e_{it}-\E e_{is}e_{it})\cr
&&+(\frac{1}{T}\sum_{s}(\frac{1}{N}  \sum_i (e_{is}e_{it}-\E e_{is}e_{it}))^2)^{1/2}O_P(C_{NT}^{-1})=O_P(C_{NT}^{-2})\cr
 \frac{1}{N}\sum_ie_{it}^b(   \frac{1}{T}\sum_s e_{is}w_s)^a  &=& O(T^{-a/2}),\quad a=2,4,\quad b=0,2\cr
 \frac{1}{N}\sum_i (     \frac{1}{T^2}    \sum_{s,m\leq T}e_{is} \zeta_{ms} \widehat w_m )^4&\leq&
   (     \frac{1}{T^2}\sum_{m,s\leq T}\zeta_{ms} ^2)^2(\frac{1}{T}\sum_m w_m ^2 )^2\frac{1}{N}\sum_i (  \frac{1}{T}\sum_s e_{is}^2 )^2\cr
&&+   \frac{1}{N}\sum_i   (     \frac{1}{T}    \sum_{m\leq T} ( \frac{1}{T}    \sum_{s\leq T}e_{is} \zeta_{ms} )^2  )^2 
O_P(C_{NT}^{-4}) =O_P(C_{NT}^{-4}) \cr
 \frac{1}{N}\sum_i  (     \frac{1}{T^2}    \sum_{s,m\leq T}e_{is} \zeta_{ms} \widehat w_m )^2&\leq&
 \frac{1}{N}\sum_i        \frac{1}{T}   \sum_m   (\frac{1}{T}\sum_se_{is} \zeta_{ms} )^2   \frac{1}{T}   \sum_m  w_m ^2
 \cr
 &&+   \frac{1}{N}\sum_i       \frac{1}{T}    \sum_{m\leq T} ( \frac{1}{T}    \sum_{s\leq T}e_{is} \zeta_{ms} )^2   
O_P(C_{NT}^{-2})=O_P(C_{NT}^{-2}) \cr
\frac{1}{N}\sum_i     (    \frac{1}{T}\sum_s e_{is}  \eta_s')^4&\leq& 
\frac{1}{N}\sum_i     (    \frac{1}{T}\sum_s e_{is}^2)^2(\frac{1}{T}\sum_t  \eta_t^2)^2=O_P(C_{NT}^{-4})\cr
\frac{1}{N}\sum_i  e_{it}^2(    \frac{1}{T}\sum_s e_{is}  \eta_s')^2 &=&O_P(C_{NT}^{-2})\cr
\frac{1}{N}\sum_i  e_{it}^2     l_i^2 ( \frac{1}{T}\sum_{s}\widehat w_s \zeta_{st}   )   ^2 &\leq&
\frac{1}{N}\sum_i  e_{it}^2     l_i^2( \frac{1}{T}\sum_{s} w_s \zeta_{st}   )   ^2 +
O_P(C_{NT}^{-2})=O_P(C_{NT}^{-2})\cr
  \frac{1}{T}\sum_t(    \frac{1}{T}\sum_{s}\widehat w_s \zeta_{st}   )^2 &\leq&
  O_P(C_{NT}^{-4})
  \cr
  \frac{1}{N}\sum_ic_i e_{it} (   \frac{1}{T}\sum_s e_{is}w_s) &=&O_P(1)(\frac{1}{N^2T^2}
  \sum_{ij\leq N}\sum_{sl\leq T} c_ic_j \E  w_sw_l \E (e_{it}e_{jt}e_{is}e_{jl}
|W)  )^{1/2}\cr
&=&O_P(C_{NT}^{-2})\cr
\frac{1}{N}\sum_ic_i e_{it} (     \frac{1}{T^2}    \sum_{s,m\leq T}e_{is} \zeta_{ms} \widehat w_m )  &\leq&O_P(C_{NT}^{-2})+ \frac{1}{N}\sum_ic_i e_{it} (     \frac{1}{T}    \sum_{s\leq T}  \frac{1}{T}    \sum_{m\leq T}e_{is} \zeta_{ms}   w_m )  \cr
&\leq&  O_P(C_{NT}^{-2})+ O_P(1) (  \frac{1}{T}    \sum_{s\leq T}  \E(  \frac{1}{T}    \sum_{m\leq T} \zeta_{ms}   w_m )  ^2)^{1/2}\cr
&=&O_P(C_{NT}^{-2}).
  \end{eqnarray*}
  
  where the last equality follows from the following: 
         \begin{eqnarray*}
 &&\frac{1}{T}    \sum_{s\leq T}  \E(  \frac{1}{T}    \sum_{m\leq T} \zeta_{ms}   w_m )  ^2
  = O(T^{-2})+  \frac{1}{T}    \sum_{s\leq T}   \frac{1}{T^2}       \sum_{t\neq s}  \E  w_s  w_t \Cov(\zeta_{ss},  \zeta_{ts}  |W) 
\cr
&& + \frac{1}{T}    \sum_{s\leq T}   \frac{1}{T}    \sum_{m\neq s} \frac{1}{T}    \sum_{t\leq T}   \E w_m  w_t \Cov(\zeta_{ms},  \zeta_{ts}  |W  )\cr
&=&O(T^{-2}) + \frac{1}{T}    \sum_{s\leq T}   \frac{1}{T}    \sum_{t\neq s} \frac{1}{T}   \E  w_t^2\frac{1}{N^2}\sum_{ij} \E(  e_{is}   e_{js} |W)\E( e_{it}e_{jt}    |W  )=O(C_{NT}^{-4}).
  \end{eqnarray*}
 
 With the above results ready, we can proceed proving (ii)(iii) as follows.

  Now for $a=4, b=0$, $c_i=1,$    up to a $O_P(1)$ multiplier 
         \begin{eqnarray*}
\frac{1}{N}\sum_i    (    \widehat e_{it}-e_{it}   )   ^4
&=&  \frac{1}{N}\sum_i   (   \frac{1}{T}\sum_s e_{is}w_s)^4     +\frac{1}{N}\sum_i    (     \frac{1}{T^2}    \sum_{s,m\leq T}e_{is} \zeta_{ms} \widehat w_m )^4   \cr
&&+\frac{1}{N}\sum_i     (    \frac{1}{T}\sum_s e_{is}  \eta_s')^4    + \mathcal Q_3^4     +  ( \frac{1}{T}\sum_{s}\widehat w_s \zeta_{st}   )   ^4   + \eta_t    ^4   \cr
&\leq&O_P(C_{NT}^{-4}) .
  \end{eqnarray*}
  For $a=2$, $b=0$,  $c_i=1,$ 
          \begin{eqnarray*} 
  \frac{1}{N}\sum_i  (\widehat e_{it}-e_{it}) ^2 
 & \leq&   \frac{1}{N}\sum_i    (   \frac{1}{T}\sum_s e_{is}w_s)^2     +\frac{1}{N}\sum_i    (     \frac{1}{T^2}    \sum_{s,m\leq T}e_{is} \zeta_{ms} \widehat w_m )^2     \cr
&&+\frac{1}{N}\sum_i   (    \frac{1}{T}\sum_s e_{is}  \eta_s')^2    + \mathcal Q_3^2     +  ( \frac{1}{T}\sum_{s}\widehat w_s \zeta_{st}   )   ^2   +   \eta_t    ^2\cr
&\leq&O_P(C_{NT}^{-2}) .
   \end{eqnarray*}
     For $a=2$, $b=2$,  $c_i=1,$ 
    \begin{eqnarray*}
\frac{1}{N}\sum_i  e_{it}^2 (    \widehat e_{it}-e_{it}   )   ^2
&=&  \frac{1}{N}\sum_i  e_{it}^2 (   \frac{1}{T}\sum_s e_{is}w_s)^2    +\frac{1}{N}\sum_i  e_{it}^2 (     \frac{1}{T^2}    \sum_{s,m\leq T}e_{is} \zeta_{ms} \widehat w_m )^2      \cr
&&+\frac{1}{N}\sum_i  e_{it}^2(    \frac{1}{T}\sum_s e_{is}  \eta_s')^2     + \mathcal Q_3^2   +  ( \frac{1}{T}\sum_{s}\widehat w_s \zeta_{st}   )   ^2   +   \eta_t    ^2   \cr
&\leq&O_P(C_{NT}^{-2}) .
  \end{eqnarray*}
   
Next, let $a=b=1$ and  $c_i$ be any element of $\lambda_i\lambda_i'$,
     \begin{eqnarray*}
\frac{1}{N}\sum_ic_i e_{it} (    \widehat e_{it}-e_{it}   )    
&=&  \frac{1}{N}\sum_ic_i e_{it} (   \frac{1}{T}\sum_s e_{is}w_s)          +\frac{1}{N}\sum_ic_i e_{it} (     \frac{1}{T^2}    \sum_{s,m\leq T}e_{is} \zeta_{ms} \widehat w_m )        \cr
&&+\frac{1}{N}\sum_ic_i e_{it} (    \frac{1}{T}\sum_s e_{is}  \eta_s')        +\frac{1}{N}\sum_ic_i e_{it}      l_i  \mathcal Q_3         \cr
&&+\frac{1}{N}\sum_ic_i e_{it}     l_i  ( \frac{1}{T}\sum_{s}\widehat w_s \zeta_{st}   )     
+\frac{1}{N}\sum_ic_i e_{it}    l_i   \eta_t   \cr
&\leq&O_P(C_{NT}^{-2}) .
  \end{eqnarray*}
  Next, ignoring an $O_P(1)$ multiplier,
       \begin{eqnarray*}
  \frac{1}{N}\sum_i \lambda_i(\widehat e_{it}  -e_{it})u_{it}&\leq &
    \frac{1}{N}\sum_i \lambda_i   \frac{1}{T}\sum_s e_{is}w_su_{it}\cr
    &&+    \frac{1}{N}\sum_i \lambda_i   \frac{1}{T^2}    \sum_{s,m\leq T}e_{is} \zeta_{ms} \widehat w_m u_{it}  +    \frac{1}{N}\sum_i \lambda_i   \frac{1}{T}\sum_s e_{is}  \eta_su_{it}  \cr
            &&+    \frac{1}{N}\sum_i \lambda_i   l_i'  \mathcal Q_3  u_{it}  +    \frac{1}{N}\sum_i \lambda_i    l_i u_{it}  \frac{1}{T}\sum_{s}\widehat w_s \zeta_{st}   
+    \frac{1}{N}\sum_i \lambda_i   l_i\eta_t    u_{it}  \cr      
&\leq& O_P(C_{NT}^{-2}).         
    \end{eqnarray*}

     (iii)  Let $a=1, b=0$ and $c_i$ be any element of $\lambda_i\alpha_i'$,
    \begin{eqnarray*}
\frac{1}{N}\sum_ic_i   (    \widehat e_{it}-e_{it}   )    
&=&  \frac{1}{N}\sum_ic_i   (   \frac{1}{T}\sum_s e_{is}w_s)    +\frac{1}{N}\sum_ic_i   (     \frac{1}{T^2}    \sum_{s,m\leq T}e_{is} \zeta_{ms} \widehat w_m )  \cr
&&+\frac{1}{N}\sum_ic_i   (    \frac{1}{T}\sum_s e_{is}  \eta_s')      +     \mathcal Q_3     + ( \frac{1}{T}\sum_{s}\widehat w_s \zeta_{st}   )      +  \eta_t       \cr
&\leq &O_P(C_{NT}^{-1}),
  \end{eqnarray*}
  where the dominating term is $\eta_t= O_P(C_{NT}^{-1}).$

\end{proof}

  \begin{lem}\label{ld.2a} Assume $\max_{it}|e_{it}|C_{NT}^{-1}=O_P(1)$ and $\E e_{it}^8<C$.

  Let $c_i$ be a non-random bounded sequence.

  (i)
$\max_{t\leq T} \frac{1}{N}\sum_i    (\widehat e_{it}-e_{it} )^4=O_P(1+\max_{t\leq T}\| w_t\|^4+b_{NT,2}^4)C_{NT}^{-4}  + O_P(b_{NT,1}^4+b_{NT,3}^4). $

$\max_{t\leq T} \frac{1}{N}\sum_i  (\widehat e_{it}-e_{it} )^2e_{it}^2 
\leq  O_P(1+\max_{t\leq T}\| w_t\|^2+  b_{NT,2}^2  )\max_{it}|e_{it}|^2C_{NT}^{-2}
$

$+O_P(b_{NT,1}^2+b_{NT,3}^2) \max_{t\leq T}\frac{1}{N}\sum_ie_{it}^2 .
$

$\max_{t}|\frac{1}{N}\sum_i c_i (\widehat e_{it}-e_{it} )e_{it} |
\leq O_P(1+\max_{t\leq T}\| w_t\|+b_{NT,2})\max_{it}|e_{it}|C_{NT}^{-1} $

$+  \max_{t\leq T}\|\frac{1}{N}\sum_ic_i e_{it}     \|_FO_P(b_{NT,1}+b_{NT,3}).
 $

$\max_{t}|\frac{1}{N}\sum_i c_i(\widehat e_{it}-e_{it} )^2|\leq O_P(1+\max_{t\leq T}\| w_t\|^2+ b_{NT,2}^2) C_{NT}^{-2}+O_P(b_{NT,1}^2+b_{NT,3}^2 ).$

$\max_{t\leq T} \frac{1}{N}\sum_i  (\widehat e_{it}-e_{it})^2\leq O_P(1+\max_{t\leq T}\| w_t\|^2+ b_{NT,2}^2) C_{NT}^{-2}+O_P(b_{NT,1}^2+b_{NT,3}^2 )$

$\max_{t}| \frac{1}{N}\sum_i  c_i (\widehat e_{it}-e_{it})|\leq  O_P(1+\max_{t\leq T}\| w_t\|)C_{NT}^{-2} + O_P(b_{NT,1}+b_{NT,3}+C_{NT}^{-1}b_{NT,2}).$

$ \max_i \frac{1}{T}\sum_t(\widehat e_{it}-e_{it})^2\leq O_P(b_{NT, 4}^2+b_{NT,5}^2 +C_{NT}^{-2})     .$

(ii)  All terms below are $O_P(C_{NT}^{-2})$:  $\frac{1}{NT}\sum_{it}(\widehat e_{it}-e_{it})^4$

$\frac{1}{NT}\sum_{it}(\widehat e_{it}-e_{it})^2$,   $ \frac{1}{NT}\sum_{it}e_{it}^2(\widehat e_{it}-e_{it})^2$,
$\frac{1}{T}\sum_{t } |\frac{1}{N}\sum_i c_i(e_{it}-\widehat e_{it})
  |^2m_t^2 $,  where\\
 $m_t^2= 1+\frac{1}{|\mathcal G|_0}\sum_{j\in\mathcal G}  \|e_{jt}    f_t\|^2 (\|f_t\|+\|g_t\|)^2 .$
  

 
(iii) All terms below are $O_P(C_{NT}^{-4})$:  
 $ \frac{1}{T}\sum_{t } |\frac{1}{N}\sum_i  c_i (\widehat e_{it}-e_{it})e_{it}|^2 $,

$\frac{1}{T}\sum_{t } |\frac{1}{N}\sum_i c_i (\widehat e_{it}-e_{it}) ^2  |^2$, $\frac{1}{T}\sum_t|  \frac{1}{N}\sum_i
c_i (\widehat e_{it} -e_{it})u_{it} |^2$, 
 
 

$  \frac{1}{|\mathcal G|_0}\sum_{i\in\mathcal G}\| \frac{1}{T}\sum_{s\notin I} f_s(\widehat e_{is}-e_{is})u_{is}  \|^2, $ 
$ \frac{1}{|\mathcal G|_0}\sum_{i\in\mathcal G}\|   \frac{1}{T}\sum_{s\notin I}  f_se_{is}(\widehat e_{is}-e_{is}) \lambda_i'  f_s  \|^2, $

$ \frac{1}{|\mathcal G|_0}\sum_{j\in\mathcal G}\|   \frac{1}{T_0}\sum_{t\in I^c}
    \frac{1}{N}\sum_i  c_i f_t ^2  e_{jt}  (e_{it}-\widehat e_{it})    \|^2$

  \end{lem}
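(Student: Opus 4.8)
The plan is to feed the term-by-term expansion of $\widehat e_{it}-e_{it}$ derived in the proof of Lemma \ref{ld.1a},
\[
\widehat e_{it}-e_{it}=\frac{1}{T}\sum_s e_{is}w_s'H_x^{-1'}\widehat w_t+\frac{1}{T^2}\sum_{s,m}e_{is}\zeta_{ms}\widehat w_m'R_1'\widehat w_t+\frac{1}{T}\sum_s e_{is}\eta_s'R_2'\widehat w_t+l_i'\mathcal Q_3\widehat w_t+l_i'H_xR_1\frac{1}{T}\sum_s\widehat w_s\zeta_{st}+l_i'H_xR_2\eta_t ,
\]
into each of the functionals in (i)--(iii) and to bound the six resulting pieces separately. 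The only difference from Lemma \ref{ld.1a}, which fixes $t$, is that here one must either take a maximum over $t$ (part (i)) or average over $t$ and the cross-section, exploiting mean-zero structure to upgrade to the faster $C_{NT}^{-4}$ rates (parts (ii)--(iii)).

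For part (i) I would bound the maximum over $t$ of each piece using the quantities $b_{NT,1},\dots,b_{NT,5}$, which are precisely the relevant maxima: $\max_t\|\tfrac1T\sum_s\widehat w_s\zeta_{st}\|=O_P(b_{NT,1}+C_{NT}^{-1}b_{NT,2})$ and $\max_t\|\eta_t\|=O_P(b_{NT,3})$ control the two $i$-homogeneous terms, while $\max_i\|\tfrac1T\sum_s e_{is}w_s\|=O_P(b_{NT,4})$ and $\max_i\|\tfrac1{NT}\sum_{js}l_j(e_{js}e_{is}-\E e_{js}e_{is})\|=O_P(b_{NT,5})$ control the two $t$-homogeneous ones. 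Combining these with $\max_t\|\widehat w_t-H_x^{-1}w_t\|=O_P(1)$, $\|\mathcal Q_3\|=O_P(C_{NT}^{-2})$, and the maintained $\max_{it}|e_{it}|=O_P(C_{NT})$, and tracking which pieces carry a $\|w_t\|$ factor through $\widehat w_t$, delivers every displayed bound in (i) by the triangle inequality after raising to the appropriate power.

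For parts (ii) and (iii) the gain comes from averaging in $s$. After substituting the expansion and squaring, the $s\neq s'$ cross terms vanish conditionally: by Assumption \ref{a4.1}(ii) the pair $(e_{is},u_{is})$ is conditionally orthogonal, and by Assumption \ref{a4.1}(i) $\{e_{it}\}$ and $\{u_{it}\}$ are conditionally serially independent given the relevant conditioning sets, so each off-diagonal term has conditional mean zero. Every average therefore collapses to a diagonal sum in $s$ carrying a factor $1/T$, and the surviving cross-sectional sums are controlled by the weak-dependence bounds of Assumption \ref{a4.1}(iv), the moment bound $\E e_{it}^8<C$, and the conditions on $\|f_t\|,\|g_t\|,\|w_t\|$ in Assumption \ref{a3.8}(iv). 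Since each single entry $\widehat e_{is}-e_{is}$ is $O_P(C_{NT}^{-1})$ (dominated by $\eta_s$), the additional $1/T\le C_{NT}^{-2}$ from diagonalization yields the $C_{NT}^{-4}$ rates in (iii), whereas in (ii) the slower $C_{NT}^{-2}$ rates follow from the same averaging but without a mean-zero $u_{is}$ weight.

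The main obstacle is the family of terms in (iii) that pair $\widehat e_{is}-e_{is}$ against a mean-zero weight $u_{is}$, or against a second $e_{is}$ and an $f_s$, the prototype being $\tfrac1{|\mathcal G|_0}\sum_{i\in\mathcal G}\|\tfrac1T\sum_{s\notin I}f_s(\widehat e_{is}-e_{is})u_{is}\|^2$. Substituting the dominant piece $\widehat e_{is}-e_{is}\approx l_i'H_xR_2\,\eta_s=l_i'H_xR_2\tfrac1N\sum_j l_j e_{js}$ turns the inner sum into $\tfrac1{NT}\sum_{j,s}f_s l_j e_{js}u_{is}$, a double-indexed sum whose every $s\neq s'$ cross term is killed by conditional serial independence together with $\E(e_{js}u_{is}\mid\mathcal W)=0$, leaving a diagonal variance of order $\tfrac1{NT}\le C_{NT}^{-4}$ by the fourth-moment and cross-sectional weak-dependence conditions. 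Carefully organizing these multi-index sums, and verifying that the subdominant $\zeta$- and $\mathcal Q_3$-pieces of the expansion contribute at the same or smaller order after averaging, is the bulk of the work; the remaining displays in (ii)--(iii), including $\tfrac1T\sum_t|\tfrac1N\sum_i c_i(e_{it}-\widehat e_{it})|^2 m_t^2$, then follow from the identical diagonalization with the mean-zero weight $u_{is}$ replaced by the deterministic $c_i$ or the conditionally bounded $m_t$.
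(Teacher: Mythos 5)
Your proposal follows essentially the same route as the paper's proof: substitute the six-term expansion of $\widehat e_{it}-e_{it}$ from Lemma \ref{ld.1a}, control the maxima over $t$ (resp. $i$) in part (i) via $b_{NT,1},\dots,b_{NT,5}$ together with $\max_t\|\widehat w_t\|=O_P(1+\max_t\|w_t\|)$ and $\|\mathcal Q_3\|=O_P(C_{NT}^{-2})$, and in (ii)--(iii) exploit the conditional serial independence of $(e_{it},u_{it})$ and the weak cross-sectional dependence bounds so that off-diagonal $s\neq s'$ terms vanish and the diagonal contributes $O(1/(NT))$. The only cosmetic difference is which subterm you flag as the binding one (you emphasize the $\eta_s$ piece, the paper's computation spends the most effort on the $\zeta_{st}$ piece), but both are dispatched by the identical diagonalization argument, so the proposal is correct.
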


 \begin{proof} (i) 
  First,   in the proof of Lemma \ref{ld.1a}(i), we showed $\max_{t\leq T}\|\widehat w_t\|\leq O_P(1+\max_{t\leq T}\| w_t\|)$.  By the proof of Lemma \ref{ld.1a}(ii),
    \begin{eqnarray*}
  &&  \max_{t\leq T} \frac{1}{N}\sum_i    (\widehat e_{it}-e_{it} )^4
  \leq  \frac{1}{N}\sum_i    (   \frac{1}{T}\sum_s e_{is}w_s)^4    \max_{t\leq T}  \widehat w_t     ^4+\frac{1}{N}\sum_i    (     \frac{1}{T^2}    \sum_{s,m\leq T}e_{is} \zeta_{ms} \widehat w_m )^4 \max_{t\leq T} \widehat w_t     ^4  \cr
&&+\frac{1}{N}\sum_i    (    \frac{1}{T}\sum_s e_{is}  \eta_s')^4 \max_{t\leq T}  \widehat w_t    ^4   +  \mathcal Q_3^4 \max_{t\leq T} \widehat w_t     ^4  +\max_{t\leq T}  ( \frac{1}{T}\sum_{s}\widehat w_s \zeta_{st}   )   ^4   +  \max_{t\leq T}   \eta_t    ^4 \cr
&\leq& O_P(1+\max_{t\leq T}\| w_t\|^4+b_{NT,2}^4)(C_{NT}^{-4}  )+ O_P(b_{NT,1}^4+b_{NT,3}^4). 
      \end{eqnarray*}
      Next, 
  \begin{eqnarray*}
 \max_{t\leq T} \frac{1}{N}\sum_i  e_{it}^2 (    \widehat e_{it}-e_{it}   )   ^2
&=&  \max_{t\leq T}  \frac{1}{N}\sum_i  e_{it}^2 (   \frac{1}{T}\sum_s e_{is}w_s)^2  \max_{t\leq T}  \widehat w_t     ^2 +\max_{t\leq T} \frac{1}{N}\sum_i e_{it}^2      l_i^4    \mathcal Q_3^2  \max_{t\leq T}  \widehat w_t     ^2    \cr
&&   + \max_{t\leq T} \frac{1}{N}\sum_i  e_{it}^2 (     \frac{1}{T^2}    \sum_{s,m\leq T}e_{is} \zeta_{ms} \widehat w_m )^2   \max_{t\leq T}  \widehat w_t     ^2   \cr
&&+ \max_{t\leq T} \frac{1}{N}\sum_i  e_{it}^2(    \frac{1}{T}\sum_s e_{is}  \eta_s')^2  \max_{t\leq T}  \widehat w_t     ^2  \cr
&&+\max_{t\leq T}\frac{1}{N}\sum_ie_{it}^2     l_i^2 ( \frac{1}{T}\sum_{s}\widehat w_s \zeta_{st}   )   ^2   +\max_{t\leq T}\frac{1}{N}\sum_ie_{it}^2    l_i^2  \eta_t    ^2  \cr
&\leq& O_P(1+\max_{t\leq T}\| w_t\|^2+b_{NT,2}^2)\max_{it}|e_{it}|^2C_{NT}^{-2}\cr
&&
+O_P(b_{NT,1}^2+b_{NT,3}^2) \max_{t\leq T}\frac{1}{N}\sum_ie_{it}^2 .
\cr
\frac{1}{N}\sum_ic_i e_{it}  (    \widehat e_{it}-e_{it}   )    
&\leq&  \max_{t\leq T} \frac{1}{N}\sum_ic_i e_{it}  (   \frac{1}{T}\sum_s e_{is}w_s)      \widehat w_t      \cr
&&+ \max_{t\leq T}\frac{1}{N}\sum_ic_i e_{it}  (     \frac{1}{T^2}    \sum_{s,m\leq T}e_{is} \zeta_{ms} \widehat w_m )  \widehat w_t         \cr
&&+ \max_{t\leq T}\frac{1}{N}\sum_ic_i e_{it}  (    \frac{1}{T}\sum_s e_{is}  \eta_s')   \widehat w_t        + \max_{t\leq T}\frac{1}{N}\sum_ic_i e_{it}       l_i  \mathcal Q_3   \max_{t\leq T} \widehat w_t         \cr
&&+ \max_{t\leq T}\frac{1}{N}\sum_ic_i e_{it}      l_i  ( \frac{1}{T}\sum_{s}\widehat w_s \zeta_{st}   )       + \max_{t\leq T}\frac{1}{N}\sum_ic_i e_{it}      l_i    \max_{t\leq T} \eta_t    \cr
&\leq& O_P(1+\max_{t\leq T}\| w_t\|+b_{NT,2})\max_{it}|e_{it}|C_{NT}^{-1} \cr
&&+  \max_{t\leq T}\|\frac{1}{N}\sum_ic_i e_{it}      l_i \|_FO_P(b_{NT,1}+b_{NT,3}).
\cr
\max_{t\leq T}\frac{1}{N}\sum_ic_i   (    \widehat e_{it}-e_{it}   )   ^2
&\leq&  \frac{1}{N}\sum_ic_i   (   \frac{1}{T}\sum_s e_{is}w_s)^2   \max_{t\leq T}  \widehat w_t     ^2
 +\max_{t\leq T} ( \frac{1}{T}\sum_{s}\widehat w_s \zeta_{st}   )   ^2   +\max_{t\leq T}    \eta_t    ^2  
\cr
&&+\frac{1}{N}\sum_ic_i   (     \frac{1}{T^2}    \sum_{s,m\leq T}e_{is} \zeta_{ms} \widehat w_m )^2 \max_{t\leq T}\widehat w_t     ^2   \cr
&&+\frac{1}{N}\sum_ic_i   (    \frac{1}{T}\sum_s e_{is}  \eta_s')^2\max_{t\leq T}  \widehat w_t    ^2   +\max_{t\leq T}  \mathcal Q_3^2  \widehat w_t     ^2   \cr
&\leq& O_P(1+\max_{t\leq T}\| w_t\|^2+ b_{NT,2}^2) C_{NT}^{-2}+O_P(b_{NT,1}^2+b_{NT,3}^2 ), 
\cr
\max_{t\leq T}\frac{1}{N}\sum_ic_i  (    \widehat e_{it}-e_{it}   )   
&\leq &  \frac{1}{N}\sum_ic_i   (   \frac{1}{T}\sum_s e_{is}w_s)     \max_{t\leq T} \widehat w_t      \cr
&&+\frac{1}{N}\sum_ic_i   (     \frac{1}{T^2}    \sum_{s,m\leq T}e_{is} \zeta_{ms} \widehat w_m )  \max_{t\leq T}\widehat w_t         \cr
&&+\frac{1}{N}\sum_ic_i   (    \frac{1}{T}\sum_s e_{is}  \eta_s')  \max_{t\leq T} \widehat w_t        +  \mathcal Q_3 \max_{t\leq T}  \widehat w_t         \cr
&&+ \max_{t\leq T} ( \frac{1}{T}\sum_{s}\widehat w_s \zeta_{st}   )       + \max_{t\leq T}  \eta_t        \cr
&\leq&  O_P(1+\max_{t\leq T}\| w_t\|)C_{NT}^{-2} + O_P(b_{NT,1}+b_{NT,3}+C_{NT}^{-1}b_{NT,2}).
  \end{eqnarray*}
  Finally,
          \begin{eqnarray*}
          \max_i \frac{1}{T}\sum_t(\widehat e_{it}-e_{it})^2&\leq&
               \max_i \frac{1}{T}\sum_t \widehat w_t  ^2 (      \frac{1}{T}\sum_s e_{is}w_s'H_x^{-1'}   )^2 +     \max_i \frac{1}{T}\sum_t\widehat w_t ^2(    \frac{1}{T^2}    \sum_{s,m\leq T}e_{is} \zeta_{ms} \widehat w_m 'R_1 '     )^2 \cr
                      && +     \max_i \frac{1}{T}\sum_t \widehat w_t^2 (  \frac{1}{T}\sum_s e_{is}  \eta_s'R_2'      )^2  +     \max_i \frac{1}{T}\sum_t\widehat w_t  ^2(  l_i'  \mathcal Q_3  )^2 \cr
                       && +     \max_i \frac{1}{T}\sum_t(     l_i'H_x  R_1  \frac{1}{T}\sum_{s}\widehat w_s \zeta_{st}   )^2  +     \max_i \frac{1}{T}\sum_t\eta_t  ^2(    l_i'H_x    R_2 )^2 \cr    
                       &\leq& O_P(b_{NT, 4}^2+b_{NT,5}^2 +C_{NT}^{-2})     .
 \end{eqnarray*}
   
  (ii) Note that $\max_{t\leq T}\|\widehat w_t\|^2=O_P(1)+O_P(1)\max_{t\leq T}\|w_t\|^2
  \leq O_P(1)+o_P(C_{NT}),
  $ where the last inequality follows from the assumption that $\max_{t\leq T}\|w_t\|^2=o_P(C_{NT}).$
    \begin{eqnarray*}
    \frac{1}{NT}\sum_{it}(\widehat e_{it}-e_{it})^4&\leq& \frac{1}{T}\sum_t\|\widehat w_t \|^2\max_{t\leq T}\|\widehat w_t\|^2[
        \frac{1}{N}\sum_{i}(    \frac{1}{T}\sum_s e_{is}w_s    )^4+   O_P(1)(  \frac{1}{T^2}      \sum_{s,m\leq T}    \zeta_{ms} ^2     )^2   ] \cr
                &&+    \frac{1}{T}\sum_t\|\widehat w_t \|^2\max_{t\leq T}\|\widehat w_t\|^2[         \frac{1}{N}\sum_{i}(   \frac{1}{T}\sum_s e_{is}  \eta_s   )^4  +          \mathcal Q_3    ^4 ]\cr
                                &&+         \frac{1}{T}\sum_{t}(      \frac{1}{T}\sum_s\zeta_{st}   ^2   )^2  O_P(C_{NT}^{-4})+         \frac{1}{T}\sum_{t}   \eta_t   ^4
                                +  \frac{1}{T}\sum_{t}(      \frac{1}{T}\sum_{s} w_s \zeta_{st}      )^4
                                 \cr       
                                 &\leq&       ( O_P(1)+o_P(C_{NT}))C_{NT}^{-4}+O_P(C_{NT}^{-2}) \cr
                                 && +         O_P(1)\frac{1}{T}\sum_{t}        \frac{1}{T^4}\sum_{s, k,l,m\leq T}   \E w_l     w_m w_kw_s \E (\zeta_{st}  
     \zeta_{kt} 
        \zeta_{lt} 
         \zeta_{mt}      |W)                 \cr
         &=&O_P(C_{NT}^{-2}) .
\end{eqnarray*}
Similarly,  $ \frac{1}{NT}\sum_{it}(\widehat e_{it}-e_{it})^2= O_P(C_{NT}^{-2}) .  $
\begin{eqnarray*}
\frac{1}{NT}\sum_{it}e_{it}^2(\widehat e_{it}-e_{it})^2&\leq&
\frac{1}{NT}\sum_{it}e_{it}^2    \widehat w_t  ^2  (      \frac{1}{T}\sum_s e_{is}  \eta_s   )^2+\frac{1}{NT}\sum_{it}e_{it}^2 \widehat w_t  ^2 (     \frac{1}{T}\sum_s e_{is}w_s   )^2  \cr
&&+\frac{1}{NT}\sum_{it}e_{it}^2 \widehat w_t ^2 (   \frac{1}{T^2}    \sum_{s,m\leq T}e_{is} \zeta_{ms} \widehat w_m   )^2  +\frac{1}{NT}\sum_{it}e_{it}^2 \widehat w_t ^2 (  l_i'  \mathcal Q_3   )^2  \cr
&&+\frac{1}{NT}\sum_{it}e_{it}^2(     l_i'  \frac{1}{T}\sum_{s}\widehat w_s \zeta_{st}      )^2  +\frac{1}{NT}\sum_{it}e_{it}^2(     l_i' \eta_t   )^2  =O_P(C_{NT}^{-2}) .
\end{eqnarray*}
Next,        \begin{eqnarray*}
 \frac{1}{T}\sum_{t } (\frac{1}{N}\sum_i   c_i(e_{it}-\widehat e_{it})
)^2 m_t ^2&\leq&
\frac{1}{T}\sum_{t } m_t ^2 \widehat w_t ^2(\frac{1}{N}\sum_i   c_i    \frac{1}{T}\sum_s e_{is}w_s
)^2 \cr
&&+ \frac{1}{T}\sum_{t } m_t ^2 \widehat w_t^2(\frac{1}{N}\sum_i   c_i  \frac{1}{T^2}    \sum_{s,m\leq T}e_{is} \zeta_{ms} \widehat w_m       )^2 \cr
&&+ \frac{1}{T}\sum_{t }m_t ^2\widehat w_t ^2 (\frac{1}{N}\sum_i   c_i   \frac{1}{T}\sum_s e_{is}  \eta_s   )^2 \cr
&&+ \frac{1}{T}\sum_{t } m_t ^2 \widehat w_t    ^2 \mathcal Q_3^2 + \frac{1}{T}\sum_{t } m_t ^2( \frac{1}{T}\sum_{s}\widehat w_s \zeta_{st}    )^2 + \frac{1}{T}\sum_{t }   m_t ^2\eta_t   ^2 \cr
&=&O_P(C_{NT}^{-2}).
   \end{eqnarray*}

(iii)  
\begin{eqnarray*}
&& \frac{1}{T}\sum_{t }(\frac{1}{N}\sum_i   c_i (\widehat e_{it}-e_{it})e_{it})^2
\cr
&\leq& \max_{t\leq T}\|\widehat w_t-H_xw_t\|^2 \frac{1}{T}\sum_{t } [(\frac{1}{N}\sum_i   c_i    \frac{1}{T}\sum_s e_{is}w_s e_{it})^2 +\frac{1}{T}\sum_m(\frac{1}{N}\sum_i   c_i     e_{it}   \frac{1}{T}    \sum_{s\leq T}e_{is} \zeta_{ms} )^2 ]\cr
&&+   \frac{1}{T}\sum_{t } w_t^2[  (\frac{1}{N}\sum_i   c_i    \frac{1}{T}\sum_s e_{is}w_s e_{it})^2 
+\frac{1}{T}\sum_m(\frac{1}{N}\sum_i   c_i     e_{it}   \frac{1}{T}    \sum_{s\leq T}e_{is} \zeta_{ms} )^2 
]\cr
&&+  \frac{1}{T}\sum_{t } \widehat w_t    ^2 
 (  \frac{1}{N}\sum_i   c_i     \frac{1}{T}\sum_s e_{is}  \eta_se_{it}    )^2
+  \frac{1}{T}\sum_{t }\widehat w_t   ^2(\frac{1}{N}\sum_i   c_i   l_i'  e_{it})^2\mathcal Q_3 ^2 \cr
&&+  \frac{1}{T}\sum_{t }(\frac{1}{N}\sum_i   c_i    l_ie_{it})^2 \frac{1}{T}\sum_{s} \zeta_{st}     ^2 +  \frac{1}{T}\sum_{t }\eta_t  ^2(\frac{1}{N}\sum_i   c_i    l_i   e_{it})^2 =O_P(C_{NT}^{-4}) .
 \end{eqnarray*}

 Next, to bound $\frac{1}{T}\sum_{t } ( \frac{1}{N}\sum_i c_i (\widehat e_{it}-e_{it}) ^2      )^2$, 
 we first bound $\frac{1}{T}\sum_{t }\widehat w_t ^4$. By (\ref{ed.43}), 
 \begin{eqnarray*}
 \frac{1}{T}\sum_{t}\widehat w_t ^4
 &\leq&O_P(1)+ \frac{1}{T}\sum_{t}\|\widehat w_t-H_x^{-1}w_t\| ^4\cr
 &\leq&O_P(1)+ 
 \frac{1}{T}\sum_{t }\|\eta_t \| ^4
 +\frac{1}{T}\sum_{t }\|  \frac{1}{T}\sum_{s} w_s \zeta_{st}  \| ^4+\frac{1}{T}\sum_{t } (   \frac{1}{T}\sum_{s}\zeta_{st} ^2 )^2 O_P(C_{NT}^{-4})\cr
 &=&O_P(1).
  \end{eqnarray*}
 Thus  
\begin{eqnarray*}
&&\frac{1}{T}\sum_{t\notin I} ( \frac{1}{N}\sum_i c_i (\widehat e_{it}-e_{it}) ^2      )^2\cr
&\leq&\frac{1}{T}\sum_{t\notin I}\widehat w_t ^4 ( \frac{1}{N}\sum_i c_i (   \frac{1}{T}\sum_s e_{is}w_s  ) ^2      )^2 +   \frac{1}{T}\sum_{t\notin I} \widehat w_t^4( \frac{1}{N}\sum_i c_i (    \frac{1}{T}\sum_s e_{is}  \eta_s      ) ^2      )^2 \cr
&&+  \frac{1}{T}\sum_{t\notin I} \widehat w_t^4  \mathcal Q_3   ^4 +  \frac{1}{T}\sum_{t\notin I}  \eta_t  ^4    
+ \frac{1}{T}\sum_{t\notin I} (  \frac{1}{T}\sum_{s} w_s \zeta_{st}    ) ^4 + \frac{1}{T}\sum_{t\notin I} ( \frac{1}{T}\sum_s \zeta_{st} ^2   ) ^2  O_P(C_{NT}^{-4})\cr
&&+  \frac{1}{T}\sum_{t\notin I} \widehat w_t ^4 ( \frac{1}{N}\sum_i c_i   \frac{1}{T}  \sum_{m\leq T}( \frac{1}{T}  \sum_{s\leq T}e_{is} \zeta_{ms}  ) ^2      )^2  =O_P(C_{NT}^{-4}) .
 \end{eqnarray*}
And up to an $O_P(1)$ as a product term, 
\begin{eqnarray*}
&&\frac{1}{|\mathcal G|_0}\sum_{i\in\mathcal G}\| \frac{1}{T}\sum_{t\notin I} f_t(\widehat e_{it}-e_{it})u_{it}  \|^2\cr
&\leq&       \frac{1}{|\mathcal G|_0}\sum_{i\in\mathcal G}\| \frac{1}{T}\sum_{t\notin I} f_t u_{it} \widehat w_t  \|^2  (  \frac{1}{T^2}    \sum_{s,m\leq T}e_{is} \zeta_{ms} \widehat w_m     \|^2  +\| \frac{1}{T}\sum_s e_{is}w_s     \|^2+\|   \frac{1}{T}\sum_s e_{is}  \eta_s  \|^2+ \|\mathcal Q_3\|^2)\cr
&& +    \frac{1}{|\mathcal G|_0}\sum_{i\in\mathcal G}\| \frac{1}{T}\sum_{t\notin I} f_t u_{it}       l_i\eta_t     \|^2 +    \frac{1}{|\mathcal G|_0}\sum_{i\in\mathcal G}\| \frac{1}{T}\sum_{t\notin I} f_t u_{it}    \frac{1}{T}\sum_{s}\widehat w_s \zeta_{st}     \|^2  .
 \end{eqnarray*}
It is easy to see all terms except for the last one is $O_P(C_{NT}^{-4}) .$ To see the last one, note  that is is bounded by ,  
\begin{eqnarray*}
 &&  O_P(1) \frac{1}{T}\sum_{s}(\widehat w_s-H_xw_s) ^2\frac{1}{T}\sum_{s}\frac{1}{T}\sum_{t\notin I}  \zeta_{st}   ^2
 + O_P(1)\frac{1}{|\mathcal G|_0}\sum_{i\in\mathcal G}\| \frac{1}{T}\sum_{t\notin I} f_t u_{it}      \frac{1}{T}\sum_{s} w_s \zeta_{st}     \|^2   \cr
 &\leq& O_P(C_{NT}^{-4})+
  O_P(1)\frac{1}{|\mathcal G|_0}\sum_{j\in\mathcal G}\| \frac{1}{T}\sum_{t\notin I} f_t u_{jt}      \frac{1}{T}\sum_{s} w_s \frac{1}{N}\sum_i(e_{is}e_{it}-\E e_{is}e_{it})   \|^2\cr
  &=& O_P(C_{NT}^{-4})
 \end{eqnarray*}
 since $e_{it}$ is conditionally serially independent given $(U, W, F)$ and $u_{it}$ is conditionally serially independent given $(E, W,F)$.

      The conclusion that $\frac{1}{|\mathcal G|_0}\sum_{i\in\mathcal G}\|   \frac{1}{T}\sum_{s\notin I}  f_se_{is}(\widehat e_{is}-e_{is}) \lambda_i'  f_s  \|^2= O_P(C_{NT}^{-4})  $ follows similarly, due to $\max_j\sum_{i\leq N}|(\E e_{it}e_{jt}|F, W)|<C$. 
      
      Next, for $a:= \frac{1}{|\mathcal G|_0}\sum_{j\in\mathcal G}      \frac{1}{N}\sum_i \|\frac{1}{T_0}\sum_{t\in I^c}
 c_i f_t ^2  e_{jt} \widehat w_t \|^2=O_P(C_{NT}^{-2}) $, 
\begin{eqnarray*}
  &&     \frac{1}{|\mathcal G|_0}\sum_{j\in\mathcal G}\|   \frac{1}{T_0}\sum_{t\in I^c}
    \frac{1}{N}\sum_i  c_i f_t ^2  e_{jt}  (e_{it}-\widehat e_{it})    \|^2\cr
    &\leq&  a    \frac{1}{N}\sum_i[\|  \frac{1}{T}\sum_s e_{is}w_s     \|^2+\|  \frac{1}{T^2}    \sum_{s,m\leq T}e_{is} \zeta_{ms} \widehat w_m '\|^2+\| \frac{1}{T}\sum_s e_{is}  \eta_s\|^2 +\| \mathcal Q_3\|^2 ]\cr
       &&+   \frac{1}{|\mathcal G|_0}\sum_{j\in\mathcal G}\|   \frac{1}{T_0}\sum_{t\in I^c}
 f_t ^2  e_{jt}     \frac{1}{T}\sum_{s}\widehat w_s   \frac{1}{N}\sum_i(e_{is}e_{it}-\E e_{is}e_{it})  \|^2 \cr
 &&+  \frac{1}{|\mathcal G|_0}\sum_{j\in\mathcal G}\|  \frac{1}{N}\sum_il_i  \frac{1}{T_0}\sum_{t\in I^c}
  f_t ^2 ( e_{jt}  e_{it}-\E e_{jt}  e_{it} )  \|^2+   \frac{1}{|\mathcal G|_0}\sum_{j\in\mathcal G}\|  \frac{1}{N}\sum_il_i  \frac{1}{T_0}\sum_{t\in I^c}
  f_t ^2 \E e_{jt}  e_{it}  \|^2 \cr
  &=&O_P(C_{NT}^{-4}).
       \end{eqnarray*}

\end{proof}
 
        \subsection{Behavior of the preliminary  }\label{pre:many}
       Recall  that 
       $$
       (  \widetilde f_s,\widetilde g_s):= \arg\min_{f_s, g_s}\sum_{i=1}^N(y_{is}- \widetilde\alpha_i'g_s- x_{is} \widetilde \lambda_i'f_s)^2,\quad s\notin I.
  $$
  and
       $$
(\dot\lambda_i, \dot\alpha_i)= \arg \min_{\lambda_i, \alpha_i}\sum_{s\notin I} (y_{is} - \alpha_i'  \widetilde g_s- x_{is} \lambda_i'\widetilde f_s)^2,\quad i=1,..., N.
  $$

The goal of this section is to show that the effect of the preliminary estimation is negligible. Specifically, we aim to show, for each fixed $t\notin I$,         \begin{eqnarray*}
\|\frac{1}{N}\sum_{j}  (\dot\lambda_j-H_1' \lambda_j)e_{jt}\|^2&=&O_P( C_{NT}^{-4}) , \cr
  \frac{1}{|\mathcal G|_0}\sum_{i\in\mathcal G} \|\frac{1}{T}\sum_{s\notin I}  f_s  (\widetilde f_s-H_1^{-1}f_s )'\mu_{is}e_{is}\|^2  &=& O_P(C_{NT}^{-4}).
    \end{eqnarray*}

      Throughout the proof below, we treat $|I^c|=T$ instead of $T/2$ to avoid keeping the constant ``$2$". 
 In addition, for notational simplicity, we write $\widetilde\Lambda= \widetilde\Lambda_I$ and $\widetilde A=\widetilde A_I$ by suppressing the subscripts, but we should keep in mind that $\widetilde\Lambda$ and $\widetilde A$ are estimated on data $D_I$ as  defined in step 2. 
    In addition, let $\E_I$  and $\Var_I$ be the conditional expectation and variance, given $D_I$. 
     Recall that  $X_s$ be the vector of $x_{is}$ fixing $s\leq T$, and $M_{\widetilde\alpha}= I_N- \widetilde A ( \widetilde A' \widetilde A)^{-1} \widetilde A'$; $X_i$ be the vector of $x_{is}$ fixing $i\leq N$, and $M_{\widetilde g} = I - \widetilde G ( \widetilde G' \widetilde G)^{-1} \widetilde G'$, for $\widetilde G$ as the $|I^c|_0\times K_1$ matrix of $\widetilde g_s$. Define $\widetilde F$ similarly. 
  Let $L$ denote $N\times K_3$ matrix of $l_i$, so $X_s=Lw_s+e_s$.     Also let $W$ be $T\times K_3$ matrix of $w_t$.
    
    Define
    \begin{eqnarray*}
\widetilde D_{fs}&=&\frac{1}{N}\widetilde\Lambda'\diag(X_s) M_{\widetilde\alpha} \diag(X_s)\widetilde\Lambda\cr
D_{fs}&=&\frac{1}{N}\Lambda '(\diag(X_s) M_{\alpha} \diag(X_s)\Lambda\cr
\bar D_{fs}&=&\frac{1}{N}\Lambda '\E((\diag(e_s) M_{\alpha} \diag(e_s) )\Lambda
+\frac{1}{N}\Lambda '(\diag(Lw_s) M_{\alpha} \diag(Lw_s) \Lambda
\cr
\widetilde D_{\lambda i}&=&\frac{1}{T}\widetilde F'\diag(X_i) M_{\widetilde g} \diag(X_i)\widetilde F\cr
 D_{\lambda i}&=&\frac{1}{T}  F'  (\diag(X_i) M_{ g} \diag(X_i)  ) F\cr
  \bar D_{\lambda i}&=&\frac{1}{T}  F' \E (\diag(E_i) M_{ g} \diag(E_i)  ) F
  +\frac{1}{T}  F'  (\diag(Wl_i) M_{ g} \diag(Wl_i)  ) F
  \cr
\end{eqnarray*}

 By the stationarity, $D_f$ does not depend on $s$.

    \begin{lem}\label{lc.1} Suppose $\max_{it}e_{it}^2+\max_{t\leq T}\|w_t\|^2=o_P(C_{NT})$, $ \max_{t\leq T}\E \|w_t\|^4=O (1)$.  
    $\frac{1}{N}\sum_{ij}|\Cov(e_{is}, e_{js}|w_s)|<\infty$ and $\|\E e_se_s'\|<\infty$. 
$\frac{1}{N^3}\sum_{ijkl}\Cov(e_{is} e_{js}, e_{ks} e_{ls}  )<C.$
    Also, there is $c>0$, so that  
    $\min_{s} \min_j\psi_j( D_{fs})>c$. 
    Then 
    
    (i) $\max_s\|\widetilde D_{fs}^{-1}\|=O_P(1)$. 
    
    (ii) $\frac{1}{T}\sum_{s\notin I} \|\widetilde D_{fs}^{-1}-(H_1'\bar D_{fs}H_1)^{-1}\|^2= O_P(C^{-2}_{NT})$.    
 
    \end{lem}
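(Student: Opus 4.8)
The plan is to obtain both parts from a single intermediate estimate: that $\widetilde D_{fs}$ is uniformly close to $H_1'D_{fs}H_1$ and, on average over $s$, close to $H_1'\bar D_{fs}H_1$ at rate $C_{NT}^{-1}$. I would first write
$$\widetilde D_{fs}-H_1'\bar D_{fs}H_1=\underbrace{(\widetilde D_{fs}-H_1'D_{fs}H_1)}_{\text{error from estimating }\Lambda,A}+\underbrace{H_1'(D_{fs}-\bar D_{fs})H_1}_{\text{concentration of the }e_s\text{ terms}}.$$
For the first piece I would substitute $\widetilde\Lambda=\Lambda H_1+\Delta_\Lambda$ with $\frac1N\|\Delta_\Lambda\|_F^2=O_P(C_{NT}^{-2})$ from Proposition \ref{p3.1}(ii), together with $M_{\widetilde\alpha}=M_\alpha+(M_{\widetilde\alpha}-M_\alpha)$ and $\|M_{\widetilde\alpha}-M_\alpha\|=O_P(C_{NT}^{-1})$, the latter following from $\frac1N\|\widetilde A-AH_2\|_F^2=O_P(C_{NT}^{-2})$, the strong-factor normalization $A'A\asymp N$, and the standard projection-perturbation bound. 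Expanding the quadratic form produces cross terms of the type $\frac1N H_1'\Lambda'\diag(X_s)M_{\widetilde\alpha}\diag(X_s)\Delta_\Lambda$ and $\frac1N\Delta_\Lambda'\diag(X_s)M_{\widetilde\alpha}\diag(X_s)\Delta_\Lambda$, plus a projection-difference term $\frac1N\Lambda'\diag(X_s)(M_{\widetilde\alpha}-M_\alpha)\diag(X_s)\Lambda$. The matrix $H_1$ is invertible with bounded inverse because $H_1=S_\Lambda^{-1/2}R$ with $S_\Lambda=\frac1N\Lambda'\Lambda$ bounded above and below under strong factors.

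For part (i) it suffices to bound each cross term crudely, using $\|\diag(X_s)\|_{op}^2=\max_i x_{is}^2\le 2\max_i\|l_i\|^2\max_s\|w_s\|^2+2\max_{is}e_{is}^2=o_P(C_{NT})$, $\|\Delta_\Lambda\|_{op}\le\|\Delta_\Lambda\|_F=\sqrt N\,O_P(C_{NT}^{-1})$, and $\|\Lambda\|_{op}=O_P(\sqrt N)$; these give $\max_{s}\|\widetilde D_{fs}-H_1'D_{fs}H_1\|=o_P(1)$. Since the hypothesis $\min_s\min_j\psi_j(D_{fs})>c$ and invertibility of $H_1$ yield $\min_j\psi_j(H_1'D_{fs}H_1)\ge c'>0$ uniformly in $s$, Weyl's inequality gives $\min_j\psi_j(\widetilde D_{fs})\ge c'/2$ for all $s$ with probability approaching one, hence $\max_s\|\widetilde D_{fs}^{-1}\|=O_P(1)$.

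For part (ii) the crude $\max_{is}x_{is}^2$ bound is too lossy, delivering only $o_P(1)$ per term; the key is to average over $s$ and never factor out $\|\diag(X_s)\|_{op}$. In the worst cross term I would write the $(j,l)$ entry as $\langle M_{\widetilde\alpha}(X_s\odot\lambda^{(j)}),\,X_s\odot\delta^{(l)}\rangle$, bound it by $\|X_s\odot\lambda^{(j)}\|\,(\sum_i x_{is}^2\delta_{il}^2)^{1/2}=O_P(\sqrt N)(\sum_i x_{is}^2\delta_{il}^2)^{1/2}$, and then use $\frac1T\sum_s x_{is}^2=O_P(1)$ uniformly in $i$ to get $\frac1T\sum_s\frac1{N^2}(\cdots)^2=O_P(N^{-1})\|\delta^{(l)}\|^2=O_P(N^{-1})\cdot N\,O_P(C_{NT}^{-2})=O_P(C_{NT}^{-2})$. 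The projection-difference term is controlled by $\frac1N\|\diag(X_s)\Lambda\|_F^2=O_P(1)$ times $\|M_{\widetilde\alpha}-M_\alpha\|^2=O_P(C_{NT}^{-2})$, and the $\Delta_\Lambda'(\cdot)\Delta_\Lambda$ term is $o_P(C_{NT}^{-2})$. For the concentration piece I would compute the conditional second moment of $D_{fs}-\bar D_{fs}$: the linear-in-$e_s$ cross terms have variance $O(N^{-1})$ by $\frac1N\sum_{ij}|\Cov(e_{is},e_{js}|w_s)|<\infty$, while the centered quadratic term $\frac1N\Lambda'(\diag(e_s)M_\alpha\diag(e_s)-\E(\diag(e_s)M_\alpha\diag(e_s)))\Lambda$ has variance $O(N^{-1})$ after exploiting the $O(1/N)$ decay of the off-diagonal entries of $M_\alpha$ together with $\max_i\sum_j|\Cov(e_{is}^m,e_{js}^r)|<C$ and the fourth-order bound $\frac1{N^3}\sum_{ijkl}|\Cov(e_{is}e_{js},e_{ks}e_{ls})|<C$; Markov's inequality then gives $\frac1T\sum_{s\notin I}\|D_{fs}-\bar D_{fs}\|^2=O_P(C_{NT}^{-2})$. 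Combining yields $\frac1T\sum_{s\notin I}\|\widetilde D_{fs}-H_1'\bar D_{fs}H_1\|^2=O_P(C_{NT}^{-2})$, and part (ii) follows from the identity $\widetilde D_{fs}^{-1}-(H_1'\bar D_{fs}H_1)^{-1}=-\widetilde D_{fs}^{-1}(\widetilde D_{fs}-H_1'\bar D_{fs}H_1)(H_1'\bar D_{fs}H_1)^{-1}$, part (i), and the uniform lower bound on $\bar D_{fs}$ from Assumption \ref{a3.8}(iii).

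The main obstacle is the sharp $C_{NT}^{-2}$ rate in part (ii): naive operator-norm bounds lose a factor of $C_{NT}$ because $\max_{is}x_{is}^2=o_P(C_{NT})$ rather than $O_P(1)$. The resolution is precisely to keep $\diag(X_s)\Lambda$ and $\diag(X_s)\Delta_\Lambda$ intact and to lean on the time-average $\frac1T\sum_s x_{is}^2=O_P(1)$, which is what converts the Frobenius rate on $\widetilde\Lambda-\Lambda H_1$ into the correct order. A secondary subtlety is guaranteeing that $\bar D_{fs}$ is uniformly invertible over the $s$ entering the average, which I would take from the eigenvalue condition on $\bar D_{ft}$ in Assumption \ref{a3.8}(iii).
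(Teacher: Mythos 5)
Your decomposition and overall strategy coincide with the paper's: the same split $\widetilde D_{fs}-H_1'\bar D_{fs}H_1=(\widetilde D_{fs}-H_1'D_{fs}H_1)+H_1'(D_{fs}-\bar D_{fs})H_1$, the same three error terms (two cross terms in $\widetilde\Lambda-\Lambda H_1$ plus the $M_{\widetilde\alpha}-M_\alpha$ projection term), the crude $\max_{is}x_{is}^2=o_P(C_{NT})$ bound plus Weyl for part (i), the variance computation for $D_{fs}-\bar D_{fs}$ under the stated covariance conditions, and the resolvent identity for part (ii). Your diagnosis of the main obstacle (that factoring out $\max_{is}x_{is}^2$ loses a factor of $C_{NT}$, so one must average over $s$) is exactly the point of the paper's argument.

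The one step that does not go through as written is your bound on the cross term in part (ii). You factor out $\|X_s\odot\lambda^{(j)}\|=O_P(\sqrt N)$ \emph{uniformly in $s$}, which requires $\max_s\frac1N\sum_i x_{is}^2=O_P(1)$; the lemma's hypotheses only give $\max_s\|w_s\|^2=o_P(C_{NT})$ and $\max_{is}e_{is}^2=o_P(C_{NT})$, so this uniform bound is not available, and using it would cost you up to a factor $o_P(C_{NT})$ and leave you with $o_P(C_{NT}^{-1})$ rather than $O_P(C_{NT}^{-2})$. The paper avoids this by keeping both sums inside the time average, reducing the problem to $\frac{1}{N^2}\sum_{i,k}\lambda_{ij}^2\,\delta_{kl}^2\,\frac1T\sum_{s\notin I}x_{is}^2x_{ks}^2$, and then exploiting sample splitting: since $\delta_{kl}=\widetilde\lambda_{kl}-(H_1'\lambda_k)_l$ is measurable with respect to $D_I$ and $e_{is}$ is serially independent, one may take the conditional expectation $\E_I$ of $x_{is}^2x_{ks}^2$, which is bounded by the moment conditions, and conclude the whole expression is $O_P(1)\cdot\frac1N\|\widetilde\Lambda-\Lambda H_1\|_F^2=O_P(C_{NT}^{-2})$. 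If you replace your uniform-in-$s$ factoring with this conditioning step, the rest of your argument (including the treatment of the projection-difference term and the $\Delta_\Lambda'(\cdot)\Delta_\Lambda$ term) matches the paper's proof.
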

    
    \begin{proof}
    (i) The eigenvalues of $D_{fs}$ are bounded from zero uniformly in $s\leq T$. Also, 
\begin{eqnarray}\label{ec.9}
\widetilde D_{fs} -H_1'D_{fs}H_1&=&
\sum_l \delta_l, \quad \text{ where }\cr
\delta_1&=&\frac{1}{N}(\widetilde\Lambda-\Lambda H_1)'\diag(X_s) M_{\widetilde\alpha} \diag(X_s)\widetilde\Lambda,
\cr
\delta_2&=&\frac{1}{N}H_1' \Lambda '\diag(X_s) M_{\widetilde\alpha} \diag(X_s)(\widetilde\Lambda-\Lambda H_1)\cr
\delta_3&=&\frac{1}{N}H_1' \Lambda '\diag(X_s)( M_{\widetilde\alpha} -M_\alpha)\diag(X_s) \Lambda H_1.
\end{eqnarray}
We now bound each term uniformly in $s\leq T$.  The first term is 
\begin{eqnarray*}
&&\frac{1}{N}(\widetilde\Lambda-\Lambda H_1)'\diag(X_s) M_{\widetilde\alpha} \diag(X_s)\widetilde\Lambda
\leq O_P(1)\frac{1}{\sqrt{N}}\|\widetilde \Lambda -\Lambda H_1\|_F \max_{is}x_{is}^2=o_P(1)
\end{eqnarray*}
provided that $\max_{is}x_{is}^2=o_P(C_{NT}).$ The second term is bounded similarly.  The third term is bounded by 
$$
O_P(1)\max_{is}x_{is}^2 \|M_{\widetilde\alpha} -M_\alpha\|=O_P(1)\frac{1}{\sqrt{N}}\|\widetilde A -A H_2\|_F \max_{is}x_{is}^2=o_P(1).
$$
This implies $\max_s \|\widetilde D_{fs} -H_1'D_{fs}H_1\|=o_P(1)$. 
In addition,  because of the convergence of $\|\frac{1}{\sqrt{N}}(\widetilde\Lambda-\Lambda H_1)\|_F$, we have $\min_j\psi_{j}(H_1'H_1)\geq C \min_j\psi_{j}(\frac{1}{N}H_1'\Lambda'\Lambda H_1)$, bounded away from zero. Thus $\min_s\min_j\psi_j(H_1'D_{fs} H_1)
\geq \min_s\min_j\psi_j(D_{fs} )C,
$ bounded away from zero. This together with   $\max_s \|\widetilde D_{fs} -H_1'D_{fs}H_1\|=o_P(1)$ imply    $\min_s\|\widetilde D_{fs}^{-1}\| =O_P(1)$.

(ii) By (\ref{ec.9}), 
\begin{eqnarray*}
&&\frac{1}{T}\sum_{s\notin I} \|\widetilde D_{fs}^{-1}-(H_1'D_{fs}H_1)^{-1}\|^2
\cr
&\leq& \frac{1}{T}\sum_s \|\widetilde D_{fs}-(H_1'D_{fs}H_1)\|^2 \|(H_1'D_{fs}H_1)^{-2}\|\max_s\|\widetilde D_{fs}^{-2}\|\cr
&\leq & O_P(1)\frac{1}{T}\sum_s \|\widetilde D_{fs}-(H_1'D_{fs}H_1)\|^2 \cr
&=& 
O_P(1)\sum_{l=1}^3\frac{1}{T}\sum_s \|\delta_l\|^2.
\end{eqnarray*}We bound each term below. Up to a $O_P(1)$ product, 
\begin{eqnarray*}
 \frac{1}{T}\sum_{s\notin I} \|\delta_1\|^2 &\leq& \frac{1}{N^2}\sum_{ij} \|\widetilde\lambda_i-H_1'\lambda_i\|^2   
  \|\widetilde\lambda_j\|^2 \frac{1}{T}\sum_{s\notin I} x_{is}^2x_{js}^2 \cr
  &\leq& \frac{1}{N^2}\sum_{ij} \|\widetilde\lambda_i-H_1'\lambda_i\|^2   
  \|\widetilde\lambda_j\|^2 \frac{1}{T}\sum_{s\notin I}( 1+ e_{is}^2e_{js}^2) \cr
  &&+ \frac{1}{N^2}\sum_{ij} \|\widetilde\lambda_i-H_1'\lambda_i\|^2   
  \|\widetilde\lambda_j\|^2( \frac{1}{T}\sum_{s\notin I}    e_{is}^4)^{1/2}
  \end{eqnarray*}
 $\{e_{is}\}$ is serially independent, so $ e_{is}, e_{js}$ are independent of $ \|\widetilde\lambda_i-H_1'\lambda_i\|^2   
  \|\widetilde\lambda_j-H_1'\lambda_j\|^2$   for $s\notin I$. 
Take the conditional expectation $\E_I$.  
\begin{eqnarray*}
 \frac{1}{T}\sum_{s\notin I} \|\delta_1\|^2 &\leq& \frac{1}{N^2}\sum_{ij} \|\widetilde\lambda_i-H_1'\lambda_i\|^2   
  \|\widetilde\lambda_j\|^2 \frac{1}{T}\sum_{s\notin I}\E_I( 1+ e_{is}^2e_{js}^2) \cr
  &&+ \frac{1}{N^2}\sum_{ij} \|\widetilde\lambda_i-H_1'\lambda_i\|^2   
  \|\widetilde\lambda_j\|^2\E_I( \frac{1}{T}\sum_{s\notin I}    e_{is}^4)^{1/2}=O_P(C_{NT}^{-2}).
\end{eqnarray*}
Term of $ \frac{1}{T}\sum_{s\notin I} \|\delta_2\|^2 $ is bounded similarly. 
\begin{eqnarray*}
 \frac{1}{T}\sum_{s\notin I} \|\delta_3\|^2 &\leq&  O_P(1) \|M_{\widetilde\alpha} -M_{\alpha}\|=O_P(C_{NT}^{-2}).
\end{eqnarray*}

Next, \begin{eqnarray*}
&& \frac{1}{T}\sum_s \| D_{fs}- \bar D_{fs}\|^2\leq \frac{1}{T}\sum_s \|\frac{1}{N}\sum_{ij}\lambda_i \lambda_j'M_{\alpha, ij}(x_{is}x_{js}-\E e_{is }e_{js} -   l_i'w_sl_j'w_s )\|_F^2     \cr
&\leq& \frac{1}{T}\sum_s \|\frac{1}{N}\sum_{ij}\lambda_i \lambda_j'M_{\alpha, ij}(e_{is}e_{js}-\E e_{is }e_{js}    )\|_F^2 \cr
&&+\frac{2}{T}\sum_s \|\frac{1}{N}\sum_{ij}\lambda_i \lambda_j'M_{\alpha, ij} l_i'w_se_{js}\|_F^2 
\end{eqnarray*}
We assume $\dim(\lambda_i)=\dim(p_i)=1$.
As for the first term, it is less than 
\begin{eqnarray*}
 &&\frac{1}{T}\sum_s \Var(\frac{1}{N}\sum_{ij}\lambda_i \lambda_j'M_{\alpha, ij}e_{is}e_{js})\cr
 &\leq&\frac{1}{T}\sum_s \frac{1}{N^4}\sum_{ijkl}|  \Cov(e_{is}e_{js}, 
e_{ks}e_{ls})|=O(N^{-1})
\end{eqnarray*}
provided that $\frac{1}{N^3}\sum_{ijkl}\Cov(e_{is} e_{js}, e_{ks} e_{ls}  )<C.$
As for the second term, it is less than
\begin{eqnarray*}
O_P(1)\frac{1}{T}\sum_s  \E  w_s^2  \frac{1}{N^2}\sum_{ij}  |\Cov(e_{js},e_{ls}|w_s)|=O(N^{-1})
\end{eqnarray*}
provided that   $\frac{1}{N}\sum_{ij}|\Cov(e_{is}, e_{js}|w_s)|<\infty$ and $\|\E e_se_s'\|<\infty$. 
 So $\frac{1}{T}\sum_s \| D_{fs}- \bar D_{fs}\|^2=O_P(N^{-1})$.

    \end{proof}

    \begin{lem}\label{lc.3}       Suppose $\max_{it}e_{it}^4=O_P(\min\{N,T\})$.
    (i) $\frac{1}{T}\|\widetilde F- F H_1^{-1'}\|_F^2= O_P(C_{NT}^{-2})=\frac{1}{T}\|\widetilde G- G H_2^{-1'}\|_F^2$, and 
    $\frac{1}{T}\sum_{t\notin I} \|\widetilde f_t- H_1^{-1}f_t\|^2 e_{it}^2u_{it}^2=O_P(C_{NT}^{-2})  $.
    \\
    (ii)      $\max_i\|\widetilde D_{\lambda i}^{-1}\|=O_P(1)$. \\    
    (iii) $\frac{1}{N}\sum_{i} \|\widetilde D_{\lambda i}^{-1}-(H_1^{-1}\bar D_{\lambda i }H_1^{-1'})^{-1}\|^2=  O_P(C_{NT}^{-2})$.
    \end{lem}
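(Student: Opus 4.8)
The plan is to treat Lemma \ref{lc.3} as the ``loading-side'' analogue of Lemma \ref{lc.1}: I would prove part (i) first (the averaged rate for the preliminary factor estimates $\widetilde f_s$) and then derive parts (ii)--(iii) for the profile Hessian $\widetilde D_{\lambda i}$ from it, exactly as Lemma \ref{lc.1} derived the invertibility and consistency of $\widetilde D_{fs}$ from the rate of $\widetilde\Lambda$.

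For part (i), I would start from the closed-form profile-OLS expansion of the preliminary estimator. Writing $\widetilde w_{is}=(\widetilde\lambda_i'x_{is},\widetilde\alpha_i')'$, $\widetilde B_s=\frac{1}{N}\sum_i\widetilde w_{is}\widetilde w_{is}'$ and substituting $y_{is}=x_{is}\lambda_i'f_s+\alpha_i'g_s+u_{is}$, one obtains
$$
\begin{pmatrix}\widetilde f_s\\\widetilde g_s\end{pmatrix}=\begin{pmatrix}H_1^{-1}f_s\\H_2^{-1}g_s\end{pmatrix}+\widetilde B_s^{-1}\widetilde S_s\begin{pmatrix}H_1^{-1}f_s\\H_2^{-1}g_s\end{pmatrix}+\widetilde B_s^{-1}\widetilde Q_s,
$$
where $\widetilde Q_s=\frac{1}{N}\sum_i\widetilde w_{is}u_{is}$ is the score and $\widetilde S_s$ collects the terms generated by replacing $(\widetilde\lambda_i,\widetilde\alpha_i)$ with $(H_1'\lambda_i,H_2'\alpha_i)$. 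I would then bound $\frac{1}{T}\sum_{s\notin I}\|\widetilde f_s-H_1^{-1}f_s\|^2$ term by term: the score contribution is $O_P(1/N)\subseteq O_P(C_{NT}^{-2})$ because $u_{is}$ is conditionally mean-zero given the regressors and serially independent given $D_I$ (Assumption \ref{a4.1}(i)--(ii)), so $\frac{1}{T}\sum_s\|\widetilde Q_s\|^2=O_P(1/N)$; the bias contributions through $\widetilde S_s$ are handled by Cauchy--Schwarz together with Proposition \ref{p3.1}(ii), which gives $\frac{1}{N}\sum_i\|\widetilde\lambda_i-H_1'\lambda_i\|^2=O_P(C_{NT}^{-2})$ and its analogue for $\widetilde\alpha_i$, and the moment bounds on $x_{is}=l_i'w_s+e_{is}$ from Assumptions \ref{a3.8} and \ref{a4.1}. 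Uniform control of $\widetilde B_s^{-1}$ follows as in Lemma \ref{lc.1}(i). The weighted claim $\frac{1}{T}\sum_{t\notin I}\|\widetilde f_t-H_1^{-1}f_t\|^2e_{it}^2u_{it}^2=O_P(C_{NT}^{-2})$ is obtained by inserting the same expansion and using that, conditional on $D_I$, each individual's contribution to $\widetilde f_t$ is $O_P(1/N)$, so the weight $e_{it}^2u_{it}^2$ can be absorbed through the eighth-moment bounds in Assumption \ref{a3.8}(iv).

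For part (ii) I would mimic Lemma \ref{lc.1}(i): decompose $\widetilde D_{\lambda i}-H_1^{-1}D_{\lambda i}H_1^{-1'}$ into three pieces obtained by successively replacing $\widetilde F$ by $FH_1^{-1'}$ and $M_{\widetilde g}$ by $M_g$. Each piece carries a factor $\diag(X_i)$, so it is controlled by $\max_{is}x_{is}^2\cdot\frac{1}{T}\sum_s\|\widetilde f_s-H_1^{-1}f_s\|^2$; under $\max_{it}e_{it}^4=O_P(\min\{N,T\})$ (i.e.\ $\max_{is}x_{is}^2=o_P(C_{NT})$) and part (i) this is $o_P(C_{NT}^{-1})=o_P(1)$ uniformly in $i$, and combining with the eigenvalue lower bound $\min_j\psi_j(D_{\lambda i})>c$ of Assumption \ref{a3.8}(iii) and the bounded eigenvalues of $H_1$ yields $\max_i\|\widetilde D_{\lambda i}^{-1}\|=O_P(1)$. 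Part (iii) then splits as $\frac{1}{N}\sum_i\|\widetilde D_{\lambda i}-H_1^{-1}D_{\lambda i}H_1^{-1'}\|^2=O_P(C_{NT}^{-2})$, from the same decomposition averaged over $i$, plus $\frac{1}{N}\sum_i\|D_{\lambda i}-\bar D_{\lambda i}\|^2=O_P(C_{NT}^{-2})$, which is the $i$-indexed analogue of the $\frac{1}{T}\sum_s\|D_{fs}-\bar D_{fs}\|^2=O_P(1/N)$ bound in Lemma \ref{lc.1}: after expanding $\diag(X_i)=\diag(Wl_i)+\diag(E_i)$, the cross term $\frac{1}{T}F'\diag(Wl_i)M_g\diag(E_i)F$ is mean-zero and the deviation of $\frac{1}{T}F'\diag(E_i)M_g\diag(E_i)F$ from its conditional mean is controlled by the cross-sectional covariance bounds in Assumption \ref{a4.1}(iv). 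The identity $A^{-1}-B^{-1}=A^{-1}(B-A)B^{-1}$, together with the lower bound $\min_j\psi_j(\bar D_{\lambda i})>c$, converts these into the stated bound on the inverses.

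The hard part will be part (i), and within it the weighted estimate: because $\widetilde f_t$ uses the raw $x_{it}$ it is not Neyman-orthogonal in $(\lambda_i,\alpha_i)$, so the bias terms in $\widetilde S_s$ do not vanish pointwise and must be shown negligible only in the averaged $L^2$ sense; moreover the residual weight $e_{it}^2u_{it}^2$ is mildly correlated with the cross-sectional average defining $\widetilde f_t$, so careful conditioning on $D_I$ and the high-moment bounds are essential. Everything downstream (parts (ii)--(iii)) is then a largely mechanical transcription of Lemma \ref{lc.1} once the averaged rate of part (i) is in hand.
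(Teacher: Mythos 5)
Your proposal follows essentially the same route as the paper: part (i) via the profile-OLS expansion of $\widetilde f_s$ (the paper's (\ref{ec.10}), which it declares straightforward and omits), and parts (ii)--(iii) via exactly the paper's decomposition of $\widetilde D_{\lambda i}-H_1^{-1}D_{\lambda i}H_1^{-1'}$ into the three $\delta_l$ pieces (successively swapping $\widetilde F$ for $FH_1^{-1'}$ and $M_{\widetilde g}$ for $M_g$) plus the separate bound on $\frac{1}{N}\sum_i\|D_{\lambda i}-\bar D_{\lambda i}\|^2$ from the $\diag(Wl_i)+\diag(E_i)$ split. The argument is correct and matches the paper's, so no further comparison is needed.
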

    \proof (i) The  proof is straightforward given the expansion of (\ref{ec.10}) and $\max_s\|\widetilde D_{fs}^{-1}\|=O_P(1)$. 
So we omit the details for brevity. 
(ii)
 Note that 
\begin{eqnarray}\label{ec.11}
\widetilde D_{\lambda i} -H_1^{-1}D_{\lambda i}H_1^{-1'}&=&
\sum_l \delta_l, \quad \text{ where }\cr
\delta_1&=&\frac{1}{T}(\widetilde F- F H_1^{-1'})'\diag(X_i) M_{\widetilde g} \diag(X_i)\widetilde F,
\cr
\delta_2&=&\frac{1}{T}H_1^{-1} F '\diag(X_i) M_{\widetilde g } \diag(X_i)(\widetilde F- F H_1^{-1'})\cr
\delta_3&=&\frac{1}{T}H_1^{-1} F '\diag(X_i)( M_{\widetilde g} -M_g)\diag(X_i) F H_1^{-1'}.
\end{eqnarray}
The proof is very similar to that of Lemma \ref{lc.1}.

(iii) 
\begin{eqnarray*}
&&\frac{1}{N}\sum_{i} \|\widetilde D_{\lambda i}^{-1}-(H_1^{-1}\bar D_{\lambda i }H_1^{-1'})^{-1}\|^2
\cr
&\leq& \frac{1}{N}\sum_i \|\widetilde D_{\lambda i}-(H_1^{-1}\bar D_{\lambda i}H_1^{-1'})\|^2\max_i \|(H_1^{-1}\bar D_{\lambda i}H_1^{-1'})^{-2}\|\|\widetilde D_{\lambda i}^{-2}\|\cr
&\leq & O_P(1)\frac{1}{N}\sum_i\|\widetilde D_{\lambda i}-(H_1^{-1}\bar D_{\lambda i}H_1^{-1'})\|^2 \cr
&=& 
O_P(1)\sum_{l=1}^3\frac{1}{N}\sum_i \|\delta_l\|^2
+O_P(1) \frac{1}{N}\sum_i \| D_{\lambda i}-\bar  D_{\lambda i}\|^2.
\end{eqnarray*}
We now bound each term.   With the assumption that $\max_{it}x_{it}^4=O_P(\min\{N,T\}),$ 
\begin{eqnarray*}
\frac{1}{N}\sum_i \|\delta_1\|^2&\leq &  
 \frac{1}{T}\|\widetilde F- F H_1^{-1'}\|_F^2 \| M_{\widetilde g} -  M_{g} \|^2\frac{1}{T}\|\widetilde F\|_F^2\frac{1}{N}\sum_i \|\diag(X_i)\|^4\cr
 &&
+\frac{1}{T^2}\| \widetilde F- F H_1^{-1'}\|^4_F\frac{1}{N}\sum_i\|\diag(X_i) \|^4  \cr
&&+\frac{1}{N}\sum_i\|\frac{1}{T}(\widetilde F- F H_1^{-1'})'\diag(X_i) M_{  g} \diag(X_i) FH_1^{-1'}\|^2\cr
&\leq&O_P(C_{NT}^{-4}) \max_{it}x_{it}^4+
\frac{1}{NT}\sum_i\sum_t x_{it}^2    (\sum_s M_{ g, ts} x_{is} f_s)^2O_P(C_{NT}^{-2})\cr
&\leq& O_P(C_{NT}^{-2})  .
\end{eqnarray*}
$\frac{1}{N}\sum_i \|\delta_2\|^2$ is bounded similarly. 
$
\frac{1}{N}\sum_i \|\delta_3\|^2\leq    O_P(1)\|M_{\widetilde g} -M_g\|^2=O_P(C_{NT}^{-2})  .
$
  Finally,
  \begin{eqnarray*} \frac{1}{N}\sum_i \| D_{\lambda i}-\bar  D_{\lambda i}\|^2
  &\leq&  \frac{1}{N}\sum_i \|\frac{1}{T}\sum_{st} f_sf_t'M_{g, st}(x_{is}x_{it}-\E  e_{is}e_{it}-l_i'w_sl_i'w_t )\|_F^2\cr
&\leq&  O_P(1) \frac{1}{N}\sum_i \|\frac{1}{T}\sum_{st} f_sf_t'M_{g, st}(e_{is}e_{it}-\E  e_{is}e_{it}  )\|_F^2\cr
&&+O_P(1) \frac{1}{N}\sum_i \|\frac{1}{T}\sum_{st} f_sf_t'M_{g, st} w_se_{it}\|_F^2\cr
&\leq& O_P(1) \frac{1}{N}\sum_i \|\frac{1}{T}\sum_{t}  f_t^2 w_te_{it}\|_F^2+O_P(1) \frac{1}{N}\sum_i \| \frac{1}{T}\sum_{t} f_tg_te_{it} \|_F^2\cr
&=&O_P(C_{NT}^{-2}).
\end{eqnarray*}

    \begin{lem}\label{ld.5} Suppose $ \Var ( u_s |e_t, e_s, w_s) <C$ and $C_{NT}^{-1} \max_{is}|x_{is}|^2=O_P(1)$.
    
    (i)    For each fixed $t\notin I$, 
  $\frac{1}{ {N}}\sum_{i}  (H_1' \lambda_i-\dot\lambda_i)e_{it}=O_P( C_{NT}^{-2}) .$
  
  (ii) $ \frac{1}{T}\sum_{t\notin I}   \|       \frac{1}{N}\sum_i c_ie_{it}  (\dot \lambda_i-H_1'\lambda_i)  \|^2 =O_P( C_{NT}^{-4})$ for any deterministic  and bounded sequence $c_i$.
  
    \end{lem}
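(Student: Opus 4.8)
The plan is to start from the first-order conditions of the preliminary least-squares problem defining $(\dot\lambda_i,\dot\alpha_i)$ and to exploit the sample-splitting structure, which makes $e_{it}$ (for the fixed $t\notin I$) mean zero and essentially independent of the bulk of the estimation error. By the Frisch--Waugh representation, after partialling out $\widetilde g_s$, the $\lambda$-block satisfies
\begin{equation*}
\dot\lambda_i - H_1'\lambda_i = \widetilde D_{\lambda i}^{-1}\frac{1}{T}\sum_{s\notin I}\widetilde f_s\, x_{is}\Big(u_{is} - x_{is}(H_1'\lambda_i)'(\widetilde f_s - H_1^{-1}f_s) - (H_2'\alpha_i)'(\widetilde g_s - H_2^{-1}g_s)\Big),
\end{equation*}
where I have used $x_{is}\lambda_i'f_s = x_{is}(H_1'\lambda_i)'(H_1^{-1}f_s)$ and $\alpha_i'g_s = (H_2'\alpha_i)'(H_2^{-1}g_s)$. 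Lemma \ref{lc.3}(ii) gives $\max_i\|\widetilde D_{\lambda i}^{-1}\| = O_P(1)$, so the argument reduces to bounding three groups of terms generated by the residual: a score term carrying $u_{is}$ and two terms carrying the preliminary factor errors $\widetilde f_s - H_1^{-1}f_s$ and $\widetilde g_s - H_2^{-1}g_s$, whose averaged $L_2$ rates are $O_P(C_{NT}^{-1})$ by Lemma \ref{lc.3}(i).

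For part (i) I would multiply by $e_{it}$, average over $i$, and split the sum over $s\notin I$ into the single index $s=t$ and the indices $s\neq t$. For $s\neq t$ everything multiplying $e_{it}$ is measurable with respect to $D_I$ together with the factor paths and $\{e_{is},u_{is}:s\neq t\}$; under this conditioning, Assumption \ref{a4.1}(i)--(ii) makes $e_{it}$ mean zero and conditionally independent across $i$ up to weak cross-sectional dependence. Hence a generic $\frac{1}{N}\sum_i e_{it} g_i$ has conditional mean zero and conditional variance controlled by $\frac{1}{N}\cdot\frac1N\sum_{ij}|\Cov(e_{it},e_{jt}\mid\cdot)|\,|g_i|\,|g_j|$, injecting an extra factor $N^{-1/2}$ on top of the $O_P(C_{NT}^{-1})$ size of $g_i$; this upgrades the naive Cauchy--Schwarz bound $O_P(C_{NT}^{-1})$ to the desired $O_P(C_{NT}^{-2})$. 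The isolated index $s=t$ carries the prefactor $T^{-1}$ and reduces to $T^{-1}\cdot\frac1N\sum_i e_{it}x_{it}u_{it}(\cdots)$, where $\E(u_{it}\mid e_{it},w_t,f_t,g_t)=0$ kills the mean of $e_{it}x_{it}u_{it}=e_{it}(l_i'w_t+e_{it})u_{it}$, so this piece is $O_P(T^{-1}N^{-1/2})\le O_P(C_{NT}^{-2})$. The two factor-error terms are handled analogously, combining $\frac1T\sum_s\|\widetilde f_s - H_1^{-1}f_s\|^2,\ \frac1T\sum_s\|\widetilde g_s-H_2^{-1}g_s\|^2 = O_P(C_{NT}^{-2})$ (Lemma \ref{lc.3}(i)), the moment bounds $C_{NT}^{-1}\max_{is}x_{is}^2=O_P(1)$ and the moment controls of Assumption \ref{a3.8}, with the same $e_{it}$-cancellation supplying the second power of $C_{NT}^{-1}$.

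For part (ii) I would avoid a uniform-in-$t$ argument and instead bound the second moment directly: taking $\E_I(\cdot)$, I would show
\begin{equation*}
\E_I\,\frac{1}{T}\sum_{t\notin I}\Big\|\frac{1}{N}\sum_i c_i e_{it}(\dot\lambda_i - H_1'\lambda_i)\Big\|^2 = O_P(C_{NT}^{-4}),
\end{equation*}
again splitting each summand by $s=t$ versus $s\neq t$, expanding the square, and using the serial conditional independence of $\{e_{it}\}$ and $\{u_{it}\}$ across $t$ to collapse the cross terms to diagonal contributions. The $s\neq t$ part inherits the pointwise $N^{-1}\times C_{NT}^{-2}$ conditional-variance bound from part (i) and averages to $O_P(C_{NT}^{-4})$, while the factor-error contributions are controlled through the $b_{NT,j}$ quantities and the averaged estimates already assembled in Lemma \ref{ld.2a}.

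I expect the main obstacle to be the score-type term at $s\neq t$: a direct Cauchy--Schwarz estimate is one power of $C_{NT}$ short, so the proof genuinely relies on the conditional-mean-zero property of $e_{it}$ together with weak cross-sectional dependence to harvest the extra factor $N^{-1/2}$. Care is needed precisely in isolating the $s=t$ observation, where this independence fails because both the preliminary factor $\widetilde f_t$ and the regressor $x_{it}$ depend on $e_{it}$, and in verifying that its $T^{-1}$-weighted contribution is of strictly smaller order than $C_{NT}^{-2}$.
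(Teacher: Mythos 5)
Your overall architecture matches the paper's. The paper's proof starts from exactly the decomposition you describe: writing $\dot\lambda_i = \widetilde D_{\lambda i}^{-1}\tfrac{1}{T}\widetilde F'\diag(X_i)M_{\widetilde g}\,y_i$ and splitting $\dot\lambda_i - H_1'\lambda_i$ into a score term carrying $u_i$ plus two terms carrying $\widetilde F - FH_1^{-1'}$ and $\widetilde G - GH_2^{-1'}$, and the engine is the same one you identify — for the fixed $t\notin I$, serial independence makes $e_{it}$ conditionally mean zero relative to the rest of the data, and weak cross-sectional dependence supplies the extra $N^{-1/2}$ (which, since $N^{-1/2}\le C_{NT}^{-1}$, upgrades the naive Cauchy--Schwarz rate $C_{NT}^{-1}$ to $C_{NT}^{-2}$).

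The gap is in the sentence ``Lemma \ref{lc.3}(ii) gives $\max_i\|\widetilde D_{\lambda i}^{-1}\| = O_P(1)$, so the argument reduces to bounding three groups of terms.'' That reduction fails. The object to be bounded is $\frac{1}{N}\sum_i e_{it}\,\widetilde D_{\lambda i}^{-1}h_i$ with $h_i$ one of your three residual groups, and $\widetilde D_{\lambda i}^{-1}$ is a random matrix varying with $i$, built from $\diag(X_i)$, $\widetilde F$ and $M_{\widetilde g}$; it is correlated with the $e_{is}$'s appearing in $h_i$ and (since the preliminary regression runs over $s\in I^c\cup\{t\}$) with $e_{it}$ itself. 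A uniform operator-norm bound cannot extract it from the cross-sectional sum without destroying the conditional-mean-zero structure in $i$ on which your extra $N^{-1/2}$ depends. The paper spends the bulk of its Steps 1, 3--5 and 7 on exactly this point: it decomposes $\widetilde D_{\lambda i}^{-1} = (\widetilde D_{\lambda i}^{-1}-\bar D_{\lambda i}^{-1}) + \bar D_{\lambda i}^{-1}$, where $\bar D_{\lambda i}$ is nonrandom given $(W,G,F)$; the difference piece is handled by plain Cauchy--Schwarz using the $L_2$ rate $\frac{1}{N}\sum_i\|\widetilde D_{\lambda i}^{-1}-\bar D_{\lambda i}^{-1}\|^2 = O_P(C_{NT}^{-2})$ from Lemma \ref{lc.3}(iii) (one factor $C_{NT}^{-1}$ from the Hessian error, one from the residual, so no mean-zero argument is needed there), and the conditional-variance computation is applied only to the $\bar D_{\lambda i}^{-1}$ piece. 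The same surgery is required for $M_{\widetilde g}$ versus $M_g$ and for $\widetilde F$ versus $FH_1^{-1'}$ inside the score term (your displayed Frisch--Waugh formula also drops the $M_{\widetilde g}$ weights, which should appear). With this replacement step added, the rest of your outline — including your explicit isolation of $s=t$, which the paper instead absorbs through the conditional-independence conditions of Assumption \ref{a4.1} — goes through.
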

    
\begin{proof} Given that the $|I^c|\times 1$ vector $y_i= G\alpha_i + \diag(X_i)F\lambda_i + u_i$
    where $u_i$ is a $|I^c|\times 1$ vector and $G$ is an $|I^c|\times K_1$ matrix, we have
        \begin{eqnarray*}
\dot\lambda_i&=& \widetilde D_{\lambda i}^{-1} \frac{1}{T}\widetilde F'\diag(X_i) M_{\widetilde g}y_i\cr
&=&  H_1' \lambda_i+ \widetilde D_{\lambda i}^{-1} \frac{1}{T}\widetilde F'\diag(X_i) M_{\widetilde g}(GH_2^{-1'}-\widetilde G)H_2'\alpha_i
\cr
&& 
+ \widetilde D_{\lambda i}^{-1} \frac{1}{T}\widetilde F'\diag(X_i) M_{\widetilde g}\diag(X_i)(FH_1^{-1'}-\widetilde  F)H_1 '\lambda_i + \widetilde D_{\lambda i}^{-1} \frac{1}{T}\widetilde F'\diag(X_i) M_{\widetilde g} u_i.
\end{eqnarray*}

(i)

Step 1:    given $\E e_{it}^4f_t^2+e_{it}^2f_t^2w_t^2<C$,
        \begin{eqnarray*} 
 &&\frac{1}{\sqrt{N}}\sum_{i}   e_{it} (\widetilde D_{\lambda i}^{-1}-(H_1^{-1}\bar D_{\lambda i }H_1^{-1'})^{-1}) \frac{1}{T}\widetilde F'\diag(X_i) M_{\widetilde g}(GH_2^{-1'}-\widetilde G)H_2'\alpha_i\cr
  &\leq&O_P(\sqrt{N}C_{NT}^{-2})+O_P(\sqrt{N}C_{NT}^{-2})(  \frac{1}{T}\sum_s (\widetilde f_s-H_1^{-1}f_s)^2     )^{1/2} \max_{is}|x_{is}|\cr
    &\leq&O_P(\sqrt{N}C_{NT}^{-2})+O_P(\sqrt{N}C_{NT}^{-3}) \max_{is}|x_{is}|=O_P(\sqrt{N}C_{NT}^{-2}).
  \end{eqnarray*}
Step 2:  $\bar D_{\lambda i}$ is nonrandom given $W, G, F$, 
        \begin{eqnarray*} 
   &&\frac{1}{\sqrt{N}}\sum_{i}   e_{it} (H_1^{-1}\bar D_{\lambda i }H_1^{-1'})^{-1} \frac{1}{T}\widetilde F'\diag(X_i) M_{\widetilde g}(GH_2^{-1'}-\widetilde G)H_2'\alpha_i\cr
 &\leq& O_P(\sqrt{N}C_{NT}^{-1})(a^{1/2}+b^{1/2}) \quad \text{ where } \cr
 a&=&\frac{1}{T}\sum_s(\widetilde f_s-H_1^{-1}f_s)^2(
 \frac{1}{N}\sum_{i}  \alpha_i e_{it}  \bar D_{\lambda i } ^{-1} x_{is})^2 \cr
 b&=&   \frac{1}{T}\sum_s  f_s^2(
 \frac{1}{N}\sum_{i}  \alpha_i e_{it}  \bar D_{\lambda i } ^{-1} x_{is})^2.
  \end{eqnarray*}
We now bound each term.  As for $b$, note that  for each fixed $t$, 
  \begin{eqnarray}\label{ec.45}
 &&\E((
 \frac{1}{N}\sum_{i}  \alpha_i e_{it}  \bar D_{\lambda i } ^{-1} x_{is})^2|F, G, W, u_s ) \cr
 &\leq&  \E(
 \frac{1}{N}\sum_{i}  \alpha_i e_{it}  \bar D_{\lambda i } ^{-1} e_{is}|F, G, W, u_s  )^2
 +\E(
 \frac{1}{N}\sum_{i}  \alpha_i e_{it}  \bar D_{\lambda i } ^{-1} l_{i}'w_s|F,  G, W , u_s )^2
 \cr
 &\leq& \frac{C}{N}+ \frac{C\|w_s\|^2}{N}+(\E
 \frac{1}{N}\sum_{i}  \alpha_i e_{it}  e_{is} \bar D_{\lambda i } ^{-1}|F, G, W, u_s )^2
   \end{eqnarray}
with the assumption  $\frac{1}{N}  \sum_{ij}|\Cov(e_{jt}  e_{js}, 
e_{it}  e_{is} |F, G,W, u_s )| <C$.  So
        \begin{eqnarray*}
\E b&\leq &\frac{1}{T}\sum_s\E f_s^2(
 \frac{1}{N}\sum_{i}  \alpha_i e_{it}  \bar D_{\lambda i } ^{-1} x_{is})^2\leq  O(C_{NT}^{-2}).
  \cr
 a&=& \frac{1}{T}\sum_sf_s^2 ( \frac{1}{N}\widetilde\Lambda'\diag(X_s) M_{\widetilde\alpha}\diag(X_s)(\Lambda H_1-\widetilde \Lambda)    )^2(
 \frac{1}{N}\sum_{i}  \alpha_i e_{it}  \bar D_{\lambda i } ^{-1} x_{is})^2\cr
 &&+\frac{1}{T}\sum_s  (\widetilde D_{fs}^{-1} \frac{1}{N}\widetilde\Lambda'\diag(X_s) M_{\widetilde\alpha}\diag(X_s)(\Lambda H_1-\widetilde \Lambda)H_1^{-1} f_s)^2(
 \frac{1}{N}\sum_{i}  \alpha_i e_{it}  \bar D_{\lambda i } ^{-1} x_{is})^2\cr
 &&+\frac{1}{T}\sum_s (\widetilde D_{fs}^{-1} \frac{1}{N}\widetilde\Lambda'\diag(X_s) M_{\widetilde\alpha} u_s )^2(
 \frac{1}{N}\sum_{i}  \alpha_i e_{it}  \bar D_{\lambda i } ^{-1} x_{is})^2\cr
 &\leq&O_P(C_{NT}^{-2})\max_{is}x_{is}^4
   \frac{1}{T}\sum_s \E f_s^2    (
 \frac{1}{N}\sum_{i}  \alpha_i e_{it}  \bar D_{\lambda i } ^{-1} x_{is})^2\cr
 &&+O_P(1)     \frac{1}{T}\sum_s (  \frac{1}{N}\widetilde\Lambda'\diag(X_s) M_{\widetilde\alpha} u_s )^2(
 \frac{1}{N}\sum_{i}  \alpha_i e_{it}  \bar D_{\lambda i } ^{-1} x_{is})^2\cr
 &\leq& O_P(C_{NT}^{-2})
 + O_P(1)     \frac{1}{T}\sum_s \E(  \frac{1}{N} \Lambda'\diag(X_s) M_{ \alpha} u_s )^2(
 \frac{1}{N}\sum_{i}  \alpha_i e_{it}  \bar D_{\lambda i } ^{-1} x_{is})^2\cr
 &&+ O_P(C_{NT}^{-2})   \max_{is}x_{is}^2  \frac{1}{TN}\sum_s  \E \|    u_s \|^2\E((
 \frac{1}{N}\sum_{i}  \alpha_i e_{it}  \bar D_{\lambda i } ^{-1} x_{is})^2|u_s)\cr
 &:=&O_P(C_{NT}^{-2})+ O_P(1)(a.1+a.2).
  \end{eqnarray*}
 We now respectively bound $a.1$ and $a.2$.  As for $a.1$, note that $ \Var ( u_s |e_t, e_s) <C$ almost surely, thus
        \begin{eqnarray*} 
 &&\E (  \frac{1}{N} \Lambda'\diag(X_s) M_{ \alpha} u_s |e_t, e_s)^2
 = \frac{1}{N} \frac{1}{N}\Lambda'\diag(X_s) M_{ \alpha}  \Var ( u_s |e_t, e_s) M_\alpha \diag(X_s)\Lambda\cr
 &\leq&  C\frac{1}{N} \frac{1}{N}\Lambda'\diag(X_s)   ^2\Lambda.
   \end{eqnarray*} 
As for $a.2$, we use   (\ref{ec.45}). Thus, 
      \begin{eqnarray*} 
        a.1  &\leq &       \frac{1}{T}\sum_s \E(
 \frac{1}{N}\sum_{i}  \alpha_i e_{it}  \bar D_{\lambda i } ^{-1} x_{is})^2\E (  \frac{1}{N} \Lambda'\diag(X_s) M_{ \alpha} u_s |e_t, e_s)^2\cr
 &\leq &    \frac{C}{N} \frac{1}{T}\sum_s \E(
 \frac{1}{N}\sum_{i}  \alpha_i e_{it}  \bar D_{\lambda i } ^{-1} x_{is})^2 \ \frac{1}{N}\Lambda'\diag(X_s)   ^2\Lambda=O(N^{-1}).\cr
 a.2&\leq&C_{NT}^{-2}   \max_{is}x_{is}^2  \frac{1}{TN}\sum_s  \E \|    u_s \|^2\E((
 \frac{1}{N}\sum_{i}  \alpha_i e_{it}  \bar D_{\lambda i } ^{-1} x_{is})^2|u_s)\cr
 &\leq& C_{NT}^{-2}   \max_{is}x_{is}^2  \frac{1}{TN}\sum_s  \E \|    u_s \|^2\frac{C}{N}
 \cr
 &&+ C_{NT}^{-2}   \max_{is}x_{is}^2  \frac{1}{TN}\sum_s  \E \|    u_s \|^2(\E
 \frac{1}{N}\sum_{i}  \alpha_i e_{it}  e_{is} \bar D_{\lambda i } ^{-1}|F, u_s)^2\cr
 &\leq& O(C_{NT}^{-2})  .
   \end{eqnarray*}

   Put together,  $a^{1/2}+b^{1/2}=O(C_{NT}^{-1})  .$ So  the first term in the expansion of $\frac{1}{\sqrt{N}}\sum_{i}  (H_1' \lambda_i-\dot\lambda_i)e_{it}$ is 
   $$
   \frac{1}{\sqrt{N}}\sum_{i}   e_{it}  \widetilde D_{\lambda i}^{-1}\frac{1}{T}\widetilde F'\diag(X_i) M_{\widetilde g}(GH_2^{-1'}-\widetilde G)H_2'\alpha_i=O_P(\sqrt{N}C_{NT}^{-2}). 
   $$
   
Step 3:  
\begin{eqnarray*} 
&&   \frac{1}{\sqrt{N}}\sum_{i}   e_{it} (\widetilde D_{\lambda i}^{-1}-(H_1^{-1}\bar D_{\lambda i }H_1^{-1'})^{-1}) \frac{1}{T}\widetilde F'\diag(X_i) M_{\widetilde g}\diag(X_i)(FH_1^{-1'}-\widetilde  F)H_1 '\lambda_i \cr
&\leq&O_P(\sqrt{N}C_{NT}^{-2}) (\frac{1}{NT}\sum_i e_{it}^2\| F'\diag(X_i) M_{  g}\diag(X_i)\|^2  )^{1/2}  \cr 
&&+O_P(\sqrt{N}C_{NT}^{-2}) (\frac{1}{N}\sum_i e_{it}^2\|\diag(X_i) \| ^4  )^{1/2} [\|M_g-M_{\widetilde g}\|\frac{1}{\sqrt{T}} \| \widetilde F\|   +   \frac{1}{\sqrt{T}} \|\widetilde F-FH_1^{-1}\|] \cr
&\leq&O_P(\sqrt{N}C_{NT}^{-2}) (\frac{1}{NT}\sum_i\sum_s e_{it}^2 f_s ^2  x_{is}^4  )^{1/2}  \cr
&&+O_P(\sqrt{N}C_{NT}^{-2}) (\frac{1}{NT}\sum_i\sum_s x_{is}^2e_{it}^2g_s^2(\frac{1}{T}\sum_k f_k g_kx_{ik})^2  )^{1/2}  \cr
&&+O_P(\sqrt{N}C_{NT}^{-3})\max_{it}x_{it}^2=O_P(\sqrt{N}C_{NT}^{-2}).
   \end{eqnarray*}

Step 4:      note $\frac{1}{T}\sum_s(\frac{1}{N}\sum_{i}     \lambda_i  e_{it}  \bar D_{\lambda i } ^{-1}e_{is}^2)^2=O_P(C_{NT}^{-2}),$
   \begin{eqnarray*} 
&&   \frac{1}{\sqrt{N}}\sum_{i}   e_{it} (H_1^{-1}\bar D_{\lambda i }H_1^{-1'})^{-1}\frac{1}{T}(\widetilde F-FH_1^{-1'})'\diag(X_i) M_{\widetilde g}\diag(X_i)(FH_1^{-1'}-\widetilde  F)H_1 '\lambda_i \cr
&\leq& O_P(\sqrt{N}C_{NT}^{-3})\max_{it}x_{it}^2\cr
&&+ \frac{1}{\sqrt{N}}\sum_{i}   e_{it} (H_1^{-1}\bar D_{\lambda i }H_1^{-1'})^{-1}\frac{1}{T}(\widetilde F-FH_1^{-1'})'\diag(X_i) M_{ g}\diag(X_i)(FH_1^{-1'}-\widetilde  F)H_1 '\lambda_i \cr
&\leq&O_P(\sqrt{N}C_{NT}^{-2})+ \sqrt{N } \frac{1}{T}\sum_{s}( \widetilde f_s-H_1^{-1}f_s) ^2    \frac{1}{N}\sum_{i}     \lambda_i  e_{it}  \bar D_{\lambda i } ^{-1}x_{is}^2       \cr
&\leq&O_P(\sqrt{N}C_{NT}^{-2})+\max_{is}x_{is}^2  O_P( \sqrt{N }C_{NT}^{-2})  \frac{1}{NT}\sum_{s}  g_s ^2     w_s^2      \sum_{i}     \lambda_i  e_{it}  \bar D_{\lambda i } ^{-1}  l_i^2     \cr
&& +\max_{is}x_{is}^2 O_P( \sqrt{N }C_{NT}^{-2})  \frac{1}{T}\sum_{s}w_s^2 \frac{1}{N}\sum_j  \lambda_j ^2x_{js}^2   f_s^2 \frac{1}{N}\sum_{i}     \lambda_i  e_{it}  \bar D_{\lambda i } ^{-1}  l_i^2       \cr
& & + \sqrt{N } \frac{1}{T}\sum_{s}w_s^2( \frac{1}{N}\Lambda'\diag(X_s) M_{\alpha} u_s) ^2    \frac{1}{N}\sum_{i}     \lambda_i  e_{it}  \bar D_{\lambda i } ^{-1} l_i^2    \cr
&\leq&O_P(\sqrt{N}C_{NT}^{-2}).
   \end{eqnarray*} 
Step 5:      First we bound $ \frac{1}{NT}  \sum_i\sum_s(\widetilde f_s-H_1^{-1}f_s)^2e_{is}^2$.  We have
\begin{eqnarray*} 
\widetilde f_s&=& \widetilde D_{fs}^{-1} \frac{1}{N}\widetilde\Lambda'\diag(X_s) M_{\widetilde\alpha}y_s\cr
&=&  H_1^{-1} f_s+ \widetilde D_{fs}^{-1} \frac{1}{N}\widetilde\Lambda'\diag(X_s) M_{\widetilde\alpha}(AH_2-\widetilde A)H_2^{-1}g_s
\cr
&& 
+ \widetilde D_{fs}^{-1} \frac{1}{N}\widetilde\Lambda'\diag(X_s) M_{\widetilde\alpha}\diag(X_s)(\Lambda H_1-\widetilde \Lambda)H_1^{-1} f_s \cr
&&+ \widetilde D_{fs}^{-1} \frac{1}{N}\widetilde\Lambda'\diag(X_s) M_{\widetilde\alpha} u_s  .
\end{eqnarray*}
So
  \begin{eqnarray*} 
&& \frac{1}{NT}  \sum_i\sum_s(\widetilde f_s-H_1^{-1}f_s)^2e_{is}^2\cr
&=&    \frac{1}{NT}  \sum_i\sum_{s\notin I}e_{is}^2  \frac{1}{N} \| \Lambda'\diag(X_s)  \|^2[ g_s^2+  \frac{1}{N}\|  u_s   \|^2 ] O_P(C_{NT}^{-2})\cr
&&+O_P(1)  \frac{1}{N}\sum_j(H_1\lambda_j-\widetilde \lambda_j)^2\max_j\frac{1}{NT}  \sum_i\sum_{s\notin I} \E_Ie_{is}^2   f_s ^2  \frac{1}{N}\| \Lambda'\diag(X_s) \|^2 x_{js}^2  \cr
&&+ \frac{1}{NT}  \sum_i\sum_se_{is}^2     ( \frac{1}{N}\Lambda'\diag(X_s) M_{ \alpha} u_s     )^2\cr
&&+ \frac{1}{NT}  \sum_i\sum_se_{is}^2     \frac{1}{N}\|\diag(X_s) M_{ \alpha} u_s     \|^2 
 O_P(C_{NT}^{-2})
 \cr
 &=& O_P(C_{NT}^{-2}).
    \end{eqnarray*}  
    Therefore    
  \begin{eqnarray*} 
&&   \frac{1}{\sqrt{N}}\sum_{i}   e_{it} (H_1^{-1}\bar D_{\lambda i }H_1^{-1'})^{-1}\frac{1}{T} F '\diag(X_i) (M_{\widetilde g}-M_{ g})\diag(X_i)(FH_1^{-1'}-\widetilde  F)H_1 '\lambda_i \cr
&\leq&O_P(\sqrt{N}C_{NT}^{-1})
 (\frac{1}{N}\sum_{i}  \lambda_i ^2e_{it}^2  \frac{1}{T} \sum_kf_k^2x_{ik}^2  )^{1/2}  (\frac{1}{NT}  \sum_i\sum_s(\widetilde f_s-H_1^{-1}f_s)^2e_{is}^2)^{1/2}
 \cr
&\leq&  O_P(\sqrt{N}C_{NT}^{-2}),
    \end{eqnarray*} 
where the last equality is due to $\frac{1}{NT}  \sum_i\sum_{s\notin I}(\widetilde f_s-H_1^{-1}f_s)^2e_{is}^2=O_P(C_{NT}^{-2})$.

Step 6:
 \begin{eqnarray*} 
&&   \frac{1}{\sqrt{N}}\sum_{i}   e_{it} (H_1^{-1}\bar D_{\lambda i }H_1^{-1'})^{-1}\frac{1}{T} F '\diag(X_i) M_g\diag(X_i)(FH_1^{-1'}-\widetilde  F)H_1 '\lambda_i \cr
&\leq& O_P(\sqrt{N}C_{NT}^{-1}) (\frac{1}{T} \sum_sf_s   ^2( \frac{1}{N}\sum_{i}  \lambda_i  x_{is}   ^2e_{it}  \bar D_{\lambda i } ^{-1}  )^2)^{1/2}\cr
&&+  O_P(\sqrt{N}C_{NT}^{-1}) (\frac{1}{T} \sum_sg_ s^2( \frac{1}{N}\sum_{i}   x_{is} \lambda_i  e_{it}  \bar D_{\lambda i } ^{-1}\frac{1}{T} \sum_k  f_k  x_{ik} g_k )^2)^{1/2}\cr
&=&O_P(\sqrt{N}C_{NT}^{-1}) (a^{1/2}+b^{1/2}).
    \end{eqnarray*} 
 We aim to show $a=O_P(C_{NT}^{-2})=b$.  
  \begin{eqnarray*} 
  \E a&:=&\frac{1}{T} \sum_s \E f_s   ^2( \frac{1}{N}\sum_{i}  \lambda_i  x_{is}   ^2e_{it}  \bar D_{\lambda i } ^{-1}  )^2\cr
  &\leq &\frac{1}{T} \sum_s\frac{1}{N^2}\sum_{ij}  \E f_s   ^2   \lambda_i  \bar D_{\lambda i } ^{-1}       \lambda_j \bar D_{\lambda j} ^{-1}  \E (e_{is}   ^2 e_{js}   ^2 |F)\Cov (e_{it} ,  e_{jt}  |F, G, W)
  \cr
&&  
  +\frac{1}{T} \sum_s \E f_s   ^2 \frac{1}{N^2}\sum_{ij}    \lambda_i  l_i^2w_s^2\bar D_{\lambda i } ^{-1} \lambda_j  \mu_{j}   ^2\bar D_{\lambda j} ^{-1}    \Cov (e_{it} ,   e_{jt}   |F, G, W     )\cr
  &&+\frac{2}{T} \sum_s \E f_s   ^2  \frac{1}{N^2}\sum_{ij}  \lambda_i  l_iw_s\bar D_{\lambda i } ^{-1}          \bar D_{\lambda j } ^{-1} \lambda_i  l_jw_s \E (e_{js}  e_{is}|F)    \Cov( e_{jt}, e_{it}  |F, G, W)\cr
  &=&O_P(N^{-1}).\cr
    \E b&:=&   \frac{1}{T} \sum_s\E g_ s^2( \frac{1}{N}\sum_{i}   x_{is} \lambda_i  e_{it}  \bar D_{\lambda i } ^{-1}\frac{1}{T} \sum_k  f_k  x_{ik} g_k )^2 \cr
   &=&O_P(C_{NT}^{-2}).
    \end{eqnarray*} 
 Therefore the second term is 
 $$
  \frac{1}{\sqrt{N}}\sum_{i}   e_{it} \widetilde D_{\lambda i}^{-1} \frac{1}{T}\widetilde F'\diag(X_i) M_{\widetilde g}\diag(X_i)(FH_1^{-1'}-\widetilde  F)H_1 '\lambda_i =O_P(\sqrt{N}C_{NT}^{-2}).
 $$

Step 7:   $\frac{1}{\sqrt{N}}\sum_{i}   e_{it}\widetilde D_{\lambda i}^{-1} \frac{1}{T}\widetilde F'\diag(X_i) M_{\widetilde g} u_i$.   
\begin{eqnarray*} 
&&\frac{1}{\sqrt{N}}\sum_{i}   e_{it} (\widetilde D_{\lambda i}^{-1}-(H_1^{-1}\bar D_{\lambda i }H_1^{-1'})^{-1})\frac{1}{T}\widetilde F'\diag(X_i) M_{\widetilde g} u_i\cr
&\leq&O_P(\sqrt{N}C_{NT}^{-2})+O_P(1) \frac{1}{\sqrt{N}}\sum_{i}   e_{it} (\widetilde D_{\lambda i}^{-1}-(H_1^{-1}\bar D_{\lambda i }H_1^{-1'})^{-1})\frac{1}{T}F' \diag(X_i) M_{ g} u_i\cr 
&\leq&O_P(\sqrt{N}C_{NT}^{-2})+O_P(\sqrt{N}C_{NT}^{-1}) \frac{1}{N}\sum_{i}  |  e_{it}| |    \frac{1}{T}
\sum_{s}f_sx_{is}u_{is}| \cr 
&&+O_P(\sqrt{N}C_{NT}^{-1}) \frac{1}{N}\sum_{i}  |  e_{it}| |    \frac{1}{T}
\sum_{s}f_sx_{is} g_s||\frac{1}{T}\sum_kg_ku_{ik}| \cr
&=&O_P(\sqrt{N}C_{NT}^{-2}).
    \end{eqnarray*} 
Also,
\begin{eqnarray*} 
&&\frac{1}{\sqrt{N}}\sum_{i}   e_{it}   \bar D_{\lambda i } ^{-1}\frac{1}{T}(\widetilde F-FH_1^{-1'})'\diag(X_i) M_{\widetilde g} u_i\cr 
&\leq& \frac{1}{\sqrt{N}}\sum_{i}   e_{it}   \bar D_{\lambda i } ^{-1}\frac{1}{T}(\widetilde F-FH_1^{-1'})'\diag(X_i) M_{  g} u_i\cr
&&
+O_P(\sqrt{N}C_{NT}^{-1}) (\frac{1}{NT}\sum_{i}  e_{it}^2   \|u_i\| ^2  )^{1/2} (\frac{1}{NT}\sum_i\|  (\widetilde F-FH_1^{-1'})'\diag(X_i)\| ^2)^{1/2}\cr
&\leq& O_P(\sqrt{N}C_{NT}^{-2})+O_P( \sqrt{N}C_{NT}^{-1})    \|   \frac{1}{N} \sum_{i} \frac{1}{\sqrt{T}} \diag(X_i) M_{  g} u_i   e_{it}   \bar D_{\lambda i } ^{-1}\|\cr
&\leq& O_P(\sqrt{N}C_{NT}^{-2})+O_P( \sqrt{N}C_{NT}^{-1})   (\frac{1}{T} \sum_s (   \frac{1}{N} \sum_{i} x_{is} M_{  g,s}' u_i   e_{it}   \bar D_{\lambda i } ^{-1})^2)^{1/2}
\cr
&=&O_P(\sqrt{N}C_{NT}^{-2}),
    \end{eqnarray*} 
    where the last equality is due to $\frac{1}{T} \sum_s (   \frac{1}{N} \sum_{i} x_{is} M_{  g,s}' u_i   e_{it}   \bar D_{\lambda i } ^{-1})^2=O_P(C_{NT}^{-2})$, proved as follows:
    \begin{eqnarray*} 
    &&\frac{1}{T} \sum_s \E(   \frac{1}{N} \sum_{i} x_{is} M_{  g,s}' u_i   e_{it}   \bar D_{\lambda i } ^{-1})^2\cr
&\leq& \frac{1}{T} \sum_s \E(   \frac{1}{N} \sum_{i} x_{is}   u_{is}   e_{it}   \bar D_{\lambda i } ^{-1})^2+ \frac{1}{T} \sum_s \E(   \frac{1}{NT} \sum_{i} \sum_kx_{is} g_kg_su_{ik}   e_{it}   \bar D_{\lambda i } ^{-1})^2\cr
&\leq&O(T^{-1})+ \frac{1}{T} \sum_{s\neq t}   \frac{1}{N^2} 
  \sum_{ij} \E|\E (e_{jt}  e_{it} |F) \E (u_{is}    u_{js}   x_{js}  x_{is} |F) |
\cr
&&+ \frac{1}{T} \sum_s \E  \frac{1}{NT} \sum_{i} \sum_k \bar D_{\lambda i } ^{-1} 
\frac{1}{NT} \sum_{j} \sum_l  \bar D_{\lambda j } ^{-1}|x_{js} g_lg_su_{jl}   x_{is} g_kg_su_{ik} | |\Cov(  e_{it} , e_{jt}  |F )|\cr
&\leq& O(T^{-1})+O(N^{-1})
        \end{eqnarray*} 
    where the last equality is due to  $\max_j\sum_i|\Cov(  e_{it} , e_{jt}  |F )|<C$.

    Next,
     \begin{eqnarray*} 
&&\frac{1}{\sqrt{N}}\sum_{i}   e_{it}   \bar D_{\lambda i } ^{-1}\frac{1}{T} F'\diag(X_i) (M_{\widetilde g} -M_g)u_i \cr
&=&\tr\frac{1}{\sqrt{N}}\sum_{i}   u_ie_{it}   \bar D_{\lambda i } ^{-1}\frac{1}{T} F'\diag(X_i) (M_{\widetilde g} -M_g)\cr
&\leq& O_P(\sqrt{N}C_{NT}^{-1})\frac{1}{T}
\|\frac{1}{N}\sum_{i}   u_ie_{it}   \bar D_{\lambda i } ^{-1} F'\diag(X_i) \|_F\cr
&\leq& O_P(\sqrt{N}C_{NT}^{-1})
(\frac{1}{T^2}\sum_{sk} \E(\frac{1}{N}\sum_{i}   u_{is}e_{it}   \bar D_{\lambda i } ^{-1} f_kx_{ik} )^2)^{1/2}=O_P(\sqrt{N}C_{NT}^{-2})
    \end{eqnarray*} 
    where the last equality is due to $\frac{1}{T^2}\sum_{sk} \E(\frac{1}{N}\sum_{i}   u_{is}e_{it}   \bar D_{\lambda i } ^{-1} f_kx_{ik} )^2=O_P(C_{NT}^{-2})$.
      \begin{eqnarray*} 
&&\frac{1}{T^2}\sum_{sk} \E(\frac{1}{N}\sum_{i}   u_{is}e_{it}   \bar D_{\lambda i } ^{-1} f_kx_{ik} )^2\cr
&=& 
\frac{1}{T^2}\sum_{sk} \E \frac{1}{N}\sum_{i}   \bar D_{\lambda i } ^{-1}
\frac{1}{N}\sum_{j}      \bar D_{\lambda j} ^{-1} f_k^2x_{jk}  u_{is}  x_{ik}u_{js}e_{jt}e_{it} \cr
&\leq &  O(T^{-1}) +
\frac{1}{T^2}\sum_{sk}  \frac{1}{N^2}\sum_{ij}   \E\bar D_{\lambda i } ^{-1}
    \bar D_{\lambda j} ^{-1} f_k^2\E( x_{jk}  u_{is}  x_{ik}u_{js}|F)\Cov (e_{jt}, e_{it} |F)\cr
    &\leq & O_P(C_{NT}^{-2}).
    \end{eqnarray*}

Step 8:  since $u_{it}$ is conditionally serially independent given $E, F $, 
   \begin{eqnarray*} 
&&\E(\frac{1}{\sqrt{N}}\sum_{i}   e_{it}   \bar D_{\lambda i } ^{-1}\frac{1}{T} F'\diag(X_i) M_gu_i )^2\cr
&=&\E \frac{1}{ N}\sum_{ij} \sum_{s}   \bar D_{\lambda j} ^{-1} \bar D_{\lambda i } ^{-1}\frac{1}{T} (F'\diag(X_i) M_{g,s})^2
    e_{jt}  e_{it}  \frac{1}{T}  u_{js}u_{is} \cr
    &\leq&\frac{C}{T^2 N}\sum_{ij} \sum_{s}       \E u_{js}u_{is}  \E ((F'\diag(X_i) M_{g,s})^2    e_{jt}  e_{it} |U)\cr
     &\leq&\frac{C}{T^2 N}\sum_{ij} \sum_{s}    |   \Cov( u_{js}, u_{is} ) |  =O(T^{-1})=O_P(C_{NT}^{-2}).
       \end{eqnarray*}

Together, $\frac{1}{\sqrt{N}}\sum_{i}  (H_1' \lambda_i-\dot\lambda_i)e_{it}=O_P(\sqrt{T}C_{NT}^{-2}) .$

(ii) The proof is the same as that of part (i), by substituting in the expansion of $H_1' \lambda_i-\dot\lambda_i$, hence we ignore it for brevity. 

\end{proof}

    \begin{lem}\label{lc.2}   
    
  For any bounded determinisitic sequence $c_i,$\\
  $
  \frac{1}{|\mathcal G|_0}\sum_{i\in\mathcal G} \|\frac{1}{T}\sum_{s\notin I}  f_sw_s'e_{is} c_{i }'(\widetilde f_s-H_1^{-1}f_s )\|^2  = O_P(C_{NT}^{-4}).
  $
     \end{lem}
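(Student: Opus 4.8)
The plan is to substitute into the target the expansion of $\widetilde f_s-H_1^{-1}f_s$ obtained in Step 5 of the proof of Lemma \ref{ld.5},
$$
\widetilde f_s-H_1^{-1}f_s=a_s+b_s+d_s,\qquad a_s=\widetilde D_{fs}^{-1}\tfrac1N\widetilde\Lambda'\diag(X_s)M_{\widetilde\alpha}(AH_2-\widetilde A)H_2^{-1}g_s,
$$
with $b_s=\widetilde D_{fs}^{-1}\tfrac1N\widetilde\Lambda'\diag(X_s)M_{\widetilde\alpha}\diag(X_s)(\Lambda H_1-\widetilde\Lambda)H_1^{-1}f_s$ and $d_s=\widetilde D_{fs}^{-1}\tfrac1N\widetilde\Lambda'\diag(X_s)M_{\widetilde\alpha}u_s$, and then to bound the contribution of each of the three pieces to $\frac1{|\mathcal G|_0}\sum_{i\in\mathcal G}\|\frac1T\sum_{s\notin I}f_sw_s'e_{is}c_i'(\widetilde f_s-H_1^{-1}f_s)\|^2$ separately. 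The essential point is that a direct Cauchy--Schwarz bound, $\le\|c\|^2\,\frac1T\sum_s\|f_sw_s'\|^2e_{is}^2\cdot\frac1T\sum_s\|\widetilde f_s-H_1^{-1}f_s\|^2$, only delivers $O_P(C_{NT}^{-2})$; the extra factor $C_{NT}^{-2}$ must be extracted from the mean-zero, conditionally serially independent structure of the weights $e_{is}$ and of $u_{is}$ (Assumption \ref{a4.1}). The proof is therefore organized around conditional second-moment computations rather than norm bounds.

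First I would reduce to deterministic Hessians and population loadings. Using Lemma \ref{lc.1}(ii) I replace $\widetilde D_{fs}^{-1}$ by the matrix $(H_1'\bar D_{fs}H_1)^{-1}$, which is nonrandom given $\mathcal W=(F,G,W)$, and using Proposition \ref{p3.1}(ii) I replace $\widetilde\Lambda$ and $M_{\widetilde\alpha}$ by $\Lambda H_1$ and $M_\alpha$; in each case the replacement error is controlled in the averaged-Frobenius sense and shown to be $O_P(C_{NT}^{-4})$, which is harmless. For the leading term $d_s$, after these replacements the relevant quantity is essentially $\frac1T\sum_{s\notin I}f_sw_s'e_{is}(\text{det.})\,\frac1N\sum_j\lambda_j x_{js}u_{js}$. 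Conditioning on $(\mathcal W,E)$ and using that $u_{js}$ is conditionally serially independent and cross-independent of $E$, only the diagonal-in-$s$ cross terms survive, and the double cross-sectional sum over $(i,j)$ is tamed by the weak-dependence bounds in Assumption \ref{a4.1}(iv); this yields a bound of order $\frac1{TN}=O(C_{NT}^{-4})$, where I use $\frac1{TN}\le\max\{N^{-2},T^{-2}\}=C_{NT}^{-4}$.

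For the loading-error terms $a_s,b_s$, I would exploit sample splitting: the discrepancies $\widetilde A-AH_2$ and $\widetilde\Lambda-\Lambda H_1$ are $D_I$-measurable and hence independent of $\{e_{is},u_{is}:s\notin I\}$. Conditioning on $D_I$ freezes them, so the conditional serial independence of $e_{is}$ across $s\notin I$ kills the cross-$s$ terms and leaves a second moment that I bound using $\frac1N\|\widetilde A-AH_2\|_F^2=O_P(C_{NT}^{-2})$ and $\frac1N\|\widetilde\Lambda-\Lambda H_1\|_F^2=O_P(C_{NT}^{-2})$ (one factor $C_{NT}^{-2}$) together with the $s$- and $i$-averaging of the mean-zero weights (a second factor $C_{NT}^{-2}$), the moment bounds of Assumption \ref{a3.8}, and the cross-sectional bounds of Assumption \ref{a4.1}. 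Throughout I must carry the decomposition $x_{js}=l_j'w_s+e_{js}$ and verify term by term that every resulting cross-moment is $O_P(C_{NT}^{-4})$.

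\textbf{Main obstacle.} The chief difficulty is that $\widetilde f_s-H_1^{-1}f_s$ depends on the time-$s$ regressors $X_s$ (through $\diag(X_s)$ and $\widetilde D_{fs}$) and on $u_s$, so it is \emph{correlated} with the weight $e_{is}$; the error cannot be treated as independent of the quantity multiplying it. Resolving this requires expanding $x_{js}$ and checking, using the conditional serial and cross-sectional independence structure, that the correlated pieces do not spoil the $C_{NT}^{-4}$ rate, which is where essentially all of the bookkeeping lies. Fortunately, most of the needed sub-bounds are already established in the proof of Lemma \ref{ld.5} and in Lemmas \ref{lc.3}(i) and \ref{ld.2a}(iii), so the argument reduces to assembling these pieces with the conditioning on $D_I$ and on $(\mathcal W,E)$ applied at the appropriate stages.
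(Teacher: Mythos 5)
Your proposal follows essentially the same route as the paper's proof: you substitute the identical three-term expansion of $\widetilde f_s-H_1^{-1}f_s$ from (\ref{ec.10}), reduce $\widetilde D_{fs}^{-1}$ to the deterministic $(H_1'\bar D_{fs}H_1)^{-1}$ via Lemma \ref{lc.1}, handle the $u_s$ term by a conditional second-moment computation, and handle the loading-error terms by conditioning on $D_I$ so that the serial independence of $e_{is}$ for $s\notin I$ supplies the extra factor of $C_{NT}^{-2}$ beyond what Cauchy--Schwarz gives. The decomposition, the role of sample splitting, and the identification of the correlation between $\widetilde f_s-H_1^{-1}f_s$ and $e_{is}$ through $X_s$ as the main bookkeeping burden all match the paper's argument.
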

     
     \begin{proof}
  For $y_s= Ag_s+\diag(X_s) \Lambda f_s+u_s$,
\begin{eqnarray}\label{ec.10}
\widetilde f_s&=& \widetilde D_{fs}^{-1} \frac{1}{N}\widetilde\Lambda'\diag(X_s) M_{\widetilde\alpha}y_s\cr
&=&  H_1^{-1} f_s+ \widetilde D_{fs}^{-1} \frac{1}{N}\widetilde\Lambda'\diag(X_s) M_{\widetilde\alpha}(AH_2-\widetilde A)H_2^{-1}g_s
\cr
&& 
+ \widetilde D_{fs}^{-1} \frac{1}{N}\widetilde\Lambda'\diag(X_s) M_{\widetilde\alpha}\diag(X_s)(\Lambda H_1-\widetilde \Lambda)H_1^{-1} f_s \cr
&&+ \widetilde D_{fs}^{-1} \frac{1}{N}\widetilde\Lambda'\diag(X_s) M_{\widetilde\alpha} u_s  .
\end{eqnarray}
 Without loss of generality, we assume $\dim(g_s)=\dim(f_s)=1$, as we can always work with their elements given that the number of factors is fixed. It suffices to prove, for $h_s:=f_sw_s$, 
$$
 \frac{1}{|\mathcal G|_0}\sum_{i\in\mathcal G} \|\frac{1}{T}\sum_{s\notin I}  h_se_{is}  (\widetilde f_s-H_1^{-1}f_s )\|^2  = O_P(C_{NT}^{-4}). 
$$
We  plug in  each term in the   expansion of $\widetilde f_s$:
\begin{eqnarray*}
&& \frac{1}{|\mathcal G|_0}\sum_{i\in\mathcal G} \|\frac{1}{T}\sum_{s\notin I}  h_se_{is}  \widetilde D_{fs}^{-1} \frac{1}{N}\widetilde\Lambda'\diag(X_s) M_{\widetilde\alpha}(AH_2-\widetilde A)H_2^{-1}g_s \|^2  \cr
&&+  \frac{1}{|\mathcal G|_0}\sum_{i\in\mathcal G} \|\frac{1}{T}\sum_{s\notin I}  h_se_{is}  \widetilde D_{fs}^{-1} \frac{1}{N}\widetilde\Lambda'\diag(X_s) M_{\widetilde\alpha}\diag(X_s)(\Lambda H_1-\widetilde \Lambda)H_1^{-1} f_s   \|^2\cr
&&+  \frac{1}{|\mathcal G|_0}\sum_{i\in\mathcal G} \|\frac{1}{T}\sum_{s\notin I}  h_se_{is}   \widetilde D_{fs}^{-1} \frac{1}{N}\widetilde\Lambda'\diag(X_s) M_{\widetilde\alpha} u_s   \|^2.
\end{eqnarray*}
     
     First term:  $ \frac{1}{|\mathcal G|_0}\sum_{i\in\mathcal G} \|\frac{1}{T}\sum_{s\notin I}  h_se_{is}  \widetilde D_{fs}^{-1} \frac{1}{N}\widetilde\Lambda'\diag(X_s) M_{\widetilde\alpha}(AH_2-\widetilde A)H_2^{-1}g_s \|^2$.  By Lemma \ref{lc.1},
     \begin{eqnarray*}
&&  \frac{1}{|\mathcal G|_0}\sum_{i\in\mathcal G} \|\frac{1}{T}\sum_{s\notin I}  h_se_{is}  (\widetilde D_{fs}^{-1}- (H_1'\bar D_{fs}H_1)^{-1}) \frac{1}{N}\widetilde\Lambda'\diag(X_s) M_{\widetilde\alpha}(AH_2-\widetilde A)H_2^{-1}g_s \|^2
\cr
&\leq&O_P( C_{NT}^{-4})   \frac{1}{|\mathcal G|_0}\sum_{i\in\mathcal G} 
\frac{1}{T}\sum_{s} \|   \frac{1}{\sqrt{N}}\widetilde\Lambda'\diag(X_s)   \|^2\|g_s\|^2\|h_se_{is}\|^2\cr
&\leq&O_P( C_{NT}^{-4})   \frac{1}{|\mathcal G|_0}\sum_{i\in\mathcal G} 
\frac{1}{T}\sum_{s} \|   \frac{1}{\sqrt{N}}\widetilde\Lambda'\diag(X_s)   \|^2\|g_s\|^2\|h_se_{is}\|^2=O_P( C_{NT}^{-4}) .\cr
&&  \frac{1}{|\mathcal G|_0}\sum_{i\in\mathcal G} \|\frac{1}{T}\sum_{s\notin I}  h_se_{is}   (H_1'\bar D_{fs}H_1)^{-1} \frac{1}{N}\widetilde\Lambda'\diag(X_s) M_{\widetilde\alpha}(AH_2-\widetilde A)H_2^{-1}g_s \|^2
\cr
 &\leq&O_P(C_{NT}^{-2})\frac{1}{|\mathcal G|_0}\sum_{i\in\mathcal G} \|\frac{1}{T\sqrt{N}}\sum_{s\notin I} h_sg_se_{is}(H_1'\bar D_{fs}H_1)^{-1} \widetilde\Lambda'\diag(X_s)\| ^2\cr
    &\leq&O_P(C_{NT}^{-2})\frac{1}{|\mathcal G|_0}\sum_{i\in\mathcal G}  \frac{1}{N}\sum_j\widetilde\lambda_j^2[(\frac{1}{T}\sum_{s\notin I} \bar D_{fs}^{-1} h_sg_se_{is}e_{js})^2+(\frac{1}{T}\sum_{s\notin I} \bar D_{fs}^{-1} h_sg_se_{is}w_{s})^2].
\end{eqnarray*}
     Because $e_{is}$ is serially independent conditionally on $(W, F, G)$,  s the above is 
         \begin{eqnarray*}
    &\leq&O_P(C_{NT}^{-2})\frac{1}{|\mathcal G|_0}\sum_{i\in\mathcal G}[  \E_I   (\frac{1}{T}\sum_{s\notin I} \bar D_{fs}^{-1} h_sg_se_{is}e_{js})^2+  \E_I   (\frac{1}{T}\sum_{s\notin I} \bar D_{fs}^{-1} h_sg_se_{is}w_{s})^2]\cr
    &=&O_P(C_{NT}^{-4}).
\end{eqnarray*}
     Put together the first term is $O_P(C_{NT}^{-4}).$
     
     Second term: $  \frac{1}{|\mathcal G|_0}\sum_{i\in\mathcal G} \|\frac{1}{T}\sum_{s\notin I}  h_se_{is}  \widetilde D_{fs}^{-1} \frac{1}{N}\widetilde\Lambda'\diag(X_s) M_{\widetilde\alpha}\diag(X_s)(\Lambda H_1-\widetilde \Lambda)H_1^{-1} f_s   \|^2$.
              \begin{eqnarray*}
     && \frac{1}{|\mathcal G|_0}\sum_{i\in\mathcal G} \|\frac{1}{T}\sum_{s\notin I}  h_se_{is} ( \widetilde D_{fs}^{-1}- (H_1'\bar D_{fs}H_1)^{-1}) \frac{1}{N}\widetilde\Lambda'\diag(X_s) M_{\widetilde\alpha}\diag(X_s)(\Lambda H_1-\widetilde \Lambda)H_1^{-1} f_s   \|^2\cr
     &\leq&O_P( C_{NT}^{-2})  \frac{1}{|\mathcal G|_0}\sum_{i\in\mathcal G} \frac{1}{T}\sum_{s} \|   \frac{1}{\sqrt{N}}\widetilde\Lambda'\diag(X_s) M_{\widetilde\alpha}\diag(X_s)  \|^2\|h_s\|^2\|f_se_{is}\|^2 = O_P( C_{NT}^{-2}) .
\end{eqnarray*}
     The same proof leads to 
         \begin{eqnarray*}
     && \frac{1}{|\mathcal G|_0}\sum_{i\in\mathcal G} \|\frac{1}{T}\sum_{s\notin I}  h_se_{is}  (H_1'\bar D_{fs}H_1)^{-1} \frac{1}{N}\widetilde\Lambda'\diag(X_s) (M_{\widetilde\alpha}- M_\alpha)\diag(X_s)(\Lambda H_1-\widetilde \Lambda)H_1^{-1} f_s   \|^2\cr
     &\leq&  O_P( C_{NT}^{-2}) .
\end{eqnarray*}
 Now let  $M_{\widetilde\alpha, ij}$   be  the $(i,j)$ th component of $M_{\widetilde\alpha}$.
Let $z_{js}=  \sum_k\widetilde\lambda_k  M_{ \alpha,kj} x_{ks}$, 
       \begin{eqnarray*}
     && \frac{1}{|\mathcal G|_0}\sum_{i\in\mathcal G} \|\frac{1}{T}\sum_{s\notin I}  h_se_{is}  (H_1'\bar D_{fs}H_1)^{-1} \frac{1}{N}\widetilde\Lambda'\diag(X_s)  M_\alpha\diag(X_s)(\Lambda H_1-\widetilde \Lambda)H_1^{-1} f_s   \|^2\cr
     &\leq&  O_P( C^{-2}_{NT})  \frac{1}{|\mathcal G|_0}\sum_{i\in\mathcal G}\|
     \frac{1}{{T}}\sum_{s} f_sh_se_{is}  \bar D_{fs} ^{-1}\frac{1}{\sqrt{N}}\widetilde\Lambda'\diag(X_s) M_{ \alpha}\diag(X_s)  \|^2\cr
      &\leq&  O_P( C^{-2}_{NT})   \frac{1}{|\mathcal G|_0}\sum_{i\in\mathcal G}\frac{1}{N}  \sum_j(
\frac{1}{{T}}\sum_{s} f_sh_s \bar D_{fs} ^{-1}e_{is}  z_{js}x_{js})^2  \cr
 &\leq&  O_P( C^{-2}_{NT})   \frac{1}{|\mathcal G|_0}\sum_{i\in\mathcal G}\frac{1}{N}  \sum_j      (
\frac{1}{{T}}\sum_{s} \E_I(f_sh_s\bar D_{fs}^{-1}   e_{is}x_{js}^2)^2( \widetilde\lambda_j  M_{ \alpha,jj} )^2    \cr
 &&+  O_P( C^{-2}_{NT})   \frac{1}{|\mathcal G|_0}\sum_{i\in\mathcal G}\frac{1}{N}  \sum_j   \frac{1}{N} \sum_{k\neq j} (  
\frac{1}{{T}}\sum_{s} \E_If_sh_s\bar D_{fs}^{-1}x_{ks}e_{is}x_{js})^2\cr
 &&+
  O_P( C^{-2}_{NT})   \frac{1}{|\mathcal G|_0}\sum_{i\in\mathcal G}\frac{1}{N}  \sum_j
  \Var_I(
\frac{1}{{T}}\sum_{s} f_sh_s\bar D_{fs}^{-1}e_{is} z_{js}x_{js})  \cr
&\leq&
O_P( C^{-2}_{NT})   \frac{1}{|\mathcal G|_0}\sum_{i\in\mathcal G}\frac{1}{N}  \sum_j (
\frac{1}{{T}}\sum_{s} \E_I(f_sh_s\bar D_{fs}^{-1}   e_{is}x_{js}^2)^2( \widetilde\lambda_j  -H_1'\lambda_j )^2
\cr
&&+ O_P( C^{-2}_{NT})   \frac{1}{|\mathcal G|_0}\sum_{i\in\mathcal G}\frac{1}{N}  \sum_j (
\frac{1}{{T}}\sum_{s} \E_If_sh_s\bar D_{fs}^{-1}   e_{is}e_{js}^2)^2 \cr
&&+ O_P( C^{-2}_{NT})   \frac{1}{|\mathcal G|_0}\sum_{i\in\mathcal G}\frac{1}{N}  \sum_j (
\frac{1}{{T}}\sum_{s} \E_If_sh_s\bar D_{fs}^{-1}   e_{is}l_jw_{s}e_{js})^2 \cr
&&+O_P( C^{-2}_{NT})   \frac{1}{|\mathcal G|_0}\sum_{i\in\mathcal G}\frac{1}{N}  \sum_j  \frac{1}{N} \sum_{k\neq j} (  
\frac{1}{{T}}\sum_{s} \E_If_sh_s\bar D_{fs}^{-1}e_{ks}e_{is}e_{js})^2
\cr
&&+O_P( C^{-2}_{NT})   \frac{1}{|\mathcal G|_0}\sum_{i\in\mathcal G}\frac{1}{N}  \sum_j   \frac{1}{N} \sum_{k\neq j} (  
\frac{1}{{T}}\sum_{s} \E_If_sh_s\bar D_{fs}^{-1}e_{ks}e_{is}l_jw_{s})^2\cr
&&+O_P( C^{-2}_{NT})   \frac{1}{|\mathcal G|_0}\sum_{i\in\mathcal G}\frac{1}{N}  \sum_j  \frac{1}{N} \sum_{k\neq j} (  
\frac{1}{{T}}\sum_{s} \E_If_sh_s\bar D_{fs}^{-1}l_kw_{s}e_{is}e_{js})^2+O_P( C^{-4}_{NT}) =O_P( C^{-4}_{NT}) 
\end{eqnarray*}
 given that  $\sum_j|  \E_I(e_{is}e_{js}|f_s, w_s)|+ \frac{1}{N} \sum_{k\neq j}  |\E_I(e_{ks}e_{is}e_{js}|f_s, w_s)|<\infty$.
  Put together,  the second term is $O_P(C_{NT}^{-4}).$
     
     Third term: $ \frac{1}{|\mathcal G|_0}\sum_{i\in\mathcal G} \|\frac{1}{T}\sum_{s\notin I}  h_se_{is}   \widetilde D_{fs}^{-1} \frac{1}{N}\widetilde\Lambda'\diag(X_s) M_{\widetilde\alpha} u_s   \|^2$.
     \begin{eqnarray*}
  && \frac{1}{|\mathcal G|_0}\sum_{i\in\mathcal G} \| \frac{1}{ T}\sum_{s\notin I } h_se_{is}( \widetilde D_{fs}^{-1} -(H_1'\bar D_{fs}H_1)^{-1})\frac{1}{N}\widetilde\Lambda'\diag(X_s) M_{\widetilde\alpha} u_s\|^2\cr
  &\leq& O_P( C_{NT}^{-2})\frac{1}{|\mathcal G|_0}\sum_{i\in\mathcal G}   \frac{1}{T}\sum_{s\notin I }| h_se_{is}\frac{1}{N}\widetilde\Lambda'\diag(X_s) M_{\widetilde\alpha} u_s|^2     
\cr
  &\leq & O_P( C_{NT}^{-2})\frac{1}{|\mathcal G|_0}\sum_{i\in\mathcal G} \frac{1}{T}\sum_{s\notin I }\E_I| h_se_{is}\frac{1}{N}\widetilde\Lambda'\diag(X_s) M_{\widetilde\alpha} u_s|^2 \cr
   &=&O_P( C_{NT}^{-2})\frac{1}{|\mathcal G|_0}\sum_{i\in\mathcal G}\frac{1}{T}\sum_{s\notin I }\E_I h_s^2e_{is}^2\frac{1}{N^2}\widetilde\Lambda'\diag(X_s) M_{\widetilde\alpha} \E_I(u_su_s'| X_{s}, f_s)    M_{\widetilde\alpha} \diag(X_s)   \widetilde\Lambda \cr
   &\leq& O_P( C_{NT}^{-2}N^{-1})\frac{1}{|\mathcal G|_0}\sum_{i\in\mathcal G}  \frac{1}{T}\sum_{s\notin I }\E_I h_s^2e_{is}^2\frac{1}{N}\|\widetilde\Lambda'\diag(X_s)    \|^2 \cr
   &=&O_P( C_{NT}^{-2}N^{-1})=O_P( C_{NT}^{-4}).
                  \end{eqnarray*}

      Next, due to $\E(u_s|f_s, D_I, e_s)=0$, and $e_s$ is conditionally serially independent given $(f_s, u_s)$,
        \begin{eqnarray*}
  && \frac{1}{|\mathcal G|_0}\sum_{i\in\mathcal G} \| \frac{1}{{T}}\sum_{s\notin I } h_se_{is}(H_1'\bar D_{fs}H_1)^{-1}\frac{1}{N}\widetilde\Lambda'\diag(X_s) (M_{\widetilde\alpha} -M_\alpha)u_s\|^2\cr
        &=& \frac{1}{|\mathcal G|_0}\sum_{i\in\mathcal G}  [\tr \frac{1}{T}\sum_{s\notin I } u_sh_se_{is}(H_1'D_{x}H_1)^{-1}\frac{1}{N}\widetilde\Lambda'\diag(X_s) (M_{\widetilde\alpha}-M_\alpha) ]^2\cr
        &\leq&O_P( C_{NT}^{-2}) \frac{1}{|\mathcal G|_0}\sum_{i\in\mathcal G} \| \frac{1}{T}\sum_{s\notin I } u_sh_se_{is}(H_1'  \bar D_{fs}^{-1}  H_1)^{-1}\frac{1}{N}\widetilde\Lambda'\diag(X_s) \|_F^2  \cr
              &\leq&O_P(C_{NT}^{-2})\frac{1}{|\mathcal G|_0}\sum_{i\in\mathcal G}   \frac{1}{N}\sum_j \E_I\| \frac{1}{T}\sum_{s\notin I } \bar D_{fs}^{-1}u_sh_se_{is} \frac{1}{\sqrt{N}}x_{js} \|_F ^2\widetilde\lambda_j^2  \cr
                   &\leq&O_P( C_{NT}^{-2})\max_{ij} \E_I\E_I(\| \frac{1}{T}\sum_{s\notin I } \bar D_{fs}^{-1}u_sh_se_{is} \frac{1}{\sqrt{N}}x_{js} \|_F ^2 |X, F, W) \cr
       &\leq&O_P( C_{NT}^{-2})\max_{ijk}  \frac{1}{T}\Var_I(  \bar D_{fs}^{-1}u_{ks}h_se_{is}  x_{js} )    \leq O_P( C_{NT}^{-2}T^{-1})=O_P( C_{NT}^{-4}).
                     \end{eqnarray*}
                       Next, 
                           \begin{eqnarray*}
  &&  \frac{1}{|\mathcal G|_0}\sum_{i\in\mathcal G}  \|\frac{1}{{T}}\sum_{s\notin I } h_se_{is}(H_1'\bar D_{fs}H_1)^{-1}\frac{1}{N}(\widetilde\Lambda-\Lambda H_1)'\diag(X_s)  M_\alpha u_s\|^2 \cr
    &\leq&O_P(1)   \frac{1}{|\mathcal G|_0}\sum_{i\in\mathcal G}\|   \frac{1}{N}\sum_j(\widetilde\lambda_j-H_1'\lambda_j)\frac{1}{{T}}\sum_{s\notin I } \bar D_{fs}^{-1}h_se_{is}M_{\alpha, j} ' u_sx_{js}  \|^2\cr
        &\leq&O_P(C_{NT}^{-2})    \frac{1}{|\mathcal G|_0}\sum_{i\in\mathcal G}   \frac{1}{N}\sum_j (\frac{1}{{T}}\sum_{s\notin I } \bar D_{fs}^{-1}h_se_{is}M_{\alpha, j} ' u_sx_{js}  )^2 \cr
                &\leq&O_P(C_{NT}^{-2}T^{-1})  \frac{1}{|\mathcal G|_0}\sum_{i\in\mathcal G}   \frac{1}{N}\sum_j \frac{1}{T}\sum_{s\notin I }\E_I \bar D_{fs}^{-2}h_s^2x_{js} ^2e_{is}^2 M_{\alpha, j} ' \Var_I(u_s |e_s, w_s, f_s) M_{\alpha, j}  \cr
      &\leq &O_P(C_{NT}^{-2}T^{-1})   \frac{1}{|\mathcal G|_0}\sum_{i\in\mathcal G}   \frac{1}{N}\sum_j \frac{1}{T}\sum_{s\notin I }\E_I \bar D_{fs}^{-2}h_s^2x_{js} ^2e_{is}^2 \|M_{\alpha, j}\|^2                  \cr
      &=&O_P(C_{NT}^{-2}T^{-1})=O_P(C_{NT}^{-4} ) .
                     \end{eqnarray*}
                
      Finally, due to the conditional serial independence,
           \begin{eqnarray*}
  &&   \frac{1}{|\mathcal G|_0}\sum_{i\in\mathcal G} \|\frac{1}{{T}}\sum_{s\notin I } h_se_{is}(H_1'\bar D_{fs}H_1)^{-1}\frac{1}{N}H_1 '\Lambda'\diag(X_s)  M_\alpha u_s\|^2\cr
  &\leq&O_P(N^{-2}T^{-2}) \sum_{s\notin I }\E_I \bar D_{fs}^{-2}h_s^2e_{is}^2  \Lambda'\diag(X_s)  M_\alpha \E_I(u_s
  u_s'|E, G, F)M_\alpha \diag(X_s)\Lambda 
 \cr
   &\leq&O_P(N^{-2}T^{-1}) \E_I \bar D_{fs}^{-2}h_s^2e_{is}^2 \| \Lambda'\diag(X_s)  \| ^2 =O_P(N^{-1}T^{-1})= O_P( C_{NT}^{-4}) .
                       \end{eqnarray*}
  Put together,  the third term is $O_P(C_{NT}^{-4}).$
     This finishes the proof. 
     \end{proof}

  \subsection{Technical lemmas for $\widehat f_t$  }

\begin{lem}\label{ld.6}  Assume $\frac{1}{N}\sum_{ij} |\Cov(e_{it}^2, e_{jt}^2)|<C$. 
For each fixed $t$,\\  (i) $\widehat B_t-B=O_P(C_{NT}^{-1})$.\\
(ii) The upper  two blocks of  $\widehat B_t^{-1}\widehat S_t-B^{-1}S$ are both $O_P(C_{NT}^{-2})$.
\end{lem}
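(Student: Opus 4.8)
The plan is to prove both parts by inserting the decompositions $\widetilde\lambda_i = H_1'\lambda_i + \delta_i$, $\widetilde\alpha_i = H_2'\alpha_i + \gamma_i$, and $\widehat e_{it} = e_{it} + r_{it}$ into the blocks of $\widehat B_t$ and $\widehat S_t$, and then controlling the resulting cross-sectional averages using three inputs: (a) the Frobenius-norm rates $\frac1N\sum_i\|\delta_i\|^2 = O_P(C_{NT}^{-2})$ and $\frac1N\sum_i\|\gamma_i\|^2 = O_P(C_{NT}^{-2})$ from Proposition \ref{p3.1}; (b) the estimation-error bounds for $r_{it}=\widehat e_{it}-e_{it}$ collected in Lemma \ref{ld.1a}; and (c) the independence between $\{e_{it}:t\notin I\}$ and $D_I$ guaranteed by sample splitting, which lets me condition on $D_I$ and treat $\delta_i,\gamma_i$ as fixed against the mean-zero $e_{it}$.

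For part (i), I would treat the four blocks of $\widehat B_t - B$ separately. In the upper-left block the dominant term is the cross term $\frac1N\sum_i (H_1'\lambda_i)\delta_i' e_{it}^2$, which by Cauchy-Schwarz is at most $(\frac1N\sum_i\|\lambda_i\|^2 e_{it}^4)^{1/2}(\frac1N\sum_i\|\delta_i\|^2)^{1/2} = O_P(C_{NT}^{-1})$; the diagonal fluctuation $\frac1N\sum_i (H_1'\lambda_i)(H_1'\lambda_i)'(e_{it}^2 - \E e_{it}^2)$ is $O_P(N^{-1/2})$ by the weak dependence of $\{e_{it}^2\}$, and the $r_{it}$-terms are negligible by Lemma \ref{ld.1a}. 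The off-diagonal (upper-right) block $\frac1N\sum_i\widetilde\lambda_i\widehat e_{it}\widetilde\alpha_i'$ is $O_P(C_{NT}^{-1})$ using Lemma \ref{ld.1a}(iii) for the $r_{it}$-part and a variance bound on the mean-zero average $\frac1N\sum_i H_1'\lambda_i e_{it}\alpha_i'H_2$; the lower-right block is $O_P(C_{NT}^{-1})$ analogously via $\gamma_i$. Hence $\widehat B_t - B = O_P(C_{NT}^{-1})$, and combined with the lower bound on the eigenvalues of $B$ (via Assumption \ref{a3.8}(iii)) this yields $\widehat B_t^{-1}, B^{-1} = O_P(1)$ and $\widehat B_t^{-1} - B^{-1} = O_P(C_{NT}^{-1})$.

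For part (ii), write $\widehat B_t^{-1}\widehat S_t - B^{-1}S = \widehat B_t^{-1}(\widehat S_t - S) + (\widehat B_t^{-1} - B^{-1})S$. Since every entry of $S$ is an average of $\delta_i$ or $\gamma_i$ against bounded weights, $S = O_P(C_{NT}^{-1})$, so the second piece is $O_P(C_{NT}^{-2})$; as $\widehat B_t^{-1}=O_P(1)$ it then suffices to show the upper two blocks of $\widehat S_t - S$ are $O_P(C_{NT}^{-2})$. The key algebraic step is the identity $\lambda_i'H_1 e_{it} - \widetilde\lambda_i'\widehat e_{it} = -\lambda_i'H_1 r_{it} - \delta_i'e_{it} - \delta_i'r_{it}$. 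Substituting it together with $\widetilde\lambda_i = H_1'\lambda_i + \delta_i$ and $\widehat e_{it}=e_{it}+r_{it}$ into the upper-left block of $\widehat S_t$, the only term of order $C_{NT}^{-1}$ is $-\frac1N\sum_i H_1'\lambda_i\delta_i' e_{it}^2$, whose deterministic part $-\frac1N\sum_i H_1'\lambda_i\delta_i'\E e_{it}^2$ is exactly the upper-left block of $S$; all remaining terms carry a factor $r_{it}$ and are $O_P(C_{NT}^{-2})$ by Cauchy-Schwarz with Lemma \ref{ld.1a}. The upper-right block ($S=0$ there) is treated the same way: each surviving term pairs either $\gamma_i$ with a mean-zero $e_{it}$ or an $r_{it}$ with $\gamma_i$, giving $O_P(C_{NT}^{-2})$.

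The main obstacle is the residual fluctuation $-\frac1N\sum_i H_1'\lambda_i\delta_i'(e_{it}^2 - \E e_{it}^2)$ left over in the upper-left block after the deterministic cross term has been cancelled by $S$, because on its own that cross term is only $O_P(C_{NT}^{-1})$. Here I would exploit sample splitting: since $t\notin I$, the quantities $\{e_{it}^2 - \E e_{it}^2\}_i$ are mean zero while $\delta_i$ and $\lambda_i$ are $D_I$-measurable, so the term has conditional mean zero given $D_I$, and its conditional variance is bounded by $\frac{C}{N^2}\sum_{ij}\|\delta_i\|\|\delta_j\|\,|\Cov(e_{it}^2, e_{jt}^2)| \lesssim \frac{C}{N}\cdot\frac1N\sum_i\|\delta_i\|^2$, using $\|\delta_i\|\|\delta_j\|\le\tfrac12(\|\delta_i\|^2+\|\delta_j\|^2)$ together with the weak cross-sectional dependence of $\{e_{it}^2\}$ from Assumption \ref{a4.1}(iv). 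This gives $O_P(N^{-1}C_{NT}^{-2})$, so the fluctuation is $O_P(N^{-1/2}C_{NT}^{-1}) = O_P(C_{NT}^{-2})$. The extra $N^{-1/2}$ gained by conditioning — which converts the non-negligible cross term into a negligible one once its conditional mean has been removed by the design of $S$ — is exactly the mechanism that makes the debiasing succeed, so verifying this cancellation and the accompanying conditional-variance bound is where the real content lies.
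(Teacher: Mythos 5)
Your overall route is the same as the paper's: the same insertion of $\widetilde\lambda_i = H_1'\lambda_i+\delta_i$, $\widetilde\alpha_i=H_2'\alpha_i+\gamma_i$, $\widehat e_{it}=e_{it}+r_{it}$; the same use of Lemma \ref{ld.1a} to kill the $r_{it}$-terms; and the same conditional-mean-zero, conditional-variance argument (sample splitting plus the covariance bound on $\{e_{it}^2\}$) to upgrade the residual fluctuation $\frac1N\sum_i H_1'\lambda_i\delta_i'(e_{it}^2-\E e_{it}^2)$ from $O_P(C_{NT}^{-1})$ to $O_P(N^{-1/2}C_{NT}^{-1})=O_P(C_{NT}^{-2})$. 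That is exactly where the paper puts the work, and your identification of the cancellation against the upper-left block of $S$ is correct.

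There is, however, one step that fails as written. In part (ii) you claim that, since $\widehat B_t^{-1}=O_P(1)$, it suffices to show the upper two blocks of $\widehat S_t-S$ are $O_P(C_{NT}^{-2})$. But $\widehat B_t^{-1}$ is not block diagonal, so the upper blocks of $\widehat B_t^{-1}(\widehat S_t-S)$ also pick up the \emph{lower-left} block of $\widehat S_t-S$ multiplied by the off-diagonal block of $\widehat B_t^{-1}$. That lower-left block contains $\frac1N\sum_i H_2'\alpha_i\lambda_i'H_1(e_{it}-\widehat e_{it})$, which by Lemma \ref{ld.1a}(iii) is only $O_P(C_{NT}^{-1})$ (the dominant contribution being $\eta_t=\frac1N\sum_i l_ie_{it}$), so multiplying it by an $O_P(1)$ matrix does not give $O_P(C_{NT}^{-2})$. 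The repair is the decomposition the paper uses: write $\widehat B_t^{-1}(\widehat S_t-S)=B^{-1}(\widehat S_t-S)+(\widehat B_t^{-1}-B^{-1})(\widehat S_t-S)$; because $B^{-1}$ is block diagonal, the upper blocks of the first term see only the upper blocks of $\widehat S_t-S$, and the second term is a product of two $O_P(C_{NT}^{-1})$ factors. You already have both ingredients from your part (i), so the gap is easily closed, but the sufficiency claim as stated is not valid.
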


\begin{proof} Throughout the proof, we assume $\dim(\alpha_i)=\dim(\lambda_i)=1$ without loss of generality. 

(i) $\widehat B_t-B= b_1+b_2$, where 
\begin{eqnarray*}
b_1&=&\frac{1}{N}\sum_i\begin{pmatrix}
 \widetilde\lambda_i  \widetilde\lambda_i ' \widehat e_{it}^2 -H_1'\lambda_i\lambda_i'H_1e_{it}^2&  \widetilde\lambda_i \widetilde\alpha_i' \widehat e_{it} - H_1'\lambda_i\alpha_iH_2'e_{it}   \\
  \widetilde\alpha_i \widetilde\lambda_i'\widehat e_{it} - H_2'\alpha_i\lambda_iH_1'e_{it}&  \widetilde\alpha_i \widetilde\alpha_i'-H_2'\alpha_i\alpha_i'H_2
\end{pmatrix}\cr
 b_2 &=&
 \frac{1}{N}\sum_i\begin{pmatrix}
 H_1'\lambda_i\lambda_i'H_1(e_{it}^2-\E e_{it}^2)&   H_1'\lambda_i\alpha_iH_2'e_{it}\\
  H_2'\alpha_i\lambda_iH_1'e_{it}&  0\end{pmatrix}.
\end{eqnarray*}
To prove the convergence of $b_1$, first note that 
\begin{eqnarray}\label{ed.17}
&&  
  \frac{1}{N}\sum_i\widetilde\lambda_i  \widetilde\lambda_i ' (\widehat e_{it}^2-e_{it}^2)
  =  \frac{1}{N}\sum_i\widetilde\lambda_i  \widetilde\lambda_i ' (\widehat e_{it}-e_{it}) ^2
  + \frac{2}{N}\sum_i\widetilde\lambda_i  \widetilde\lambda_i ' (\widehat e_{it}-e_{it})e_{it}\cr
  &\leq& 
   \frac{1}{N}\sum_i(\widetilde\lambda_i  -H_1'\lambda_i)(\widetilde\lambda_i-H_1'\lambda_i) ' (\widehat e_{it}-e_{it}) ^2
   +  \frac{1}{N}\sum_i(\widetilde\lambda_i  -H_1'\lambda_i) \lambda_i ' H_1'(\widehat e_{it}-e_{it}) ^2  \cr
   &&
   + O_P(1)\frac{1}{N}\sum_i (\widehat e_{it}-e_{it}) ^2  
   +\frac{2}{N}\sum_i    (\widetilde\lambda_i  -H_1'\lambda_i)(\widetilde\lambda_i-H_1'\lambda_i) '    (\widehat e_{it}-e_{it})e_{it}\cr
   &&   +\frac{2}{N}\sum_i    (\widetilde\lambda_i  -H_1'\lambda_i) \lambda_i'H_1'    (\widehat e_{it}-e_{it})e_{it}
   +O_P(1)\frac{1}{N}\sum_i    \lambda_i  \lambda_i'  (\widehat e_{it}-e_{it})e_{it}\cr
   &\leq& O_P(C_{NT}^{-2}) \max_{it} |\widehat e_{it}-e_{it}|\max_{it}|e_{it}|
 + O_P(C_{NT}^{-1}) (\frac{1}{N}\sum_i (\widehat e_{it}-e_{it}) ^4)^{1/2}\cr
 &&  + O_P(1)\frac{1}{N}\sum_i (\widehat e_{it}-e_{it}) ^2  
 + O_P(C_{NT}^{-1}) (\frac{1}{N}\sum_i (\widehat e_{it}-e_{it}) ^2e_{it}^2)^{1/2}\cr
 &&  +O_P(1)\frac{1}{N}\sum_i    \lambda_i  \lambda_i'  (\widehat e_{it}-e_{it})e_{it}=O_P(C_{NT}^{-2}), 
\end{eqnarray}
by Lemma \ref{ld.1a}.
In addition, still by Lemma \ref{ld.1a},
\begin{eqnarray}
&&\frac{1}{N}\sum_i\widetilde\lambda_i \widetilde\alpha_i' (\widehat e_{it} -e_{it})
=\frac{1}{N}\sum_i(\widetilde\lambda_i-H_1'\lambda_i) (\widetilde\alpha_i-H_2'\alpha_i)' (\widehat e_{it} -e_{it})\cr
&&+\frac{1}{N}\sum_i(\widetilde\lambda_i-H_1'\lambda_i)  \alpha_i' H_2(\widehat e_{it} -e_{it})
+O_P(1)\frac{1}{N}\sum_i\lambda_i \alpha_i' (\widehat e_{it} -e_{it})\cr
&\leq& O_P(C_{NT}^{-2}) \max_{it} |\widehat e_{it}-e_{it}|
+O_P(C_{NT}^{-1}) (\frac{1}{N}\sum_i (\widehat e_{it}-e_{it}) ^2)^{1/2}\cr
&&+O_P(1)\frac{1}{N}\sum_i\lambda_i \alpha_i' (\widehat e_{it} -e_{it})=O_P(C_{NT}^{-2}).
\end{eqnarray}

 So the first term of $b_1$ is, due to the serial independence of $e_{it}^2$, 
\begin{eqnarray*}
&& \frac{1}{N}\sum_i\widetilde\lambda_i  \widetilde\lambda_i ' \widehat e_{it}^2 -H_1'\lambda_i\lambda_i'H_1e_{it}^2\cr
&\leq &
  \frac{1}{N}\sum_i\widetilde\lambda_i  \widetilde\lambda_i ' (\widehat e_{it}^2-e_{it}^2)
  + \frac{1}{N}\sum_i(\widetilde\lambda_i  \widetilde\lambda_i ' - H_1'\lambda_i\lambda_i'H_1) e_{it}^2 
\cr
&\leq& O_P(C_{NT}^{-2})+ O_P(1) \frac{1}{N}\sum_i\|\widetilde\lambda_i  \widetilde\lambda_i ' - H_1'\lambda_i\lambda_i'H_1\|_F \E_Ie_{it}^2 \cr
&\leq& O_P(C_{NT}^{-2})+ O_P(1)\frac{1}{N}\sum_i\|\widetilde\lambda_i  \widetilde\lambda_i ' - H_1'\lambda_i\lambda_i'H_1\|_F =O_P(C_{NT}^{-1}).
\end{eqnarray*}
The second term is,  
 \begin{eqnarray*}
&&   \frac{1}{N}\sum_i\widetilde\lambda_i \widetilde\alpha_i' \widehat e_{it} - H_1'\lambda_i\alpha_iH_2'e_{it} \cr
&\leq& O_P(C_{NT}^{-1})+ O_P(1)\frac{1}{N}\sum_i \|\widetilde\lambda_i \widetilde\alpha_i'   - H_1'\lambda_i\alpha_iH_2' \|_F\E_I|e_{it}|\leq O_P(C_{NT}^{-1}).
\end{eqnarray*}
The third term of $b_1$ is bounded similarly.  The last term of $b_1$ is easy to show to be $O_P(C_{NT}^{-1}).$ 
As for $b_2$, by the assumption that $\frac{1}{N}\sum_{ij} |\Cov(e_{it}^2, e_{jt}^2)|<C$, thus $b_2=O_P(N^{-1/2})$. Hence $\widehat B_t-B=O_P(C_{NT}^{-1}).$

(ii) We have \begin{eqnarray*}
  B_t^{-1}\widehat S_t-  B^{-1} S&=&(B_t^{-1}-  B^{-1}) (\widehat S_t-S)
  +(B_t^{-1}-  B^{-1})  S
  +B^{-1} (\widehat S_t-S).
  \end{eqnarray*}

 We first bound the four blocks of  $\widehat S_t-S$. 
  We have
  $  \widehat S_t-S=c_t+d_t$,
 \begin{eqnarray*}
c_t& =& \frac{1}{N}\sum_i
\begin{pmatrix}
\widetilde \lambda_i   \lambda_i'H_1e_{it}(\widehat e_{it}-e_{it})   +  \widetilde \lambda_i   \widetilde\lambda_i'(\widehat e_{it}^2-e_{it}^2) & 
  \widetilde \lambda_i  (\widehat e_{it} -e_{it})( \alpha_i'H_2-\widetilde\alpha_i')\\
 \widetilde\alpha_i \lambda_i'H_1(e_{it}-\widehat e_{it})
  +\widetilde\alpha_i (\lambda_i'H_1 -\widetilde\lambda_i')(\widehat e_{it} -e_{it})& 
0
\end{pmatrix}\cr
d_t& =& \frac{1}{N}\sum_i
\begin{pmatrix}
( \widetilde \lambda_i   \lambda_i'H_1-  \widetilde \lambda_i   \widetilde\lambda_i'  )e_{it}^2
- H_1'\lambda_i ( \lambda_i'H_1-  \widetilde\lambda_i'  )\E e_{it}^2
 & 
  \widetilde \lambda_i   e_{it} ( \alpha_i'H_2-\widetilde\alpha_i')\\
\widetilde\alpha_i (\lambda_i'H_1 -\widetilde\lambda_i')e_{it} & 
0
\end{pmatrix}.
\end{eqnarray*}
Call each block of $c_t$ to be $c_{t,1}...c_{t,4}$ in the clockwise order. Note that $c_{t,4}=0$.
 
 As for $c_{t,1}$,  it follows from Lemma \ref{ld.1a} that 
\begin{eqnarray*}
&& \frac{1}{N}\sum_i\widetilde \lambda_i   \lambda_i'H_1e_{it}(\widehat e_{it}-e_{it}) \leq(\frac{1}{N}\sum_ie_{it}^2(\widehat e_{it}-e_{it})^2)^{1/2}O_P(C_{NT}^{-1})
\cr
&&+O_P(1)\frac{1}{N}\sum_i \lambda_i   \lambda_ie_{it}(\widehat e_{it}-e_{it}) \leq  O_P(C_{NT}^{-2}).
\end{eqnarray*} We have also shown $\frac{1}{N}\sum_i\widetilde\lambda_i  \widetilde\lambda_i ' (\widehat e_{it}^2-e_{it}^2)=    O_P(C_{NT}^{-2}).    $ Thus $c_{t,1}=O_P(C_{NT}^{-2})$.

 For $c_{t,2}$,  from Lemma \ref{ld.1a},
 \begin{eqnarray*}
&&   \frac{1}{N}\sum_i \widetilde \lambda_i  (\widehat e_{it} -e_{it})( \alpha_i'H_2-\widetilde\alpha_i')\cr
&\leq& O_P(C_{NT}^{-2})\max_i|e_{it}-\widehat e_{it}|
+O_P(C_{NT}^{-1}) (\frac{1}{N}\sum_i   (\widehat e_{it}-e_{it})^2)^{1/2}=O_P(C_{NT}^{-2}).
\end{eqnarray*}
   For the third term of $c_t$,  similarly, $
  \frac{1}{N}\sum_i  \widetilde\alpha_i (\lambda_i'H_1 -\widetilde\lambda_i')(\widehat e_{it} -e_{it})= O_P(C_{NT}^{-2})$.  Also,  by Lemma \ref{ld.1a}, 
  \begin{eqnarray*}
&& \frac{1}{N}\sum_i \widetilde\alpha_i \lambda_i'H_1(e_{it}-\widehat e_{it})\cr
&\leq& O_P(C_{NT}^{-1}) (\frac{1}{N}\sum_i (\widehat e_{it}-e_{it}) ^2)^{1/2}+O_P(1)\frac{1}{N}\sum_i\lambda_i \alpha_i' (\widehat e_{it} -e_{it})\cr
&\leq& O_P(C_{NT}^{-1}) .
\end{eqnarray*}
So $c_{t,3}= O_P(C_{NT}^{-1}) .$

 As for $d_t$, we first prove that $\frac{1}{N}\sum_i \lambda_i ( \lambda_i'H_1-  \widetilde\lambda_i'  )(e_{it}^2 -\E e_{it}^2)=O_P(C_{NT}^{-1}N^{-1/2}).$ Note that $\E_I \frac{1}{N}\sum_i   \lambda_i ( \lambda_i'H_1-  \widetilde\lambda_i'  )(e_{it}^2 -\E e_{it}^2)=0$. Let $\Upsilon_t$ be an $N\times 1$ vector of $e_{it}^2$, and  $\diag(\Lambda)$ be  diagonal matrix consisting of elements of $\Lambda$.  Then 
 $$
 \|\Var(\Upsilon_t)\|\leq\max_i\sum_{j}|\Cov(e_{it}^2, e_{jt}^2)|<C,
 $$
 and 
 $\frac{1}{N}\sum_i \lambda_i ( \lambda_i'H_1-  \widetilde\lambda_i'  )e_{it}^2 = \frac{1}{N}(H_1\Lambda -\widetilde\Lambda) '\diag(\Lambda)\Upsilon_t$. So 
 \begin{eqnarray*}
&& \Var_I(\frac{1}{N}\sum_i \lambda_i ( \lambda_i'H_1-  \widetilde\lambda_i'  )e_{it}^2)=\frac{1}{N^2}
 (H_1\Lambda -\widetilde\Lambda) '\diag(\Lambda) \Var(\Upsilon_t)\diag(\Lambda) (H_1\Lambda -\widetilde\Lambda) \cr
 &\leq& C \frac{1}{N^2}\| H_1\Lambda -\widetilde\Lambda\|_F^2=O_P(C_{NT}^{-2}N^{-1}).
\end{eqnarray*}
This implies $\frac{1}{N}\sum_i \lambda_i ( \lambda_i'H_1-  \widetilde\lambda_i'  )(e_{it}^2 -\E e_{it}^2)=O_P(C_{NT}^{-1}N^{-1/2}).$

 Thus  the first term of $d_t$ is 
  \begin{eqnarray*}
  &&\frac{1}{N}\sum_i( \widetilde \lambda_i   \lambda_i'H_1-  \widetilde \lambda_i   \widetilde\lambda_i'  )e_{it}^2
- H_1'\lambda_i ( \lambda_i'H_1-  \widetilde\lambda_i'  )\E e_{it}^2\cr
&\leq&  \frac{1}{N}\sum_i (\widetilde \lambda_i -H_1'\lambda_i) ( \lambda_i'H_1-  \widetilde\lambda_i'  )e_{it}^2
+   \frac{1}{N}\sum_i  H_1'\lambda_i ( \lambda_i'H_1-  \widetilde\lambda_i'  )(e_{it}^2 -\E e_{it}^2)
\cr
& =&O_P(1)\frac{1}{N}\sum_i (\widetilde \lambda_i -H_1'\lambda_i) ( \lambda_i'H_1-  \widetilde\lambda_i'  )\E_I e_{it}^2+O_P(C_{NT}^{-1}N^{-1/2})=O_P(C_{NT}^{-2}).
\end{eqnarray*}

As for the  second term of $d_t$,   $\frac{1}{N}\sum_i   \widetilde \lambda_i   e_{it} ( \alpha_i'H_2-\widetilde\alpha_i')$, note that
 \begin{eqnarray*}
 &&\frac{1}{N}\sum_i  ( \widetilde \lambda_i  -H_1'\lambda_i) e_{it} ( \alpha_i'H_2-\widetilde\alpha_i')=  O_P(C_{NT}^{-1})(\frac{1}{N}\sum_i  e_{it}^2 ( \alpha_i'H_2-\widetilde\alpha_i')^2)^{1/2}\cr
 &\leq&O_P(C_{NT}^{-1})(\frac{1}{N}\sum_i  ( \alpha_i'H_2-\widetilde\alpha_i')^2\E_I  e_{it}^2)^{1/2}=O_P(C_{NT}^{-2}).
 \end{eqnarray*}
 And, $\E_I\frac{1}{N}\sum_i   \widetilde \lambda_i   e_{it} ( \alpha_i'H_2-\widetilde\alpha_i')=0$. 
    \begin{eqnarray*}
&& \Var_I( \frac{1}{N}\sum_i   \lambda_i   e_{it} ( \alpha_i'H_2-\widetilde\alpha_i'))
=\frac{1}{N^2}\Var_I(( A H_2-\widetilde A)'\diag( \Lambda) e_t)
\cr
&\leq& \frac{1}{N^2} ( A H_2-\widetilde A)'\diag( \Lambda) \Var(e_t) \diag( \Lambda) ( A H_2-\widetilde A)= O_P(C_{NT}^{-2}N^{-1}),
\end{eqnarray*}
implying $\frac{1}{N}\sum_i   \widetilde \lambda_i   e_{it} ( \alpha_i'H_2-\widetilde\alpha_i')=  O_P(C_{NT}^{-2} ).
$ 

Finally  the third term of $d_t$, $  \frac{1}{N}\sum_i\widetilde \lambda_i   e_{it} ( \alpha_i'H_2-\widetilde\alpha_i')$, is bounded similarly.  So $d_t=O_P(C_{NT}^{-2} ). $ 
Put together, we have: $\widehat S_t-S=c_t+d_t= O_P(C_{NT}^{-1}) .$ On the other hand, the upper two blocks of  $\widehat S_t-S$ are $O_P(C_{NT}^{-2})$, determined by  $c_{t,1}, c_{t,2}$ and the upper blocks of $d_t$.  In addition, note that  both $B, S$ are block diagonal matrices, and the diagonal  blocks of $S$  are  $O_P(C_{NT}^{-1})$. Due to
\begin{eqnarray*}
\widehat   B_t^{-1}\widehat S_t-  B^{-1} S&=&(\widehat  B_t^{-1}-  B^{-1}) (\widehat S_t-S)
  +(\widehat  B_t^{-1}-  B^{-1})  S
  +B^{-1} (\widehat S_t-S),
    \end{eqnarray*}
  hence the upper blocks of $\widehat   B_t^{-1}\widehat S_t-  B^{-1} S$ are both $O_P(C_{NT}^{-2})$.

\end{proof}

    \begin{lem}\label{lc.6}       Suppose $\max_{t\leq T}|\frac{1}{N}\sum_i\alpha_i\lambda_i e_{it}|=O_P(1)$,\\ $C_{NT}^{-1} \max_{it}|e_{it}|^2 +\max_{t}\|\frac{1}{N}\sum_i  \lambda_i   \lambda_i ' \bar e_i e_{it} \|_F=o_P(1)$. Then

    (i) $\max_{t\leq T}\|\widehat B_{t}^{-1}\|=O_P(1)$, $\max_{t\leq T}|\widehat B_t-B_t|=o_P(1)$.
    
    (ii) $\frac{1}{T}\sum_{s\notin I} \|\widehat B_{s}^{-1}-B^{-1}\|^2= O_P(C^{-2}_{NT})$.
    
(iii) Write
 $$
 \widehat S_t-  S=\begin{pmatrix}
\Delta_{t1}\\
 \Delta_{t2}
 \end{pmatrix},
 $$
 whose partition matches with that of $(f_t', g_t')'$. Then 
 $\frac{1}{T}\sum_{t\notin I} \|\Delta_{t1}\|^2=O_P(C^{-4}_{NT})$ and  $\frac{1}{T}\sum_{t\notin I} \| \Delta_{t2}\|^2=O_P(C^{-2}_{NT})$.
 
(iv) $\max_{t\leq T}\|\widehat S_t\|=O_P(1)$ and $\max_{t\leq T}\|\widehat S_t- S\|=o_P(1)$.
    \end{lem}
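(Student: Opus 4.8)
The plan is to mirror the term-by-term decomposition used in the proof of Lemma~\ref{ld.6}, but to replace each fixed-$t$, $O_P(\cdot)$ estimate either by its uniform-in-$t$ counterpart (for parts (i) and (iv)) or by an averaged squared counterpart (for parts (ii) and (iii)). The uniform estimates will be fed by Lemma~\ref{ld.2a}(i), whose bounds on $\max_t \frac1N\sum_i(\widehat e_{it}-e_{it})^2$, $\max_t\frac1N\sum_i(\widehat e_{it}-e_{it})^2e_{it}^2$, and $\max_t|\frac1N\sum_i c_i(\widehat e_{it}-e_{it})e_{it}|$ are all $o_P(1)$ once combined with $\delta_{NT}=o_P(1)$ from Assumption~\ref{a3.8}(ii); the averaged estimates will be fed by Lemma~\ref{ld.2a}(ii)--(iii), which supply the sharper $O_P(C_{NT}^{-2})$ and $O_P(C_{NT}^{-4})$ rates. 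Throughout I take $\dim(\lambda_i)=\dim(\alpha_i)=1$ as in the earlier proofs, and write $\widetilde w_{it}=(H_1'\lambda_ie_{it},H_2'\alpha_i')'$ so that $B_t=\frac1N\sum_i\widetilde w_{it}\widetilde w_{it}'$.

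For part (i), I would expand $\widehat B_t-B_t$ into (a) terms in which $e_{it}$ is replaced by $\widehat e_{it}$ and (b) terms in which $(\lambda_i,\alpha_i)$ is replaced by $(\widetilde\lambda_i,\widetilde\alpha_i)$ up to the rotations $H_1,H_2$, exactly as in the treatment of $b_1$ in Lemma~\ref{ld.6}. Each piece is bounded uniformly in $t$ using Lemma~\ref{ld.2a}(i) together with $\frac1N\|\widetilde\Lambda-\Lambda H_1\|_F^2=O_P(C_{NT}^{-2})$ and $\frac1N\|\widetilde A-AH_2\|_F^2=O_P(C_{NT}^{-2})$ from Proposition~\ref{p3.1}; the two stated moment conditions, $C_{NT}^{-1}\max_{it}e_{it}^2=o_P(1)$ and $\max_t\|\frac1N\sum_i\lambda_i\lambda_i'\bar e_ie_{it}\|_F=o_P(1)$, dispatch the leading cross terms, yielding $\max_t|\widehat B_t-B_t|=o_P(1)$. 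To pass from this to $\max_t\|\widehat B_t^{-1}\|=O_P(1)$, I would show $\min_t\min_j\psi_j(B_t)>c$ with probability approaching one and use $\min_j\psi_j(\widehat B_t)\ge\min_j\psi_j(B_t)-\|\widehat B_t-B_t\|$; here the Gram structure of $B_t$ keeps the matrix positive definite uniformly, the off-diagonal block $\frac1N\sum_i\alpha_i\lambda_ie_{it}$ being controlled at $O_P(1)$ by hypothesis and dominated by the diagonal blocks, which are bounded below by Assumption~\ref{a3.2}.

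For parts (ii) and (iii) I would switch to averaged squared norms. Writing $\widehat B_s^{-1}-B^{-1}=-\widehat B_s^{-1}(\widehat B_s-B)B^{-1}$ and invoking $\max_s\|\widehat B_s^{-1}\|=O_P(1)$ from (i), part (ii) reduces to $\frac1T\sum_{s\notin I}\|\widehat B_s-B\|^2=O_P(C_{NT}^{-2})$; splitting $\widehat B_s-B=(\widehat B_s-B_s)+(B_s-B)$, the first piece is $O_P(C_{NT}^{-2})$ by the averaged $\widehat e$-error bounds of Lemma~\ref{ld.2a}(ii) and the loading rates, while the second piece is $O_P(N^{-1})$ because the fluctuations $\frac1N\sum_i\lambda_i\alpha_ie_{is}$ and $\frac1N\sum_i\lambda_i^2(e_{is}^2-\E e_{is}^2)$ are serially independent given the $I$-sample with per-$s$ conditional variance $O(1/N)$. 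For part (iii) I would reuse the decomposition $\widehat S_t-S=c_t+d_t$ of Lemma~\ref{ld.6}: the upper block $\Delta_{t1}$ is assembled from $c_{t,1},c_{t,2}$ and the upper entries of $d_t$, all of which involve \emph{orthogonalized} quantities such as $\frac1N\sum_i c_i(\widehat e_{it}-e_{it})e_{it}$ and $\frac1N\sum_i c_i(\widehat e_{it}-e_{it})^2$, so that Lemma~\ref{ld.2a}(iii) delivers $\frac1T\sum_{t\notin I}\|\Delta_{t1}\|^2=O_P(C_{NT}^{-4})$; whereas the lower block $\Delta_{t2}$ is driven by the un-orthogonalized term $c_{t,3}=\frac1N\sum_i\widetilde\alpha_i\lambda_i'H_1(e_{it}-\widehat e_{it})$, whose averaged square is only $O_P(C_{NT}^{-2})$. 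Part (iv) is then handled exactly as (i), expanding $\widehat S_t-S$ uniformly in $t$ via Lemma~\ref{ld.2a}(i) to get $\max_t\|\widehat S_t-S\|=o_P(1)$, and noting $\|S\|=O_P(1)$.

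The main obstacle I anticipate is twofold. First, the uniform eigenvalue lower bound in (i): because the cross term $\frac1N\sum_i\alpha_i\lambda_ie_{it}$ is only $O_P(1)$ (not $o_P(1)$) uniformly in $t$, one cannot simply claim $B_t$ is close to the block-diagonal $B$, so positive-definiteness must be extracted from the Gram structure and the strong-factor assumption rather than from a perturbation argument. Second, and more delicate, is maintaining the split $C_{NT}^{-4}$ (upper) versus $C_{NT}^{-2}$ (lower) rate dichotomy in (iii): this rests on the approximate Neyman orthogonality of the $f_t$-block moment condition that was exploited at fixed $t$ in Lemma~\ref{ld.6}, now carried through in averaged-squared form, and it is precisely this cancellation that must be matched term by term against the sharp averaged bounds of Lemma~\ref{ld.2a}(iii).
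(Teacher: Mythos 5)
Your overall architecture matches the paper's: parts (ii)--(iv) are handled exactly as in the paper's proof, namely by uniformizing or averaging the fixed-$t$ decomposition $\widehat B_t-B_t=b_{1t}+b_{2t}$ and $\widehat S_t-S=c_t+d_t$ from Lemma~\ref{ld.6}, feeding the uniform bounds of Lemma~\ref{ld.2a}(i) into (i)/(iv) and the averaged bounds of Lemma~\ref{ld.2a}(ii)--(iii) into (ii)/(iii), including the correct attribution of the $O_P(C_{NT}^{-4})$ rate for $\Delta_{t1}$ to the orthogonalized quantities and of the slower $O_P(C_{NT}^{-2})$ rate for $\Delta_{t2}$ to the block $\frac{1}{N}\sum_i\widetilde\alpha_i\lambda_i'H_1(e_{it}-\widehat e_{it})$. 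Your explicit inclusion of the $O_P(N^{-1})$ fluctuation term $B_s-B$ in part (ii) is if anything slightly more careful than the paper's write-up.

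The genuine gap is in part (i), and it is exactly the obstacle you flag but do not resolve. You define $B_t=\frac{1}{N}\sum_i\widetilde w_{it}\widetilde w_{it}'$ with $\widetilde w_{it}=(H_1'\lambda_ie_{it},H_2'\alpha_i')'$, so your $B_t$ carries the off-diagonal block $\frac{1}{N}\sum_iH_1'\lambda_ie_{it}\alpha_i'H_2$, and you then need $\min_{t}\min_j\psi_j(B_t)$ bounded away from zero. The justification offered --- that the Gram structure keeps $B_t$ positive definite and that the $O_P(1)$ off-diagonal is ``dominated by the diagonal blocks'' --- does not work: a two-block Gram matrix with diagonal blocks bounded below and an $O_P(1)$ cross block can still be arbitrarily close to singular, since the Cauchy--Schwarz inequality $|\frac{1}{N}\sum_i\lambda_ie_{it}\alpha_i|^2\leq(\frac{1}{N}\sum_i\lambda_i^2e_{it}^2)(\frac{1}{N}\sum_i\alpha_i^2)$ becomes an equality under collinearity of $(\lambda_ie_{it})_{i\leq N}$ and $(\alpha_i)_{i\leq N}$, and nothing in the stated $O_P(1)$ hypothesis rules this out uniformly over $t$. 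The paper avoids the issue entirely: it keeps $B_t$ block-diagonal (with $e_{it}^2$ on the $\lambda$-block), places the cross term into the remainder $b_{2t}$, and invokes the strictly stronger condition $\max_{t\leq T}\|\frac{1}{N}\sum_ie_{it}\alpha_i\lambda_i'\|_F=o_P(1)$ from Assumption~\ref{a3.8}(ii), so that $\widehat B_t$ is a uniform $o_P(1)$ perturbation of a block-diagonal matrix whose blocks are bounded below, and the elementary perturbation bound $\min_j\psi_j(\widehat B_t)\geq\min_j\psi_j(B_t)-\|\widehat B_t-B_t\|$ applies. To repair your argument you should either import that $o_P(1)$ condition (which the paper's proof in fact uses, despite the weaker $O_P(1)$ phrasing in the lemma's hypothesis) or supply a genuine uniform non-collinearity bound, e.g.\ via the Schur complement together with a lower bound on $\frac{1}{N}\Lambda'\diag(e_t)M_\alpha\diag(e_t)\Lambda$ in the spirit of $\bar D_{ft}$ in Assumption~\ref{a3.8}(iii).
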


\begin{proof}
Define
$$
B_t=\frac{1}{N}\sum_i\begin{pmatrix}
H_1'\lambda_i\lambda_i'H_1  e_{it}^2 &0 \\
0& H_2'\alpha_i\alpha_i'H_2
\end{pmatrix}.
$$
Then $\widehat B_t-B_t=b_{1t}+b_{2t}$, 
\begin{eqnarray*}
b_{1t}&=&\frac{1}{N}\sum_i\begin{pmatrix}
 \widetilde\lambda_i  \widetilde\lambda_i ' \widehat e_{it}^2 -H_1'\lambda_i\lambda_i'H_1e_{it}^2&  \widetilde\lambda_i \widetilde\alpha_i' \widehat e_{it} - H_1'\lambda_i\alpha_iH_2'e_{it}   \\
  \widetilde\alpha_i \widetilde\lambda_i'\widehat e_{it} - H_2'\alpha_i\lambda_iH_1'e_{it}&  \widetilde\alpha_i \widetilde\alpha_i'-H_2'\alpha_i\alpha_i'H_2
\end{pmatrix}\cr
 b_{2t} &=&
 \frac{1}{N}\sum_i\begin{pmatrix}
0&   H_1'\lambda_i\alpha_iH_2'e_{it}\\
  H_2'\alpha_i\lambda_iH_1'e_{it}&  0\end{pmatrix}.
\end{eqnarray*}
(i) We now show $\max_{t\leq T}|b_{1t}|=o_P(1)$. In addition, by assumption $\max_{t\leq T}|b_{2t}|=o_P(1)$.
Thus $\max_{t\leq T}|\widehat B_t-B_t|=o_P(1)$, and thus $\max_{t\leq T}\|\widehat B_{t}^{-1}\|=o_P(1)+\max_{t\leq T}\| B_{t}^{-1}\|=O_P(1)$, by the assumption that $\|B_t^{-1}\|=O_P(1)$ uniformly in $t$. To show $\max_{t\leq T}|b_{1t}|=o_P(1)$, note that: 

First term:
 \begin{eqnarray*}
&&\max_{t}\|\frac{1}{N}\sum_i \widetilde\lambda_i  \widetilde\lambda_i ' (\widehat e_{it}^2-e_{it}^2)\|
\cr
&\leq&  
 O_P(1)(\frac{1}{N}\sum_i  \|\widetilde\lambda_i-H_1'\lambda_i\|^2)^{1/2}\max_{t}[\frac{1}{N}\sum_i    (\widehat e_{it}-e_{it} )^4+(\widehat e_{it}-e_{it} )^2e_{it}^2]^{1/2}\cr
 &&+
 \frac{1}{N}\sum_i \|\widetilde\lambda_i-H_1'\lambda_i\|^2 [2\max_{t} |(\widehat e_{it}-e_{it} )e_{it}  | 
 + \max_{t\leq T}(\widehat e_{it}-e_{it} )^2]
 \cr
&&   +O_P(1)\max_{t}\|\frac{1}{N}\sum_i  \lambda_i   \lambda_i ' (\widehat e_{it}-e_{it} )e_{it} \|_F\cr
 &&+O_P(1)\max_{t}\|\frac{1}{N}\sum_i  \lambda_i  \lambda_i ' (\widehat e_{it}-e_{it} )^2\|_F\cr
 &\leq& O_P(1+\max_{t\leq T}\| w_t\|^2+b_{NT,2}^2)C_{NT}^{-2}  + O_P(b_{NT,1}^2+b_{NT,3}^2)   \cr
 &&+ O_P(1+\max_{t\leq T}\| w_t\|+  b_{NT,2}   )\max_{it}|e_{it}|C_{NT}^{-1} +O_P(b_{NT,1}+b_{NT,3}) (\max_{t\leq T}\frac{1}{N}\sum_ie_{it}^2)^{1/2}C_{NT}^{-1} \cr
 &&+ O_P(\phi_{NT}^2C_{NT}^{-2}) +\phi_{NT} \max_{it}|e_{it}|C_{NT}^{-2} 
 + 
   \max_{t\leq T}\|\frac{1}{N}\sum_ic_i e_{it}      l_i \|_FO_P(b_{NT,1}+b_{NT,3})  \cr
   &=&o_P(1),
\end{eqnarray*}
given   assumptions
$C_{NT}^{-1}(b_{NT,2}  +  \max_{t\leq T}\|w_t\| ) \max_{it}|e_{it}|=o_P(1)$,   $ \phi_{NT}\max_{it}|e_{it}|=O_P(1)$,    $\max_{t\leq T}\|\frac{1}{N}\sum_ie_{it}\alpha_i\lambda_i'\|_F=o_P(1)$, 
and 
 $(b_{NT,1}+b_{NT,3}) [(\max_{t\leq T}\frac{1}{N}\sum_ie_{it}^2)^{1/2}C_{NT}^{-1} +1]=o_P(1)$.

In addition, $
\max_{t\leq T}\|\frac{1}{N}\sum_i( \widetilde\lambda_i  \widetilde\lambda_i '   -H_1'\lambda_i\lambda_i'H_1)e_{it}^2 \|
\leq O_P(C_{NT}^{-1})\max_{it}e_{it}^2=o_P(1).
$ So the first term of $\max_{t\leq T}|b_{1t}|$ is $o_P(1)$.

Second term,    \begin{eqnarray*}
&&\max_{t}\| \frac{1}{N}\sum_i \widetilde\lambda_i \widetilde\alpha_i' (\widehat e_{it}-e_{it})\|_F\cr
&\leq&\max_{it}|\widehat e_{it}-e_{it}| \frac{1}{N}\sum_i\|( \widetilde\lambda_i-H_1'\lambda_i) (\widetilde\alpha_i-H_2'\alpha_i)' \|_F\cr
&&
+O_P(1)(\frac{1}{N}\sum_i\|\widetilde\lambda_i-H_1'\lambda_i\|^2 
+(\frac{1}{N}\sum_i\|\widetilde\alpha_i-H_2'\alpha_i\|^2
  )^{1/2}\max_{t}( \frac{1}{N}\sum_i  (\widehat e_{it}-e_{it})^2)^{1/2}
\cr
&&
+O_P(1)\max_{t}\| \frac{1}{N}\sum_i  \lambda_i  \alpha_i' (\widehat e_{it}-e_{it})\|_F\cr
&\leq& O_P(\phi_{NT}+1+\max_{t\leq T}\| w_t\| ) C_{NT}^{-2}+  O_P(b_{NT,1}+b_{NT,3}+C_{NT}^{-1}b_{NT,2})=o_P(1).
\end{eqnarray*}

  Next,
$$
\max_{t\leq T}\|\frac{1}{N}\sum_i(\widetilde\lambda_i \widetilde\alpha_i'  - H_1'\lambda_i\alpha_iH_2')e_{it} \|_F=O_P(C_{NT}^{-1})\max_{it}|e_{it}|=o_P(1).
$$ So the second term of $\max_{t\leq T}|b_{1t}|$ is $o_P(1)$. Similarly, the third term is also $o_P(1)$. 
 Finally, the last term $\|\frac{1}{N}\sum_i\widetilde\alpha_i \widetilde\alpha_i'-H_2'\alpha_i\alpha_i'H_2\|_F=o_P(1)$.
 
  (ii) Because we have proved $\max_{t\leq T}\|\widehat B_{t}^{-1}\|=O_P(1)$, it suffices to prove \\
$\frac{1}{T}\sum_{s\notin I} \|\widehat B_{s}-B\|_F^2= O_P(C^{-2}_{NT})$, or 
$\frac{1}{T}\sum_{s\notin I} \|b_{1t}\|^2_F=O_P(C^{-2}_{NT})= \frac{1}{T}\sum_{s\notin I} \|b_{2t}\|^2_F  $.

    First term of $b_{1t}$: by  (\ref{ed.17}), the first term is bounded by,  by Lemma \ref{ld.2a}, 
        \begin{eqnarray} 
    &&       \frac{1}{T}\sum_{t\notin I} \| \frac{1}{N}\sum_i  \widetilde\lambda_i  \widetilde\lambda_i ' \widehat e_{it}^2 -H_1'\lambda_i\lambda_i'H_1e_{it}^2\|_F^2\cr
 &\leq &    \frac{1}{T}\sum_{t\notin I} \|    \frac{1}{N}\sum_i\widetilde\lambda_i  \widetilde\lambda_i ' (\widehat e_{it}^2-e_{it}^2)  \|_F^2 
 +  \frac{1}{T}\sum_{t\notin I} \| \frac{1}{N}\sum_i  (\widetilde\lambda_i  \widetilde\lambda_i '   -H_1'\lambda_i\lambda_i'H_1)e_{it}^2\|_F^2\cr
     &\leq& O_P(C_{NT}^{-4}) \max_{it} |\widehat e_{it}-e_{it}|^2\max_{it}|e_{it}|^2
 + O_P(C_{NT}^{-2})  \frac{1}{T}\sum_{t\notin I}\frac{1}{N}\sum_i (\widehat e_{it}-e_{it}) ^4\cr
 &&  + O_P(1) \frac{1}{T}\sum_{t\notin I}(\frac{1}{N}\sum_i (\widehat e_{it}-e_{it}) ^2  )^2
 + O_P(C_{NT}^{-2})  \frac{1}{T}\sum_{t\notin I}\frac{1}{N}\sum_i (\widehat e_{it}-e_{it}) ^2e_{it}^2\cr
 &&  +O_P(1) \frac{1}{T}\sum_{t\notin I}(\frac{1}{N}\sum_i    \lambda_i  \lambda_i'  (\widehat e_{it}-e_{it})e_{it})^2\cr
 &&  + O_P(1)
\tr(\frac{1}{N^2}\sum_{ij}(\widetilde\lambda_i  \widetilde\lambda_i ' - H_1'\lambda_i\lambda_i'H_1)
(\widetilde\lambda_j \widetilde\lambda_j ' - H_1'\lambda_j\lambda_i'H_1)  \frac{1}{T}\sum_{t\notin I}\E_Ie_{jt}^2  e_{it}^2)\cr
 &\leq & O_P(C_{NT}^{-2})+O_P(1)
 (\frac{1}{N}\sum_{i}\|(\widetilde\lambda_i  \widetilde\lambda_i ' - H_1'\lambda_i\lambda_i'H_1\|_F)^2
\cr
 &\leq & O_P(C_{NT}^{-4}\phi_{NT}^2)  \max_{it}|e_{it}|^2
 + O_P(C_{NT}^{-2})
=O_P(C_{NT}^{-2}).
 \end{eqnarray}

  Second term of $b_{1t}$: 
   \begin{eqnarray} 
    &&      \frac{1}{T}\sum_{t\notin I} \| \frac{1}{N}\sum_i     \widetilde\lambda_i \widetilde\alpha_i' \widehat e_{it} - H_1'\lambda_i\alpha_iH_2'e_{it} \|_F^2\cr
    &\leq&
      \frac{1}{T}\sum_{t\notin I} \|    \frac{1}{N}\sum_i\widetilde\lambda_i \widetilde\alpha_i' (\widehat e_{it} -e_{it})   \|_F^2
      +        \frac{1}{T}\sum_{t\notin I} \| \frac{1}{N}\sum_i  (   \widetilde\lambda_i \widetilde\alpha_i'    - H_1'\lambda_i\alpha_iH_2')e_{it} \|_F^2\cr
      &\leq&O_P(C_{NT}^{-4}) \max_i \frac{1}{T}\sum_t(\widehat e_{it}-e_{it})^2
          +O_P(C_{NT}^{-2})\frac{1}{NT}\sum_{it}(\widehat e_{it}-e_{it})^2
\cr
&&
      +     \frac{1}{T}\sum_{t\notin I} \|    \frac{1}{N}\sum_i \lambda_i  \alpha_i' (\widehat e_{it} -e_{it})   \|_F^2\cr
      &&+O_P(1)\tr 
      \frac{1}{N^2}\sum_{ij}  (   \widetilde\lambda_i \widetilde\alpha_i'    - H_1'\lambda_i\alpha_iH_2')
  (   \widetilde\lambda_j \widetilde\alpha_j'    - H_1'\lambda_j\alpha_jH_2') \frac{1}{T}\sum_{t\notin I} \E_I e_{it}e_{jt}\cr
 &\leq& O_P(b_{NT, 4}^2+b_{NT,5}^2 +C_{NT}^{-2})   C_{NT}^{-4}  +O_P(C_{NT}^{-2})
  =O_P(C_{NT}^{-2}).
     \end{eqnarray}

     The third term of $b_{1t}$ is bounded similarly. Finally, it is straightforward to  see that fourth term of $b_{1t}$ is $ \| \frac{1}{N}\sum_i   \widetilde\alpha_i \widetilde\alpha_i'-H_2'\alpha_i\alpha_i'H_2 \|_F^2=O_P(C_{NT}^{-2}).$ Thus $\frac{1}{T}\sum_{s\notin I} \|b_{1t}\|^2_F=O_P(C^{-2}_{NT}).$

    As for $b_{2t}$,  it suffices to prove 
   \begin{eqnarray*} 
 &&      \frac{1}{T}\sum_{t\notin I} \| \frac{1}{N}\sum_i    \alpha_i\lambda_i'e_{it} \|_F^2
 = O_P(1) \frac{1}{T}\sum_{t\notin I} \E\| \frac{1}{N}\sum_i    \alpha_i\lambda_i'e_{it} \|_F^2
 =O_P(N^{-1}).
     \end{eqnarray*}
     Thus  $\frac{1}{T}\sum_{s\notin I} \|b_{2t}\|^2_F=O_P(C^{-2}_{NT}).$

        (iii)    Note that 
$\widehat S_t-S=c_t+d_t$,  where 
 \begin{eqnarray*}
 	c_t& =& \frac{1}{N}\sum_i
 	\begin{pmatrix}
 		\widetilde \lambda_i   \lambda_i'H_1e_{it}(\widehat e_{it}-e_{it})   +  \widetilde \lambda_i   \widetilde\lambda_i'(\widehat e_{it}^2-e_{it}^2) & 
 		\widetilde \lambda_i  (\widehat e_{it} -e_{it})( \alpha_i'H_2-\widetilde\alpha_i')\\
 		\widetilde\alpha_i \lambda_i'H_1(e_{it}-\widehat e_{it})
 		+\widetilde\alpha_i (\lambda_i'H_1 -\widetilde\lambda_i')(\widehat e_{it} -e_{it})& 
 		0
 	\end{pmatrix}\cr
 	d_t& =& \frac{1}{N}\sum_i
 	\begin{pmatrix}
 		( \widetilde \lambda_i   \lambda_i'H_1-  \widetilde \lambda_i   \widetilde\lambda_i'  )e_{it}^2
 		- H_1'\lambda_i ( \lambda_i'H_1-  \widetilde\lambda_i'  )\E e_{it}^2
 		& 
 		\widetilde \lambda_i   e_{it} ( \alpha_i'H_2-\widetilde\alpha_i')\\
 		\widetilde\alpha_i (\lambda_i'H_1 -\widetilde\lambda_i')e_{it} & 
 		0
 	\end{pmatrix}.
 \end{eqnarray*}
 As for the  upper blocks of $c_t$,  first note that 
\begin{eqnarray*}
 &&\frac{1}{T}\sum_{t\notin I} (\frac{1}{N}\sum_i      \|\widetilde\lambda_i  -H_1'\lambda_i\|^2 |e_{it}|  )^2
 \cr
 &=&O_P(1) 
 \frac{1}{N^2}\sum_{ij}      \|\widetilde\lambda_i  -H_1'\lambda_i\|^2 \|\widetilde\lambda_j  -H_1'\lambda_j\|^2 \frac{1}{T}\sum_{t\notin I}\E_I |e_{jt }e_{it}| \cr
 &=&O_P(1) 
( \frac{1}{N}\sum_{i}      \|\widetilde\lambda_i  -H_1'\lambda_i\|^2)^2=O_P(C_{NT}^{-4}).
      \end{eqnarray*}

Then by Lemma \ref{ld.2a}, 
 
 \begin{eqnarray*}
 	&&\frac{1}{T}\sum_{t\notin I} \|\frac{1}{N}\sum_i\widetilde \lambda_i   \lambda_i'H_1e_{it}(\widehat e_{it}-e_{it})\|_F^2 \cr
 	&\leq&
 	O_P(1)
 	\frac{1}{N}\sum_i\|\widetilde \lambda_i  -H_1'\lambda_i\|^2\frac{1}{NT}\sum_i \sum_{t\notin I} e_{it}^2(\widehat e_{it}-e_{it})^2
 	\cr
 	&&+O_P(1)\frac{1}{T}\sum_{t\notin I} \|\frac{1}{N}\sum_i \lambda_i   \lambda_ie_{it}(\widehat e_{it}-e_{it})\|_F^2 \cr
 	&=&O_P(C_{NT}^{-4})\cr
 	&&\frac{1}{T}\sum_{t\notin I} \|\frac{1}{N}\sum_i \widetilde \lambda_i   \widetilde\lambda_i'(\widehat e_{it}^2-e_{it}^2) \|_F^2 \cr
 	&\leq& (\frac{1}{N}\sum_i\|\widetilde\lambda_i  -H_1'\lambda_i\|^2)	^2 \max_{it} [(\widehat e_{it}-e_{it}) ^4  ]\cr
 	&&+\frac{1}{T}\sum_{t\notin I} (\frac{1}{N}\sum_i      \|\widetilde\lambda_i  -H_1'\lambda_i\|^2 |e_{it}|  )^2  \max_{it}|\widehat e_{it}-e_{it}|^2 \cr
 	&&+ \frac{1}{N}\sum_i\|\widetilde\lambda_i  -H_1'\lambda_i\|^2 
 	\frac{1}{NT}\sum_{t\notin I}  \sum_i[(\widehat e_{it}-e_{it}) ^4+ (\widehat e_{it}-e_{it})^2e_{it} ^2  ]
 	\cr
 	&&+\frac{1}{T}\sum_{t\notin I} \|\frac{1}{N}\sum_i    \lambda_i  \lambda_i'  (\widehat e_{it}-e_{it})e_{it}   \|_F^2   +\frac{1}{T}\sum_{t\notin I} \| \frac{1}{N}\sum_i \lambda_i\lambda_i' (\widehat e_{it}-e_{it}) ^2      \|_F^2  \cr
 	&=&O_P(C_{NT}^{-4}).
 \end{eqnarray*}

 Also, 
 \begin{eqnarray*}
 &&\frac{1}{T}\sum_{t\notin I} \|\frac{1}{N}\sum_i    	\widetilde \lambda_i  (\widehat e_{it} -e_{it})( \alpha_i'H_2-\widetilde\alpha_i')  \|_F^2\cr
  &\leq & \frac{1}{N}\sum_i     \|\alpha_i'H_2-\widetilde\alpha_i'\|^2 \frac{1}{N}\sum_i    	\|\widetilde \lambda_i -H_1'\lambda_i\|^2 \max_{it} (\widehat e_{it} -e_{it})^2\cr
   &&+O_P(1)\frac{1}{N}\sum_i  \| \alpha_i'H_2-\widetilde\alpha_i'  \|^2
     	 \frac{1}{N}\sum_i\frac{1}{T}\sum_{t\notin I}     (\widehat e_{it} -e_{it})^2
=O_P(C_{NT}^{-4}).
      \end{eqnarray*}
 Similarly, $\frac{1}{T}\sum_{t\notin I} \|\frac{1}{N}\sum_i\widetilde\alpha_i (\lambda_i'H_1 -\widetilde\lambda_i')(\widehat e_{it} -e_{it}) \|_F^2=O_P(C_{NT}^{-4}).$
 Finally,  \begin{eqnarray*}
 \frac{1}{T}\sum_{t\notin I} \|\frac{1}{N}\sum_i   \widetilde\alpha_i \lambda_i'H_1(e_{it}-\widehat e_{it})
  \|_F^2 &=&O_P(C_{NT}^{-4})+ O_P(1)\frac{1}{T}\sum_{t\notin I} \|\frac{1}{N}\sum_i    \alpha_i \lambda_i'(e_{it}-\widehat e_{it})
  \|_F^2\cr
  &=&O_P(C_{NT}^{-4}).
       \end{eqnarray*}
         So if we let the two upper blocks of $c_t$ be $c_{t,1}, c_{t,2}$, and the third block of $c_t$ be $c_{t,3}$, then 
 $ \frac{1}{T}\sum_{t\notin I}\|c_{t,1}\|_F^2+\frac{1}{T}\sum_{t\notin I}\|c_{t,2}\|_F^2=O_P(C_{NT}^{-4})$ while $\frac{1}{T}\sum_{t\notin I}\|c_{t,3}\|_F^2=O_P(C_{NT}^{-2})$.

    As for $ d_t $, let $d_{t,1},...,d_{t,3}$  denote the nonzero blocks.  Then $d_{t,1}$ depends on 
 $
  \frac{1}{T}\sum_{t\notin I} \| \frac{1}{N}\sum_i \lambda_i ( \lambda_i'H_1-  \widetilde\lambda_i'  )(e_{it}^2 -\E e_{it}^2)
  \|_F^2  $.
              Let $\Upsilon_t$ be an $N\times 1$ vector of $e_{it}^2$, and  $\diag(\Lambda)$ be  diagonal matrix consisting of elements of $\Lambda$.   Suppose $\dim(\lambda_i)=1$ (focus on each element), then   $ \frac{1}{T}\sum_{t\notin I}\|d_{t,1}\|_F^2$ is bounded by
                \begin{eqnarray*}
  && \frac{1}{T}\sum_{t\notin I} \|\frac{1}{N}\sum_i  ( \widetilde \lambda_i   \lambda_i'H_1-  \widetilde \lambda_i   \widetilde\lambda_i'  )e_{it}^2
     		- H_1'\lambda_i ( \lambda_i'H_1-  \widetilde\lambda_i'  )\E e_{it}^2
    \|_F^2 \cr
    &\leq&
 \frac{1}{T}\sum_{t\notin I} \|\frac{1}{N}\sum_i   (\widetilde \lambda_i -H_1'\lambda_i) ( \lambda_i'H_1-  \widetilde\lambda_i'  )e_{it}^2
    \|_F^2 \cr
    &&+ \frac{1}{T}\sum_{t\notin I} \|\frac{1}{N}\sum_i   H_1'\lambda_i ( \lambda_i'H_1-  \widetilde\lambda_i'  )(e_{it}^2 -\E e_{it}^2)
    \|_F^2 \cr
    &\leq&O_P(1) 
    \frac{1}{N^2}\sum_{ij}    \|\widetilde \lambda_i -H_1'\lambda_i \|^2
 \|\widetilde \lambda_j -H_1'\lambda_j\|^2 \frac{1}{T}\sum_{t\notin I}\E_Ie_{jt}^2 e_{it}^2\cr
&&+ O_P(1) \frac{1}{T}\sum_{t\notin I}  \frac{1}{N^2}
(\widetilde\Lambda- \Lambda H_1)' \diag(\Lambda)  \Var_I
(\Upsilon_t) \diag(\Lambda)   (\widetilde\Lambda- \Lambda H_1)\cr
&=&O_P(C_{NT}^{-4}).
         \end{eqnarray*}
  In addition, using the same technique, it is easy to show 
  $$
 \frac{1}{T}\sum_{t\notin I}\|   \frac{1}{N}\sum_i \widetilde \lambda_i   e_{it} ( \alpha_i'H_2-\widetilde\alpha_i')\|_F^2
=O_P(C_{NT}^{-4})=
 \frac{1}{T}\sum_{t\notin I}\| \frac{1}{N}\sum_i  \widetilde\alpha_i (\lambda_i'H_1 -\widetilde\lambda_i')e_{it}   \|_F^2.
  $$
  Hence $\frac{1}{T}\sum_{t\notin I}\|d_{t,k}\|_F^2=O_P(C_{NT}^{-4})$ for $k=1,2,3.$ 
 Together, we have 
 $$\frac{1}{T}\sum_{t\notin I}\|\Delta_{t1}\|^2\leq   \frac{1}{T}\sum_{t\notin I}\|c_{t,1}+d_{t,1}   \|^2
 +\frac{1}{T}\sum_{t\notin I}\|c_{t,2}+d_{t,2}   \|^2
 =O_P(C_{NT}^{-4}) $$ and 
  $\frac{1}{T}\sum_{t\notin I}\|\Delta_{t2}\|^2=   \frac{1}{T}\sum_{t\notin I}\|c_{t,3}+d_{t,3}\|^2=O_P(C_{NT}^{-2}). $
  
  (iv) Recall that 
$\widehat S_t-S=c_t+d_t$.  It suffices to show $\max_{t\leq T}\|c_t\|=o_P(1)=\max_{t\leq T}\|d_t\|$.
\begin{eqnarray*}
\max_{t\leq T}\|c_{t}\|&\leq &O_P(1)\max_{it}|\widehat e_{it}-e_{it}| (|e_{it}| +1) =o_P(1).\cr
\max_{t\leq T}\|d_{t}\|&\leq & (\max_{it}|e_{it}|+\max_{it}|e_{it}|^2+1) O_P(C_{NT}^{-1})=o_P(1).
 \end{eqnarray*}

\end{proof}

  \begin{lem}\label{ld.9} For terms defined in (\ref{ed.24}), 
   and for each fixed $t\notin I$, 
   
   (i) $ \sum_{d=2}^5A_{dt}=O_P(C_{NT}^{-2})$
   
   (ii)  For the ``upper block" of $A_{6t}$, $\frac{1}{N}\sum_i 
 \lambda_i e_{it} ( l_i'w_t\lambda_i'f_t - \widehat{l_i'w_t}\dot\lambda_i'\widetilde f_t  )=O_P(C_{NT}^{-2})$.
 
 (iii) The upper block of $A_{1t}$ is $O_P(C_{NT}^{-2})$.
  \end{lem}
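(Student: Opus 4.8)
The plan is to bound each remainder term $A_{2t},\dots,A_{6t}$ (for part (i) and (ii)) and $A_{1t}$ (for part (iii)) separately, exploiting that every term is a product of at least one ``small'' factor. I will use repeatedly three facts: by Lemma \ref{ld.6}(i), $\widehat B_t-B=O_P(C_{NT}^{-1})$, so that $\|\widehat B_t^{-1}\|=O_P(1)$ and $\widehat B_t^{-1}-B^{-1}=-\widehat B_t^{-1}(\widehat B_t-B)B^{-1}=O_P(C_{NT}^{-1})$; the score $\frac{1}{N}\sum_i(H_1'\lambda_i e_{it},H_2'\alpha_i)'u_{it}=O_P(N^{-1/2})$ by the cross-sectional CLT and $\E(u_{it}|\cdot)=0$; and $N^{-1/2}\le C_{NT}^{-1}$, so any product of one $O_P(C_{NT}^{-1})$ factor with an $O_P(N^{-1/2})$ or $O_P(C_{NT}^{-1})$ factor is $O_P(C_{NT}^{-2})$. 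Part (iii) is then immediate: $A_{1t}=(\widehat B_t^{-1}\widehat S_t-B^{-1}S)(H_1^{-1}f_t{}',H_2^{-1}g_t{}')'$, and Lemma \ref{ld.6}(ii) gives that the two upper blocks of $\widehat B_t^{-1}\widehat S_t-B^{-1}S$ are $O_P(C_{NT}^{-2})$; since $f_t,g_t=O_P(1)$ for fixed $t$, the upper block of $A_{1t}$ is $O_P(C_{NT}^{-2})$.

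A key auxiliary bound that I would establish first is $\frac{1}{N}\sum_i\Xi_{it}^2=O_P(C_{NT}^{-2})$, where $\Xi_{it}:=\mu_{it}\lambda_i'f_t-\widehat\mu_{it}\dot\lambda_i'\widetilde f_t$. Writing $\mu_{it}-\widehat\mu_{it}=\widehat e_{it}-e_{it}$, I decompose $\Xi_{it}=(\widehat e_{it}-e_{it})\lambda_i'f_t+\widehat\mu_{it}(\lambda_i'f_t-\dot\lambda_i'\widetilde f_t)$ and use $\frac{1}{N}\sum_i(\widehat e_{it}-e_{it})^2=O_P(C_{NT}^{-2})$ from Lemma \ref{ld.1a}(ii), the single-$t$ rate $\|\widetilde f_t-H_1^{-1}f_t\|=O_P(C_{NT}^{-1})$ read off the expansion (\ref{ec.10}), and $\frac{1}{N}\sum_i\|\dot\lambda_i-H_1'\lambda_i\|^2=O_P(C_{NT}^{-2})$, together with the elementary split $\lambda_i'f_t-\dot\lambda_i'\widetilde f_t=\lambda_i'H_1(H_1^{-1}f_t-\widetilde f_t)+(H_1'\lambda_i-\dot\lambda_i)'\widetilde f_t$.

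For part (i) I would treat the four terms in turn. $A_{2t}$ is $(\widehat B_t^{-1}-B^{-1})$ times the score, hence $O_P(C_{NT}^{-1}N^{-1/2})=O_P(C_{NT}^{-2})$. $A_{5t}=(\widehat B_t^{-1}-B^{-1})\frac{1}{N}\sum_i(H_1'\lambda_i e_{it},H_2'\alpha_i)'\Xi_{it}$, and Cauchy--Schwarz against $\frac{1}{N}\sum_i\Xi_{it}^2=O_P(C_{NT}^{-2})$ makes the average $O_P(C_{NT}^{-1})$, so $A_{5t}=O_P(C_{NT}^{-2})$. For $A_{4t}$ I would use $\frac{1}{N}\sum_i\|\widetilde\lambda_i\widehat e_{it}-H_1'\lambda_i e_{it}\|^2=O_P(C_{NT}^{-2})$ (splitting into $\widetilde\lambda_i(\widehat e_{it}-e_{it})$ and $(\widetilde\lambda_i-H_1'\lambda_i)e_{it}$ and applying Lemma \ref{ld.1a}(ii) and Proposition \ref{p3.1}) and $\frac{1}{N}\sum_i\|\widetilde\alpha_i-H_2'\alpha_i\|^2=O_P(C_{NT}^{-2})$, then Cauchy--Schwarz against $\frac{1}{N}\sum_i\Xi_{it}^2$. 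For $A_{3t}$ I use $\|\widehat B_t^{-1}\|=O_P(1)$ and bound $\frac{1}{N}\sum_i(\widetilde\lambda_i\widehat e_{it}-H_1'\lambda_i e_{it})u_{it}$ and $\frac{1}{N}\sum_i(\widetilde\alpha_i-H_2'\alpha_i)u_{it}$: the $\alpha$-piece and the $(\widetilde\lambda_i-H_1'\lambda_i)e_{it}u_{it}$ piece are conditionally mean-zero given $D_I$ (by sample splitting and $\E(u_{it}|\cdot)=0$) with conditional variance $O_P(C_{NT}^{-2}N^{-1})$, hence $O_P(C_{NT}^{-2})$, while the piece $\frac{1}{N}\sum_i\widetilde\lambda_i(\widehat e_{it}-e_{it})u_{it}$ is controlled by Lemma \ref{ld.1a}(ii) for the $H_1'\lambda_i$ part and by Cauchy--Schwarz with Lemma \ref{ld.2a} for the remainder.

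The main obstacle is part (ii), the upper block of $A_{6t}$, because here the leading factor is $B^{-1}=O_P(1)$ rather than a difference of inverses, so the whole rate $C_{NT}^{-2}$ must be extracted from $\frac{1}{N}\sum_i\lambda_i e_{it}\Xi_{it}$ itself; this is precisely where the orthogonalization (mean-zero, serially independent $e_{it}$) and sample-splitting are essential. Using the two decompositions above I would split this into $T_1+T_2+T_3$, where $T_1=[\frac{1}{N}\sum_i\lambda_i\lambda_i'(\widehat e_{it}-e_{it})e_{it}]f_t=O_P(C_{NT}^{-2})$ by Lemma \ref{ld.1a}(ii); $T_2=[\frac{1}{N}\sum_i\lambda_i\lambda_i'e_{it}\widehat\mu_{it}]H_1(H_1^{-1}f_t-\widetilde f_t)$, in which $\frac{1}{N}\sum_i\lambda_i\lambda_i'e_{it}\widehat\mu_{it}=O_P(N^{-1/2}+C_{NT}^{-2})$ (its $l_i'w_t$ part is mean-zero in $e_{it}$, its $\widehat e_{it}-e_{it}$ part is Lemma \ref{ld.1a}(ii)), so $T_2=O_P((N^{-1/2}+C_{NT}^{-2})C_{NT}^{-1})=O_P(C_{NT}^{-2})$. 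The genuinely delicate piece is $T_3=[\frac{1}{N}\sum_i\lambda_i e_{it}\widehat\mu_{it}(H_1'\lambda_i-\dot\lambda_i)']\widetilde f_t$: substituting $\widehat\mu_{it}=l_i'w_t-(\widehat e_{it}-e_{it})$, its first sub-part is a bounded-weight version of Lemma \ref{ld.5}(i)---the Neyman-orthogonality bound for the effect of the preliminary $\dot\lambda_i$, which crucially uses that $e_{it}$ is mean-zero and, for $t\notin I$, independent of $D_I$---giving $O_P(C_{NT}^{-2})$, and its second sub-part is Cauchy--Schwarz against Lemma \ref{ld.1a}(ii) and $\frac{1}{N}\sum_i\|\dot\lambda_i-H_1'\lambda_i\|^2=O_P(C_{NT}^{-2})$. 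Thus the crux of the whole lemma is the $T_3$ bound, i.e. adapting Lemma \ref{ld.5} so that the first-stage estimation error in $\lambda_i$ vanishes fast enough once weighted by the orthogonalized $e_{it}$.
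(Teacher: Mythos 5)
Your proposal is correct and follows essentially the same route as the paper's proof: the same decomposition of $\mu_{it}\lambda_i'f_t-\widehat\mu_{it}\dot\lambda_i'\widetilde f_t$ into an $\widehat e_{it}-e_{it}$ piece, a $\widetilde f_t-H_1^{-1}f_t$ piece, and a $\dot\lambda_i-H_1'\lambda_i$ piece, with Lemma \ref{ld.1a}, the conditional mean-zero/variance argument under sample splitting, and Lemma \ref{ld.6} doing the same work in the same places, and the same identification of the $e_{it}$-weighted $\dot\lambda_i-H_1'\lambda_i$ term (your $T_3$, the paper's $B_{3t}$) as the crux handled by Lemma \ref{ld.5}. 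The only difference is bookkeeping: you aggregate the paper's explicit enumeration of the $B_{dt}$ and $C_{dt}$ terms into a single Cauchy--Schwarz bound on $\frac{1}{N}\sum_i\Xi_{it}^2$, which is a cleaner but mathematically equivalent presentation.
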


\begin{proof}
(i) Term $ A_{2t}$. Given $B_t^{-1}-  B^{-1}=O_P(C_{NT}^{-1})$ and the cross-sectional weak correlations in $u_{it}$, it is easy to see $A_{2t}= O_P(C_{NT}^{-2})$. 

Term $ A_{3t}$. It suffices to prove: 
\begin{eqnarray}\label{ec.25}
&&  \frac{1}{N}\sum_i (\widetilde\lambda_i\widehat e_{it}  - H_1'\lambda_i e_{it} )u_{it}=O_P(C_{NT}^{-2})\cr
&&\frac{1}{N}\sum_i (\widetilde\alpha_i -H_2'\alpha_i)u_{it}=O_P(C_{NT}^{-2}).
\end{eqnarray}
First,  let $\Upsilon_t$ be an $N\times 1$ vector of $e_{it}u_{it}$. Due to the serial independence of $(u_{it}, e_{it})$, we have 
\begin{eqnarray*}
&&\E_I \frac{1}{N}\sum_i( \widetilde\lambda_i 
 - H_1'\lambda_i )e_{it} u_{it}=0,\cr
 &&\Var_I( \frac{1}{N}\sum_i( \widetilde\lambda_i 
 - H_1'\lambda_i )e_{it} u_{it})=\frac{1}{N^2}(\widetilde\Lambda-\Lambda H_1)'\Var_I(\Upsilon_t)(\widetilde\Lambda-\Lambda H_1)\cr
 &\leq& O_P(C_{NT}^{-2}N^{-1})\max_i\sum_j|\Cov(e_{it}u_{it}, e_{jt}u_{jt})|= O_P(C_{NT}^{-2}N^{-1}).
\end{eqnarray*}
Similarly, $\E_I\frac{1}{N}\sum_i (\widetilde\alpha_i -H_2'\alpha_i)u_{it}=0$ and
$\Var_I(\frac{1}{N}\sum_i (\widetilde\alpha_i -H_2'\alpha_i)u_{it})=O_P(C_{NT}^{-2}N^{-1})$. 
\begin{eqnarray}
&& \frac{1}{N}\sum_i( \widetilde\lambda_i 
 - H_1'\lambda_i )e_{it} u_{it}= O_P(C_{NT}^{-1}N^{-1/2})\cr
 &&\frac{1}{N}\sum_i (\widetilde\alpha_i -H_2'\alpha_i)u_{it}=O_P(C_{NT}^{-1}N^{-1/2})\cr
 &&\frac{1}{N}\sum_i \|\widetilde\lambda_i-H_1'\lambda_i\|^2u_{it}^2
=O_P(1)\frac{1}{N}\sum_i \|\widetilde\lambda_i-H_1'\lambda_i\|^2\E_Iu_{it}^2
=O_P(C_{NT}^{-2}).\cr
\end{eqnarray}
 Thus, the first term of (\ref{ec.25}) is 
\begin{eqnarray*}
&&  \frac{1}{N}\sum_i (\widetilde\lambda_i\widehat e_{it}  - H_1'\lambda_i e_{it} )u_{it} 
\leq
(  \frac{1}{N}\sum_i \|\widetilde\lambda_i-H_1'\lambda_i\|^2u_{it}^2)^{1/2}
(  \frac{1}{N}\sum_i  (\widehat e_{it}  -e_{it})^2 )^{1/2}
  \cr
 &&+H_1' \frac{1}{N}\sum_i \lambda_i(\widehat e_{it}  -e_{it})u_{it}
 + \frac{1}{N}\sum_i( \widetilde\lambda_i 
 - H_1'\lambda_i )e_{it} u_{it}\cr
 &\leq& O_P( C_{NT}^{-1})(  \frac{1}{N}\sum_i  (\widehat e_{it}  -e_{it})^2 )^{1/2}+
 O_P( C_{NT}^{-2})+O_P(1)\frac{1}{N}\sum_i \lambda_i(\widehat e_{it}  -e_{it})u_{it}\cr
  &=&O_P( C_{NT}^{-2}).
\end{eqnarray*}
where the last equality follows from Lemma \ref{ld.1a}.

Term $A_{4t}$. Recall that $\mu_{it}=l_i'w_t$. 
    It suffices to prove each of the following terms is $O_P( C_{NT}^{-2})$:  
   \begin{eqnarray}\label{ec.37a}
C_{1t}&=& \frac{1}{N}\sum_i (\widetilde\lambda_i - H_1'\lambda_i) e_{it}  (\widehat e_{it}-e_{it})( \dot\lambda_i-H_1'\lambda_i)'\widetilde f_t  \cr
C_{2t}&=& \frac{1}{N}\sum_i (\widetilde\lambda_i - H_1'\lambda_i)(\widehat  e_{it} -e_{it}) ^2( \dot\lambda_i-H_1'\lambda_i)'\widetilde f_t  \cr
C_{3t}&=& \frac{1}{N}\sum_i H_1'\lambda_i(\widehat  e_{it} -e_{it})^2( \dot\lambda_i-H_1'\lambda_i)'\widetilde f_t  \cr
C_{4t}&=& \frac{1}{N}\sum_i  (\widetilde\lambda_i - H_1'\lambda_i) e_{it} (\widehat e_{it}-e_{it}) \lambda_i'H_1\widetilde f_t  \cr
C_{5t}&=& \frac{1}{N}\sum_i (\widetilde\lambda_i - H_1'\lambda_i)(\widehat  e_{it} -e_{it}) ^2 \lambda_i'H_1\widetilde f_t  \cr
C_{6t}&=& \frac{1}{N}\sum_i  H_1'\lambda_i(\widehat  e_{it} -e_{it}) ^2\lambda_i'H_1\widetilde f_t  \cr
C_{7t}&=&   \frac{1}{N}\sum_i (\widetilde\lambda_i - H_1'\lambda_i) e_{it}\mu_{it} (\dot \lambda_i-H_1'\lambda_i)' \widetilde f_t\cr
C_{8t}&=&   \frac{1}{N}\sum_i  (\widetilde\lambda_i - H_1'\lambda_i)(\widehat  e_{it} -e_{it})\mu_{it} (\dot \lambda_i-H_1'\lambda_i)' \widetilde f_t\cr
C_{9t}&=&   \frac{1}{N}\sum_i H_1'\lambda_i(\widehat  e_{it} -e_{it}) \mu_{it} (\dot \lambda_i-H_1'\lambda_i)' \widetilde f_t\cr
C_{10t}&=& \frac{1}{N}\sum_i  (\widetilde\lambda_i - H_1'\lambda_i) e_{it}\mu_{it} \lambda_i'H_1(\widetilde f_t- H_1^{-1}f_t)\cr
C_{11t}&=& \frac{1}{N}\sum_i (\widetilde\lambda_i - H_1'\lambda_i)(\widehat  e_{it} -e_{it})\mu_{it} \lambda_i'H_1(\widetilde f_t- H_1^{-1}f_t)\cr
C_{12t}&=& \frac{1}{N}\sum_i H_1'\lambda_i(\widehat  e_{it} -e_{it}) \mu_{it} \lambda_i'H_1(\widetilde f_t- H_1^{-1}f_t)\cr
C_{13t}&=& \frac{1}{N}\sum_i (\widetilde\alpha_i -H_2'\alpha_i) (\widehat e_{it}-e_{it})( \dot\lambda_i-H_1'\lambda_i)'\widetilde f_t  \cr
C_{14t}&=& \frac{1}{N}\sum_i(\widetilde\alpha_i -H_2'\alpha_i)  (\widehat e_{it}-e_{it}) \lambda_i'H_1\widetilde f_t  \cr
C_{15t}&=&   \frac{1}{N}\sum_i (\widetilde\alpha_i -H_2'\alpha_i) \mu_{it} (\dot \lambda_i-H_1'\lambda_i)' \widetilde f_t\cr
C_{16t}&=& \frac{1}{N}\sum_i (\widetilde\alpha_i -H_2'\alpha_i) \mu_{it} \lambda_i'H_1(\widetilde f_t- H_1^{-1}f_t).
\end{eqnarray}
The proof follows from repeatedly applying the Cauchy-Schwarz inequality and is straightforward. 
In addition,   it  is also  straightforward to apply   Cauchy-Schwarz to prove that
\begin{equation}\label{ec.32a}  \frac{1}{T}\sum_{t\notin I} \|C_{dt}\|^2=O_P(C_{NT}^{-4})
,\quad d=1,...,16.
\end{equation}

 Term $A_{5t}$. 
 Note that  $$
       A_{5t}= (\widehat B_t^{-1}-B^{-1} )\begin{pmatrix}H_1'\sum_{d=1}^4 B_{dt}\\
       H_2'\sum_{d=5}^8 B_{dt}
       \end{pmatrix}
       $$
 where $ B_{dt}$ are defined below. 
 Given $B_t^{-1}-  B^{-1}=O_P(C_{NT}^{-1})$, it suffices to prove 
 the following terms  are  $O_P( C_{NT}^{-2})$.
   \begin{eqnarray}\label{ed.48}
B_{1t}&=& \frac{1}{N}\sum_i\lambda_i e_{it}  (\widehat e_{it}-e_{it})( \dot\lambda_i-H_1'\lambda_i)'\widetilde f_t  \cr
B_{2t}&=& \frac{1}{N}\sum_i\lambda_i e_{it}  (\widehat e_{it}-e_{it}) \lambda_i'H_1\widetilde f_t  \cr
B_{3t}&=&   \frac{1}{N}\sum_i\lambda_i e_{it}l_i'w_t (\dot \lambda_i-H_1'\lambda_i)' \widetilde f_t\cr
B_{4t}&=& \frac{1}{N}\sum_i\lambda_i e_{it} l_i'w_t \lambda_i'H_1(\widetilde f_t- H_1^{-1}f_t)\cr
B_{5t}&=& \frac{1}{N}\sum_i\alpha_i  (\widehat e_{it}-e_{it})( \dot\lambda_i-H_1'\lambda_i)'\widetilde f_t  
\end{eqnarray}
and that the following terms  are  $O_P( C_{NT}^{-1})$:
   \begin{eqnarray}\label{ed.49}
B_{6t}&=& \frac{1}{N}\sum_i \alpha_i (\widehat e_{it}-e_{it}) \lambda_i'H_1\widetilde f_t  \cr
B_{7t}&=&   \frac{1}{N}\sum_i\alpha_il_i'w_t(\dot \lambda_i-H_1'\lambda_i)' \widetilde f_t\cr
B_{8t}&=& \frac{1}{N}\sum_i\alpha_i l_i'w_t \lambda_i'H_1(\widetilde f_t- H_1^{-1}f_t).
\end{eqnarray}

In fact, $B_{1t}, B_{5,t}\sim B_{8t}$ follow immediately from the Cauchy-Schwarz inequality.  $B_{2t}= O_P(C_{NT}^{-2})$ due to Lemma \ref{ld.1a}. 
By Lemma \ref{ld.5} that   $\frac{1}{\sqrt{N}}\sum_{i}  (H_1' \lambda_i-\dot\lambda_i)e_{it}=O_P(\sqrt{N}C_{NT}^{-2}) .$ So  $t\notin I$, 
   $B_{3t}=O_P(1)   \frac{1}{N}\sum_i\lambda_i e_{it}\mu_i (\dot \lambda_i-H_1'\lambda_i)' 
   =O_P(C_{NT}^{-2}). 
   $
In addition, for each fixed $t$, $\widetilde f_t- H_1^{-1}f_t=O_P(C_{NT}^{-1})$. Thus 
   $$
   B_{4t}=O_P(C_{NT}^{-1}) \frac{1}{N}\sum_i\lambda_i e_{it} \mu_i \lambda_i'=O_P(C_{NT}^{-2}). 
   $$
So $A_{5t}=O_P(C_{NT}^{-2}).$

(ii)   ``upper block" of $A_{6t}$.  
   Note that $l_i'w_t-\widehat{l_i'w_t}= \widehat e_{it}-e_{it}$.
       $$
       A_{6t}= B^{-1} \begin{pmatrix}H_1'\sum_{d=1}^4 B_{dt}\\
       H_2'\sum_{d=5}^8 B_{dt}
       \end{pmatrix}
       $$
           So we only need to look at $\sum_{d=1}^4 B_{dt}$.
   From the proof of (i), 
  we have $B_{dt}=  O_P( C_{NT}^{-2})$, for $d=1,...,4$.   It follows immediately that $\frac{1}{N}\sum_i 
 \lambda_i e_{it} ( l_i'w_t\lambda_i'f_t - \widehat{l_i'w_t} \dot\lambda_i'\widetilde f_t  )=O_P(C_{NT}^{-2})$.
 
 (iii) Lastly, note that the upper block of $A_{1t}$  is determined by the upper blocks of 
$\widehat B_t^{-1}\widehat S_t-B^{-1}S$, and are    both $O_P(C_{NT}^{-2})$ by Lemma \ref{ld.6}.

\end{proof}

  \subsection{Technical lemmas for $\widehat \lambda_i$}
  
    \begin{lem}\label{lc.8add} Suppose $\max_i\|\frac{1}{T}\sum_{s }   f_s  f_s'  (e_{is}^2-\E e_{is}^2)\|=o_P(1)= \max_i\|\frac{1}{T}\sum_{s }   f_s  g_s'  e_{is}\|.$\\
 (i) $\frac{1}{|\mathcal G|_0}\sum_{i\in\mathcal G}\|\widehat D_{i}-D_i\|^2=O_P(C_{NT}^{-2})$.\\
 (ii) $ \max_{i\leq N}\|\widehat D_{i}^{-1}-D_i^{-1}\|=o_P(1)$. \\
  (iii) $\frac{1}{|\mathcal G|_0}\sum_{i\in\mathcal G}\|\widehat D_{i}^{-1}-D_i^{-1}\|^2=O_P(C_{NT}^{-2})$.\\
 \end{lem}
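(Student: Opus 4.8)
The three parts form a chain: the real work is in (i), after which (ii) and (iii) follow almost mechanically from the standard resolvent identity together with a uniform invertibility statement. My plan is therefore to first prove the average Frobenius bound in (i), then separately establish that $\min_{i}\psi_{\min}(D_i)$ is bounded away from zero with probability approaching one and that $\max_{i\le N}\|\widehat D_i-D_i\|=o_P(1)$, which together deliver (ii); finally I combine (i) and (ii) to get (iii). Throughout I will use the block structure of $\widehat D_i$ and $D_i$ from \eqref{ec.33} and substitute the estimated objects $(\widehat f_s,\widehat g_s,\widehat e_{is})$ by their rotated population counterparts $(H_ff_s,H_gg_s,e_{is})$, controlling the replacement errors with the averaged convergence rates already available for the estimated factors and for $\widehat e_{is}-e_{is}$.

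\textbf{Proof of (i).} I would decompose $\widehat D_i-D_i$ into its $(1,1)$, $(1,2)/(2,1)$, and $(2,2)$ blocks and bound $\frac{1}{|\mathcal G|_0}\sum_{i\in\mathcal G}$ of the squared norm of each. For the $(1,1)$ block I write
$$
\frac{1}{T_0}\sum_{s\notin I}\big(\widehat f_s\widehat f_s'\widehat e_{is}^2-H_ff_sf_s'H_f'\E e_{is}^2\big)
$$
and insert intermediate terms, splitting $\widehat f_s\widehat f_s'-H_ff_sf_s'H_f'$ off from the scalar factor and splitting $\widehat e_{is}^2-\E e_{is}^2=(\widehat e_{is}^2-e_{is}^2)+(e_{is}^2-\E e_{is}^2)$. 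The factor-replacement pieces are handled by an averaged bound $\frac{1}{T_0}\|\widehat F-FH_f'\|_F^2=O_P(C_{NT}^{-2})$, which I would obtain from the expansion in the proof of Proposition \ref{pc.1} exactly as Lemma \ref{lc.3}(i) does for $\widetilde F$ (the leading stochastic term averages to $O_P(C_{NT}^{-2})$). The $\widehat e_{is}^2-e_{is}^2$ pieces are controlled by Lemmas \ref{ld.1a} and \ref{ld.2a} (in particular $\frac1N\sum_i(\widehat e_{is}-e_{is})^2e_{is}^2=O_P(C_{NT}^{-2})$ type bounds, averaged suitably over $s$), and the centering term $\frac{1}{T_0}\sum_s f_sf_s'(e_{is}^2-\E e_{is}^2)$ is handled by the hypothesis $\max_i\|\frac1T\sum_s f_sf_s'(e_{is}^2-\E e_{is}^2)\|=o_P(1)$ sharpened via the conditional serial independence of $e_{is}$ (so its $\frac1{|\mathcal G|_0}\sum_i$-variance is $O_P(C_{NT}^{-2})$). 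The off-diagonal blocks converge to the zero population block: after replacing $(\widehat f_s,\widehat g_s,\widehat e_{is})$ the leading term is $\frac{1}{T_0}\sum_s H_gg_sf_s'H_f'e_{is}$, which has conditional mean zero given $(F,G)$ and, by Assumption \ref{a4.1} and the hypothesis $\max_i\|\frac1T\sum_s f_sg_s'e_{is}\|=o_P(1)$, squared-norm of order $O_P(C_{NT}^{-2})$ after averaging over $i\in\mathcal G$. The $(2,2)$ block $\frac{1}{T_0}\sum_s(\widehat g_s\widehat g_s'-H_gg_sg_s'H_g')$ is the simplest, using $\frac{1}{T_0}\|\widehat G-GH_g'\|_F^2=O_P(C_{NT}^{-2})$ and that it carries no idiosyncratic weight.

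\textbf{Proof of (ii) and (iii); main obstacle.} For (ii) I first note that the diagonal blocks of $D_i$ equal $H_f(\frac{1}{T_0}\sum_s f_sf_s'\E e_{is}^2)H_f'$ and $H_g(\frac{1}{T_0}\sum_s g_sg_s')H_g'$, so by Assumption \ref{a3.2} (eigenvalues of $\frac1T\sum f_sf_s'$ and $\frac1T\sum g_sg_s'$ bounded below), $\min_i\E e_{is}^2>c$, and the invertibility of $H_f,H_g$ with bounded singular values, I get $\min_i\psi_{\min}(D_i)>c$ w.p.a.1. A uniform bound $\max_{i\le N}\|\widehat D_i-D_i\|=o_P(1)$ follows from the same block decomposition as in (i) but using the uniform hypotheses in the statement together with the uniform controls $\max_s\|\widehat f_s\|,\max_s\|\widehat g_s\|=O_P(1)+\max_s(\|f_s\|+\|g_s\|)$ and $\max_{is}|\widehat e_{is}-e_{is}|\max_{is}|e_{is}|=O_P(1)$ from Lemma \ref{ld.1a}. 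These two facts give $\max_i\|\widehat D_i^{-1}\|=O_P(1)$, and then the identity $\widehat D_i^{-1}-D_i^{-1}=-\widehat D_i^{-1}(\widehat D_i-D_i)D_i^{-1}$ yields (ii). Part (iii) is immediate: with $\max_i\|\widehat D_i^{-1}\|=O_P(1)$ and $\max_i\|D_i^{-1}\|=O_P(1)$ the same identity gives $\frac{1}{|\mathcal G|_0}\sum_{i\in\mathcal G}\|\widehat D_i^{-1}-D_i^{-1}\|^2\le O_P(1)\,\frac{1}{|\mathcal G|_0}\sum_{i\in\mathcal G}\|\widehat D_i-D_i\|^2=O_P(C_{NT}^{-2})$ by (i). The main obstacle will be the bookkeeping in (i): propagating the $C_{NT}^{-2}$ rate through the cross-products of two independently estimated errors ($\widehat f_s-H_ff_s$ and $\widehat e_{is}-e_{is}$) while keeping the averaging over $i\in\mathcal G$ and the sample-split independence of $e_{is}$ from $(\widehat f_s,\widehat g_s)$ aligned, since it is precisely this independence that prevents the off-diagonal and idiosyncratic-centering terms from degrading the rate to $o_P(1)$.
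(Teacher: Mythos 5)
Your proposal matches the paper's own argument: the same block decomposition of $\widehat D_i-D_i$ with substitution of $(H_ff_s,H_gg_s,e_{is})$, the same use of Lemmas \ref{ld.1a}--\ref{ld.2a} and the averaged rate $\frac{1}{T_0}\sum_{s\notin I}\|\widehat f_s-H_ff_s\|^2=O_P(C_{NT}^{-2})$ for part (i), the uniform $o_P(1)$ bound via the stated hypotheses plus lower-bounded eigenvalues of $D_i$ for part (ii), and the resolvent identity combined with (i) for part (iii). The only cosmetic difference is that you derive $\min_i\psi_{\min}(D_i)>c$ from the primitive assumptions while the paper simply invokes it.
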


\begin{proof} $\widehat D_{i}-D_i$ is a two-by-two block matrix. The first block is 
\begin{eqnarray*}
 d_{1i}&:=&\frac{1}{T_0}\sum_{s\notin I}\widehat f_s\widehat f_s'\widehat e_{is}^2-H_ff_s  f_s'H_f' \E e_{is}^2\cr
 &=&\frac{1}{T_0}\sum_{s\notin I}(\widehat f_s-H_ff_s)(\widehat f_s-H_ff_s)'[(\widehat e_{is}-e_{is})^2+ (\widehat e_{is}-e_{is})e_{is} + e_{is}^2]\cr
 &&
 +\frac{1}{T_0}\sum_{s\notin I}(\widehat f_s-H_ff_s)  f_s'H_f'[ (\widehat e_{is}-e_{is})^2+(\widehat e_{is}-e_{is})e_{is}  + e_{is}^2  ]   \cr
 &&
 +\frac{1}{T_0}\sum_{s\notin I}H_f f_s(\widehat f_s-H_ff_s)'[(\widehat e_{is}-e_{is})^2+(\widehat e_{is}-e_{is})e_{is}  +e_{is}^2 ]
 \cr
 &&
  +\frac{1}{T_0}\sum_{s\notin I}H_f f_s f_s'H_f'[  (\widehat e_{is}-e_{is})^2  +(\widehat e_{is}-e_{is})e_{is}]
  +\frac{1}{T_0}\sum_{s\notin I}  H_ff_s  f_s'H_f'  (e_{is}^2-\E e_{is}^2).
  \end{eqnarray*}
   It follows from Lemma \ref{ld.2a} that $\frac{1}{NT}\sum_{it}(\widehat e_{it}-e_{it})^ 2e_{it}^2=O_P(C_{NT}^{-2})     $  \\  $\frac{1}{NT}\sum_{it}(\widehat e_{it}-e_{it})^ 4= O_P(C_{NT}^{-4})$ and Lemma \ref{ld.1a} $\max_{it}|\widehat e_{it}-e_{it}|e_{it}=O_P(1)$.
   \begin{eqnarray*}
\frac{1}{|\mathcal G|_0}\sum_{i\in\mathcal G}\|   d_{1i}\|^2&\leq&
O_P(1)\frac{1}{T_0}\sum_{s\notin I}\|\widehat f_s-H_ff_s\|^2(1+\|f_t\|^2+\|f_t\|^2\frac{1}{|\mathcal G|_0}\sum_{i\in\mathcal G}e_{it}^4)
\cr
&&+O_P(C_{NT}^{-2})  =O_P(C_{NT}^{-2})  .
   \end{eqnarray*}

   The second block is 
  \begin{eqnarray*}
 d_{2i}&:=& \frac{1}{T_0}\sum_{s\notin I} \widehat f_s\widehat g_s'\widehat e_{is}
=\frac{1}{T_0}\sum_{s\notin I}( \widehat f_s-H_ff_s)(\widehat g_s-H_gg_s)'\widehat e_{is}
 \cr
 &&+\frac{1}{T_0}\sum_{s\notin I}( \widehat f_s-H_ff_s)g_s'H_g'\widehat e_{is}
+\frac{1}{T_0}\sum_{s\notin I}H_f f_s(\widehat g_s-H_gg_s)'\widehat e_{is}
 \cr
 &&+\frac{1}{T_0}\sum_{s\notin I}H_f f_s g_s'H_g'(\widehat e_{is}-e_{is})
 +\frac{1}{T_0}\sum_{s\notin I}H_f f_s g_s'H_g'e_{is}
  \end{eqnarray*}
  So $\frac{1}{|\mathcal G|_0}\sum_{i\in\mathcal G}\|   d_{2i}\|^2=O_P(C_{NT}^{-2})  .$
  The third block is similar. The fourth block is $ d_{4i}= \frac{1}{T_0}\sum_{s\notin I} \widehat g_s\widehat g_s' - H_gg_s g_s'H_g'$. So $\frac{1}{|\mathcal G|_0}\sum_{i\in\mathcal G}\|   d_{4i}\|^2=O_P(C_{NT}^{-2})  .$

 Hence $\frac{1}{|\mathcal G|_0}\sum_{i\in\mathcal G}\|\widehat D_{i} -D_i \|^2=O_P(C_{NT}^{-2})$.

 (ii)   Note that 
\begin{eqnarray*}
 \max_i\|d_{1i}\|&=&o_P(1)+O_P(1) \max_i\|\frac{1}{T}\sum_{s }   f_s  f_s'  (e_{is}^2-\E e_{is}^2)\|=o_P(1)\cr
 \max_i\|d_{2i}\|&=&o_P(1)+O_P(1) \max_i\|\frac{1}{T}\sum_{s }   f_s  g_s'  e_{is}\|=o_P(1).
 \end{eqnarray*}
 So $\max_i\| 
 \widehat D_{i} -D_i\|=o_P(1).$
 Now because $\min_i\lambda_{\min}(D_i)\geq c$,   with probability approaching one
 $\min_i\lambda_{\min}(\widehat D_i)\geq\min_i\lambda_{\min}( D_i) -\max_i\| 
 \widehat D_{i} -D_i\|>c/2$.  So $\max_i\|D_i^{-1}\|+ \max_i\|\widehat D_i^{-1}\|=O_P(1)$.  So
 $$
\max_i \|\widehat D_{i}^{-1}-D_i^{-1}\| 
\leq  \max_i\|\widehat D_i^{-1}\| \max_i\|  D_i^{-1}\|\max_i \|\widehat D_{i}-D_i\|=o_P(1). 
 $$
 
 (iii) Because $\min_i\lambda_{\min}(D_i)\geq c$, 
 \begin{eqnarray*}
 \frac{1}{|\mathcal G|_0}\sum_{i\in\mathcal G}\|\widehat D_{i}^{-1}-D_i^{-1}\|^2 
&\leq& \frac{1}{|\mathcal G|_0}\sum_{i\in\mathcal G}\|\widehat D_{i}^{-1}\|^2\|D_{i}^{-1}\|^2\|\widehat D_{i}-D_i\|^2 \cr
&\leq &\max_i [\|\widehat D_{i}^{-1}-D_i^{-1}\| +\|D_i^{-1}\|]^2\frac{C}{|\mathcal G|_0}\sum_{i\in\mathcal G} \|\widehat D_{i}-D_i\|^2\cr
&=&O_P(1)\frac{1}{|\mathcal G|_0}\sum_{i\in\mathcal G}\|\widehat D_{i} -D_i \|^2=O_P(C_{NT}^{-2}).
\end{eqnarray*}
   
 \end{proof}

\begin{lem}\label{lc.8}
   $\frac{1}{|\mathcal G|_0}\sum_{i\in\mathcal G}\| \frac{1}{T}\sum_{s\notin I} (\widehat f_s -H_ff_s) e_{is} f_s' \|^2=O_P(C_{NT}^{-4}).$
\end{lem}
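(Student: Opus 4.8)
The plan is to work from the expansion (\ref{ed.24}) of $\widehat f_s$, whose upper block, after the factor‑indeterminacy bias has been absorbed into $H_f$ (Proposition \ref{pc.1}) and using that $B$ is block diagonal, reads
$$
\widehat f_s-H_ff_s=B_{11}^{-1}\frac{1}{N}\sum_{j}H_1'\lambda_j e_{js}u_{js}+\sum_{d=1}^{6}a_{ds},\qquad B_{11}=\frac{1}{N}\sum_j H_1'\lambda_j\lambda_j'H_1\,\E e_{js}^2,
$$
where $a_{ds}$ is the upper block of $A_{ds}$. Substituting this into $\tfrac1T\sum_{s\notin I}(\widehat f_s-H_ff_s)e_{is}f_s'$ splits the target into a leading score contribution and six remainder contributions, which I would bound separately, using throughout that $\{e_{is},u_{is}:s\notin I\}$ is independent of the $D_I$‑measurable quantities $(\widetilde\lambda,\widetilde\alpha,\widehat f,H_1,H_f,B_{11})$ and serially independent across $s$.

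For the score term I would condition on $\mathcal W=(F,G,W)$ and on $D_I$ and compute a second moment. With $W_i:=\tfrac1{TN}\sum_{s\notin I}\sum_j\lambda_je_{js}u_{js}e_{is}f_s'$, serial independence together with $\E(u_{js}\mid\mathcal W,E)=0$ (Assumption \ref{a4.1}(i)--(ii)) annihilates every $s\neq s'$ cross term, leaving $\E[\|W_i\|_F^2\mid\mathcal W]=\tfrac1{T^2N^2}\sum_{s\notin I}\|f_s\|^2\sum_{j,j'}(\lambda_j'\lambda_{j'})\E[e_{js}u_{js}e_{j's}u_{j's}e_{is}^2\mid\mathcal W]$. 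The inner double sum is $O(N)$: by Cauchy--Schwarz it is dominated by $\E[\|\tfrac1{\sqrt N}\sum_j\lambda_je_{js}u_{js}\|^4\mid\mathcal W]^{1/2}\E[e_{is}^4\mid\mathcal W]^{1/2}$, which is $O(N)$ by the fourth‑moment bound on $\tfrac1{\sqrt N}\sum_jc_j\omega_{js}$ in Assumption \ref{a4.1}(iv) and the eighth‑moment bound on $e_{is}$ in Assumption \ref{a3.8}(iv). Since $\sum_{s\notin I}\|f_s\|^2=O_P(T)$ and $\|B_{11}^{-1}\|\,\|H_1\|=O_P(1)$, this yields $O_P((NT)^{-1})$ per $i$, and $(NT)^{-1}\le C_{NT}^{-4}$ finishes the score piece after averaging over $i\in\mathcal G$.

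For each remainder I would apply Cauchy--Schwarz in $s$: as $a_{ds}$ is free of $i$ and $\|a_{ds}e_{is}f_s'\|_F=\|a_{ds}\|\,|e_{is}|\,\|f_s\|$,
$$
\frac{1}{|\mathcal G|_0}\sum_{i\in\mathcal G}\Big\|\frac1T\sum_{s\notin I}a_{ds}e_{is}f_s'\Big\|_F^2\le\Big(\frac1T\sum_{s\notin I}\|a_{ds}\|^2\Big)\cdot\frac{1}{|\mathcal G|_0}\sum_{i\in\mathcal G}\frac1T\sum_{s\notin I}e_{is}^2\|f_s\|^2 ,
$$
and the last factor is $O_P(1)$ by $\E(e_{is}^2\|f_s\|^2)<C$. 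Everything thus reduces to the averaged estimates $\tfrac1T\sum_{s\notin I}\|a_{ds}\|^2=O_P(C_{NT}^{-4})$, $d=1,\dots,6$, which I would assemble from ingredients already in hand: $a_{2s},a_{3s}$ from $\tfrac1T\sum_s\|\widehat B_s^{-1}-B^{-1}\|^2=O_P(C_{NT}^{-2})$ (Lemma \ref{lc.6}(ii)) and the variance bounds behind (\ref{ec.25}); $a_{4s},a_{5s},a_{6s}$ from the $L^2$‑averaged versions of the $B_{ds}$ and $C_{ds}$ estimates, i.e.\ from Lemma \ref{ld.5}(ii), Lemma \ref{lc.2} and Lemma \ref{ld.2a}, which are exactly the objects controlling the interaction of $(\dot\lambda_i-H_1'\lambda_i)$, $(\widehat e_{is}-e_{is})$ and $(\widetilde f_s-H_1^{-1}f_s)$ with $e_{is}$ for $s\notin I$; and $a_{1s}$ from the block structure of $B,S$ together with Lemma \ref{lc.6}(ii)--(iii).

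The main obstacle is the score‑term second moment: it is the one genuinely new estimate, and it hinges on simultaneously exploiting serial independence (to collapse to $s=s'$), the conditional mean‑zero property of $u$ (to discard cross terms \emph{before} invoking cross‑sectional structure), and weak cross‑sectional dependence plus the sample‑splitting independence of the $s\notin I$ block (so that $B_{11}^{-1}$ and $H_1$ are fixed under $\E[\cdot\mid\mathcal W,D_I]$). Within the remainder analysis the most delicate bookkeeping is the cross term $(\widehat B_s^{-1}-B^{-1})(\widehat S_s-S)$ inside $a_{1s}$, since a crude product bound only gives $o_P(C_{NT}^{-2})$; I would instead separate $\widehat S_s-S$ into its fast upper block $\Delta_{s1}$ (with $\tfrac1T\sum_s\|\Delta_{s1}\|^2=O_P(C_{NT}^{-4})$) and its slower lower block $\Delta_{s2}$, and pair the latter against the off‑diagonal coupling of $\widehat B_s-B$, which is itself mean‑zero across $i$ and hence $O_P(N^{-1/2})$ on average, so that $N^{-1}\le C_{NT}^{-2}$ restores the sharp $C_{NT}^{-4}$ rate.
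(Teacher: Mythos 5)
Your overall architecture is the paper's: expand $\widehat f_s-H_ff_s$ via (\ref{ed.24}) into the score $B^{-1}\frac{1}{N}\sum_j\lambda_je_{js}u_{js}$ plus the upper blocks $a_{ds}$ of $A_{ds}$, handle the score by a conditional second-moment computation, and treat $a_{1s}$ specially by splitting $\widehat S_s-S$ into its fast upper block and slow lower block. Your score-term bound is correct and essentially equivalent to the paper's (the paper passes through $\|\E(u_tu_t'\mid\mathcal W,E)\|<C$ where you use the fourth-moment bound of Assumption \ref{a4.1}(iv) via Cauchy--Schwarz; both give the inner sum $O(N)$ and hence $O_P((NT)^{-1})\le O_P(C_{NT}^{-4})$).

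The gap is in the blanket reduction
$\frac{1}{|\mathcal G|_0}\sum_{i}\|\frac1T\sum_{s}a_{ds}e_{is}f_s'\|^2\le(\frac1T\sum_{s}\|a_{ds}\|^2)\cdot O_P(1)$
together with the claim $\frac1T\sum_{s}\|a_{ds}\|^2=O_P(C_{NT}^{-4})$ for all $d$. This fails for $d=2$ (and the analogous pieces of $d=5$). There $a_{2s}$ is the product of $\|\widehat B_s^{-1}-B^{-1}\|$ and $\|v_s\|$ with $v_s=\frac1N\sum_j(\lambda_je_{js}u_{js},\alpha_ju_{js})'$, and each factor is controlled only in time-average mean square: $\frac1T\sum_s\|\widehat B_s^{-1}-B^{-1}\|^2=O_P(C_{NT}^{-2})$ and $\frac1T\sum_s\|v_s\|^2=O_P(N^{-1})$. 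Since the two factors both depend on the time-$s$ data (so you cannot condition one away given $D_I$), Cauchy--Schwarz in $s$ on their product gives at best $(\frac1T\sum_s\|\widehat B_s^{-1}-B^{-1}\|^4)^{1/2}(\frac1T\sum_s\|v_s\|^4)^{1/2}=o_P(C_{NT}^{-1}N^{-1})$, which exceeds $C_{NT}^{-4}$ whenever $\min\{N,T\}^{3}>N^{2}$ (e.g.\ $N\asymp T$). The needed bound therefore does not follow from Lemma \ref{lc.6}(ii) plus the separate score estimates. The fix --- and what the paper actually does --- is to split the Cauchy--Schwarz the other way: bound $(\frac1T\sum_s\|e_{is}f_s\|\,\|\widehat B_s^{-1}-B^{-1}\|\,\|v_s\|)^2$ by $\frac1T\sum_s\|\widehat B_s^{-1}-B^{-1}\|^2$ times $\frac1T\sum_s\|e_{is}f_s\|^2\|v_s\|^2$, and show by a direct conditional-expectation computation (using the weak cross-sectional dependence between $e_{is}$ and $\omega_{js}$) that the latter, averaged over $i\in\mathcal G$, is $O_P(N^{-1})$; the product $C_{NT}^{-2}N^{-1}\le C_{NT}^{-4}$ then closes the argument. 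The same pairing is needed inside $a_{5s}$, where the $B_{ds}$ factors for $d=6,7,8$ are only $O_P(C_{NT}^{-1})$ in time-average mean square. Your treatment of $a_{1s}$ is in the right spirit, but there too the decisive step is keeping $\|e_{is}f_s\|$ attached to the slower block rather than absorbing it into a global $O_P(1)$ factor.
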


This lemma is needed to prove the performance for $\widehat\lambda_i$, which controls the effect of $\widehat f_s-H_ff_s$ on the estimation of $\lambda_i$. 

\begin{proof}

  Use (\ref{ed.24}),  $\frac{1}{|\mathcal G|_0}\sum_{i\in\mathcal G} \|\frac{1}{T}\sum_{s\notin I} (\widehat f_s -H_ff_s) e_{is} f_s'\|^2$ is bounded by, up to a multiplier of order $O_P(1)$, 
\begin{eqnarray*}
	\frac{1}{|\mathcal G|_0}\sum_{i\in\mathcal G} \| \frac{1}{T}\sum_{t\notin I}  \frac{1}{N}\sum_{j=1}^N\lambda_j e_{jt}u_{jt}e_{it} f_t'\|^2+ \sum_{d=1}^6 \frac{1}{|\mathcal G|_0}\sum_{i\in\mathcal G} \|\frac{1}{T}\sum_{t\notin I}  a_{dt}    e_{it}    f_t'\|^2.
\end{eqnarray*}
where $a_{dt}$ is the upper block of $A_{dt}$.   We shall assume $\dim(f_t)=1$ without loss of generality.  The proof for $ \frac{1}{|\mathcal G|_0}\sum_{i\in\mathcal G}\| \frac{1}{T}\sum_{t\notin I}   a_{1t}    e_{it}    f_t'\|^2$ is the hardest, and we shall present it at    last. 

Step 1. Show the first term.

Let $\nu_t=(\lambda_1e_{1t},...,\lambda_Ne_{Nt})'$. Then
\begin{eqnarray*}
	&&\E     \frac{1}{|\mathcal G|_0}\sum_{i\in\mathcal G}        \| \frac{1}{T}\sum_{t\notin I}  \frac{1}{N}\sum_{j=1}^N\lambda_j e_{jt}u_{jt}e_{it} f_t'\|^2 
	=\frac{1}{N^2T^2}
\frac{1}{|\mathcal G|_0}\sum_{i\in\mathcal G}\sum_{t\notin I} \E (  \nu_{t}'u_{t}e_{it} f_t) ^2	\cr
&\leq& \frac{1}{N^2T^2}
\frac{1}{|\mathcal G|_0}\sum_{i\in\mathcal G}\sum_{t\notin I} \E  f_t^2 \nu_{t}'\E(u_{t}u_t'|E, F)\nu_te_{it}  ^2	\leq  \frac{1}{N^2T^2}
\frac{1}{|\mathcal G|_0}\sum_{i\in\mathcal G}\sum_{t\notin I}  \sum_{j=1}^N\E  f_t^2 \lambda_j^2e_{jt}^2e_{it}  ^2\cr
&=&O(N^{-1}T^{-1})= O(C_{NT}^{-4}).
\end{eqnarray*}

Step 2. Show  $\sum_{d=2}^6 \frac{1}{|\mathcal G|_0}\sum_{i\in\mathcal G} \|\frac{1}{T}\sum_{t\notin I}  a_{dt}    e_{it}    f_t'\|^2.$

Up to a multiplier of order $O_P(1)$, by Lemma \ref{lc.6}, 
\begin{eqnarray*}
 && \frac{1}{|\mathcal G|_0}\sum_{i\in\mathcal G}\|  \frac{1}{T}\sum_{t\notin I}   a_{2t}    e_{it}    f_t'\|^2\cr
 &\leq& 
   \frac{1}{|\mathcal G|_0}\sum_{i\in\mathcal G}|  \frac{1}{T}\sum_{t\notin I}     \| e_{it}    f_t'\| \|\widehat B_t^{-1}- B^{-1}\| [\|\frac{1}{N}\sum_j\lambda_je_{jt}u_{jt}\| + \| \frac{1}{N}\sum_j \alpha_ju_{jt}\|]|^2\cr
     &\leq&
    \frac{1}{T}\sum_{t\notin I} \| \widehat B_t-  B\|^2
  \frac{1}{|\mathcal G|_0}\sum_{i\in\mathcal G}    \frac{1}{T}\sum_{t\notin I}      \| e_{it}    f_t'\| ^2  [\|\frac{1}{N}\sum_j\lambda_je_{jt}u_{jt}\| + \| \frac{1}{N}\sum_j \alpha_ju_{jt}\|]    ^2   \cr
     &=&O_P(C_{NT}^{-4})\cr
   &&   \frac{1}{|\mathcal G|_0}\sum_{i\in\mathcal G}\|    \frac{1}{T}\sum_{t\notin I}   a_{3t}    e_{it}    f_t'\|^2\cr
   &\leq&
        \max_{t\leq T} \|\widehat B_t^{-1}\| ^2 \frac{1}{|\mathcal G|_0}\sum_{i\in\mathcal G} (\frac{1}{T}\sum_{t\notin I}  \|   e_{it}    f_t'\| \|\frac{1}{N}\sum_j
   (\widetilde\lambda_j\widehat e_{jt}  - H_1'\lambda_j e_{jt} )u_{jt}\|    )^2    \cr
   &&+ 
             \max_{t\leq T} \|\widehat B_t^{-1}\|^2 \frac{1}{|\mathcal G|_0}\sum_{i\in\mathcal G}(  \frac{1}{T}\sum_{t\notin I}  \|   e_{it}    f_t'\|  \|\frac{1}{N}\sum_j
   (\widetilde\alpha_j -H_2'\alpha_j)u_{jt}  \|)^2\cr
   &\leq& O_P(1)   \frac{1}{T}\sum_{t\notin I}  \|\frac{1}{N}\sum_j
   (\widetilde\lambda_j - H_1'\lambda_j ) e_{jt} u_{jt}\|^2    +O_P(1)   \frac{1}{T}\sum_{t\notin I} \|\frac{1}{N}\sum_j (\widetilde\alpha_j -H_2'\alpha_j)u_{jt} 
     \|^2   \cr
   &&+O_P(1) \frac{1}{T}\sum_{t\notin I}   \|\frac{1}{N}\sum_j
   (\widetilde\lambda_j - H_1'\lambda_j )(\widehat e_{jt} -e_{jt})u_{jt}\|  ^2\cr
   &&+ O_P(1) \frac{1}{T}\sum_{t\notin I}   \|\frac{1}{N}\sum_j
 \lambda_j (\widehat e_{jt} -e_{jt})u_{jt}\| ^2 =O_P(C_{NT}^{-4}).
     \end{eqnarray*}
     The first term is bounded by, for $\omega_{it}=e_{it}u_{it}$, $$ 
   O_P(1)    \frac{1}{T}\sum_{t\notin I}  \frac{1}{N^2} 
  (\widetilde\Lambda - \Lambda H_1 ) '   \Var_I(\omega_{t})(\widetilde\Lambda - \Lambda H_1 ) 
  \leq    O_P(1)     \frac{1}{N^2} 
  \|\widetilde\Lambda - \Lambda H_1 \|^2=O_P(C_{NT}^{-4}).
     $$
     The second term  $  \frac{1}{T}\sum_{t\notin I} \|\frac{1}{N}\sum_j (\widetilde\alpha_j -H_2'\alpha_j)u_{jt} 
     \|^2$ is bounded similarly.  The third term follows from Cauchy-Schwarz. 
     The last term follows from Lemma \ref{ld.2a}. 
     Next, 
     \begin{eqnarray*}
      \frac{1}{|\mathcal G|_0}\sum_{i\in\mathcal G}\|    \frac{1}{T}\sum_{t\notin I}   a_{4t}    e_{it}    f_t'\|^2
      \leq    \max_{t\leq T} \|\widehat B_t^{-1}\|  ^2 \sum_{d=1}^{16}\frac{1}{|\mathcal G|_0}\sum_{i\in\mathcal G}(\frac{1}{T}\sum_{t\notin I}  \|   e_{it}    f_t'\| \|  C_{dt} \|)^2=O_P(C_{NT}^{-4}),
      \end{eqnarray*}
     where $C_{dt}$'s are defined in the proof of Lemma \ref{ld.9}.
 Applying the simple  Cauchy-Schwarz  proves $ \frac{1}{|\mathcal G|_0}\sum_{i\in\mathcal G}(\frac{1}{T}\sum_{t\notin I}  \|   e_{it}    f_t'\| \|  C_{dt} \|)^2=O_P(C_{NT}^{-4}) $ for all $d\leq16.$
     
  Next,  for $B_{dt}$ defined in the proof of Lemma \ref{ld.9}, repeatedly use Cauchy-Schwarz, 
\begin{eqnarray*} 
  \frac{1}{|\mathcal G|_0}\sum_{i\in\mathcal G}\|\frac{1}{T}\sum_{t\notin I}   a_{5t}    e_{it}    f_t'\|^2&\leq&O_P(C_{NT}^{-2}) \sum_{d=1}^8   \frac{1}{|\mathcal G|_0}\sum_{i\in\mathcal G}
 \frac{1}{T}\sum_{t\notin I}   \|  B_{dt}  e_{it}    f_t' \|^2=O_P(C_{NT}^{-4}).
         \end{eqnarray*}
 Also,
 \begin{eqnarray*} 
  \frac{1}{|\mathcal G|_0}\sum_{i\in\mathcal G}\|\frac{1}{T}\sum_{t\notin I}   a_{6t}    e_{it}    f_t'\|^2&\leq& \sum_{d=1}^4   \frac{1}{|\mathcal G|_0}\sum_{i\in\mathcal G}
 \frac{1}{T}\sum_{t\notin I}   \|  B_{dt}  e_{it}    f_t' \|^2=O_P(C_{NT}^{-4}), \quad \text{where } \cr
   \frac{1}{|\mathcal G|_0}\sum_{i\in\mathcal G}
 \frac{1}{T}\sum_{t\notin I}   \|  B_{1t}  e_{it}    f_t' \|^2&=&O_P(C_{NT}^{-4})  \text{ Cauchy-Schwarz}\cr
  \frac{1}{|\mathcal G|_0}\sum_{i\in\mathcal G}
 \frac{1}{T}\sum_{t\notin I}   \|  B_{2t}  e_{it}    f_t' \|^2&\leq& O_P(1)
 \frac{1}{T}\sum_{t\notin I}    |       \frac{1}{N}\sum_j\lambda_j^2 e_{jt}  (\widehat e_{jt}-e_{jt})   |^2 =O_P(C_{NT}^{-4})\cr
 && \text{(Lemma \ref{ld.2a})} \cr
  \frac{1}{|\mathcal G|_0}\sum_{i\in\mathcal G}
 \frac{1}{T}\sum_{t\notin I}   \|  B_{3t}  e_{it}    f_t' \|^2&\leq& O_P(1)
   \frac{1}{T}\sum_{t\notin I}   \|       \frac{1}{N}\sum_jl_j \lambda_je_{jt}  (\dot \lambda_j-H_1'\lambda_j)  \|^2  =O_P(C_{NT}^{-4})\cr
 && \text{(Lemma \ref{ld.5})} \cr
   \frac{1}{|\mathcal G|_0}\sum_{i\in\mathcal G} \frac{1}{T}\sum_{t\notin I}   \|  B_{4t}  e_{it}    f_t' \|^2&\leq& O_P(C_{NT}^{-2}) \frac{1}{T}\sum_{t\notin I}   \|     \frac{1}{N}\sum_j\lambda_j ^2l_je_{jt} \|^2=O_P(C_{NT}^{-4}).
         \end{eqnarray*}

Step 3. Finally, we bound   $  \frac{1}{|\mathcal G|_0}\sum_{i\in\mathcal G} \|\frac{1}{T}\sum_{t\notin I}  a_{1t}    e_{it}    f_t'\|^2.$

  Note that 
$$
     A_{1t}=  (\widehat B_t^{-1}\widehat S_t-  B^{-1} S )\begin{pmatrix}
 H_1^{-1}f_t\\
 H_2^{-1}g_t
\end{pmatrix}
:= (A_{1t,a} +A_{1t,b}+A_{1t,c}) \begin{pmatrix}
 H_1^{-1}f_t\\
 H_2^{-1}g_t
\end{pmatrix},
$$
where
\begin{eqnarray*}
A_{1t,a}&=& (\widehat  B_t^{-1}-  B^{-1}) (\widehat S_t-S) \cr
A_{1t,b}&=& (\widehat  B_t^{-1}-  B^{-1})  S \cr
  A_{1t,c}&=&B^{-1} (\widehat S_t-S).
\end{eqnarray*}
     Let $(a_{1t,a}, a_{1t,b}, a_{1t,c})$ respectively be the upper blocks of $(A_{1t,a}, A_{1t,b},A_{1t,c})$. Then 
      \begin{eqnarray*}
    \frac{1}{|\mathcal G|_0}\sum_{i\in\mathcal G} \|	\frac{1}{T}\sum_{t\notin I}   a_{1t}    e_{it}    f_t'\|^2&\leq&  \frac{1}{|\mathcal G|_0}\sum_{i\in\mathcal G} ( \frac{1}{T}\sum_{t\notin I}  (\|a_{1t,a}\|+\|a_{1t,b}\|+\|a_{1t,c}\|)   \|e_{it}    f_t\| (\|f_t\|+\|g_t\|) )^2.
   \end{eqnarray*}
   
     By   the Cauchy-Schwarz and Lemma \ref{lc.6}, and $B$ is a block diagonal matrix, 
    \begin{eqnarray*}
 && \frac{1}{|\mathcal G|_0}\sum_{i\in\mathcal G}[ \frac{1}{T}\sum_{t\notin I} \|a_{1t,b}\|  \|e_{it}    f_t\| (\|f_t\|+\|g_t\|)]^2\leq  O_P(1)  \frac{1}{T}\sum_{t\notin I} \|A_{1t,b}\|  ^2 \cr
 &\leq& O_P(\|S\|^2)   \frac{1}{T}\sum_{t\notin I} \|\widehat B_t-B\|  ^2 = O_P(C_{NT}^{-4}). \cr
&&  \frac{1}{|\mathcal G|_0}\sum_{i\in\mathcal G}[\frac{1}{T}\sum_{t\notin I} \|a_{1t,c}\|  \|e_{it}    f_t\| (\|f_t\|+\|g_t\|) ]^2\leq  O_P(1)  \frac{1}{T}\sum_{t\notin I} \|a_{1t,c}\|  ^2 \cr
  &\leq &O_P(1)  \frac{1}{T}\sum_{t\notin I} \|\Delta_{t1}\|  ^2 = O_P(C_{NT}^{-4}),
  \end{eqnarray*}
    where $\Delta_{t1}$ is defined in Lemma \ref{lc.6}, the upper block of $\widehat S_t-S$.

        The treatment of $\frac{1}{T}\sum_{t\notin I} \|a_{1t,a}\|  \|e_{it}    f_t\| (\|f_t\|+\|g_t\|)$ is slightly different. Note that $\max_{t\leq T}\|\widehat B_t^{-1}\|+\|B^{-1}\|=O_P(1)$, shown in Lemma \ref{lc.6}.
         Partition $$
           \widehat S_t-S =\begin{pmatrix}
        \Delta_{t1}\\
         (\Delta_{t2,1}+\Delta_{t2,2},0)
         \end{pmatrix}
         $$
        where the notation $\Delta_{t1}$ is  defined in the proof of Lemma \ref{lc.6}.    The proof of Lemma \ref{lc.6} also gives 
    \begin{eqnarray*}
        \Delta_{t2,1}&=& \frac{1}{N}\sum_i	\widetilde\alpha_i (\lambda_i'H_1 -\widetilde\lambda_i')e_{it} 
 		+ \frac{1}{N}\sum_i	\widetilde\alpha_i (\lambda_i'H_1 -\widetilde\lambda_i')(\widehat e_{it} -e_{it})\cr
		&&+\frac{1}{N}\sum_i	(\widetilde\alpha_i -H_2'\alpha_i)\lambda_i'H_1(e_{it}-\widehat e_{it})
      \cr
        \Delta_{t2,2}&=&H_2' \frac{1}{N}\sum_i	\alpha_i \lambda_i'(e_{it}-\widehat e_{it})H_1.
       \end{eqnarray*}
        Therefore, 
\begin{eqnarray}\label{eqd.18add}
       \|a_{1t,a}\|&\leq &\|    (\widehat  B_t^{-1}-  B^{-1}) (\widehat S_t-S)   \| \cr
       &\leq& (\max_{t}\|\widehat B_t^{-1}\|+\|B\|^{-1})(\|\Delta_{t1}\|+ \|\Delta_{t2,1}\|)+\|\widehat B_t^{-1}-B^{-1}\|\|\Delta_{t2,2}\|.\cr
  \end{eqnarray}
       Note that the above bound treats $\Delta_{t1}$ and $\Delta_{t2}$ differently because by the proof of Lemma  \ref{lc.6},  $\frac{1}{T}\sum_{t\notin I} \|\Delta_{t1}\|^2=O_P(C^{-4}_{NT})=\frac{1}{T}\sum_{t\notin I} \|\Delta_{t2,1}\|^2$ but  the rate of convergence for $\frac{1}{T}\sum_{t\notin I} \| \Delta_{t2,2}\|^2 $  is slower ($=O_P(C^{-2}_{NT})$).

       Hence 
                    \begin{eqnarray*}
&& \frac{1}{|\mathcal G|_0}\sum_{i\in\mathcal G}[ \frac{1}{T}\sum_{t\notin I} \|a_{1t,a}\|  \|e_{it}    f_t\| (\|f_t\|+\|g_t\|)]^2\leq O_P(1)  \frac{1}{T}\sum_{t\notin I} \|\Delta_{t1}\|  ^2+\|\Delta_{t2,1}\|  ^2 \cr
 &&+  \frac{1}{T}\sum_{t\notin I} \|\widehat B_t^{-1}-B^{-1} \|  ^2 
 \frac{1}{T}\sum_{t\notin I} \|\Delta_{t2,2} \|  ^2  \frac{1}{|\mathcal G|_0}\sum_{i\in\mathcal G}\|e_{it}    f_t\|^2 (\|f_t\|+\|g_t\|)^2    \cr
  &\leq^{(a)}& O_P(C^{-4}_{NT})+O_P(C^{-2}_{NT})    \frac{1}{T}\sum_{t\notin I} \|\Delta_{t2,2} \|  ^2 \| \frac{1}{|\mathcal G|_0}\sum_{i\in\mathcal G}  \|e_{it}    f_t\|^2 (\|f_t\|+\|g_t\|)^2    \cr
  &\leq^{(b)}&O_P(C^{-4}_{NT}).
  \end{eqnarray*}
  where $(a)$ follows from the proof of Lemma \ref{lc.6},  while $(b)$ follows from Lemma \ref{ld.1a}.  Thus $$  \frac{1}{|\mathcal G|_0}\sum_{i\in\mathcal G} \|\frac{1}{T}\sum_{t\notin I}  a_{1t}    e_{it}    f_t'\|^2=O_P(C^{-4}_{NT}).$$
  
Also note that in the above proof, we have also proved (to be used later)
\begin{eqnarray}\label{eqd.18}
\frac{1}{T}\sum_{t\notin I} [\|a_{1t,b}\|^2+ \|a_{1t,c}\|^2 +\|\Delta_{t1}\|^2+\|\Delta_{t2,1}\|^2 ]
\leq O_P(C_{NT}^{-4})  
\end{eqnarray}
       
\end{proof}

    \begin{lem}\label{ld.13} 
    for $\mu_{it}=l_i'w_t$, $\widehat\mu_{it}= \widehat{l_i'w_t}$, 
   	$$
   	\frac{1}{|\mathcal G|_0}\sum_{i\in\mathcal G}\|\frac{1}{T}\sum_{s\notin I}\widehat f_s\widehat e_{is} ( \mu_{it}\lambda_i'f_s-   \widehat\mu_{it}\dot\lambda_i'\widetilde f_s  )\|^2 =O_P(C_{NT}^{-4})$$
	$$ 	\frac{1}{|\mathcal G|_0}\sum_{i\in\mathcal G}\|\frac{1}{T}\sum_{s\notin I}\widehat g_s (\mu_{it}   \lambda_i'f_s- \widehat{\mu}_{it}\dot\lambda_i'\widetilde f_s  )\|^2 =O_P(C_{NT}^{-2}).
   	$$
   	 
   \end{lem}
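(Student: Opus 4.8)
The plan is to control the ``partialing-out residual''
\[
\delta_{is} := \mu_{is}\lambda_i'f_s - \widehat\mu_{is}\dot\lambda_i'\widetilde f_s ,
\]
which is exactly the quantity appearing in the terms $R_{3i}$ and $R_{4i}$ of the expansion (\ref{ec.32}) in Proposition \ref{pc.2}. First I would split $\delta_{is}$ into three error sources by inserting the rotation identity $\lambda_i'f_s=(H_1'\lambda_i)'H_1^{-1}f_s$ and telescoping:
\[
\delta_{is}= (\mu_{is}-\widehat\mu_{is})\lambda_i'f_s + \widehat\mu_{is}(H_1'\lambda_i-\dot\lambda_i)'H_1^{-1}f_s + \widehat\mu_{is}\dot\lambda_i'(H_1^{-1}f_s-\widetilde f_s) =: \delta_{is}^{(1)}+\delta_{is}^{(2)}+\delta_{is}^{(3)} ,
\]
where $\mu_{is}-\widehat\mu_{is}=\widehat e_{is}-e_{is}$. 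These isolate, respectively, the error from estimating $\mu_{is}$, from the preliminary loading $\dot\lambda_i$, and from the preliminary factor $\widetilde f_s$. Using $\|a+b+c\|^2\le 3(\|a\|^2+\|b\|^2+\|c\|^2)$ it suffices to bound the three resulting pieces separately for each of the two weights $\widehat f_s\widehat e_{is}$ and $\widehat g_s$.

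For the first (faster) bound I would treat the three pieces as follows. The $\delta^{(3)}$ piece is, after replacing $\dot\lambda_i$ and $\widehat\mu_{is}$ by their limits, of the form handled by Lemma \ref{lc.2} with $h_s=f_sw_s$ (recall $\mu_{is}=l_i'w_s$), and is therefore $O_P(C_{NT}^{-4})$; the discrepancies $\dot\lambda_i-H_1'\lambda_i$ and $\widehat\mu_{is}-\mu_{is}$ only multiply this by $O_P(1)$ factors after Cauchy--Schwarz. For $\delta^{(2)}$ I would pull the fixed-$i$ vector $(H_1'\lambda_i-\dot\lambda_i)$ out of the $s$-average; the remaining matrix $\frac{1}{T}\sum_{s\notin I}\widehat f_s\widehat e_{is}\widehat\mu_{is}(H_1^{-1}f_s)'$ carries the conditionally mean-zero, serially independent factor $\widehat e_{is}\approx e_{is}$ and is hence $O_P(T^{-1/2})$, which combines with the preliminary-loading control of Lemma \ref{ld.5} to give $O_P(C_{NT}^{-4})$. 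The crucial piece is $\delta^{(1)}$: here the weight $\widehat e_{is}$ yields $\widehat e_{is}\delta^{(1)}_{is}=[e_{is}(\widehat e_{is}-e_{is})+(\widehat e_{is}-e_{is})^2]\lambda_i'f_s$. The quadratic term is genuinely second order and is $O_P(C_{NT}^{-4})$ by the fourth-power bounds in Lemma \ref{ld.2a}(ii); the term $e_{is}(\widehat e_{is}-e_{is})$ is only first order, but substituting the explicit expansion of $\widehat e_{is}-e_{is}$ from Lemma \ref{ld.1a} and invoking conditional serial independence (the bounds in Lemma \ref{ld.2a}(iii) and Assumption \ref{a4.1}) shows it too is $O_P(C_{NT}^{-4})$. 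Throughout I would replace $\widehat f_s$ by $H_ff_s$ plus the remainder controlled in Lemma \ref{lc.8}.

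For the second bound the weight is $\widehat g_s$ rather than $\widehat f_s\widehat e_{is}$, and the same decomposition applies. The $\delta^{(2)}$ and $\delta^{(3)}$ contributions remain $O_P(C_{NT}^{-4})$ by the arguments above (using the $\widehat g_s$-analogue of Lemma \ref{lc.2}). The difference lies in $\delta^{(1)}$: without the extra $\widehat e_{is}$ multiplier, $\widehat g_s\delta^{(1)}_{is}=\widehat g_s(\widehat e_{is}-e_{is})\lambda_i'f_s$ enters at first order in $\widehat e_{is}-e_{is}$ and cannot be orthogonalized, so using only $\max_i\frac{1}{T}\sum_s(\widehat e_{is}-e_{is})^2=O_P(C_{NT}^{-2})$ (Lemma \ref{ld.2a}(i)) it is $O_P(C_{NT}^{-2})$. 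This single term determines the slower rate, exactly mirroring the $\Delta_{t1}$ versus $\Delta_{t2}$ dichotomy in Lemma \ref{lc.6}.

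The main obstacle is the $\delta^{(1)}$ piece of the first bound: a naive Cauchy--Schwarz over $s$ only produces the rate $C_{NT}^{-2}$, because the per-individual estimation errors $\widehat e_{is}-e_{is}$ and $\dot\lambda_i-H_1'\lambda_i$ are each only $O_P(C_{NT}^{-1})$ (respectively $O_P(T^{-1/2})$). Obtaining the sharp $C_{NT}^{-4}$ rate requires carefully pairing each estimation-error factor with a conditionally mean-zero, serially independent $e_{is}$ so that a cross-sectional average and a temporal average each contribute an independent $C_{NT}^{-1}$ gain; this is precisely where the orthogonalization built into the weight $\widehat f_s\widehat e_{is}$, the factor expansion of $\widehat e_{is}-e_{is}$ in Lemma \ref{ld.1a}, and the conditional-independence moment bounds of Assumption \ref{a4.1} all enter.
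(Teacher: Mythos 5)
Your proposal is correct and follows essentially the same route as the paper: the paper's proof performs the same telescoping of the partialing-out residual (refined into seven terms $R_{3i,1}$--$R_{3i,7}$), bounds the preliminary-factor piece via Lemma \ref{lc.2}, the critical $e_{is}(\widehat e_{is}-e_{is})$ piece via Lemma \ref{ld.2a}, the $\widehat f_s\widehat e_{is}-H_ff_se_{is}$ replacement via Lemma \ref{lc.8}, and the remaining terms by Cauchy--Schwarz, with the $\widehat g_s$-weighted term bounded only at $O_P(C_{NT}^{-2})$ exactly as you identify. One small inaccuracy: with the weight $\widehat g_s$ the $\delta^{(2)}$ and $\delta^{(3)}$ pieces also lose the mean-zero multiplier $e_{is}$ and are themselves only $O_P(C_{NT}^{-2})$ rather than $O_P(C_{NT}^{-4})$, but since the target rate for that display is $O_P(C_{NT}^{-2})$ this does not affect the conclusion.
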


   \begin{proof}
    It  suffices to prove that the following statements:
    $$
    \frac{1}{|\mathcal G|_0}\sum_{i\in\mathcal G}\|R_{3i,d}\|^2= O_P(C_{NT}^{-4}), d=1\sim 6, \quad   \frac{1}{|\mathcal G|_0}\sum_{i\in\mathcal G}\|R_{3i,7}\|^2= O_P(C_{NT}^{-2}),
    $$
    where
    \begin{eqnarray}\label{ec.39}
   R_{3i,1} &:=&\frac{1}{T}\sum_{s\notin I} (\widehat f_s -H_ff_s)(\widehat e_{is}-e_{is})\mu_{it}\lambda_i'f_s  +\frac{1}{T}\sum_{s\notin I} (\widehat f_s\widehat e_{is}-H_ff_se_{is})(\mu_{it}\lambda_i'f_s -\widehat \mu_{it}\dot\lambda_i'\widetilde f_s)  \cr
   R_{3i,2} &:=&\frac{1}{T}\sum_{s\notin I}  f_se_{is}(\widehat e_{is}-e_{is})(\dot\lambda_i'\widetilde f_s - \lambda_i' f_s) +\frac{1}{T}\sum_{s\notin I}  f_se_{is}\mu_{is}(\dot\lambda_i-H_1'\lambda_i)'(\widetilde f_s -H_1^{-1}f_s) \cr
  R_{3i,3} &:=&\frac{1}{T}\sum_{s\notin I}  f_se_{is}\mu_{is}f_s  ' H_1^{-1'}(\dot\lambda_i-H_1'\lambda_i)  \cr
           R_{3i,4} &:=&\frac{1}{T}\sum_{s\notin I}  f_se_{is}(\widehat e_{is}-e_{is}) \lambda_i'  f_s   \cr
   R_{3i,5} &:=&\frac{1}{T}\sum_{s\notin I}  f_se_{is} \mu_{is}\lambda_i'(\widetilde f_s-H_1^{-1}f_s )  \cr
   R_{3i,6} &:=&\frac{1}{T}\sum_{s\notin I} (\widehat f_s -H_ff_s) e_{is} \mu_{is}\lambda_i'f_s   \cr
   R_{3i,7} &:=&\frac{1}{T}\sum_{s\notin I} \widehat g_s(\mu_{is}\lambda_i'f_s -\widehat{\mu}_{is}\dot\lambda_i'\widetilde f_s)  .
   \end{eqnarray}
 So
     \begin{eqnarray*}
   \frac{1}{|\mathcal G|_0}\sum_{i\in\mathcal G}\|R_{3i,d}\|^2&=& O_P(C_{NT}^{-4}),\quad d=1,2,3, 
       \text{by Cauchy-Schwarz}\cr
      \frac{1}{|\mathcal G|_0}\sum_{i\in\mathcal G}\|R_{3i,4}\|^2&=& O_P(C_{NT}^{-4}),
       \text{ Lemma \ref{ld.2a} }\cr
           \frac{1}{|\mathcal G|_0}\sum_{i\in\mathcal G}\|R_{3i,5}\|^2&=& O_P(C_{NT}^{-4}),
       \text{ Lemma  \ref{lc.2} }\cr
                \frac{1}{|\mathcal G|_0}\sum_{i\in\mathcal G}\|R_{3i,6}\|^2&=& O_P(C_{NT}^{-4}),
       \text{ Lemma  \ref{lc.8} }, \cr
          \frac{1}{|\mathcal G|_0}\sum_{i\in\mathcal G}\|R_{3i,7}\|^2&=& O_P(C_{NT}^{-2}),
       \text{ by Cauchy-Schwarz}. 
     \end{eqnarray*}

      \end{proof}
      
      \begin{lem}\label{ld.14}
      Let  $r_{5i}$   be the upper block of $R_{5i}$, where  
	$$ R_{5i}=\widehat D_i^{-1}\frac{1}{T_0}\sum_{s\notin I}
\begin{pmatrix}
\widehat e_{is}\I&0  \\
0&\I
  \end{pmatrix}     \begin{pmatrix}
 \widehat f_s  -H_ff_s  \\
\widehat g_s -H_gg_s
  \end{pmatrix}    u_{is}. $$
      \end{lem}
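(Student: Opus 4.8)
The goal is to establish $\frac{1}{|\mathcal G|_0}\sum_{i\in\mathcal G}\|r_{5i}\|=O_P(C_{NT}^{-2})$, which by the Cauchy--Schwarz inequality follows once we prove the stronger $L^2$ bound $\frac{1}{|\mathcal G|_0}\sum_{i\in\mathcal G}\|r_{5i}\|^2=O_P(C_{NT}^{-4})$. Writing
$$
v_i:=\frac{1}{T_0}\sum_{s\notin I}\begin{pmatrix}\widehat e_{is}(\widehat f_s-H_ff_s)u_{is}\\ (\widehat g_s-H_gg_s)u_{is}\end{pmatrix},
$$
we have $R_{5i}=\widehat D_i^{-1}v_i$, so $r_{5i}$ is the upper block of $\widehat D_i^{-1}v_i$. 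The key observation is that $v_i$ is structurally the same object handled in Lemma \ref{lc.8}, except that the deterministic weight $f_s'$ there is replaced by the mean-zero, conditionally serially independent noise $u_{is}$, and the whole expression is pre-multiplied by $\widehat D_i^{-1}$. The plan is therefore to first strip off $\widehat D_i^{-1}$, and then to bound the upper block of $v_i$ by mirroring the term-by-term argument of Lemma \ref{lc.8}.

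First I would replace $\widehat D_i^{-1}$ by its block-diagonal limit $D_i^{-1}$ from (\ref{ec.33}). Writing $\widehat D_i^{-1}=D_i^{-1}+(\widehat D_i^{-1}-D_i^{-1})$, the remainder contributes at most
$$
\frac{1}{|\mathcal G|_0}\sum_{i\in\mathcal G}\|\widehat D_i^{-1}-D_i^{-1}\|^2\cdot\frac{1}{|\mathcal G|_0}\sum_{i\in\mathcal G}\|v_i\|^2 ,
$$
and Lemma \ref{lc.8add} already gives $\frac{1}{|\mathcal G|_0}\sum_{i\in\mathcal G}\|\widehat D_i^{-1}-D_i^{-1}\|^2=O_P(C_{NT}^{-2})$; it then remains to check the crude bound $\frac{1}{|\mathcal G|_0}\sum_{i\in\mathcal G}\|v_i\|^2=O_P(C_{NT}^{-2})$, which follows from one application of Cauchy--Schwarz using the $L^2$-averaged rates $\frac{1}{T}\sum_{s\notin I}\|\widehat f_s-H_ff_s\|^2=O_P(C_{NT}^{-2})$ and $\frac{1}{T}\sum_{s\notin I}\|\widehat g_s-H_gg_s\|^2=O_P(C_{NT}^{-2})$ (the averaged form of Proposition \ref{pc.1} via (\ref{ed.24}) and Lemma \ref{lc.6}) together with the moment bounds on $\widehat e_{is}$ from Lemma \ref{ld.1a} and $\E(u_{is}^2|\mathcal W,E)<C$. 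Since $D_i^{-1}$ is block diagonal with $\max_i\|(D_i^{-1})_{11}\|=O_P(1)$, the upper block of $D_i^{-1}v_i$ equals $(D_i^{-1})_{11}\,\frac{1}{T_0}\sum_{s\notin I}\widehat e_{is}(\widehat f_s-H_ff_s)u_{is}$, so the task reduces to bounding the $L^2$ average of $\frac{1}{T_0}\sum_{s\notin I}\widehat e_{is}(\widehat f_s-H_ff_s)u_{is}$.

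Next I would split $\widehat e_{is}=e_{is}+(\widehat e_{is}-e_{is})$. The $(\widehat e_{is}-e_{is})$ piece is dispatched by Cauchy--Schwarz together with the rate $\frac{1}{NT}\sum_{is}(\widehat e_{is}-e_{is})^2=O_P(C_{NT}^{-2})$ (Lemma \ref{ld.2a}) and the averaged rate for $\widehat f_s-H_ff_s$. For the main piece $\frac{1}{T_0}\sum_{s\notin I}e_{is}(\widehat f_s-H_ff_s)u_{is}$ I would substitute the expansion (\ref{ed.24}) into the upper block and bound the resulting contributions exactly as in the proof of Lemma \ref{lc.8}, with $u_{is}$ now playing the role of $f_s'$. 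The terms coming from $a_{2s},\dots,a_{6s}$ are controlled by the same Cauchy--Schwarz manipulations and the bounds on the $B_{ds}$, $C_{ds}$ already recorded there; the leading score term $\frac{1}{T_0}\sum_{s\notin I}e_{is}u_{is}(D_i^{-1})_{11}\frac{1}{N}\sum_j H_1'\lambda_j e_{js}u_{js}$ is handled by separating the diagonal index $j=i$, which is $O_P(N^{-1})=O_P(C_{NT}^{-2})$, from the off-diagonal part, which has conditional mean zero given $\mathcal W$ and variance of order $O_P(N^{-1}T^{-1})+O_P(N^{-2})$ by the weak cross-sectional dependence and conditional serial independence in Assumption \ref{a4.1}(iv).

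The hard part, as in Lemma \ref{lc.8}, will be the $a_{1s}$ term, i.e. the contribution of $(\widehat B_s^{-1}\widehat S_s-B^{-1}S)(H_1^{-1}f_s{}',H_2^{-1}g_s{}')'$. Here one must exploit the fact that the lower-off-diagonal block $\Delta_{s2,2}$ of $\widehat S_s-S$ converges only at the slower rate $\frac{1}{T}\sum_{s\notin I}\|\Delta_{s2,2}\|^2=O_P(C_{NT}^{-2})$, whereas the remaining blocks satisfy the faster $O_P(C_{NT}^{-4})$ rate recorded in (\ref{eqd.18}). The decomposition (\ref{eqd.18add}) isolates this slow block and pairs it against $\frac{1}{|\mathcal G|_0}\sum_{i\in\mathcal G}\frac{1}{T}\sum_{s\notin I}\|e_{is}u_{is}\|^2$; the extra $u_{is}$ factor, being mean zero and conditionally independent across $s$, supplies exactly the additional $C_{NT}^{-2}$ needed so that the product still attains $O_P(C_{NT}^{-4})$. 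Assembling the three contributions then yields $\frac{1}{|\mathcal G|_0}\sum_{i\in\mathcal G}\|r_{5i}\|^2=O_P(C_{NT}^{-4})$, and hence the claim.
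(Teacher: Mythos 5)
Your proposal is correct and follows essentially the same route as the paper's proof: strip off $\widehat D_i^{-1}$ via Lemma \ref{lc.8add} plus a crude Cauchy--Schwarz bound on $v_i$, use block-diagonality of $D_i^{-1}$ and $B^{-1}$ to isolate the upper block, split $\widehat e_{is}=e_{is}+(\widehat e_{is}-e_{is})$, treat the leading score term by a conditional second-moment computation under Assumption \ref{a4.1}(iv), and control the $A_{1t}$ contribution through the decomposition in (\ref{eqd.18add})--(\ref{eqd.18}). The only divergence is in the slow block $\Delta_{t2,2}$: the paper exploits the extra factor $\|\widehat B_t^{-1}-B^{-1}\|$ that is structurally present in $a_{1t,a}$ and closes the bound with a deterministic Cauchy--Schwarz (together with the fourth-moment rate for $\widehat e_{it}-e_{it}$), whereas you invoke the conditional mean-zero and serial independence of $u_{is}$ to gain the missing factor; both mechanisms deliver the required $O_P(C_{NT}^{-2})$ rate.
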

      Then $\frac{1}{|\mathcal G|_0}\sum_{i\in\mathcal G}\|r_{5i}\|=O_P(C_{NT}^{-2}).$
      
      \begin{proof}
      For any matrix $A$ of the same size $R_{5i}$, we write 
      $
\mathcal U({A})
      $
      to denote its upper block (the first $\dim(f_t)$-subvector of $A$). Then in this notation $r_{5i}=  \mathcal U({R_{5i}}). $
            
      First,  by Cauchy-Schwarz 
\begin{eqnarray*}
 &&\left[\frac{1}{|\mathcal G|_0}\sum_{i\in\mathcal G}\|  \widehat D_i^{-1}-D_i^{-1}  \|
\left(\|\frac{1}{T}\sum_{s\notin I} \widehat e_{is}(\widehat f_s-H_ff_s)u_{is}\|
+\|\frac{1}{T}\sum_{s\notin I}  (\widehat g_s-H_gg_s)u_{is}\|\right)\right] ^2\cr
&\leq& O_P(C_{NT}^{-2})
\frac{1}{|\mathcal G|_0}\sum_{i\in\mathcal G}\left(\frac{1}{T}\sum_{s\notin I} (\widehat f_s-H_ff_s)^2\frac{1}{T}\sum_{s\notin I}  u_{is}^2\widehat e_{is}^2
+\frac{1}{T}\sum_{s\notin I}  (\widehat g_s-H_gg_s)^2  \frac{1}{T}\sum_{s\notin I} u_{is}^2\right)\cr
&\leq &O_P(C_{NT}^{-4}).
\end{eqnarray*}
Now  let $\bar R_{5i}$ be  defined as $R_{5i} $ but with $\widehat D_i^{-1}$ replaced with $D_i^{-1}$. Substitute in the expansion (\ref{ed.24}), then 
$$
\bar R_{5i}=   D_i^{-1}\frac{1}{T_0}\sum_{t\notin I}
\begin{pmatrix}
\widehat e_{it}\I&0  \\
0&\I
  \end{pmatrix}      
B^{-1}    \frac{1}{N}\sum_j
\begin{pmatrix}
 H_1'\lambda_je_{jt}   \\
 H_2'\alpha_j
\end{pmatrix}
 u_{jt}    u_{it}
  +  \sum_{d=1}^6 D_i^{-1}\frac{1}{T_0}\sum_{t\notin I}
\begin{pmatrix}
\widehat e_{it}\I&0  \\
0&\I
  \end{pmatrix}   A_{dt}   u_{it}.
$$ 
Both $B^{-1}$ and $D_i^{-1}$ are block diagonal; let $b, d_i$  respectively denote their first diagonal block. Then  
$$
\mathcal U({\bar R_{5i}}) =\frac{1}{T_0}\sum_{t\neq I} d_i\widehat e_{it} bH_1'    \frac{1}{N}\sum_j  \lambda_je_{jt}u_{jt}u_{it}
+\sum_{d=1}^6d_i\frac{1}{T_0}\sum_{t\neq I}\widehat e_{it} u_{it}\mathcal U(A_{dt}).
$$
The goal is now to prove $\frac{1}{|\mathcal G|_0}\sum_{i\in\mathcal G}\|\mathcal U({\bar R_{5i}}) \|=O_P(C_{NT}^{-2}).$

First,  let $\nu_t=(\lambda_{j}u_{jt}: j\leq N)$, a column vector.
\begin{eqnarray*}
&&\left[\frac{1}{|\mathcal G|_0}\sum_{i\in\mathcal G}\|\frac{1}{T_0}\sum_{t\neq I} d_i(\widehat e_{it}-e_{it}) bH_1'    \frac{1}{N}\sum_j  \lambda_je_{jt}u_{jt}u_{it}\|
\right]^2\cr
&\leq&  O_P(C_{NT}^{-2})    \frac{1}{T_0}\sum_{t\neq I} \frac{1}{N^2} \E\Var( \nu_t'e_{t} u_{it}| U)
\leq  O_P(C_{NT}^{-2})    \frac{1}{T_0}\sum_{t\neq I} \frac{1}{N^2} \E u_{it}^2  \nu_t'\Var(e_{t}| U) \nu_t\cr
&\leq&  O_P(C_{NT}^{-2}N^{-1})    \frac{1}{T_0}\sum_{t\neq I}\E u_{it}^2    \frac{1}{N} \sum_{j=1}^N\lambda_j^2u_{jt}^2=O_P(C_{NT}^{-4}). 
\end{eqnarray*}
Next,  for $\omega_{it}=e_{it}u_{it}$,
\begin{eqnarray*}
&&\left[\frac{1}{|\mathcal G|_0}\sum_{i\in\mathcal G}\|\frac{1}{T_0}\sum_{t\neq I} d_i e_{it} bH_1'    \frac{1}{N}\sum_j  \lambda_je_{jt}u_{jt}u_{it}\|
\right]^2\cr
&\leq& O_P(1)\frac{1}{|\mathcal G|_0}\sum_{i\in\mathcal G}\E |\frac{1}{NT}\sum_{t  }\sum_j  e_{it}       \lambda_je_{jt}u_{jt}u_{it}|
 ^2\cr
 &\leq& O_P(1)\frac{1}{N^2}   (\max_i   \sum_j           |\E \omega_{jt}\omega_{it}|) ^2+O_P(1)\frac{1}{N^2T^2}\frac{1}{|\mathcal G|_0}\sum_{i\in\mathcal G}\sum_{t  }\Var (\sum_j     \lambda_j \omega_{jt}\omega_{it})\cr
 &\leq& O_P(N^{-2}) +O_P(1)\frac{1}{N^2T^2}\frac{1}{|\mathcal G|_0}\sum_{i\in\mathcal G}\sum_{t  } \sum_{jk}  |  \Cov(   \omega_{jt}\omega_{it}, 
\omega_{kt}\omega_{it})|\cr
&=&O_P(N^{-2}+N^{-1}T^{-1})=O_P(C_{NT}^{-4}). 
\end{eqnarray*}
We now  show $\sum_{d=1}^6\frac{1}{|\mathcal G|_0}\sum_{i\in\mathcal G}\| d_i\frac{1}{T_0}\sum_{t\neq I}\widehat e_{it} u_{it}\mathcal U(A_{dt})\|= O_P(C_{NT}^{-2})$. For each $d\leq 6$, 
 \begin{eqnarray*}
 &&[\frac{1}{|\mathcal G|_0}\sum_{i\in\mathcal G}\| d_i\frac{1}{T_0}\sum_{t\neq I}(\widehat e_{it}-e_{it}) u_{it}\mathcal U(A_{dt})\|]^2
\cr
& \leq& \frac{1}{|\mathcal G|_0}\sum_{i\in\mathcal G}  \frac{1}{T_0}\sum_{t\neq I}(\widehat e_{it}-e_{it}) ^2u_{it}^2d_i^2\frac{1}{T_0}\sum_{t\neq I}\|A_{dt}\|^2
=O_P(C_{NT}^{-4}),
 \end{eqnarray*}
where it  follows from applying Cauchz-Schwarz that $\frac{1}{T_0}\sum_{t\neq I}\|A_{dt}\|^2=O_P(C_{NT}^{-2})$.  
  It remains to prove $\sum_{d=1}^6\frac{1}{|\mathcal G|_0}\sum_{i\in\mathcal G}\| d_i\frac{1}{T_0}\sum_{t\neq I} \omega_{it}  \mathcal U(A_{dt})\|= O_P(C_{NT}^{-2})$.
  
\textbf{term $\mathcal U(A_{1t})$.}
 Using the notation of the proof of Lemma \ref{lc.8}, we have 
$$
   \mathcal U(A_{1t})
:= (a_{1t,a} +a_{1t,b}+a_{1t,c}) \begin{pmatrix}
 H_1^{-1}f_t\\
 H_2^{-1}g_t
\end{pmatrix},
$$
Also, (\ref{eqd.18add}) and (\ref{eqd.18}) yield, for 
$
m_{it}:= \|d_i\omega_{it}\|(\|f_t\|+\|g_t\|),
$
\begin{eqnarray*}
&&\frac{1}{|\mathcal G|_0}\sum_{i\in\mathcal G}\| d_i\frac{1}{T_0}\sum_{t\neq I} \omega_{it}\mathcal U(A_{dt})\|\cr
&\leq& [\frac{1}{T_0}\sum_{t\neq I}  \|a_{1t,b}+a_{1t,c}\|^2]^{1/2}[ \frac{1}{|\mathcal G|_0}\sum_{i\in\mathcal G}\frac{1}{T_0}\sum_{t\neq I} m_{it}^2 ]^{1/2}+ \frac{1}{|\mathcal G|_0}\sum_{i\in\mathcal G} \frac{1}{T_0}\sum_{t\neq I} \| m_{it}  a_{1t,a}\| \cr
&\leq& O_P(C_{NT}^{-2})
+ [\frac{1}{T_0}\sum_{t\neq I}  \|\Delta_{t1}+\Delta_{t2,1}\|^2]^{1/2} [ \frac{1}{|\mathcal G|_0}\sum_{i\in\mathcal G}\frac{1}{T_0}\sum_{t\neq I} m_{it}^2 ]^{1/2}\cr
&&+ \frac{1}{|\mathcal G|_0}\sum_{i\in\mathcal G} \frac{1}{T_0}\sum_{t\neq I}  m_{it}   \|\widehat B_t^{-1}-B^{-1}\|\|\Delta_{t2,2}\|\cr
&\leq & O_P(C_{NT}^{-2})+ O_P(C_{NT}^{-1}) [ \frac{1}{T}\sum_{t}\|\Delta_{t2,2}\|^4]^{1/4}\cr
&\leq & O_P(C_{NT}^{-2})+ O_P(C_{NT}^{-1}) [ \frac{1}{T}\sum_{t} \frac{1}{N}\sum_i	 (e_{it}-\widehat e_{it})^4 ]^{1/4}=O_P(C_{NT}^{-2}).
\end{eqnarray*}

      \textbf{terms $\mathcal U(A_{2t}),\mathcal U(A_{4t}),\mathcal U(A_{5t})$.} $\frac{1}{|\mathcal G|_0}\sum_{i\in\mathcal G}\| d_i\frac{1}{T_0}\sum_{t\neq I} \omega_{it}\mathcal U(A_{dt})\|= O_P(C_{NT}^{-2})$, $d=2,4,5$ follow from the simple Cauchy-Schwarz. 
      
            \textbf{term $\mathcal U(A_{3t})$.}   Because $\max_{t\leq T}\|\widehat B_t^{-1}\|=O_P(1)$,
            \begin{eqnarray*}
&&\frac{1}{|\mathcal G|_0}\sum_{i\in\mathcal G}\| d_i\frac{1}{T_0}\sum_{t\neq I} \omega_{it}\mathcal U(A_{3t})\|\leq \frac{1}{|\mathcal G|_0}\sum_{i\in\mathcal G}\| d_i\frac{1}{T_0}\sum_{t\neq I} \omega_{it} A_{3t}\|\cr
&\leq&   [ \frac{1}{T_0}\sum_{t\neq I}  \|\frac{1}{N}\sum_j\lambda_i  u_{jt} (\widehat e_{jt}  -  e_{jt}  ) \|^2]^{1/2} 
+ [ \frac{1}{T_0}\sum_{t\neq I}  \|\frac{1}{N}\sum_ju_{jt}( \widetilde\lambda_j - H_1'\lambda_j  )(\widehat e_{jt} -e_{jt}) \|^2]^{1/2} 
\cr
&&+
[ \frac{1}{T_0}\sum_{t\neq I}  \|\frac{1}{N}\sum_j\omega_{jt}( \widetilde\lambda_j - H_1'\lambda_j  ) \|^2]^{1/2} + [ \frac{1}{T_0}\sum_{t\neq I}  \|\frac{1}{N}\sum_ju_{jt}(\widetilde\alpha_j- H_2'\alpha_j) \|^2]^{1/2} \cr
&=&O_P(C_{NT}^{-2}).
\end{eqnarray*}
            The first term follows from Lemma \ref{ld.2a}. The second term follows from Cauchy-Schwarz. The third and fourth are due to, for instance, \begin{eqnarray*}
&&           \frac{1}{T_0}\sum_{t\neq I}\E_I   \|\frac{1}{N}\sum_j\omega_{jt}( \widetilde\lambda_j - H_1'\lambda_j  ) \|^2\cr
&\leq& \frac{1}{T_0}\sum_{t\neq I}\frac{1}{N^2}\Var_I(\omega_t'(\widetilde\Lambda-\Lambda H_1))
\leq  \frac{1}{T_0}\sum_{t\neq I}\frac{1}{N^2}\|\widetilde\Lambda-\Lambda H_1\|^2\|\Var_I(\omega_t)\|\cr
&\leq& O(C_{NT}^{-4}).
            \end{eqnarray*}

              \textbf{term $\mathcal U(A_{6t})$.} Note that $B^{-1}$ is block diagonal.  Let $b$ be the first block of $B^{-1}$. Recall that Lemma \ref{ld.9} shows 
                                  $$
     \mathcal U(  A_{6t})=b H_1'\sum_{d=1}^4 B_{dt}.
       $$
       So         $\frac{1}{|\mathcal G|_0}\sum_{i\in\mathcal G}\| d_i\frac{1}{T_0}\sum_{t\neq I} \omega_{it}\mathcal U(A_{6t})\|\leq  O_P(1)\sum_{d=1}^4
\frac{1}{|\mathcal G|_0}\sum_{i\in\mathcal G}\| d_i\frac{1}{T_0}\sum_{t\neq I} \omega_{it} B_{dt}\|.$

For $d=1,4$,  we apply Cauchy-Schwarz,
  \begin{eqnarray*}
&& 
 \frac{1}{|\mathcal G|_0}\sum_{i\in\mathcal G}\| d_i\frac{1}{T_0}\sum_{t\neq I} \omega_{it} B_{dt}\|
\leq O_P(1) [\frac{1}{T}\sum_t\|B_{dt}\|^2]^{1/2}=O_P(C_{NT}^{-2}).
\end{eqnarray*}
For $d=2$, we still apply Cauchy-Schwarz and Lemma \ref{ld.2a} that  
 \begin{eqnarray*}
&& 
 \frac{1}{|\mathcal G|_0}\sum_{i\in\mathcal G}\| d_i\frac{1}{T_0}\sum_{t\neq I} \omega_{it} B_{2t}\|
\leq O_P(C_{NT}^{-2})+O_P(1) [\frac{1}{T}\sum_t (\frac{1}{N}\sum_i\lambda_i ^2e_{it}  (\widehat e_{it}-e_{it}) )^2 ]^{1/2}\cr
&=&O_P(C_{NT}^{-2}).
\end{eqnarray*}
              Finally, we bound for $d=3$. 
     \begin{eqnarray*}
&& 
 [\frac{1}{|\mathcal G|_0}\sum_{i\in\mathcal G}\| d_i\frac{1}{T_0}\sum_{t\neq I} \omega_{it} B_{3t}\|]^2
\leq  O_P(1)  \frac{1}{|\mathcal G|_0}\sum_{i\in\mathcal G}\|  \frac{1}{T_0}\sum_{t\neq I} \omega_{it} B_{3t}\|^2\cr
&\leq&O_P(1)  \frac{1}{|\mathcal G|_0}\sum_{j\in\mathcal G}\|  \frac{1}{T_0}\sum_{t\neq I} \omega_{jt}   \frac{1}{N}\sum_il_i\lambda_i e_{it} w_t (\dot \lambda_i-H_1'\lambda_i)' \widetilde f_t\|^2\cr
&\leq& O_P(C_{NT}^{-2})  \frac{1}{|\mathcal G|_0}\sum_{j\in\mathcal G} \frac{1}{N}\sum_i \|  \frac{1}{T_0}\sum_{t\neq I} \omega_{jt}   e_{it} w_t ( \widetilde f_t-H_1^{-1}f_t)\|^2\cr
&&+  O_P(C_{NT}^{-2})  \frac{1}{|\mathcal G|_0}\sum_{j\in\mathcal G} \frac{1}{N}\sum_i \|  \frac{1}{T_0}\sum_{t\neq I} \omega_{jt}   e_{it} w_t  f_t\|^2\cr
&=&O_P(C_{NT}^{-4}) +O_P(C_{NT}^{-2})  \frac{1}{|\mathcal G|_0}\sum_{j\in\mathcal G} \frac{1}{N}\sum_i \|  \frac{1}{T_0}\sum_{t\neq I} e_{jt} u_{jt}  e_{it} w_t  f_t\|^2\cr
&=&O_P(C_{NT}^{-4}).
\end{eqnarray*}
The last equality is due to the conditional serial  independence of $(e_t, u_t)$ and that $\E(u_t|E, F, W)=0.$
          
      \end{proof}

      \begin{lem}\label{ld.15}
      Let  $r_{6i}$   be the upper block of $R_{6i}$, where  
	$$ R_{6i}=\widehat D_i^{-1}\frac{1}{T_0}\sum_{s\notin I}
   \begin{pmatrix}
\widehat f_s \widehat e_{is} \\
\widehat g_s 
  \end{pmatrix}( \lambda_i'H_f^{-1}, \alpha_i'H_g^{-1})\begin{pmatrix}
\widehat e_{is}\I&0  \\
0&\I
  \end{pmatrix}     \begin{pmatrix}
 \widehat f_s  -H_ff_s  \\
\widehat g_s -H_gg_s
  \end{pmatrix} . $$
      \end{lem}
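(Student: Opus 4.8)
The plan is to mirror the treatment of $r_{5i}$ in Lemma \ref{ld.14}, exploiting that $R_{6i}$ is built from a product of the factor estimation errors $\widehat f_s-H_ff_s$ and $\widehat g_s-H_gg_s$ against the ``signal'' loadings $(\lambda_i'H_f^{-1},\alpha_i'H_g^{-1})$, so that it is a genuine second-order remainder. Carrying out the inner product in the definition of $R_{6i}$, the scalar multiplier collapses to $\lambda_i'H_f^{-1}\widehat e_{is}(\widehat f_s-H_ff_s)+\alpha_i'H_g^{-1}(\widehat g_s-H_gg_s)$, and $r_{6i}$ is the upper block of $\widehat D_i^{-1}$ times $\frac{1}{T_0}\sum_{s\notin I}$ of the column with entries $\widehat f_s\widehat e_{is}$ and $\widehat g_s$ multiplied by this scalar. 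First I would peel off $\widehat D_i^{-1}$: by Lemma \ref{lc.8add}, $\frac{1}{|\mathcal G|_0}\sum_{i\in\mathcal G}\|\widehat D_i^{-1}-D_i^{-1}\|^2=O_P(C_{NT}^{-2})$ and $\max_i\|D_i^{-1}\|=O_P(1)$, so a Cauchy--Schwarz argument reduces the problem to the same quantity with the block-diagonal $D_i^{-1}$ in place of $\widehat D_i^{-1}$; let $d_i$ denote its upper block, which satisfies $\max_i\|d_i\|=O_P(1)$.

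Next I would replace $\widehat e_{is}$ by $e_{is}$ wherever it appears, controlling the discrepancies through Lemmas \ref{ld.1a} and \ref{ld.2a}, and then split the resulting sum according to the decompositions $\widehat f_s=H_ff_s+(\widehat f_s-H_ff_s)$ and $\widehat g_s=H_gg_s+(\widehat g_s-H_gg_s)$. The terms in which a factor error is hit twice (for instance $(\widehat f_s-H_ff_s)$ arising both from the left factor $\widehat f_s$ and from the scalar multiplier) are genuinely quadratic in the error and are dispatched by a single application of Cauchy--Schwarz together with $\frac{1}{T}\sum_{s\notin I}\|\widehat f_s-H_ff_s\|^2=O_P(C_{NT}^{-2})$ and $\frac{1}{T}\sum_{s\notin I}\|\widehat g_s-H_gg_s\|^2=O_P(C_{NT}^{-2})$ (from Lemma \ref{lc.3} and Proposition \ref{pc.1}), using that $\frac{1}{T}\sum_{s\notin I}\widehat e_{is}^2$ and $\frac{1}{T}\sum_{s\notin I}\|\widehat f_s\|^2$ are $O_P(1)$ uniformly.

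The delicate piece is the part that is only first order in the factor error, namely $d_i\frac{1}{T_0}\sum_{s\notin I}H_ff_s\,e_{is}^2\,\lambda_i'H_f^{-1}(\widehat f_s-H_ff_s)$ and its $g$-analogue. For these I would substitute the expansion (\ref{ed.24}) of $\widehat f_s-H_ff_s$ (respectively $\widehat g_s-H_gg_s$), reducing the problem to bounding, block by block $A_{ds}$, the averages $\frac{1}{|\mathcal G|_0}\sum_{i\in\mathcal G}\|\frac{1}{T_0}\sum_{s\notin I}f_s e_{is}^2\,\mathcal U(A_{ds})\|$, which are exactly the objects already controlled in the proof of Lemma \ref{ld.14}. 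The leading contribution comes from the martingale term $\frac{1}{N}\sum_j\lambda_j e_{js}u_{js}$ in Proposition \ref{pc.1}; because $s\notin I$ and $(e_{is},u_{is})$ are conditionally serially independent with $\E(u_{js}\mid E,F,W)=0$, the product $e_{is}^2e_{js}u_{js}$ has conditional mean zero, and the weak cross-sectional dependence bounds of Assumption \ref{a4.1} deliver the requisite $O_P(N^{-1}T^{-1})=O_P(C_{NT}^{-4})$ second-moment control after averaging over $\mathcal G$.

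The main obstacle is precisely this first-order piece: unlike the quadratic terms it does not collapse under a crude Cauchy--Schwarz, because one factor of the error is paired against the full-rank signal $f_s e_{is}^2$, so one must retain the martingale structure and invoke the conditional orthogonality $\E(u_{js}\mid E,F,W)=0$ together with the summable cross-sectional covariance conditions rather than merely the averaged $L^2$ rate of $\widehat f_s-H_ff_s$. Once each $A_{ds}$-block is shown to contribute at rate $O_P(C_{NT}^{-2})$ (or faster) in the averaged norm, summing the finitely many blocks and recombining with the quadratic terms and the $\widehat D_i^{-1}$ replacement yields $\frac{1}{|\mathcal G|_0}\sum_{i\in\mathcal G}\|r_{6i}\|=O_P(C_{NT}^{-2})$, as required.
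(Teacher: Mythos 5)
Your overall architecture matches the paper's: peel off $\widehat D_i^{-1}$ via Lemma \ref{lc.8add}, substitute the expansion \eqref{ed.24} of the factor-estimation errors, kill the leading block using $\E(u_{js}\mid E,F,W)=0$ plus the weak cross-sectional dependence conditions, and sweep the remaining blocks. But two steps, as you state them, do not close. First, your treatment of the ``quadratic'' terms fails: the term in question is of the form $T_0^{-1}\sum_{s}\|\widehat f_s-H_ff_s\|^2\,\widehat e_{is}^2$ (up to constants), and you cannot bound $T_0^{-1}\sum_s a_s b_s$ by the product of $T_0^{-1}\sum_s a_s=O_P(C_{NT}^{-2})$ and $T_0^{-1}\sum_s b_s=O_P(1)$; Cauchy--Schwarz would require fourth-moment rates ($T_0^{-1}\sum_s\|\widehat f_s-H_ff_s\|^4=O_P(C_{NT}^{-4})$), which are never established, and the alternative bound via $\max_{is}\widehat e_{is}^2=o_P(C_{NT})$ only yields $o_P(C_{NT}^{-1})$, short of the required $O_P(C_{NT}^{-2})$. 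The paper never isolates a purely quadratic-in-factor-error term; it keeps the column $(\widehat f_t\widehat e_{jt},\widehat g_t)$ intact, substitutes \eqref{ed.24} only into the scalar, and exploits that each block $A_{dt}$ carries its own additional $O_P(N^{-1/2})$ averaging (e.g.\ $\frac{1}{N}\sum_i(\lambda_ie_{it},\alpha_i)'u_{it}$), so that each product lands at $O_P(C_{NT}^{-4})$ in squared averaged norm.

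Second, the first-order pieces are not ``exactly the objects already controlled in the proof of Lemma \ref{ld.14}.'' There the weight is $\omega_{it}=e_{it}u_{it}$, which is conditionally mean zero; here the weights are $\widehat e_{jt}^2$ and, after multiplying the row $(\lambda_j'\widehat e_{jt},\alpha_j')$ against the off-diagonal block $\Delta_{t2,1}$ of $\widehat S_t-S$, a single power $e_{jt}$ that pairs with the $e_{it}$ inside $\Delta_{t2,1}$ to produce the non-centered product $e_{it}e_{jt}$. Your appeal to $\E(u_{js}\mid E,F,W)=0$ disposes only of the leading $\Gamma_0$-type block; for the $A_{1t}$ block the paper must split $e_{it}e_{jt}$ into $(e_{it}e_{jt}-\E e_{it}e_{jt})+\E e_{it}e_{jt}$ and invoke $\max_j\sum_i|\E e_{it}e_{jt}|<C$, and for the $A_{6t}$ block it needs the separate Lemma \ref{ld.16} to handle the weights $\widehat f_t\widehat e_{jt}^2$ against the $B_{dt}$ terms. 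Relatedly, the paper's proof has a two-speed structure you do not reproduce: the full blocks $\Gamma_1^j,\Gamma_6^j$ only attain $O_P(C_{NT}^{-2})$ in squared averaged norm, which is salvaged by pairing them with the $O_P(C_{NT}^{-2})$ factor $\frac{1}{|\mathcal G|_0}\sum_i\|\widehat D_i^{-1}-D_i^{-1}\|^2$, while their \emph{upper} blocks are separately shown to attain $O_P(C_{NT}^{-4})$. Without these refinements the argument does not reach the stated rate.
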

      Then $\frac{1}{|\mathcal G|_0}\sum_{i\in\mathcal G}\|r_{6i}\|=O_P(C_{NT}^{-2}).$
          \begin{proof}  
    Write $B^{-1}=\diag(b_1, b_2)$,
\begin{eqnarray*}
\Gamma^j&:=& \Gamma_0^j+\Gamma_1^j+...+\Gamma_6^j,\cr
  \Gamma_0^j&:=&\frac{1}{T_0}\sum_{t\notin I}
   \begin{pmatrix}
\widehat f_t \widehat e_{jt} \\
\widehat g_t 
  \end{pmatrix}( \lambda_j'H_f^{-1}\widehat e_{jt}b_1, \alpha_j'H_g^{-1}b_2)       \frac{1}{N}\sum_i 
\begin{pmatrix}
 H_1'\lambda_i e_{it}   \\
 H_2'\alpha_i
\end{pmatrix}
 u_{it} 
\cr
  \Gamma_d^j&:=& \frac{1}{T_0}\sum_{t\notin I}
   \begin{pmatrix}
\widehat f_t \widehat e_{jt} \\
\widehat g_t
  \end{pmatrix}( \lambda_j'H_f^{-1}\widehat e_{jt}, \alpha_j'H_g^{-1}) A_{dt}  ,\quad d=1,...,6.
  \end{eqnarray*}
Then $ R_{6j}=\widehat D_j^{-1}\Gamma^j$. Let $d_j$ be the first diagonal block of $D_j^{-1}$ (which is a block diagonal matrix),
      We have 
\begin{eqnarray*}
    [  \frac{1}{|\mathcal G|_0}\sum_{j\in\mathcal G}\|r_{6j}\|]^2
     & \leq&  \frac{1}{|\mathcal G|_0}\sum_{j\in\mathcal G}\|\widehat D_j^{-1}-D_j^{-1}\|^2 \frac{1}{|\mathcal G|_0}\sum_{j\in\mathcal G}\|\Gamma^j\|^2
      + [  \frac{1}{|\mathcal G|_0}\sum_{j\in\mathcal G}\|d_j\mathcal U(\Gamma^j)\|]^2\cr
      &\leq& O_P(C_{NT}^{-2}) \frac{1}{|\mathcal G|_0}\sum_{j\in\mathcal G}\|\Gamma^j\|^2
+ \frac{1}{|\mathcal G|_0}\sum_{j\in\mathcal G}\|\mathcal U(\Gamma^j)\|^2\cr
&\leq &   O_P(1) \sum_{d=2}^5\frac{1}{|\mathcal G|_0}\sum_{j\in\mathcal G}\| \Gamma_d^j\|^2+O_P(1)  \frac{1}{|\mathcal G|_0}\sum_{j\in\mathcal G}\|\Gamma^j_0\|^2\cr
&&+O_P(C_{NT}^{-2})  \frac{1}{|\mathcal G|_0}\sum_{j\in\mathcal G}\|\Gamma^j_1\|^2
+  \frac{1}{|\mathcal G|_0}\sum_{j\in\mathcal G}\|\mathcal U(\Gamma_1^j)\|^2\cr
&&+O_P(C_{NT}^{-2})  \frac{1}{|\mathcal G|_0}\sum_{j\in\mathcal G}\|\Gamma^j_6\|^2
+  \frac{1}{|\mathcal G|_0}\sum_{j\in\mathcal G}\|\mathcal U(\Gamma_6^j)\|^2.
  \end{eqnarray*}
      In the above inequalities, we treat individual $\Gamma_d^j$ differently. This is because we aim to show the following:
\begin{eqnarray*}
      \frac{1}{|\mathcal G|_0}\sum_{j\in\mathcal G}\| \Gamma_d^j\|^2&=&O_P(C_{NT}^{-4}),\quad d=0, 2\sim 5
,\cr
      \frac{1}{|\mathcal G|_0}\sum_{j\in\mathcal G}\| \Gamma_d^j\|^2&=&O_P(C_{NT}^{-2}),\quad d= 1, 6
,\cr
      \frac{1}{|\mathcal G|_0}\sum_{j\in\mathcal G}\| \mathcal U(\Gamma_d^j)\|^2&=&O_P(C_{NT}^{-4}),\quad d= 1, 6.
  \end{eqnarray*}
      That is, while  it is not likely to prove $ \frac{1}{|\mathcal G|_0}\sum_{j\in\mathcal G}\| \Gamma_d^j\|^2$ are  also $ O_P(C_{NT}^{-4})$ for $d=1,6$, it can be proved that their  upper bounds are.
      
      Step 1: $  \frac{1}{|\mathcal G|_0}\sum_{j\in\mathcal G}\| \Gamma_d^j\|^2$ for $d=0,2\sim 5$.   Let $$m_t= \max_{jt}|  \widehat e_{jt} ^4-  e_{jt} ^4 |+   \frac{1}{|\mathcal G|_0}\sum_{j\in\mathcal G}e_{jt}^4+f_t^2+g_t^2+\max_{jt}|  \widehat e_{jt} ^2-  e_{jt} ^2 |+   \frac{1}{|\mathcal G|_0}\sum_{j\in\mathcal G}  e_{jt}^2+1.$$
      \begin{eqnarray*}
       \frac{1}{|\mathcal G|_0}\sum_{j\in\mathcal G}\| \Gamma_0^j\|^2
      & \leq&        \frac{1}{|\mathcal G|_0}\sum_{j\in\mathcal G}\|  \frac{1}{T}\sum_{t\notin I}  \widehat f_t \frac{1}{N}\sum_i
    \widehat e_{jt} ^2u_{it} \lambda_j\lambda_ie_{it}\|^2
     +    \frac{1}{|\mathcal G|_0}\sum_{j\in\mathcal G}\|  \frac{1}{T}\sum_{t\notin I}  \widehat f_t\frac{1}{N}\sum_i
    \widehat e_{jt} u_{it}\alpha_j\alpha_i\|^2\cr
       &&+      \frac{1}{|\mathcal G|_0}\sum_{j\in\mathcal G}\|  \frac{1}{T}\sum_{t\notin I}  \widehat g_t\frac{1}{N}\sum_i
    u_{it} \lambda_j\lambda_i\widehat e_{jt}e_{it}\|^2
      +    \frac{1}{|\mathcal G|_0}\sum_{j\in\mathcal G}\|  \frac{1}{T}\sum_{t\notin I}      \widehat g_t\frac{1}{N}\sum_i
u_{it}\alpha_j\alpha_i\|^2\cr
    & \leq&  
   O_P(C_{NT}^{-2})\frac{1}{T}\sum_{t\notin I} m_t \| \frac{1}{N}\sum_i
  u_{it} \lambda_ie_{it}\|^2
+    O_P(C_{NT}^{-2})\frac{1}{T}\sum_{t\notin I}     m_t\|\frac{1}{N}\sum_i
   u_{it} \alpha_i\|^2\cr
         &&+      \frac{1}{|\mathcal G|_0}\sum_{j\in\mathcal G}\|  \frac{1}{T}\sum_{t\notin I}    g_t\frac{1}{N}\sum_i
    u_{it} \lambda_j\lambda_i e_{jt}e_{it}\|^2
      +    \frac{1}{|\mathcal G|_0}\sum_{j\in\mathcal G}\|  \frac{1}{T}\sum_{t\notin I}        g_t\frac{1}{N}\sum_i
u_{it}\alpha_j\alpha_i\|^2\cr
&&+ \frac{1}{|\mathcal G|_0}\sum_{j\in\mathcal G}\|  \frac{1}{T}\sum_{t\notin I}        e_{jt} ^2 f_t \frac{1}{N}\sum_i
u_{it} \lambda_j\lambda_ie_{it}\|^2
     +    \frac{1}{|\mathcal G|_0}\sum_{j\in\mathcal G}\|  \frac{1}{T}\sum_{t\notin I}        e_{jt}f_t\frac{1}{N}\sum_i
  u_{it}\alpha_j\alpha_i\|^2\cr
 &\leq&  O_P(C_{NT}^{-4}).
      \end{eqnarray*}   
      Now let 
  $$
  \mathcal D= \frac{1}{T_0}\sum_{t\in I^c}
 \|\widehat f_t-H_ff_t \|^2+\| \widehat g_t -H_gg_t  \|^2
 =O_P(C_{NT}^{-2}).
  $$
      \begin{eqnarray*}
 &&    \frac{1}{|\mathcal G|_0}\sum_{j\in\mathcal G}\| \Gamma_2^j\|^2 \cr
     &=&
         \frac{1}{|\mathcal G|_0}\sum_{j\in\mathcal G}\| \frac{1}{T_0}\sum_{t\in I^c}
  \begin{pmatrix}
\widehat f_t \widehat e_{jt} \\
\widehat g_t 
  \end{pmatrix} ( \lambda_j'H_f^{-1}\widehat e_{jt}, \alpha_j'H_g^{-1})   (\widehat B_t^{-1}- B^{-1})  \frac{1}{N}\sum_i 
\begin{pmatrix}
 H_1'\lambda_i e_{it}   \\
 H_2'\alpha_i
\end{pmatrix}u_{it}   \|^2\cr
&\leq& \frac{1}{|\mathcal G|_0}\sum_{j\in\mathcal G}\mathcal D   \frac{1}{T_0}\sum_{t\in I^c} (|\widehat e_{jt}|^2+1)^2\left\|  \frac{1}{N}\sum_i 
\begin{pmatrix}
 \lambda_i e_{it}   \\
 \alpha_i
\end{pmatrix}u_{it} \right\|^2\max_{t\leq T}\|\widehat B_t^{-1}-B^{-1}\|\cr
&&+  \frac{1}{T_0}\sum_{t\in I^c}
 \|\widehat B_t^{-1}-B^{-1} \|^2   \frac{1}{|\mathcal G|_0}\sum_{j\in\mathcal G}  \frac{1}{T_0}\sum_{t\in I^c}\left\|
  \begin{pmatrix}
 f_t \widehat e_{jt} \\
 g_t 
  \end{pmatrix} \right\|^2  (|\widehat e_{jt}|+1)^2\left\| \frac{1}{N}\sum_i 
\begin{pmatrix}
 \lambda_i e_{it}   \\
 \alpha_i
\end{pmatrix}u_{it}  \right\|^2\cr
&\leq& O_P(C_{NT}^{-2}) \frac{1}{T_0}\sum_{t\in I^c}n_t\left\| \frac{1}{N}\sum_i 
\begin{pmatrix}
 \lambda_i e_{it}   \\
 \alpha_i
\end{pmatrix}u_{it}  \right\|^2=O_P(C_{NT}^{-4}),
     \end{eqnarray*}   
     where 
     $$
     n_t=\max_{t\leq T}\|\widehat B_t^{-1}-B^{-1}\|+\frac{1}{|\mathcal G|_0}\sum_{j\in\mathcal G}  \frac{1}{T_0}\sum_{t\in I^c}   (| e_{jt}|+1+\max_{jt}|e_{jt}-\widehat e_{jt}|)^2[\|f_t\|^2+\|g_t\|^2+\|f_te_{jt}\|^2].
     $$
       
   Next, let  $$
   \mathcal A_t=\frac{1}{N}\sum_i 
\begin{pmatrix}
\widetilde\lambda_i\widehat e_{it}  - H_1'\lambda_i e_{it}   \\
\widetilde\alpha_i -H_2'\alpha_i
\end{pmatrix}
 u_{it} .
   $$
    \begin{eqnarray*}
  \frac{1}{|\mathcal G|_0}\sum_{j\in\mathcal G}\| \Gamma_3^j\|^2 &=& \frac{1}{|\mathcal G|_0}\sum_{j\in\mathcal G}\| \frac{1}{T_0}\sum_{t\in I^c}
  \begin{pmatrix}
\widehat f_t \widehat e_{jt} \\
\widehat g_t 
  \end{pmatrix} ( \lambda_j'H_f^{-1}\widehat e_{jt}, \alpha_j'H_g^{-1})  \widehat B_t^{-1}   \mathcal A_t \|^2\cr
  &\leq&
 \frac{1}{T_0}\sum_{t\in I^c}  \mathcal D \frac{1}{|\mathcal G|_0}\sum_{j\in\mathcal G}  ( e_{jt}^2+1)^2 \|   \mathcal A_t\|^2 + O_P(1) \frac{1}{T_0}\sum_{t\in I^c} \| \mathcal A_t\|^2\cr
 &\leq&O_P(C_{NT}^{-4})    + \frac{1}{T_0}\sum_{t\in I^c}  \|\frac{1}{N}\sum_i (\widetilde\alpha_i -H_2'\alpha_i)u_{it} \|^2\cr
 &&+   \frac{1}{T_0}\sum_{t\in I^c} \|\frac{1}{N}\sum_i ( \widetilde\lambda_i- H_1'\lambda_i ) e_{it}  u_{it} \|^2 +   \frac{1}{T_0}\sum_{t\in I^c}  \|\frac{1}{N}\sum_i  \lambda_i (\widehat e_{it}-e_{it}  )u_{it} \|^2\cr
  &\leq&O_P(C_{NT}^{-4})  ,
           \end{eqnarray*}   
  where the last equality is due to Lemma \ref{ld.2a} and :
  \begin{eqnarray*}
&& \frac{1}{T_0}\sum_{t\in I^c}   \E[| \frac{1}{N}\sum_i (\widetilde\lambda_i - H_1'\lambda_i )e_{it} u_{it}|^2| E, I]\cr
&=&  \frac{1}{T_0}\sum_{t\in I^c}  \frac{1}{N^2}(\widetilde\Lambda-\Lambda H_1)'\diag(e_t)\Var( u_t |E, I)  \diag(e_t)(\widetilde\Lambda-\Lambda H_1)\cr
&\leq& O_P(1)  \frac{1}{N^2} \sum_i(\widetilde\lambda_i - H_1'\lambda_i )^2\frac{1}{T_0}\sum_{t\in I^c} \E_I e_{it}^2 =O_P(C_{NT}^{-4}).
     \end{eqnarray*}

Next,  for $C_{dt} $   defined in the proof of Lemma \ref{ld.9},
  \begin{eqnarray*}
  \frac{1}{|\mathcal G|_0}\sum_{j\in\mathcal G}\| \Gamma_4^j\|^2 &=& \sum_{d=1}^{16} 
    \frac{1}{|\mathcal G|_0}\sum_{j\in\mathcal G}\| \frac{1}{T_0}\sum_{t\in I^c}
  \begin{pmatrix}
\widehat f_t \widehat e_{jt} \\
\widehat g_t 
  \end{pmatrix} ( \lambda_j'H_f^{-1}\widehat e_{jt}, \alpha_j'H_g^{-1})  \widehat B_t^{-1}C_{dt} \|^2  \cr
  &\leq& \sum_{d=1}^{16} \mathcal D 
    \frac{1}{|\mathcal G|_0}\sum_{j\in\mathcal G}  \frac{1}{T_0}\sum_{t\in I^c}|e_{jt}|^4 \|C_{dt}\|^2 +O_P(1) \sum_{d=1}^{16}  \frac{1}{T_0}\sum_{t\in I^c}  \|C_{dt} \|^2\cr
   & =&O_P(C_{NT}^{-4}).
           \end{eqnarray*}   

For $B_{dt} $   defined in the proof of Lemma \ref{ld.9},  and
$$n_t=\|
  f_t  \|^2\frac{1}{|\mathcal G|_0}\sum_{j\in\mathcal G} (e_{jt} ^2 +  |e_{jt}| )^2+ \|
 g_t  \|^2 \frac{1}{|\mathcal G|_0}\sum_{j\in\mathcal G} (1 +  |e_{jt}| )^2 + \frac{1}{|\mathcal G|_0}\sum_{j\in\mathcal G}(|  e_{jt}|^2+1)^2,$$
  \begin{eqnarray*}
  \frac{1}{|\mathcal G|_0}\sum_{j\in\mathcal G}\| \Gamma_5^j\|^2 &=&   
    \frac{1}{|\mathcal G|_0}\sum_{j\in\mathcal G}\|  \sum_{d=1}^{8} \frac{1}{T_0}\sum_{t\in I^c}
  \begin{pmatrix}
\widehat f_t \widehat e_{jt} \\
\widehat g_t 
  \end{pmatrix} ( \lambda_j'H_f^{-1}\widehat e_{jt}, \alpha_j'H_g^{-1})   (\widehat B_t^{-1} -B^{-1})  B_{dt}\|^2     \cr
  &\leq &[ \max_{t\leq T}\|\widehat B_t^{-1}-B^{-1}\|\mathcal D +  O_P(C_{NT}^{-2})  ]\sum_{d=1}^{8}   \frac{1}{T_0}\sum_{t\in I^c}n_t \|B_{dt}\|^2\cr
  &\leq&   O_P(C_{NT}^{-2})  \sum_{d=1}^{8}   \frac{1}{T_0}\sum_{t\in I^c}n_t \|B_{dt}\|^2= O_P(C_{NT}^{-4}),
      \end{eqnarray*}   
           where the last equality follows from simply applying Cauchy-Schwarz to show $   \frac{1}{T_0}\sum_{t\in I^c}n_t \|B_{dt}\|^2= O_P(C_{NT}^{-2})$ for $d=1\sim 8.$
           
  Step 2: $  \frac{1}{|\mathcal G|_0}\sum_{j\in\mathcal G}\| \Gamma_d^j\|^2$ for $d=1,6$. 
  
By Lemma \ref{lc.6}, $\max_{t\leq T}[\|\widehat B_t^{-1}+\|\widehat S_t\|]=O_P(1),$ $\frac{1}{T}\sum_t\|\widehat B_t^{-1}\widehat S_t-  B^{-1} S \|^2 =O_P(C_{NT}^{-2})$.
\begin{eqnarray*}   
 &&     \frac{1}{|\mathcal G|_0}\sum_{j\in\mathcal G}\| \Gamma_1^j\|^2\cr
 & =&      \frac{1}{|\mathcal G|_0}\sum_{j\in\mathcal G}\|  \frac{1}{T_0}\sum_{t\in I^c}
  \begin{pmatrix}
\widehat f_t \widehat e_{jt} \\
\widehat g_t 
  \end{pmatrix} ( \lambda_j'H_f^{-1}\widehat e_{jt}, \alpha_j'H_g^{-1})  (\widehat B_t^{-1}\widehat S_t-  B^{-1} S )\begin{pmatrix}
 H_1^{-1}f_t\\
 H_2^{-1}g_t
\end{pmatrix}  \|^2\cr
&\leq&  O_P(C_{NT}^{-2})  \frac{1}{|\mathcal G|_0}\sum_{j\in\mathcal G}  \frac{1}{T_0}\sum_{t\in I^c}
 |\widehat e_{jt}| ^2 (|\widehat e_{jt}|^2+1)\|\begin{pmatrix}
 H_1^{-1}f_t\\
 H_2^{-1}g_t
\end{pmatrix}  \|^2\cr
&&+  \frac{1}{T}\sum_t\|(\widehat B_t^{-1}\widehat S_t-  B^{-1} S )\|^2   \frac{1}{|\mathcal G|_0}\sum_{j\in\mathcal G}  \frac{1}{T_0}\sum_{t\in I^c}\|
  \begin{pmatrix}
  f_t \widehat e_{jt} \\
  g_t 
  \end{pmatrix} ( \lambda_j'H_f^{-1}\widehat e_{jt}, \alpha_j'H_g^{-1})  \|^2\|\begin{pmatrix}
 H_1^{-1}f_t\\
 H_2^{-1}g_t
\end{pmatrix}  \|^2\cr
&\leq&  O_P(C_{NT}^{-2})  .\cr
  && \frac{1}{|\mathcal G|_0}\sum_{j\in\mathcal G}\| \Gamma_6^j\|^2\cr
   &=& \frac{1}{|\mathcal G|_0}\sum_{j\in\mathcal G}\|\sum_{d=1}^6\frac{1}{T_0}\sum_{t\notin I}
   \begin{pmatrix}
\widehat f_t \widehat e_{jt} \\
\widehat g_t
  \end{pmatrix}( \lambda_j'H_f^{-1}\widehat e_{jt}, \alpha_j'H_g^{-1}) B^{-1} \begin{pmatrix}H_1'\sum_{d=1}^4 B_{dt}\\
       H_2'\sum_{d=5}^8 B_{dt}
       \end{pmatrix}  \|^2\cr
       &\leq& O_P(C_{NT}^{-2}) \sum_{d=1}^8    \frac{1}{T_0}\sum_{t\in I^c}
 n_t \|B_{dt} \|^2\cr
  &&+   \sum_{d=1}^8  \frac{1}{T}\sum_t\| B_{dt}\|^2   \frac{1}{|\mathcal G|_0}\sum_{j\in\mathcal G}  \frac{1}{T_0}\sum_{t\in I^c}\|
  \begin{pmatrix}
  f_t \widehat e_{jt} \\
  g_t 
  \end{pmatrix} ( \lambda_j'H_f^{-1}\widehat e_{jt}, \alpha_j'H_g^{-1})  \|^2 =O_P(C_{NT}^{-2}) .
          \end{eqnarray*}   
          
      Step 3: $  \frac{1}{|\mathcal G|_0}\sum_{j\in\mathcal G}\| \mathcal U(\Gamma_d^j)\|^2$ for $d=1,6$. 
            Let $$q_t= \frac{1}{|\mathcal G|_0}\sum_{j\in\mathcal G} \sqrt{ e_{jt}^4+e_{jt}^2}(\|f_t\|^2+\|g_t\|^2)(\|f_t\|^2+1)
            $$
\begin{eqnarray*}   
      \frac{1}{|\mathcal G|_0}\sum_{j\in\mathcal G}\| \mathcal U(\Gamma_1^j)\|^2&=&
   \frac{1}{|\mathcal G|_0}\sum_{j\in\mathcal G}\|   \frac{1}{T_0}\sum_{t\in I^c}
  \widehat f_t \widehat e_{jt} ( \lambda_j'H_f^{-1}\widehat e_{jt}, \alpha_j'H_g^{-1})  ( A_{1t,a}+ A_{1t,b}+A_{1t,c} )\begin{pmatrix}
 H_1^{-1}f_t\\
 H_2^{-1}g_t
\end{pmatrix} \|^2    \cr
&\leq&      O_P(C_{NT}^{-4})+O_P(C_{NT}^{-2}) \frac{1}{T_0}\sum_{t\in I^c}
 q_t\|  \widehat S_t-S\|^2\cr
&&+ O_P(1)    \frac{1}{T_0}\sum_{t\in I^c} [\|\Delta_{t1,1} \|^2 +  \| \Delta_{t1,2}   \|^2]+O_P(1)   \frac{1}{|\mathcal G|_0}\sum_{j\in\mathcal G}\|   \frac{1}{T_0}\sum_{t\in I^c}
 f_t ^2  e_{jt}   \Delta_{t2,1}     \|^2\cr
               \end{eqnarray*}   
        where $A_{1t,a}= (\widehat  B_t^{-1}-  B^{-1}) (\widehat S_t-S)$, $A_{1t,b}= (\widehat  B_t^{-1}-  B^{-1})  S$, $  A_{1t,c}=B^{-1} (\widehat S_t-S),$
\begin{eqnarray*}
  \widehat S_t-S&=&\begin{pmatrix}
  \Delta_{t1,1} &   \Delta_{t1,2}\\
    \Delta_{t2,1}   &  0
  \end{pmatrix}
\end{eqnarray*}       
               
\begin{eqnarray*}
\Delta_{t1,1}&=&  \frac{1}{N}\sum_i\widetilde \lambda_i   \lambda_i'H_1e_{it}(\widehat e_{it}-e_{it})   +  \widetilde \lambda_i   \widetilde\lambda_i'(\widehat e_{it}^2-e_{it}^2) +( \widetilde \lambda_i   \lambda_i'H_1-  \widetilde \lambda_i   \widetilde\lambda_i'  )e_{it}^2
 		- H_1'\lambda_i ( \lambda_i'H_1-  \widetilde\lambda_i'  )\E e_{it}^2\cr
\Delta_{t1,2}&=&  \frac{1}{N}\sum_i\widetilde \lambda_i  (\widehat e_{it} -e_{it})( \alpha_i'H_2-\widetilde\alpha_i')+\widetilde \lambda_i   e_{it} ( \alpha_i'H_2-\widetilde\alpha_i')	\cr
\Delta_{t2,1}&=&  \frac{1}{N}\sum_i \widetilde\alpha_i \lambda_i'H_1(e_{it}-\widehat e_{it})
 		+\widetilde\alpha_i (\lambda_i'H_1 -\widetilde\lambda_i')(\widehat e_{it} -e_{it})+\widetilde\alpha_i (\lambda_i'H_1 -\widetilde\lambda_i')e_{it} 
\end{eqnarray*}       

By Lemma \ref{lc.6},  $  \frac{1}{T_0}\sum_{t\in I^c} [\|\Delta_{t1,1} \|^2 +  \| \Delta_{t1,2}   \|^2]=  O_P(C_{NT}^{-4})$.
It is straightforward to use Cauchy-Schwarz to show  $\frac{1}{T_0}\sum_{t\in I^c}
 q_t\|  \widehat S_t-S\|^2= O_P(C_{NT}^{-2}).$ In addition, 
\begin{eqnarray*}      
 &&
  \frac{1}{|\mathcal G|_0}\sum_{j\in\mathcal G}\|   \frac{1}{T_0}\sum_{t\in I^c}
 f_t ^2  e_{jt}   \Delta_{t2,1}     \|^2\leq \frac{1}{|\mathcal G|_0}\sum_{j\in\mathcal G}\|   \frac{1}{T_0}\sum_{t\in I^c}
    \frac{1}{N}\sum_i  \alpha_i \lambda_i f_t ^2  e_{jt}  (e_{it}-\widehat e_{it})    \|^2\cr
 &&+  \frac{1}{|\mathcal G|_0}\sum_{j\in\mathcal G}\|  \frac{1}{N}\sum_i  \frac{1}{T_0}\sum_{t\in I^c}
 f_t ^2  e_{jt}  \alpha_i (\lambda_i'H_1 -\widetilde\lambda_i')(\widehat e_{it} -e_{it})  \|^2\cr
 &&+  \frac{1}{|\mathcal G|_0}\sum_{j\in\mathcal G}\|  \frac{1}{N}\sum_i  \frac{1}{T_0}\sum_{t\in I^c}
 f_t ^2    \alpha_i (\lambda_i'H_1 -\widetilde\lambda_i')(e_{it} e_{jt} -\E e_{it} e_{jt} )
   \|^2 \cr
   && +  \frac{1}{|\mathcal G|_0}\sum_{j\in\mathcal G}\|  \frac{1}{N}\sum_i  \frac{1}{T_0}\sum_{t\in I^c}
 f_t ^2    \alpha_i (\lambda_i'H_1 -\widetilde\lambda_i')\E e_{it} e_{jt} 
   \|^2+O_P(C_{NT}^{-4}).
\end{eqnarray*}      
By Lemma \ref{ld.2a}, $ \frac{1}{|\mathcal G|_0}\sum_{j\in\mathcal G}\|   \frac{1}{T_0}\sum_{t\in I^c}
    \frac{1}{N}\sum_i  \alpha_i \lambda_i f_t ^2  e_{jt}  (e_{it}-\widehat e_{it})    \|^2=  O_P(C_{NT}^{-4}).$ The middle two terms follow from Cauchy-Schwarz. Also,  by Cauchy-Schwarz
\begin{eqnarray*}    
  &&  \frac{1}{|\mathcal G|_0}\sum_{j\in\mathcal G}\|  \frac{1}{N}\sum_i   \alpha_i (\lambda_i'H_1 -\widetilde\lambda_i') \frac{1}{T_0}\sum_{t\in I^c}
 f_t ^2  \E e_{it} e_{jt} 
   \|^2\cr
   &=&O_P(C_{NT}^{-4})  \max_j \sum_{i=1}^N    | \E e_{it} e_{jt} | =O_P(C_{NT}^{-4}).
   \end{eqnarray*}    
   Hence  $ \frac{1}{|\mathcal G|_0}\sum_{j\in\mathcal G}\|   \frac{1}{T_0}\sum_{t\in I^c}
 f_t ^2  e_{jt}   \Delta_{t2,1}     \|^2= O_P(C_{NT}^{-4}).$ Thus $   \frac{1}{|\mathcal G|_0}\sum_{j\in\mathcal G}\| \mathcal U(\Gamma_1^j)\|^2=O_P(C_{NT}^{-4}). $
 
 Finally,
 \begin{eqnarray*}   
      \frac{1}{|\mathcal G|_0}\sum_{j\in\mathcal G}\| \mathcal U(\Gamma_6^j)\|^2
   &   =&      \frac{1}{|\mathcal G|_0}\sum_{j\in\mathcal G}\| \frac{1}{T_0}\sum_{t\in I^c}
   \widehat f_t \widehat e_{jt}  ( \widehat e_{jt}\lambda_j'H_f^{-1}, \alpha_j'H_g^{-1})  B^{-1}  
   \begin{pmatrix}
   	H_1'&0  \\
   	0& H_2 
   \end{pmatrix}
   \begin{pmatrix}
   	\sum_{d=1}^4B_{dt}  \\
   	\sum_{d=5}^8B_{dt} 
   \end{pmatrix} \|^2
        \end{eqnarray*}     
      It suffices to prove
 \begin{eqnarray*} 
        \frac{1}{|\mathcal G|_0}\sum_{j\in\mathcal G}\|   \frac{1}{T_0}\sum_{t\in I^c}
        \widehat f_t \widehat e_{jt}^2 B_{dt}    \|^2& =&O_P(C_{NT}^{-4}),\quad d=1\sim 4\cr
         \frac{1}{|\mathcal G|_0}\sum_{j\in\mathcal G}\|   \frac{1}{T_0}\sum_{t\in I^c}
        \widehat f_t \widehat e_{jt}  B_{dt}    \|^2& =&O_P(C_{NT}^{-4}),\quad d=5\sim 8.
         \end{eqnarray*}   
     This is proved by Lemma \ref{ld.16}.
      \end{proof}

\begin{lem}\label{ld.16} For $B_{dt}$ defined in (\ref{ed.48}) and (\ref{ed.49}), 
	 \begin{eqnarray*} 
		\frac{1}{|\mathcal G|_0}\sum_{j\in\mathcal G}\|   \frac{1}{T_0}\sum_{t\in I^c}
		\widehat f_t \widehat e_{jt}^2 B_{dt}    \|^2& =&O_P(C_{NT}^{-4}),\quad d=1\sim 4\cr
		\frac{1}{|\mathcal G|_0}\sum_{j\in\mathcal G}\|   \frac{1}{T_0}\sum_{t\in I^c}
		\widehat f_t \widehat e_{jt}  B_{dt}    \|^2& =&O_P(C_{NT}^{-4}),\quad d=5\sim 8.
	\end{eqnarray*}  
\end{lem}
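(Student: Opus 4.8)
The plan is to treat the two groups $d=1,\dots,4$ (weight $\widehat e_{jt}^2$) and $d=5,\dots,8$ (weight $\widehat e_{jt}$) with the same philosophy but with an extra refinement for $d=6,7,8$. The starting observation is that, exactly as in the proof of Lemma \ref{lc.8}, the unweighted aggregate bound $\frac{1}{T}\sum_{t\in I^c}\|B_{dt}\|^2=O_P(C_{NT}^{-4})$ holds for $d=1,\dots,5$ — this follows by Cauchy--Schwarz together with $\frac{1}{N}\sum_i\|\dot\lambda_i-H_1'\lambda_i\|^2=O_P(C_{NT}^{-2})$, the rate $\frac{1}{NT}\sum_{it}e_{it}^2(\widehat e_{it}-e_{it})^2=O_P(C_{NT}^{-2})$ of Lemma \ref{ld.2a}, and Lemmas \ref{lc.2} and \ref{ld.5} — whereas for $d=6,7,8$ only the weaker bound $\frac{1}{T}\sum_{t\in I^c}\|B_{dt}\|^2=O_P(C_{NT}^{-2})$ is available, these terms being only $O_P(C_{NT}^{-1})$ pointwise. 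Consequently the $d\le5$ cases will follow from a single Cauchy--Schwarz step, while the $d\in\{6,7,8\}$ cases will require extracting an additional factor $C_{NT}^{-1}$ from the averaging over $t$.

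For $d\le 5$ I would apply Cauchy--Schwarz in $t$ for each fixed $j$, writing for instance
\[
\Bigl\|\frac{1}{T_0}\sum_{t\in I^c}\widehat f_t\,\widehat e_{jt}^2\,B_{dt}\Bigr\|^2
\le\Bigl(\frac{1}{T_0}\sum_{t\in I^c}\|\widehat f_t\|^2\widehat e_{jt}^4\Bigr)\Bigl(\frac{1}{T_0}\sum_{t\in I^c}\|B_{dt}\|^2\Bigr),
\]
and the analogous inequality with $\widehat e_{jt}^2$ in place of $\widehat e_{jt}^4$ for $d=5$. The second factor equals $O_P(C_{NT}^{-4})$ and is free of $j$, so averaging over $j\in\mathcal G$ reduces matters to showing $\frac{1}{|\mathcal G|_0}\sum_{j\in\mathcal G}\frac{1}{T_0}\sum_{t\in I^c}\|\widehat f_t\|^2\widehat e_{jt}^4=O_P(1)$. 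This in turn follows from the eighth--moment bound on $e_{jt}$ and the fourth--moment bound on $f_t$ in Assumption \ref{a3.8}(iv), after replacing $\widehat e_{jt}$ by $e_{jt}$ and $\widehat f_t$ by $H_ff_t$ using the uniform controls of Lemmas \ref{ld.1a} and \ref{lc.6}.

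For $d\in\{6,7,8\}$ the refinement substitutes the leading expansions and genuinely uses the $t$--averaging. Take $B_{6t}$: replacing $\widehat e_{it}-e_{it}$ by its dominant component $\eta_t=\frac{1}{N}\sum_k l_ke_{kt}$ from the expansion (\ref{ed.43}), which enters through the $i$--independent factor $l_i'H_xR_2\eta_t$, reduces $B_{6t}$ to $\bigl(\frac{1}{N}\sum_i\alpha_i\lambda_i'l_i'\bigr)$ contracted with $\widetilde f_t$ and $\eta_t$, up to a remainder whose $\widehat e_{jt}$--weighted aggregate square is $O_P(C_{NT}^{-4})$ by Lemma \ref{ld.1a}. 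The weighted average then reduces, up to $O_P(1)$ multipliers and bounded--moment $t$--weights $a_t$, to $\frac{1}{NT_0}\sum_{t\in I^c}\sum_k a_t\,\widehat e_{jt}\,e_{kt}\,l_k$, which I would bound by conditioning on $(F,W,U)$ and using the conditional serial independence of $\{e_{it}\}$ with $\max_j\sum_k|\E(e_{jt}e_{kt}\mid\mathcal W,U)|<C$: the diagonal $k=j$ contributes $O_P(N^{-1})$ and the centred part $O_P((NT)^{-1/2})$, so the aggregate over $j$ is $O_P(C_{NT}^{-4})$. For $B_{7t}$, after isolating the $w_t$ factor, I would invoke the sharp directional bound $\frac{1}{T}\sum_{t\in I^c}\|\frac{1}{N}\sum_i c_ie_{it}(\dot\lambda_i-H_1'\lambda_i)\|^2=O_P(C_{NT}^{-4})$ of Lemma \ref{ld.5}; and for $B_{8t}$ I would use $\frac{1}{T}\sum_{t\in I^c}\|\widetilde f_t-H_1^{-1}f_t\|^2=O_P(C_{NT}^{-2})$ together with the boundedness of $\frac{1}{N}\sum_i\alpha_il_i'\lambda_i'$ and the same serial--independence argument applied to $\widehat e_{jt}\,w_t$.

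The main obstacle is precisely the $d\in\{6,7,8\}$ group: the pointwise and crude aggregate rates lose a factor $C_{NT}^2$, so cancellation in the time average must be exploited, and the weight $\widehat e_{jt}$ is correlated with the $e$--noise carried inside $B_{dt}$ through the shared cross--section, ruling out any naive independence argument. The resolution is to peel off the leading expansions of $\widehat e_{it}-e_{it}$, $\dot\lambda_i-H_1'\lambda_i$ and $\widetilde f_t-H_1^{-1}f_t$, condition on $(F,W,U)$ (and on $D_I$ where the split--sample estimators enter), and appeal to the conditional serial independence of $\{e_{it}\}$ and the weak cross--sectional dependence bounds of Assumption \ref{a4.1}(iv) so that the relevant covariance sums stay bounded. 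Verifying that every remainder dropped in these expansions has $\widehat e_{jt}$--weighted aggregate square of order $C_{NT}^{-4}$ is routine but lengthy, resting on repeated Cauchy--Schwarz and the bounds of Lemmas \ref{ld.1a}, \ref{ld.2a}, \ref{lc.2}, \ref{lc.8}, \ref{ld.5} and \ref{lc.6}.
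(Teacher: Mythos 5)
Your overall architecture — replace $\widehat f_t$ by $H_ff_t$ and $\widehat e_{jt}^r$ by $e_{jt}^r$ at a cost controlled by Cauchy--Schwarz, dispatch the terms whose aggregate square is already $O_P(C_{NT}^{-4})$ by a single Cauchy--Schwarz in $t$, and treat the remaining terms by peeling off expansions and using the directional bounds of Lemmas \ref{lc.2}, \ref{ld.5} and \ref{ld.2a}(iii) — is exactly the paper's. But there is a genuine gap in your classification: you place $d=2$ in the ``easy'' group by asserting $\frac{1}{T}\sum_{t}\|B_{2t}\|^2=O_P(C_{NT}^{-4})$, and this is false. Writing $B_{2t}=\bigl[\frac{1}{N}\sum_i\lambda_i\lambda_i'e_{it}(\widehat e_{it}-e_{it})\bigr]H_1\widetilde f_t$, the route you cite — Cauchy--Schwarz over $i$ plus $\frac{1}{NT}\sum_{it}e_{it}^2(\widehat e_{it}-e_{it})^2=O_P(C_{NT}^{-2})$ — yields only $\frac{1}{T}\sum_t\|B_{2t}\|^2=O_P(C_{NT}^{-2})$, since there is a single small factor here (no $\dot\lambda_i-H_1'\lambda_i$ to pair it with, unlike $d=1,5$). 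The sharper rate $\frac{1}{T}\sum_t|\frac{1}{N}\sum_ic_i(\widehat e_{it}-e_{it})e_{it}|^2=O_P(C_{NT}^{-4})$ of Lemma \ref{ld.2a}(iii) requires keeping the cross-sectional sum intact, and even then it does not upgrade $\frac{1}{T}\sum_t\|B_{2t}\|^2$ itself to $C_{NT}^{-4}$, because the cross term with $\widetilde f_t-H_1^{-1}f_t$ contributes $\frac{1}{T}\sum_t\|\widetilde f_t-H_1^{-1}f_t\|^2\cdot\max_t|\frac{1}{N}\sum_ic_ie_{it}(\widehat e_{it}-e_{it})|^2$, whose second factor is only $O_P(1)$-to-$o_P(1)$, not $O_P(C_{NT}^{-2})$. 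Consequently your factored Cauchy--Schwarz step delivers only $O_P(C_{NT}^{-2})$ for the $d=2$ target. The fix is to treat $B_{2t}$ like your $d\in\{6,7,8\}$ group: keep the weight attached, reduce to $\frac{1}{|\mathcal G|_0}\sum_j\|\frac{1}{T_0}\sum_t f_t^2e_{jt}^2\frac{1}{N}\sum_i\lambda_i^2e_{it}(\widehat e_{it}-e_{it})\|^2$ and invoke the time-aggregated directional bound of Lemma \ref{ld.2a}(iii); this is precisely what the paper does.

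A secondary, lesser concern is $B_{8t}$: combining $\frac{1}{T}\sum_t\|\widetilde f_t-H_1^{-1}f_t\|^2=O_P(C_{NT}^{-2})$ with ``serial independence applied to $\widehat e_{jt}w_t$'' does not work as stated, because $\widetilde f_t-H_1^{-1}f_t$ is built from the time-$t$ cross-section and is therefore correlated with $e_{jt}$; a naive Cauchy--Schwarz again stalls at $O_P(C_{NT}^{-2})$. What is actually needed is the full substitution of the expansion of $\widetilde f_t-H_1^{-1}f_t$ and the conditional-independence computation, i.e.\ exactly the statement of Lemma \ref{lc.2}, $\frac{1}{|\mathcal G|_0}\sum_{j\in\mathcal G}\|\frac{1}{T}\sum_{s\notin I}f_sw_s'e_{js}c_j'(\widetilde f_s-H_1^{-1}f_s)\|^2=O_P(C_{NT}^{-4})$, which you list among your tools but should cite directly here rather than re-deriving a weaker version. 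Your treatments of $d=1,3,4,5$ and of $d=6,7$ are consistent with the paper's.
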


\begin{proof}
	For $r=1,2$ and all $d=1,..,8$,
	\begin{eqnarray*} 
	&&	\frac{1}{|\mathcal G|_0}\sum_{j\in\mathcal G}\|   \frac{1}{T_0}\sum_{t\in I^c}
		\widehat f_t \widehat e_{jt}^r B_{dt}    \|^2\leq\max_{jt}|	\widehat e_{jt}^r-e_{jt}^r|\frac{1}{T_0}\sum_{t\in I^c}   \|	\widehat f_t-H_ff_t \|^2       \frac{1}{T_0}\sum_{t\in I^c}
 \|B_{dt}    \|^2\cr
		&&+ \frac{1}{|\mathcal G|_0}\sum_{j\in\mathcal G}    \frac{1}{T_0}\sum_{t\in I^c}
	\|\widehat f_t-H_ff_t\|^2   e_{jt}^{2r} \frac{1}{T_0}\sum_{t\in I^c}\| B_{dt}    \|^2
		+\frac{1}{|\mathcal G|_0}\sum_{j\in\mathcal G}\|   \frac{1}{T_0}\sum_{t\in I^c}
	  f_t \widehat e_{jt}^r B_{dt}    \|^2\cr
	 &\leq& O_P(C_{NT}^{-4}) + \frac{1}{|\mathcal G|_0}\sum_{j\in\mathcal G}\|   \frac{1}{T_0}\sum_{t\in I^c}
	 f_t e_{jt}^r B_{dt}    \|^2+\frac{1}{|\mathcal G|_0}\sum_{j\in\mathcal G}\|   \frac{1}{T_0}\sum_{t\in I^c}
	 f_t (\widehat e_{jt}^r-  e_{jt}^r ) B_{dt}    \|^2.
	\end{eqnarray*}  

It remains to show the following three steps.

Step 1. Show $ \frac{1}{|\mathcal G|_0}\sum_{j\in\mathcal G}\|   \frac{1}{T_0}\sum_{t\in I^c}
f_t e_{jt} ^2B_{dt}    \|^2= O_P(C_{NT}^{-4})$ for $d=1\sim 4$.

When $d=1, 4$, it follows from Cauchy-Schwarz. 
 \begin{eqnarray*}
 	\frac{1}{|\mathcal G|_0}\sum_{j\in\mathcal G}\|   \frac{1}{T_0}\sum_{t\in I^c}
 	f_t e_{jt} ^2B_{2t}    \|^2 &=&O_P(C_{NT}^{-4})
 	+	\frac{1}{|\mathcal G|_0}\sum_{j\in\mathcal G}\|   \frac{1}{T_0}\sum_{t\in I^c}
 	f_t^2 e_{jt} ^2\frac{1}{N}\sum_i\lambda_i^2 e_{it}  (\widehat e_{it}-e_{it})   \|^2,
\end{eqnarray*} 
which is $O_P(C_{NT}^{-4})$ due to Lemma \ref{ld.2a}.

 Next, by Lemma \ref{ld.5}, for any fixed sequence $c_i$, 
$\frac{1}{T}\sum_{t\notin I}   \|       \frac{1}{N}\sum_i c_ie_{it}  (\dot \lambda_i-H_1'\lambda_i)  \|^2 =O_P( C_{NT}^{-4})$. So 
  \begin{eqnarray*}
 	\frac{1}{|\mathcal G|_0}\sum_{j\in\mathcal G}\|   \frac{1}{T_0}\sum_{t\in I^c}
 	f_t e_{jt} ^2B_{3t}    \|^2 &=&O_P(C_{NT}^{-4})
 	+\frac{1}{|\mathcal G|_0}\sum_{j\in\mathcal G}\|   \frac{1}{T_0}\sum_{t\in I^c}
 	f_t^2 e_{jt} ^2\frac{1}{N}\sum_i\lambda_i e_{it}l_i'w_t (\dot \lambda_i-H_1'\lambda_i)'    \|^2\cr
 	&\leq &O_P(C_{NT}^{-4}).
 \end{eqnarray*}

Step 2. Show $ \frac{1}{|\mathcal G|_0}\sum_{j\in\mathcal G}\|   \frac{1}{T_0}\sum_{t\in I^c}
f_t e_{jt} B_{dt}    \|^2= O_P(C_{NT}^{-4})$ for $d=5\sim 8$.

When $d=5$, it follows from Cauchy-Schwarz. 

Next  by Lemma \ref{ld.2a}, 
  \begin{eqnarray*}
	\frac{1}{|\mathcal G|_0}\sum_{j\in\mathcal G}\|   \frac{1}{T_0}\sum_{t\in I^c}
	f_t e_{jt} B_{6t}    \|^2 &=& \frac{1}{|\mathcal G|_0}\sum_{j\in\mathcal G}\|   \frac{1}{T_0}\sum_{t\in I^c}
	f_t ^2e_{jt} \frac{1}{N}\sum_i \alpha_i (\widehat e_{it}-e_{it}) \lambda_i   \|^2+O_P(C_{NT}^{-4})\cr
	&\leq &O_P(C_{NT}^{-4}).\cr
		\frac{1}{|\mathcal G|_0}\sum_{j\in\mathcal G}\|   \frac{1}{T_0}\sum_{t\in I^c}
	f_t e_{jt} B_{7t}    \|^2 &= &	\frac{1}{|\mathcal G|_0}\sum_{j\in\mathcal G}\|   \frac{1}{T_0}\sum_{t\in I^c}
	f_t ^2 e_{jt} w_t \|^2\|\frac{1}{N}\sum_i\alpha_il_i (\dot \lambda_i-H_1'\lambda_i)   \|^2+O_P(C_{NT}^{-4})\cr
	&\leq&O_P(C_{NT}^{-4}). 
\end{eqnarray*} 

Next, by Lemma \ref{lc.2},    
  \begin{eqnarray*}
	\frac{1}{|\mathcal G|_0}\sum_{j\in\mathcal G}\|   \frac{1}{T_0}\sum_{t\in I^c}
f_t e_{jt} B_{8t}    \|^2 &=&	\frac{1}{|\mathcal G|_0}\sum_{j\in\mathcal G}\|   \frac{1}{T_0}\sum_{t\in I^c}
f_t e_{jt} (\widetilde f_t- H_1^{-1}f_t)  w_t\|^2\|\frac{1}{N}\sum_i\alpha_i l_i  \lambda_i   \|^2\cr
&\leq& O_P(C_{NT}^{-4}). 
\end{eqnarray*} 

Step 3. Show $ \frac{1}{|\mathcal G|_0}\sum_{j\in\mathcal G}\|   \frac{1}{T_0}\sum_{t\in I^c}
f_t (\widehat e_{jt} ^r-  e_{jt}^r  ) B_{dt}    \|^2= O_P(C_{NT}^{-4})$ for $d=1\sim 8$.

It is bounded by Cauchy-Schwarz. 

\end{proof}

  \subsection{Technical lemmas for covariance estimations}

 \begin{lem}\label{ld.15add}
Suppose uniformly over $i\leq N$,  the following terms are $o_P(1)$: $q \|\frac{1}{T}\sum_t f_te_{it}u_{it}\|,q \|\frac{1}{T}\sum_t g_tu_{it}\|,q \|\frac{1}{T}\sum_{s }  f_sw_s'e_{is}\|$,  $q|\frac{1}{T}\sum_{tj}c_j(m_{jt}m_{it} -  \E m_{jt}m_{it} ) |$ for $m_{it}\in \{e_{it}u_{it} ,  u_{it}\}$, $|\frac{1}{T}\sum_t  e_{it}^2 -\E e_{it}^2|$ .
Suppose  
$ q\max_{t\leq T}\|\frac{1}{N}\sum_i\lambda_i\omega_{it}+\alpha_iu_{it}\|=o_P(1) $.

Also, $q  C_{NT}^{-1} (\max_{it}|e_{it}u_{it}|)=o_P(1)$, where
  $$q:=\max_{it}(\|f_tx_{it}\|+1+|x_{it}|).$$

Then (i) $ \max_i[\|\widehat\lambda_i-H_f^{'-1}\lambda_i\|+ \|\widehat\alpha_i-H_g^{'-1}\alpha_i\|]q =o_P(1)$.

(ii) $\max_{t\leq T}[\|\widehat f_t-H_ff_t\|+\|\widehat g_t-H_gg_t\|] q=o_P(1)$.

(iii) $\max_{is}|\widehat u_{is}-u_{is}|=o_P(1)$, 

(iv) $\max_i|\widehat \sigma_i^2-\sigma_i^2|=o_P(1)$, where $\sigma_i^2=\E e_{it}^2$ and $\widehat\sigma_i^2=\frac{1}{T}\sum_t\widehat e_{it}^2$.

 \end{lem}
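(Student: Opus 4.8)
The plan is to establish the four claims in the order (ii), (i), (iii), (iv), since the loading expansion (\ref{ec.32}) is built on the factor estimates and the residual and variance claims rest on both. The unifying reduction is that each hypothesis of the form $q\|\cdots\|=o_P(1)$ is tailored to annihilate exactly one block of the asymptotic expansions derived earlier, so the work is to split $\widehat f_t-H_ff_t$ and $\widehat\lambda_i-H_f^{'-1}\lambda_i$ into a leading score term plus remainders and to show that, after multiplication by $q$, every remainder is uniformly $o_P(1)$. For (ii) I would start from (\ref{ed.24}) and the definition of $H_f,H_g$, which gives that the upper block of $\widehat f_t-H_ff_t$ is the upper block of $B^{-1}\frac1N\sum_i(H_1'\lambda_ie_{it},H_2'\alpha_i)'u_{it}+\sum_{d=1}^6A_{dt}$. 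Since $\|B^{-1}\|=O_P(1)$ and the leading block is $\frac1N\sum_i\lambda_i\omega_{it}$ with $\omega_{it}=e_{it}u_{it}$, the assumption $q\max_t\|\frac1N\sum_i\lambda_i\omega_{it}+\alpha_iu_{it}\|=o_P(1)$ handles the score times $q$ directly. For the $A_{dt}$ I would reuse the block decompositions of Lemma \ref{ld.9}, but re-bound them uniformly in $t$: each factors into a piece that is $O_P(1)$ uniformly in $t$ — via $\max_t\|\widehat B_t^{-1}\|=O_P(1)$, $\max_t\|\widehat B_t-B_t\|=o_P(1)$ and $\max_t\|\widehat S_t-S\|=o_P(1)$ from Lemma \ref{lc.6}, and $\max_{it}|\widehat e_{it}-e_{it}|=O_P(\phi_{NT})$ from Lemma \ref{ld.1a} — times a factor governed by the assumed moment bounds; this yields $\max_t\|\widehat f_t-H_ff_t\|q=o_P(1)$ and likewise for $\widehat g_t$.

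Claim (i) follows the same template applied to (\ref{ec.32}). The leading blocks are $D_i^{-1}\frac1T\sum_sH_ff_se_{is}u_{is}$ and $D_i^{-1}\frac1T\sum_sH_gg_su_{is}$; with $\max_i\|D_i^{-1}\|=O_P(1)$ from Lemma \ref{lc.8add}, the hypotheses $q\|\frac1T\sum_tf_te_{it}u_{it}\|=o_P(1)$ and $q\|\frac1T\sum_tg_tu_{it}\|=o_P(1)$ dispatch them. For $\sum_dR_{di}$ I would use $\max_i\|\widehat D_i^{-1}-D_i^{-1}\|=o_P(1)$ (Lemma \ref{lc.8add}), the uniform factor bounds just obtained in (ii), the concentration assumption $q|\frac1T\sum_{tj}c_j(m_{jt}m_{it}-\E m_{jt}m_{it})|=o_P(1)$ for $m\in\{\omega,u\}$, and $q\|\frac1T\sum_sf_sw_s'e_{is}\|=o_P(1)$ to absorb the contributions of $\widehat e_{is}-e_{is}$. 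Each $R_{di}$ again splits as (uniformly $O_P(1)$)$\times$(a quantity made $o_P(1/q)$ by one hypothesis), giving $\max_i\|\widehat\lambda_i-H_f^{'-1}\lambda_i\|q=o_P(1)$ and the analogue for $\widehat\alpha_i$.

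For (iii) the essential device is the rotation cancellation. Writing $\widehat u_{is}-u_{is}=x_{is}(\lambda_i'f_s-\widehat\lambda_i'\widehat f_s)+(\alpha_i'g_s-\widehat\alpha_i'\widehat g_s)$ and expanding $\widehat\lambda_i'\widehat f_s=(H_f^{'-1}\lambda_i)'(H_ff_s)+(\widehat\lambda_i-H_f^{'-1}\lambda_i)'\widehat f_s+(H_f^{'-1}\lambda_i)'(\widehat f_s-H_ff_s)$, the leading product is exactly $\lambda_i'f_s$. Because $q\ge\max_{it}(\|f_tx_{it}\|+1+|x_{it}|)$, claim (ii) gives $\max_{is}|x_{is}|\,\|\widehat f_s-H_ff_s\|=o_P(1)$ and hence $\max_{is}\|x_{is}\widehat f_s\|=O_P(q)$, so the two cross terms are bounded by $\max_i\|\widehat\lambda_i-H_f^{'-1}\lambda_i\|\,O_P(q)$ and $O_P(1)\max_{is}|x_{is}|\,\|\widehat f_s-H_ff_s\|$, both $o_P(1)$ by (i) and (ii); the $g$-block is identical, so $\max_{is}|\widehat u_{is}-u_{is}|=o_P(1)$. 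Finally (iv) follows from $\widehat\sigma_i^2-\sigma_i^2=\frac1T\sum_t(\widehat e_{it}-e_{it})^2+\frac2T\sum_t(\widehat e_{it}-e_{it})e_{it}+(\frac1T\sum_te_{it}^2-\E e_{it}^2)$, where the last term is uniformly $o_P(1)$ by assumption and the first two are controlled uniformly in $i$ by $\max_i\frac1T\sum_t(\widehat e_{it}-e_{it})^2=o_P(1)$ from Lemma \ref{ld.2a}(i) together with Cauchy--Schwarz.

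I expect the main obstacle to be the uniform control of the remainder blocks $A_{dt}$ and $R_{di}$: the earlier lemmas bound these only in an averaged ($\frac1T\sum_t\|\cdot\|^2$ or $\frac1{|\mathcal G|_0}\sum_i\|\cdot\|^2$) sense, whereas here I need suprema over $t$ or $i$. The delicate point is the bookkeeping of the factor $q$ — most remainder terms are products in which one factor is only uniformly $O_P(1)$ after being stabilized by Lemmas \ref{lc.6} and \ref{lc.8add}, while the other is driven to $o_P(1/q)$ by precisely one of the assumed moment conditions, and the argument succeeds only because these powers of $q$ line up so that each product times $q$ collapses to $o_P(1)$.
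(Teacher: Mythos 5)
Your proposal is correct and follows essentially the same route as the paper: plug in the expansions (\ref{ed.24}) and (\ref{ec.32}), let each assumed $q$-weighted moment condition absorb one leading term, and upgrade the earlier averaged remainder bounds (Lemmas \ref{ld.1a}, \ref{ld.2a}, \ref{lc.6}, \ref{lc.8add}, \ref{ld.17add}) to uniform ones, with (iii) via the rotation cancellation and (iv) via the quadratic decomposition of $\widehat e_{it}^2-e_{it}^2$. The only difference is cosmetic: you prove (ii) first and feed its sup-bounds into (i), whereas the paper proves (i) first using Cauchy--Schwarz with the $L^2$-averaged bound $\frac{1}{T}\sum_s\|\widehat f_s-H_ff_s\|^2=O_P(C_{NT}^{-2})$; both orderings close the argument.
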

 
 \begin{proof}
 (i) In view of (\ref{ec.32}),  it  suffices to show $\max_i\|R_{di}\|q=o_P(1)$ for all $d=1\sim 6$.
 First, by our assumption,  
 $q\max_i\|R_{1i}\|=o_P(1)$.  Also by Lemma \ref{lc.8add}, $\max_i\|\widehat D_i^{-1}-D_i^{-1}\|
 =o_P(1)$.  Also,  $q\max_i\|R_{2i}\|=o_P(1)$ follows from   $q\max_{it}|\widehat e_{it}-e_{it}||e_{it}+u_{it}|=o_P(1)$ .

 \textbf{Term $\max_i[\|R_{3i}\|+\|R_{4i}\| ]$. }  It suffices to prove $q\max_i\|R_{3i,d}\|=o_P(1)$ for $R_{3i,d}$ defined in  Lemma \ref{ld.13}.
 \begin{eqnarray*}
 \max_i\|R_{3i,1}\|q&\leq& o_P(1)
 +q\max_i\| \frac{1}{T}\sum_{s } (\widehat f_s-H_ff_s)(\widehat e_{is} -e_{is})(\mu_{it}\lambda_i'f_s -\widehat \mu_{it}\dot\lambda_i'\widetilde f_s) \|\cr
&&  +q\max_i\| \frac{1}{T}\sum_{s } (\widehat f_s-H_ff_s) e_{is} (\mu_{it}\lambda_i'f_s -\widehat \mu_{it}\dot\lambda_i'\widetilde f_s) \|\cr
 &&+  qO_P(1)\max_i\| \frac{1}{T}\sum_{s } f_s(\widehat e_{is}-e_{is})(\mu_{it}\lambda_i'f_s -\widehat \mu_{it}\dot\lambda_i'\widetilde f_s) \|\cr
 &=&o_P(1)
 \end{eqnarray*}
 following from the Cauchy-Schwarz, $q\max_{it}|\widehat e_{it}-e_{it}||e_{it}|=o_P(1)$, $qC_{NT}^{-1}\max_{it}e_{it}^2=o_P(1)$ and  Lemma \ref{ld.17add}.
 Similarly, $ \max_i\|R_{3i,d}\|q=o_P(1)$ for all $d=2\sim 7.$
 
 \textbf{Term $\max_i\|R_{5i}\|$. } 
By the assumption:
 $$
 q\max_i\|\frac{1}{T}\sum_{tj}\lambda_j(\omega_{jt}\omega_{it} -  \E \omega_{jt}\omega_{it} )\|+q\max_i\|\frac{1}{T}\sum_{tj}\alpha_j(u_{jt}u_{it} -  \E u_{jt}u_{it} )\|=o_P(1).
 $$
 It suffices to prove for $A_{dt}$ defined in (\ref{ed.24}),
 $$
q\sum_{d=1}^6\max_i\|    \frac{1}{T}\sum_{t }
\begin{pmatrix}
\widehat e_{it}\I&0  \\
0&\I
  \end{pmatrix}   A_{dt}   u_{it}\|=o_P(1).
$$ 
First, for $A_{1t,a}= (\widehat  B_t^{-1}-  B^{-1}) (\widehat S_t-S)$, $A_{1t,b}= (\widehat  B_t^{-1}-  B^{-1})  S$, $  A_{1t,c}=B^{-1} (\widehat S_t-S),$
\begin{eqnarray*}
&&q \max_i\|    \frac{1}{T}\sum_{t }
\begin{pmatrix}
\widehat e_{it}\I&0  \\
0&\I
  \end{pmatrix}   A_{1t}   u_{it}\|
  \leq q\max_i\|    \frac{1}{T}\sum_{t }
\begin{pmatrix}
e_{it}\I&0  \\
0&\I
  \end{pmatrix}     A_{1t,a}\begin{pmatrix}
 H_1^{-1}f_t\\
 H_2^{-1}g_t
\end{pmatrix}   u_{it}\|\cr
&&+q \max_i\|    \frac{1}{T}\sum_{t }
\begin{pmatrix}
e_{it}\I&0  \\
0&\I
  \end{pmatrix}     A_{1t,b}\begin{pmatrix}
 H_1^{-1}f_t\\
 H_2^{-1}g_t
\end{pmatrix}   u_{it}\|\cr
&&+ q\max_i\|    \frac{1}{T}\sum_{t }
\begin{pmatrix}
e_{it}\I&0  \\
0&\I
  \end{pmatrix}     A_{1t,c}\begin{pmatrix}
 H_1^{-1}f_t\\
 H_2^{-1}g_t
\end{pmatrix}   u_{it}\| +o_P(1)\cr
&=&qO_P(C_{NT}^{-1})(\max_{it}|e_{it}u_{it}|)+o_P(1)=o_P(1).
\end{eqnarray*}

Similarly, $d=2\sim 6$,  Cauchy-Schwarz and $q O_P(C_{NT}^{-1})(\max_{it}|e_{it}u_{it}|)=o_P(1)$ imply 
\begin{eqnarray*}
&& q\max_i\|    \frac{1}{T}\sum_{t }
\begin{pmatrix}
\widehat e_{it}\I&0  \\
0&\I
  \end{pmatrix}   A_{dt}   u_{it}\|
  \leq o_P(1).
  \end{eqnarray*}

 \textbf{Term $\max_i\|R_{6i}\|$. } Lemma \ref{lc.8add} shows $\max_i\|\widehat D_i^{-1}\|=O_P(1)$.  It suffices to prove  $q\max_i\|\Gamma_d^i\|=o_P(1)$, where $\Gamma_d^i$ is as defined in 
 the proof of Lemma \ref{ld.15}.
\begin{eqnarray*}
 q\max_j\|\Gamma_0^j\|&=&q  \max_j\| \frac{1}{T_0}\sum_{t\notin I}
   \begin{pmatrix}
\widehat f_t \widehat e_{jt} \\
\widehat g_t 
  \end{pmatrix}( \lambda_j'H_f^{-1}\widehat e_{jt}b_1, \alpha_j'H_g^{-1}b_2)       \frac{1}{N}\sum_i 
\begin{pmatrix}
 H_1'\lambda_i e_{it}   \\
 H_2'\alpha_i
\end{pmatrix}
 u_{it} 
\|\cr
&=&qO_P(C_{NT}^{-1}) \max_j|\widehat e_{jt}^2+\widehat e_{jt}| =o_P(1)\cr
 \max_j\|\Gamma_d^j\|&=& q\max_j\|  \frac{1}{T_0}\sum_{t\notin I}
   \begin{pmatrix}
\widehat f_t \widehat e_{jt} \\
\widehat g_t
  \end{pmatrix}( \lambda_j'H_f^{-1}\widehat e_{jt}, \alpha_j'H_g^{-1}) A_{dt}\|=o_P(1)
   \end{eqnarray*}
   due to $O_P(C_{NT}^{-1}) \max_j|\widehat e_{jt}^2+\widehat e_{jt}| =o_P(1)$ and $\max_i\| \dot\lambda_i-H_1'\lambda_i\|=o_P(1).  $
   
   (ii)  In view of (\ref{ed.24}), we have 
   $\max_{t\leq T}\|\frac{1}{N}\sum_i\lambda_i\omega_{it}+\alpha_iu_{it}\|=o_P(1)$.  So it suffices to prove $\max_{t\leq T}\|A_{dt}\|q=o_P(1)$ for $d=1\sim 8. $
   
   The conclusion of Lemma \ref{lc.6} can be strengthened to ensure that 
   
   $q\max_{t\leq T} (\|f_t\|+\|g_t\|)\max_{t\leq T}[\|\widehat S_t-S\|+\|\widehat B_t^{-1}-B^{-1}\|]=o_P(1)$.  So this is true for $d=1,2.$ Next, $C_{NT}^{-1}q=o_P(1)$ and Lemma \ref{ld.17add} shows $q\max_{t}\| \widetilde f_t-H_1^{-1}f_t\| =o_P(1).  $ So it is easy to verify using the Cauchy-Schwarz that it also holds true for $d=3\sim 8$.

   (iii) Uniformly in $i,s$,  by part (ii), 
   \begin{eqnarray*}
   	|\widehat u_{is}-u_{is}|&\leq& |x_{it}| \|\widehat\lambda_i-\lambda_i\|\|\widehat f_t\|
   	+C|x_{it}| \|\widehat f_t-f_t\| 
   	\cr
   	&&+\|\widehat\alpha_i-\alpha_i\|\|\widehat g_t\|  + C  \|\widehat g_t-g_t\| =o_P(1).
   \end{eqnarray*}

 (iv) Uniformly in $i\leq N$, 
  \begin{eqnarray*}
 	 \frac{1}{T}\sum_t\widehat e_{it}^2-\E e_{it}^2&\leq& | \frac{1}{T}\sum_t\widehat e_{it}^2-e_{it}^2|+  \frac{1}{T}\sum_t  e_{it}^2 -\E e_{it}^2=o_P(1). 
 \end{eqnarray*}
   
    \end{proof}
   
\begin{lem}\label{ld.17add} Suppose the following are $o_P(1)$:  $\max_{it}|\widehat e_{it}-e_{it}||e_{it}+u_{it}| $   ,  $ C_{NT}^{-1}  \max_{it}| x_{it}|^3 \|f_t\|$, $\max_i|\frac{1}{T}\sum_tu_{it}^2-\E u_{it}^2| $, 
$ q\max_i\|\frac{1}{T}\sum_t f_t  e_{it}u_{it}\|,  $
$ q\max_i\|\frac{1}{T}\sum_t f_t  w_tu_{it}\|, 
$  $q\max_{it}|x_{it} | \max_i\|\frac{1}{T}\sum_tu_{it}g_t\|$,\\
$  \max_i \|\frac{1}{T}\sum_t \omega_{it}^2-\E \omega_{it}^2 \|
+   \max_i \|\frac{1}{T}\sum_t u_{it}^2 w_t^2-\E u_{it}^2 w_t^2 \|$,

$
\max_{t\leq T}|\frac{1}{N}\sum_iu_{it}^2-\E u_{it}^2| , 
$
$q\max_{t\leq T}[\|\frac{1}{N}\sum_i\lambda_ie_{it}u_{it}\|+\| \frac{1}{N}\sum_i\lambda_iw_{t}u_{it}  \|]$, $q\max_{it}\max_{t\leq T}\|\frac{1}{N}\sum_iu_{it}\alpha_i\|$,\\
$\max_{t\leq T}|\frac{1}{N}\sum_i\omega_{it}^2-\E \omega_{it}^2|$, $\max_i\|\frac{1}{N}\sum_iu_{it}^2w_tw_t'-\E u_{it}^2w_tw_t'\|$,

 where
$$q:=\max_{it}(\|f_tx_{it}\|+1+|x_{it}|).$$

 Then  
 (i) $q\max_{it}\| \dot\lambda_i-H_1'\lambda_i\| =o_P(1).  $
(ii) $q\max_{t}\| \widetilde f_t-H_1^{-1}f_t\| =o_P(1).  $

\end{lem}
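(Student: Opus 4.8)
The plan is to prove both statements by substituting the explicit expansions of $\widetilde f_s$ and $\dot\lambda_i$ and then bounding each resulting term uniformly, absorbing the factor $q$ term by term into the hypotheses of the lemma. The two parts are structurally symmetric: part (ii) uses the expansion (\ref{ec.10}) of $\widetilde f_s$ with $M_{\widetilde\alpha}$, while part (i) uses the analogous expansion of $\dot\lambda_i$ from the proof of Lemma \ref{ld.5} with $M_{\widetilde g}$ and the roles of $N$ and $T$, $\Lambda$ and $F$ interchanged. I would therefore carry out part (ii) in detail and then indicate that part (i) follows by the same argument with the time-averaged hypotheses replacing the cross-sectional ones.

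For part (ii), after subtracting $H_1^{-1}f_s$ the expansion (\ref{ec.10}) leaves three terms. Since Lemma \ref{lc.1} gives $\max_s\|\widetilde D_{fs}^{-1}\| = O_P(1)$, it suffices to bound $q$ times each of the three inner quantities uniformly over $s$. The first term carries the loading error $\widetilde A - AH_2$; I would pull it out via Cauchy--Schwarz in the cross-section, using $\frac{1}{N}\|\widetilde\Lambda'\diag(X_s)\|_F \le \max_i|x_{is}|\,\frac{1}{\sqrt{N}}\|\widetilde\Lambda\|_F$, the rate $\frac{1}{\sqrt{N}}\|\widetilde A - AH_2\|_F = O_P(C_{NT}^{-1})$ from Proposition \ref{p3.1}, and the bound $C_{NT}^{-1}\max_{is}|x_{is}|^3\|f_s\| = o_P(1)$ to kill the product with $q$. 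The third (noise) term is the one requiring the new uniform hypotheses: here I would split $x_{is} = l_i'w_s + e_{is}$ and $\widetilde\Lambda = \Lambda H_1 + (\widetilde\Lambda - \Lambda H_1)$, and handle the $M_{\widetilde\alpha}$ projection by writing it against $\frac{1}{N}\widetilde A'u_s$, so that the dominant pieces become exactly $q\max_s\|\frac{1}{N}\sum_i\lambda_i e_{is}u_{is}\|$, $q\max_s\|\frac{1}{N}\sum_i\lambda_i w_s u_{is}\|$ and $q\max_t\|\frac{1}{N}\sum_i u_{it}\alpha_i\|$, all assumed $o_P(1)$, with the remainders controlled by the Frobenius rate of $\widetilde\Lambda$ and by $\max_{is}|\widehat e_{is}-e_{is}|$.

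The middle term, quadratic in $X_s$, is the main obstacle. Because it contains $\diag(X_s)\diag(X_s)$ and is then multiplied by $q$ (which itself grows with $|x_{is}|$ and $\|f_s x_{is}\|$), naive bounding produces high powers of $x_{is}$. I would exploit the projection $M_{\widetilde\alpha}$ and the decomposition $x_{is}=\mu_{is}+e_{is}$ to separate the factor part $\mu_{is}=l_i'w_s$ from the weakly dependent part $e_{is}$, reducing the $e$-contribution to the uniform second-moment hypotheses $\max_i\|\frac{1}{T}\sum_t\omega_{it}^2 - \E\omega_{it}^2\|$ and $\max_t|\frac{1}{N}\sum_i\omega_{it}^2 - \E\omega_{it}^2|$ (and their $w$-weighted analogs), while the $\mu$-part is absorbed using $C_{NT}^{-1}\max_{is}|x_{is}|^3\|f_s\| = o_P(1)$ together with the loading rate. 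Throughout, the uniform invertibility $\max_s\|\widetilde D_{fs}^{-1}\|=O_P(1)$ and $\max_i\|\widetilde D_{\lambda i}^{-1}\|=O_P(1)$ from Lemmas \ref{lc.1} and \ref{lc.3} keep the Hessian factors harmless. Once part (ii) is established, part (i) follows verbatim with $\diag(X_i)$, $M_{\widetilde g}$, $\widetilde F$ and the time-averaged hypotheses $q\max_i\|\frac{1}{T}\sum_t f_t e_{it}u_{it}\|$, $q\max_i\|\frac{1}{T}\sum_t f_t w_t u_{it}\|$ and $q\max_{it}|x_{it}|\max_i\|\frac{1}{T}\sum_t u_{it}g_t\|$ in place of their cross-sectional counterparts.
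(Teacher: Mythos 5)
Your proposal is correct and follows essentially the same route as the paper: substitute the OLS expansions of $\dot\lambda_i$ and $\widetilde f_s$, use the uniform Hessian bounds $\max_i\|\widetilde D_{\lambda i}^{-1}\|=O_P(1)$ and $\max_s\|\widetilde D_{fs}^{-1}\|=O_P(1)$ from Lemmas \ref{lc.3} and \ref{lc.1}, and match each residual term to one of the listed uniform hypotheses. The only divergence is that you treat the quadratic-in-$X$ middle term as requiring the $x_{is}=\mu_{is}+e_{is}$ split, whereas the crude Cauchy--Schwarz bound $O_P(C_{NT}^{-1})\,q\,\max_{it}|x_{it}|^2$ already suffices because the hypothesis $C_{NT}^{-1}\max_{it}|x_{it}|^3\|f_t\|=o_P(1)$ is calibrated exactly to absorb it.
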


\begin{proof}
  Recall         \begin{eqnarray*}
\dot\lambda_i&=& \widetilde D_{\lambda i}^{-1} \frac{1}{T}\widetilde F'\diag(X_i) M_{\widetilde g}y_i\cr
&=&  H_1' \lambda_i+ \widetilde D_{\lambda i}^{-1} \frac{1}{T}\widetilde F'\diag(X_i) M_{\widetilde g}(GH_2^{-1'}-\widetilde G)H_2'\alpha_i
\cr
&& 
+ \widetilde D_{\lambda i}^{-1} \frac{1}{T}\widetilde F'\diag(X_i) M_{\widetilde g}\diag(X_i)(FH_1^{-1'}-\widetilde  F)H_1 '\lambda_i + \widetilde D_{\lambda i}^{-1} \frac{1}{T}\widetilde F'\diag(X_i) M_{\widetilde g} u_i.
\end{eqnarray*}
(i) By Lemma \ref{lc.3},  $\max_i\|\widetilde D_{\lambda i}^{-1}\|=O_P(1)$. 

 First we show $q\max_i\|\frac{1}{T}\widetilde F'\diag(X_i) M_{\widetilde g}(GH_2^{-1'}-\widetilde G) \|=o_P(1)$.  It is in fact bounded by
$$
  O_P(C_{NT}^{-1}) \max_{it}|x_{it}|q=o_P(1).
$$
Next, $q\max_i\|\frac{1}{T}\widetilde F'\diag(X_i) M_{\widetilde g}\diag(X_i)(FH_1^{-1'}-\widetilde  F)H_1 '\lambda_i\|$ is bounded by 
$$
O_P(C_{NT}^{-1})q  \max_{it}|x_{it} |^2=o_P(1)
$$
Finally, note that 
$
\max_i\frac{1}{T}\|u_i\|^2=\max_i|\frac{1}{T}\sum_tu_{it}^2-\E u_{it}^2|+O_P(1)=O_P(1).
$ So
\begin{eqnarray*}
&&q\max_i\|\frac{1}{T}\widetilde F'\diag(X_i) M_{\widetilde g} u_i\|\cr
&\leq& q O_P(C_{NT}^{-1})\max_{it}|x_{it}|  \max_i \frac{1}{\sqrt{T}}\| u_i\|\cr
&&+ q\max_i\|\frac{1}{T}\sum_t\widetilde f_t x_{it} u_{it}\|\cr
&&+O_P(1)q
\max_i\|\frac{1}{T}\widetilde F'\diag(X_i) G\| \frac{1}{T} \max_i\| G' u_i\|\cr
&\leq& o_P(1) + O_P(C_{NT}^{-1})q \max_i(\frac{1}{T}\sum_t u_{it}^2x_{it}^2)^{1/2}
\cr
&&+O_P(1)q\max_i[\|\frac{1}{T}\sum_t f_t e_{it} u_{it}\| +\|\frac{1}{T}\sum_t f_t  w_tu_{it}\|]\cr
&&+ O_P(1)q\max_{it}|x_{it} | \max_i\|\frac{1}{T}\sum_tu_{it}g_t\|\cr
&\leq&  o_P(1) + O_P(C_{NT}^{-1})q \max_i(\frac{1}{T}\sum_t \omega_{it}^2-\E \omega_{it}^2)^{1/2}
\cr
&&+O_P(C_{NT}^{-1})q \max_i(\frac{1}{T}\sum_t u_{it}^2 w_t^2-\E u_{it}^2 w_t^2)^{1/2} =o_P(1). 
\end{eqnarray*}

(ii) Recall that 
\begin{eqnarray*}
\widetilde f_s&=& \widetilde D_{fs}^{-1} \frac{1}{N}\widetilde\Lambda'\diag(X_s) M_{\widetilde\alpha}y_s\cr
&=&  H_1^{-1} f_s+ \widetilde D_{fs}^{-1} \frac{1}{N}\widetilde\Lambda'\diag(X_s) M_{\widetilde\alpha}(AH_2-\widetilde A)H_2^{-1}g_s
\cr
&& 
+ \widetilde D_{fs}^{-1} \frac{1}{N}\widetilde\Lambda'\diag(X_s) M_{\widetilde\alpha}\diag(X_s)(\Lambda H_1-\widetilde \Lambda)H_1^{-1} f_s \cr
&&+ \widetilde D_{fs}^{-1} \frac{1}{N}\widetilde\Lambda'\diag(X_s) M_{\widetilde\alpha} u_s  .
\end{eqnarray*}

By Lemma \ref{lc.1},  $\max_s\|\widetilde D_{fs}^{-1}\|=O_P(1)$. 
The rest of the proof is very similar to part (i), so we omit details to avoid repetitions. The only extra assumption, parallel to those of part (i), is that the following terms are $o_P(1)$:
$
\max_{t\leq T}|\frac{1}{N}\sum_iu_{it}^2-\E u_{it}^2| , 
$
$q\max_{t\leq T}[\|\frac{1}{N}\sum_i\lambda_ie_{it}u_{it}\|+\| \frac{1}{N}\sum_i\lambda_iw_{t}u_{it}  \|]$, $q\max_{it}\max_{t\leq T}\|\frac{1}{N}\sum_iu_{it}\alpha_i\|$,\\
$\max_{t\leq T}|\frac{1}{N}\sum_i\omega_{it}^2-\E \omega_{it}^2|$, $\max_i\|\frac{1}{N}\sum_iu_{it}^2w_tw_t'-\E u_{it}^2w_tw_t'\|$.

\end{proof}

\end{document}